\numberwithin{equation}{section}
\newcounter{continuehere}
\newcommand{\pp}[1]{\frac{\partial}{\partial #1}}
\newcommand{\grad}{\nabla}
\newcommand{\evalat}[2]{\bigl. #1 \bigr|_{#2}\rule{0pt}{3ex}}
\renewcommand{\Re}{\operatorname{Re}}
\renewcommand{\Im}{\operatorname{Im}}
\newcommand{\Res}{\operatorname{Res}}
\newcommand{\ad}{\operatorname{ad}}
\newcommand{\spanop}{\operatorname{span}}
\newcommand{\inner}[2]{\left\langle #1 , #2 \right\rangle}
\newcommand{\innerp}[2]{\left( #1, #2 \right)}
\newcommand{\hinner}[2]{\innerp{#1}{#2}}
\newcommand{\ncinner}[2]{\innerp{#1}{#2}} 
\newcommand{\definenoindex}[1]{\textbf{#1}}
\newcommand{\define}[2]{\definenoindex{#1}\index{#2|textbf}}
\newcommand{\Ric}{\operatorname{Ric}}
\newcommand{\Lie}{\operatorname{Lie}}
\newcommand{\Exp}{\operatorname{Exp}}
\newcommand{\End}{\operatorname{End}}
\newcommand{\supp}{\operatorname{supp}}
\newcommand{\csch}{\operatorname{csch}}
\newcommand{\funcclass}{\mathcal{C}}
\newcommand{\cylinder}{D}
\newcommand{\haar}{m}
\newcommand{\groupop}{\star}
\newcommand{\diffat}[2]{\left.\frac{d}{d #1}\right\vert_{#1 = #2}}
\newcommand{\downto}{\downarrow}
\newcommand{\asympon}[1]{\overset{#1}\asymp}
\newcommand{\heatcite}{Eldredge, Nathaniel, ``Precise Estimates for the Subelliptic
    Heat Kernel on H-type Groups,'' to appear, \emph{Journal de
      Math\'ematiques Pures et Appliqu\'ees}, 2009}
\newcommand{\gradcite}{Eldredge, Nathaniel, ``Gradient Estimates for the Subelliptic
    Heat Kernel on H-type Groups,'' submitted, \emph{Journal of
      Functional Analysis}, 2009}
\theoremstyle{remark}
\newtheorem*{remark*}{Remark}
\newcommand{\ignorethis}[1]{}
\newcommand{\polar}{\gamma}
\newcommand{\fixnotme}[1]{} 
\begin{document}

\title{Hypoelliptic heat kernel inequalities on H-type groups}

\author{Nathaniel Gilbert Bartsch Eldredge}
\degreeyear{2009}
\degreetitle{Doctor of Philosophy} 

\field{Mathematics}
\chair{Professor Bruce K. Driver}
\othermembers{
Professor George M. Fuller\\ 
Professor Benjamin Grinstein\\
Professor J. William Helton\\
Professor Linda P. Rothschild\\
}
\numberofmembers{5} 

\begin{frontmatter}
\makefrontmatter 

%
%

%
%


\begin{epigraph} 



   \begin{quotation}
     \textit{Pereant qui ante nos nostra dixerunt.}
   \end{quotation}
   ---Aelius Donatus

   (Quoted by St. Jerome, his pupil)

   \bigskip
   \begin{quotation}
     I wish to God these calculations had been executed by steam.
   \end{quotation}
   ---Charles Babbage

\end{epigraph}


\tableofcontents

\listoffigures  


\begin{acknowledgements} 
\fixnotme{Consider a Creative Commons or other free license for the
  dissertation.  Should be fine with publishers; Elsevier says ``may use in
  thesis'', which would be copyright by me in any case.}

Through the long and occasionally rocky process of writing this
dissertation, the support of many of my colleagues, friends and family
has been invaluable.  First and foremost among these is my advisor,
Bruce Driver, who has been throughout my graduate education
an outstanding teacher, colleague, and mentor; who has generously
shared his time and experience; and whose interest, patience, and
motivation throughout my graduate education has been above and beyond
the call of duty.  I would also like to thank the members of my
dissertation committee, George Fuller, Benjamin Grinstein, Bill Helton,
and Linda Rothschild for their kind and patient service.

Much credit is due to my parents, Gilbert and Lisa Eldredge, for their
lifelong love, encouragement and support in uncountable ways as I have
found my way into mathematics as an interest, a discipline, and now as
a profession; likewise to my brother, Jordan Eldredge, whose parallel
path in other fields has been an inspiration over the years.  And to
all the friends whose companionship through some of these challenging
years has kept me going, especially Andy Niedermaier, Chelsea Hollow,
Eric Tressler, Kristin Jehring, Kelly Jacobson, and Ross Richardson.

My thanks to the dedicated faculty and staff and my fellow students at
UCSD, at Harvey Mudd College, and throughout my schooling, with whom I
learned what mathematics is and what it is to me.  In particular, to
Michael Orrison of Harvey Mudd College, my undergraduate thesis
advisor, who introduced me to the world of research.  To the members
and coordinators of the UCSD Graduate Student Growth Group for mutual
assistance with non-mathematical obstacles.  And finally, to everyone
else who has had a hand in my development as a mathematician thus far.
Even if I could have done it without you, I wouldn't have wanted to.

In financial terms, I would like to acknowledge the support of the
UCSD Department of Mathematics, and of the National Science
Foundation, by whom this research was funded in part through NSF
Grants DMS-0504608 and DMS-0804472, as well as an NSF Graduate
Research Fellowship.

This dissertation was typeset with \LaTeX{} using the \texttt{TX}
fonts package; the primary text font is Nimbus Roman No9.  My thanks
to the authors and maintainers of the UCSD \LaTeX{} dissertation
class and template for making the formatting process as smooth as
possible.  The figures were prepared using \texttt{gnuplot} and MetaPost.


Portions of Chapters \ref{h-type-chapter},
\ref{subriemannian-chapter}, and \ref{heat-chapter} are adapted from
material awaiting publication as \heatcite.  The dissertation author
was the sole author of this paper.

Portions of Chapters \ref{h-type-chapter} and \ref{gradient-chapter}
are adapted from material awaiting publication as \gradcite.  The
dissertation author was the sole author of this paper.

\end{acknowledgements}

\begin{vitapage}
\begin{vita}
  \item[2003] B.~S. in Mathematics with High Distinction, Harvey Mudd College
  \item[2003--2008] Teaching Assistant, University of California, San Diego
  \item[2005] M.~A. in Mathematics, University of California, San
    Diego
  \item[2007--2009] Research Assistant, University of California, San Diego
  \item[2008, 2009] Associate Instructor in Mathematics, University of California,
    San Diego
  \item[2009] Ph.~D. in Mathematics, University of California, San Diego 
\end{vita}
\begin{publications}
  \item \heatcite.
  \item \gradcite.
\end{publications}
\end{vitapage}


\begin{abstract}
  We study inequalities related to the heat kernel for the
  hypoelliptic sublaplacian on an H-type Lie group.  Specifically, we
  obtain precise pointwise upper and lower bounds on the heat kernel
  function itself.  We then apply these bounds to derive an estimate
  on the gradient of solutions of the heat equation, which is known to
  have various significant consequences including logarithmic Sobolev
  inequalities.  We also present a computation of the heat kernel, and
  a discussion of the geometry of H-type groups including their
  geodesics and Carnot-Carath\'eodory distance functions.
\end{abstract}
\end{frontmatter}



\chapter{Introduction}\label{intro-chapter}

\section{Two trivial examples}

In recent years, there has been considerable interest in the study of
hypoelliptic operators and associated problems.  The purpose of this
dissertation is to address two specific questions, regarding estimates
for the heat kernel, in the context of H-type Lie groups.

To introduce and motivate these problems, we begin with a simpler
example.

\begin{example}\label{R3-example}\index{R3@$\R^3$}
  Let
\begin{equation*}
  \Delta^{(3)} := \left(\pp{x}\right)^2 + \left(\pp{y}\right)^2 + \left(\pp{z}\right)^2
\end{equation*}
be the usual Laplacian \index{Laplacian} on $\R^3$.  (We decorate various objects in
this example with the superscript ${}^{(3)}$ to contrast with examples to come.)
Consider the Cauchy problem for the associated heat equation:
\begin{equation}\label{R3-cauchy}
  \begin{split}
    \left(\Delta^{(3)} - \pp{t}\right) u (t,\vec{x}) &= 0 \quad \text{for all
      $t > 0$, $\vec{x} \in \R^3$}
    \\
    u(0, \vec{x}) &= f(\vec{x}) \quad \text{for all $\vec{x} \in \R^3$}
  \end{split}
\end{equation}
where, for instance, $f \in C_c(\R^3)$ (i.e. $f$ is an continuous
real-valued function on $\R^3$ with compact support).  Then
(\ref{R3-cauchy}) has a unique bounded solution $u(t,\vec{x})$ (see
\cite{evans-pde-book}).  To emphasize the dependence on the initial
condition $f$, we write $u(t, \vec{x}) = P_t^{(3)} f(\vec{x})$.
($P_t^{(3)}$ is really the heat semigroup \index{heat semigroup} $P_t^{(3)} =
e^{t\Delta^{(3)}}$.)  It is well-known that $P_t^{(3)}$ has a
convolution kernel, which is the heat kernel \index{heat kernel!Euclidean} $p_t^{(3)}$, i.e.
\begin{equation}\label{R3-convolution}
  P_t^{(3)} f(\vec{x}) = f(\vec{x}) * p_t^{(3)} = \int_{\R^3}
  f(\vec{x} - \vec{u})
    p_t^{(3)} (\vec{u})\,d\vec{u}
\end{equation}
where
\begin{equation}\label{R3-pt-formula}
  p_t^{(3)} (\vec{x}) = \frac{1}{(4\pi t)^{3/2} }
  e^{-\frac{1}{4t}\abs{\vec{x}}^2}.
\end{equation}
$p_t^{(3)}$ can also be viewed as the fundamental solution to
(\ref{R3-cauchy}), with initial condition a delta distribution
supported at the origin, i.e. $p_t^{(3)} = P_t^{(3)} \delta_0$.

A useful interpretation of the heat equation (\ref{R3-cauchy}) is as a
model for diffusion.\index{diffusion}  Imagine that $\R^3$ is filled
with air, which is contaminated unevenly with perfume.\index{perfume|see{diffusion}}  If $f(\vec{x})$ represents the
concentration of perfume at the point $\vec{x}$ at time $t=0$, then
the solution $u(t,\vec{x})$ gives the concentration at later times $t$
as the perfume diffuses throughout $\R^3$.  The heat kernel
$p_t^{(3)}(\vec{x})$ corresponds to the concentration following an
initial configuration with a unit mass of perfume concentrated at the
origin.  This model can also be viewed probabilistically, if the
perfume is considered to consist of a large number of tiny particles
moving randomly through $\R^3$ (specifically, moving according to
Brownian motion; see Figure \ref{bm3-fig}).  Then $p_t^{(3)}(\vec{x})$ gives the probability
density that a particle which begin at the origin at time $t=0$ winds
up ``near'' the point $\vec{x}$ at time $t$.  It is then plausible
that $p_t^{(3)}$ should be a smooth function, and $P_t^{(3)}$ a
``smoothing'' operator, since particles will immediately spread out
from any area where they may be highly concentrated.
\begin{figure}
  \begin{center}
    \includegraphics[height=3in]{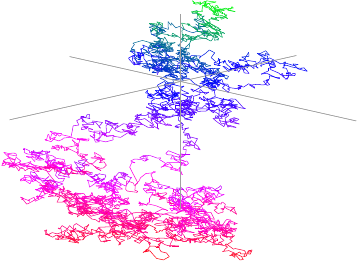}
    \end{center}
  \caption[Brownian motion in $\R^3$]{Brownian motion in $\R^3$.  The colors vary with the $z$ coordinate.}\label{bm3-fig}
\end{figure}

For comparison with following examples, we note the trivial fact that
the vector fields $\left\{\pp{x}, \pp{y}, \pp{z}\right\}$, the sum of
whose squares gives $\Delta^{(3)}$, are translation invariant with
respect to the usual vector addition $+$ on $\R^3$ (by the chain
rule), and the same is true for $\Delta^{(3)}$.  That is, if
$L_\vec{u}(\vec{x}) = \vec{u} + \vec{x}$ denotes translation by
$\vec{u} \in \R^3$, we have by the chain rule that $\Delta^{(3)}(f
\circ L_\vec{u}) = (\Delta^{(3)}f) \circ L_\vec{u}$.  Moreover, these
vector fields have the property that they span the tangent space to
$\R^3$ at every point; every differentiable curve is tangent at each
point to some linear combination of $\left\{\pp{x}, \pp{y},
\pp{z}\right\}$.  For each fixed $t > 0$, the heat kernel behaves at
infinity like $e^{-\frac{1}{4t} \abs{\vec{x}}^2}$, where the distance
$\abs{\vec{x}}$ can be interpreted as the length of the shortest path
from $\vec{0}$ to $\vec{x}$ which is tangent to the span of
$\left\{\pp{x}, \pp{y}, \pp{z}\right\}$ (namely, a straight line,
since the latter condition holds trivially).

\index{gradient bounds!Laplacian}
Another property of interest is that the gradient in $\vec{x}$ of a solution
$u = P_t^{(3)} f$ can be controlled in terms of the usual gradient
$\grad^{(3)}$ of the
initial condition $f$, and we have the inequality
\begin{equation}\label{grad-R3}
  \abs{\left(\grad^{(3)} P_t^{(3)} f\right)(\vec{x})} \le K(t) P_t^{(3)}
  \left(\abs{\grad^{(3)} f}\right)(\vec{x})
\end{equation}
for some constant $K(t)$ depending on $t$.  Indeed, by differentiating
under the integral sign we have
\begin{align*}
  \abs{\left(\grad^{(3)} P_t^{(3)} f\right)(\vec{x})} &=
  \abs{\grad^{(3)} \int_{\R^3}  f(\vec{x} - \vec{u})
    p_t^{(3)} (\vec{u})\,d\vec{u}} \\
  &= \abs{\int_{\R^3} \grad^{(3)} \left[ f(\vec{x} - \vec{u})
    p_t^{(3)} (\vec{u})\right]\,d\vec{u}} \\
   &= \abs{\int_{\R^3} \grad^{(3)}f(\vec{x} - \vec{u})
    p_t^{(3)} (\vec{u})\,d\vec{u}} \\
  &\le \int_{\R^3} \abs{\grad^{(3)}f(\vec{x} - \vec{u})}
    p_t^{(3)} (\vec{u})\,d\vec{u} \\
    &= P_t^{(3)}
  \left(\abs{\grad^{(3)} f}\right)(\vec{x})   
\end{align*}
so that (\ref{grad-R3}) holds with $K(t) \equiv 1$.

\end{example}

Much of the nice behavior of the operator $\Delta^{(3)}$ is related to
the fact that it is an elliptic \index{elliptic} operator (see Definition
\ref{elliptic-def}).  A particular consequence of this is ``elliptic
regularity,'' which guarantees that even weak solutions of
(\ref{R3-cauchy}) (in the sense of distributions) must actually be
smooth functions.  The following example shows what can go wrong if
this condition is relaxed too far.

\begin{example}\label{D2-example}\index{R3@$\R^3$}
  We continue to work in $\R^3$, and let
  \begin{equation*}
    \Delta^{(2)} := \left(\pp{x}\right)^2 + \left(\pp{y}\right)^2
  \end{equation*}
  be the Laplacian \index{Laplacian!degenerate} in the $x$ and $y$ variables only.  For $\Delta^{(2)}$,
  the Cauchy problem
  \begin{equation}\label{D2-cauchy}
    \begin{split}
      \left(\Delta^{(2)} - \pp{t}\right) u (t,\vec{x}) &= 0 \quad \text{for all
        $t > 0$, $\vec{x} \in \R^3$}
      \\
      u(0, \vec{x}) &= f(\vec{x}) \quad \text{for all $\vec{x} \in \R^3$}
    \end{split}
  \end{equation}
  reduces to a one-parameter family of Cauchy problems in $\R^2$,
  indexed by $z$.  In some sense, the degenerate operator
  $\Delta^{(2)}$ is not ``using'' all three dimensions.
  (\ref{D2-cauchy}) still has a unique bounded solution $P_t^{(2)} f$ given
  the initial values $f$, namely
  \begin{equation*}
    P_t^{(2)} f(x,y,z) = \iint_{\R^2} f(x-x', y-y', z) p_t^{(2)}(x', y')\,dx'\,dy'
  \end{equation*}
  where
  \begin{equation*}
    p_t^{(2)} (x,y) = \frac{1}{4\pi t}  e^{-\frac{1}{4t}(x^2+y^2)}.
  \end{equation*}
  However, it is obvious that no smoothness is imposed on the $z$
  dependence of $P_t^{(2)} f$.  Indeed, the fundamental solution of
  (\ref{D2-cauchy}) must be interpreted as the distribution $p_t^{(2)}
  (x,y) \delta_0(z)$.

  The vector fields $\left\{\pp{x}, \pp{y}\right\}$ obviously do not
  span the tangent space of $\R^3$ at any point, and paths which are
  everywhere tangent to this subspace must be horizontal, i.e. have
  constant $z$ coordinate.   $p_t^{(2)} (x,y)$ behaves at infinity
  like $e^{-\frac{1}{4t}\abs{(x,y)}^2}$, where the ``horizontal distance'' $\abs{(x,y)} =
  \sqrt{x^2+y^2}$ could be interpreted as the length of the shortest
  horizontal path from the origin to $(x,y)$.  Off the $x$-$y$ plane,
  this distance should be considered infinite.

  The inequality
  \index{gradient bounds!degenerate Laplacian}
  \begin{equation}\label{D2-grad}
    \abs{\left(\grad^{(2)} P_t^{(2)} f\right)(\vec{x})} \le K(t) P_t^{(2)}
    \left(\abs{\grad^{(2)} f}\right)(\vec{x})
  \end{equation}
  again holds with $K(t) \equiv 1$, if we take $\grad^{(2)} =
  \left(\pp{x}, \pp{y}\right)$ to be the gradient in the $x$ and $y$
  variables only.  However, this is really a one-parameter family of
  inequalities indexed by $z$, and provides very limited control over
  the behavior of the solution.  For instance, a function $f$
  satisfying $\grad^{(2)}f \equiv 0$ need not be constant on $\R^3$.

  In terms of diffusion, (\ref{D2-cauchy}) describes a system in which
  perfume diffuses horizontally, but not vertically.
  Probabilistically, particles move according to a two-dimensional
  ``horizontal'' Brownian motion, keeping their $z$ coordinate fixed.
  This can be viewed as a three-dimensional Brownian motion which has
  been ``constrained'' to only follow horizontal paths.  Of course, it
  is natural that concentration smoothing occurs within planes of
  constant $z$ coordinate, but not between them.
\end{example}

\section{One nontrivial example: the Heisenberg group}\label{H1-sec}

The foregoing examples represent two extremes of behavior.  This
dissertation focuses on a class of operators which occupy a middle
ground.

\begin{example}\label{H1-example}
  We work again on $\R^3$.  Let $X, Y$ be the vector fields
  \begin{equation}
    X = \pp{x} - \frac{1}{2} y \pp{z}, \quad Y = \pp{y} + \frac{1}{2}
    x \pp{z}
  \end{equation}
  and take $L$ to be the operator
  \begin{equation}
    L = X^2 + Y^2 = \left(\pp{x}\right)^2 + \left(\pp{y}\right)^2 +
    \left(x \pp{y} - y \pp{x}\right) \pp{z} + \frac{1}{4} \left(x^2 +
    y^2\right) \left(\pp{z}\right)^2.
  \end{equation}
  We immediately note that $X, Y, L$ are not translation invariant
  with respect to vector addition on $\R^3$.  However, if we equip
  $\R^3$ with the binary operation
  \begin{equation}
    (x,y,z) \groupop (x', y', z') = \left(x+x', y+y', z+z'+\frac{1}{2}(x y'
    - x' y)\right),
  \end{equation}
  then $(\R^3, \groupop)$ becomes a Lie group, known as the
  \define{Heisenberg group}{Heisenberg group} $\mathbb{H}_1$.  This is the
  prototype for the \emph{H-type groups} which are the subject of this
  dissertation.  Then $X, Y, L$ are invariant with respect to
  \emph{left} translation under $\groupop$ (or simply
  \definenoindex{left-invariant}), i.e. if $L_h(g) = h \groupop g$ is
  left translation by $h$, we have $L(f \circ L_h) = (Lf) \circ L_h$.
  (We shall begin using the letters $g, h, k$ instead of $\vec{x},
  \vec{u}$ to represent elements of $\mathbb{H}_1$, to emphasize its group
  structure, but shall not forget that $\mathbb{H}_1 = \R^3$ as a set and as a
  smooth manifold.)

  We again consider the Cauchy problem
  \begin{equation}\label{H1-cauchy}
    \begin{split}
      \left(L - \pp{t}\right) u (t,g) &= 0 \quad \text{for all
        $t > 0$, $g \in \mathbb{H}_1$}
      \\
      u(0, g) &= f(g) \quad \text{for all $g \in \mathbb{H}_1$}.
    \end{split}
  \end{equation}
  As before, a unique bounded solution exists, given by
  \begin{equation*}
    P_t f(g) = \int_{\mathbb{H}_1} f(g \groupop k^{-1}) p_t(k)\,d\haar(k)
  \end{equation*}
  where the heat kernel $p_t$ is given by the more complicated formula
  \begin{equation}\label{H1-pt-formula}\index{heat kernel!formula}
    p_t(x,y,z) =  \frac{1}{2\pi} \int_{\R} e^{i \lambda z - \frac{1}{4}
    \lambda \coth(t \lambda) (x^2+y^2} \frac{\lambda}{4
  \pi \sinh(t
  \lambda)}\,d\lambda.
  \end{equation}
  Here $\haar$ is Lebesgue measure, which is also the Haar measure
  \index{Haar measure} for
  $\mathbb{H}_1$ as it is invariant under left and right translation.
  A derivation of a generalization of (\ref{H1-pt-formula}) appears in
  Section \ref{pt-sec}.

  The operator $L$ is not elliptic; the matrix of coefficients of
  second-order partials is
  \begin{equation*}
    Q(x,y,z) =
    \begin{pmatrix}
      1 & 0 & -\frac{1}{2} y \\
      0 & 1 & \frac{1}{2} x \\
      -\frac{1}{2}y & \frac{1}{2}x & \frac{1}{4}(x^2+y^2)
    \end{pmatrix}
  \end{equation*}
  which is easily shown to be positive semidefinite but degenerate for
  all $(x,y,z)$.  However, the heat kernel $p_t$ is actually smooth,
  and hence so are bounded solutions to the Cauchy problem
  (\ref{H1-cauchy}).  Thus the operator $L$ retains some regularity;
  specifically, it is hypoelliptic \index{hypoelliptic} (see
  Definition \ref{hypoelliptic-def}).

  The reason for this regularity is related to the following
  observation.  Although the vector fields $\{X,Y\}$ clearly do not
  span the tangent space of $\mathbb{H}_1 = \R^3$ at any point (since
  there are only two of them), yet their Lie bracket $[X,Y] := XY - YX
  = \pp{z} =: Z$ is linearly independent of $\{X,Y\}$, so that $\{X, Y, Z\}$
  \emph{does} span the tangent space at each point.  (The relations
  $[X,Y]=Z$ and $[X,Z]=[Y,Z]=0$, which define the
  \define{Heisenberg Lie algebra}{Heisenberg Lie algebra}, arise in quantum mechanics
  and are the reason for the use of the name of Heisenberg.)  By
  analogy with Example \ref{D2-example}, we call a path $\gamma(t) =
  (x(t), y(t), z(t))$ \definenoindex{horizontal} 
  \index{horizontal path}
  if it is tangent to some linear combination of $X,Y$ (but
  not $Z$) at each point.  See Figure \ref{planes-heis-fig}.  The
  relationship $[X,Y] = Z$ then suggests that a horizontal curve which
  travels a distance $\epsilon$ in the directions of $+X, +Y, -X, -Y$
  successively will make $\epsilon^2$ progress in the ``forbidden''
  $Z$ direction.  Thus it is plausible that, unlike in Example
  \ref{D2-example}, horizontal paths may be able to join arbitrary
  pairs of points of $\mathbb{H}_1$.
\begin{figure}
  \begin{center}
    \includegraphics[height=3in]{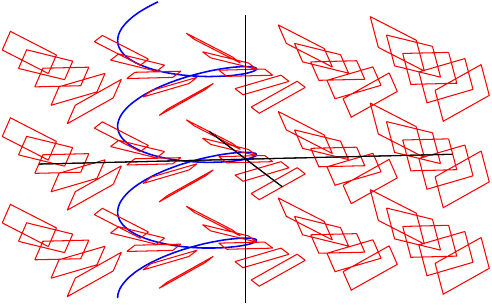}
    \end{center}
  \caption[Horizontal planes in $\mathbb{H}_1$]{Horizontal planes in $\mathbb{H}_1$.
    Each of the planes in red represents the two-dimensional subspace
    of the tangent space $T_g \mathbb{H}_1$ at a point $g$ which is spanned by
    $X(g), Y(g)$.  A sample horizontal curve, which is tangent at each
    point to the corresponding subspace, is shown in
    blue.}\label{planes-heis-fig}
\end{figure}

\index{diffusion}
  Our probabilistic diffusion model suggests how this relates to the
  regularity property of $L$.  Perfume particles should move according
  to a ``horizontal Brownian motion'' 
  \index{horizontal Brownian motion}
  that has been constrained to
  follow horizontal paths.  See Figure \ref{bm-heis-fig}.  (Our definition of ``horizontal'' as
  ``tangent to some linear combination of $X,Y$'' requires
  reinterpretation, since Brownian motion paths are nowhere
  differentiable.  This can be done in the language of stochastic
  calculus.\fixnotme{See Section \ref{prob-sec}.})  Since, unlike in
  Example \ref{D2-example}, horizontal paths do not remain stuck in a
  submanifold, but are able to reach arbitrary points (see below), it
  seems reasonable that Brownian particles should be able to diffuse
  throughout space.  Locally, their motion is horizontal to first
  order, but also vertical to second order.  Thus high concentrations
  of perfume spread out in all directions, giving rise to a smoothing
  effect.
\begin{figure}
  \begin{center}
    \includegraphics[height=3in]{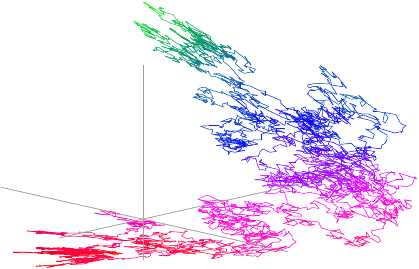}
    \end{center}
  \caption[Horizontal Brownian motion in $\mathbb{H}_1$]{Horizontal Brownian motion in $\mathbb{H}_1$.  The colors vary with the $z$ coordinate.}\label{bm-heis-fig}
\end{figure}

  Let us see what such paths look like.  It is clear that a horizontal
  path must satisfy $\dot{\gamma}(t) = \dot{x}(t) X(\gamma(t)) +
  \dot{y}(t) Y(\gamma(t))$, so that solving for $\dot{z}(t)$ and
  integrating we find
  \begin{equation}\label{H1-green}
    z(t) = z(0) + \frac{1}{2} \int_0^t x(t) (\dot{y}(t) - \dot{x}(t)
    y(t))\,dt.
  \end{equation}
  \index{Green's theorem}
  By Green's theorem, this says that $z(t) - z(0)$ is equal to the
  (signed) area enclosed by the two-dimensional curve $(x(t),y(t))$.
  (If the curve is not closed, one may close it by adjoining straight
  lines from the origin to $(x(0), y(0))$ and $(x(t), y(t))$, since
  such lines do not contribute to the integral in (\ref{H1-green}).
  See Figure \ref{signed-area-fig}.)
  \index{signed area}
  It is intuitively clear that one may connect any pair of points in
  $\R^2$ by a curve which encloses any prescribed signed area; thus
  any pair of points in $\mathbb{H}_1$ can indeed be joined by a horizontal
  path.
\begin{figure}
  \begin{center}
    \includegraphics[height=3in]{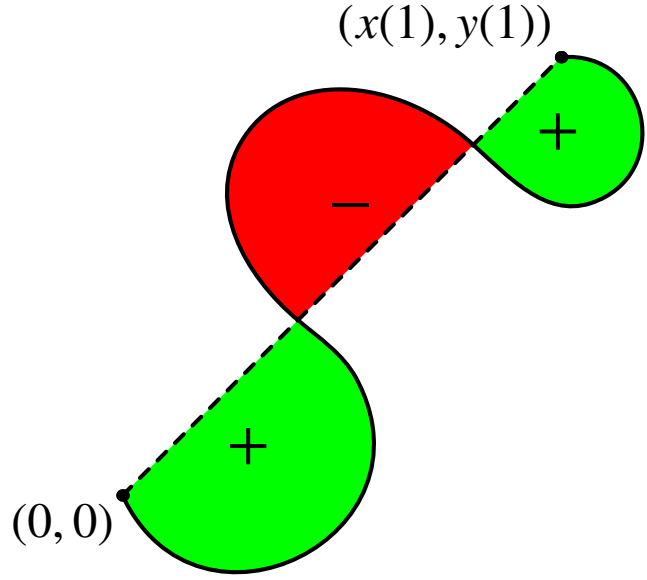}
    \end{center}
  \caption[Signed area]{Signed area, as used to describe horizontal
    curves in $\mathbb{H}_1$.  The plane curve from $(0,0)$ to
    $(x(1), y(1))$ (solid black) may be closed by the straight line
    back to the origin (dashed).  The areas thus enclosed contribute
    positively or negatively according to a ``right-hand rule'' based
    on the orientation of the curve.}\label{signed-area-fig}
\end{figure}

  The regularity of $L$ and the connectedness of $\mathbb{H}_1$ by horizontal
  paths are both strongly related to the fact that the vector fields
  $X,Y$, together with their bracket $[X,Y]=Z$, span the tangent space
  at each point.  This \emph{bracket generating condition} is what
  separates $\mathbb{H}_1$ from degenerate situations like Example
  \ref{D2-example}.  We shall say more about this condition in Section \ref{hypo-sec}.
  \index{bracket generating}

  If
  we consider the \definenoindex{length} of a horizontal path to be
  the length of its horizontal projection $(x(t), y(t))$ (because
  $(\dot{x}(t), \dot{y}(t))$ are the coefficients of the horizontal
  vector fields $X,Y$, which we may consider to be ``orthonormal''),
  the problem of finding the shortest horizontal curve joining two
  points is just the problem of finding the shortest plane curve
  enclosing a given area in the previous sense.  This is a classic
  problem in the calculus of variations\footnote{The problem is
    commonly called Dido's Problem after the legendary \cite{aeneid} queen of
    Carthage.  It seems she and her followers found themselves
    shipwrecked in North Africa after fleeing the murderous King
    Pygmalion of Tyre.  She
    pled with the local authorities for some land on which to settle,
    but was offered only as much land as she could cover
    with an ox-hide.  Interpreting the word ``cover'' creatively, she
    cut the hide into thin strips and used them to bound a large
    region of land, with the ocean as the other boundary.  On this
    prime waterfront property she founded the city of Carthage.  There
    she lived happily with her companions until the arrival of Aeneas
    from Troy, who caused for Dido an entirely different sort of
    problem \cite{dido}, less easily solved by mathematics.
  } whose solution is
  the arc of a circle.  Thus the ``horizontal distance'' (or
  \define{Carnot-Carath\'eodory distance}{Carnot-Carath\'eodory distance}) $d(g,h)$ between any
  two points $g,h \in \mathbb{H}_1$ is finite.
 \begin{figure}
  \begin{center}
    \includegraphics[height=3in]{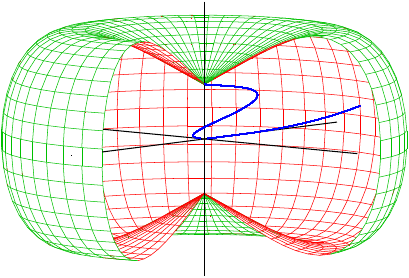}
    \end{center}
  \caption[The unit ball of $\mathbb{H}_1$]{The unit ball of $\mathbb{H}_1$, with
    respect to the Carnot-Carath\'eodory distance $d$.  A section has
    been removed to show geodesics 
    \index{geodesic}
    (shortest horizontal paths, blue)
    emanating from the origin.}\label{ball-fig}
\end{figure}
  It is this distance that, as in
  the previous examples, we might expect to describe the behavior of
  the heat kernel $p_t$ at infinity.  Indeed, it was shown in
  \cite{li-heatkernel} that
  \index{heat kernel bounds!Heisenberg group}
  \index{Heisenberg group!heat kernel bounds}
  \begin{equation}\label{H1-heat}
    \frac{C_1}{1+\sqrt{\abs{(x,y)}d(0,g)}} e^{-\frac{1}{4}d(0, g)^2}
    \le p_1(g) \le \frac{C_2}{1+\sqrt{\abs{(x,y)}d(0,g)}} e^{-\frac{1}{4}d(0, g)^2}
  \end{equation}
  for constants $C_1, C_2$. 
  Chapter \ref{heat-chapter} of this
  dissertation is concerned with extending estimates like
  (\ref{H1-heat}) to general H-type groups.

  \index{gradient bounds!Heisenberg group}
  Regarding the problem of gradient estimates, it was first shown in
  \cite{li-jfa} that there exists a constant $K$ (independent of $t$!)
  such that
  \begin{equation}\label{H1-grad}
    \abs{\grad P_t f} \le K P_t(\abs{\grad f})
  \end{equation}
  where $\grad = (X,Y)$ is the ``horizontal gradient.''  The proof
  makes extensive use of the heat kernel bounds (\ref{H1-heat}).
  (\ref{H1-grad}) is a considerably more informative statement than its
  analogue (\ref{D2-grad}) in Example \ref{D2-example}.  For instance,
  because $[X,Y]=Z$, we have that \mbox{$\grad f \equiv 0$} implies that $f$
  is constant.  Chapter \ref{gradient-chapter} of this dissertation
  follows a proof appearing in \cite{bbbc-jfa} to show that
  (\ref{H1-heat}) also holds for general H-type groups.
\end{example}

\section{Hypoelliptic operators and Lie groups}\label{hypo-sec}

As suggested in the previous section, some sort of regularity
condition is needed on an operator $L$ to avoid the degeneracy of
Example \ref{D2-example} without requiring ellipticity as in Example
\ref{R3-example}.  We therefore confine our attention to hypoelliptic
operators.

\begin{definition}\label{hypoelliptic-def}
  A partial differential operator $L$ on a manifold $M$ is said to be
  \define{hypoelliptic}{hypoelliptic} if, for every distribution $u$
  on $M$, $L u \in C^\infty(M)$ if and only if $u \in C^\infty(M)$.
\end{definition}

By standard elliptic regularity results, every elliptic operator is
hypoelliptic;
\index{elliptic regularity}
 its corresponding parabolic
heat operator is hypoelliptic as well.  See, for instance, Section 3.4 of
\cite{stroock-pde-probabilists}.  The Heisenberg sublaplacian is an example of a
hypoelliptic operator that is not elliptic.  More examples are
supplied by the following theorem, of which a simplified proof can be
found in Chapter 7 of \cite{stroock-pde-probabilists}.

\begin{theorem}[H\"ormander \cite{hormander67}]\label{hormander-thm}
\index{Hormander's theorem@H\"ormander's theorem}
  Let $X_1, \dots, X_n$ be smooth vector fields on a manifold $M$
  satisfying the following \define{bracket generating
    condition}{bracket generating}: for each $m \in M$ there
  exists an integer $r$ (the \define{rank}{rank} of $\{X_1, \dots,
  X_n\}$ at $m$) such that
  \begin{equation}\label{bracket-gen-condition}
    \begin{split}
    T_m M = \spanop\{X_{i_1}(m), [X_{i_1}, X_{i_2}](m), [X_{i_1}, [X_{i_2},
      X_{i_3}]](m), \dots : 1 \le i_1, \dots, i_r \le n \}
    \end{split}
  \end{equation}
  where at most $r-1$ brackets are taken.
  Let $Y$ be another smooth vector field on $M$.  Then the
  second-order operator $L := X_1^2 + \dots + X_n^2 + Y$ is
  hypoelliptic.
\end{theorem}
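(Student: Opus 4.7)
The plan is to deduce hypoellipticity from a \emph{subelliptic estimate} of the form
\[
\|\varphi u\|_{\varepsilon}^2 \le C\bigl(\|L u\|_0^2 + \|u\|_0^2\bigr)
\]
valid for all $u \in C^\infty_c$, with $\varphi$ a smooth cutoff, some $\varepsilon > 0$ depending on the rank $r$, and $\|\cdot\|_s$ the $L^2$-Sobolev norm of order $s$. Since hypoellipticity is a local property, I would first localize the problem in a coordinate chart and assume the $X_i$ and $Y$ have smooth, uniformly bounded coefficients on a neighborhood where the rank of the bracket-generating system is uniformly $r$. Given the subelliptic estimate, hypoellipticity follows from a standard bootstrap: by mollifying and commuting $L$ with a pseudodifferential cutoff $\Lambda^s$, one iteratively promotes $Lu \in H^s_{\mathrm{loc}}$ to $u \in H^{s+\varepsilon}_{\mathrm{loc}}$, and Sobolev embedding gives smoothness.

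The starting point for the estimate is integration by parts: since $X_i^* = -X_i + a_i$ for smooth functions $a_i$, one obtains
\[
\sum_{i=1}^n \|X_i u\|_0^2 = -\textnormal{Re}(L u, u) + \textnormal{Re}(Y u, u) + \textnormal{(lower-order terms)},
\]
which, by Cauchy--Schwarz, yields $\sum_i \|X_i u\|_0^2 \le C(\|Lu\|_0 \|u\|_0 + \|u\|_0^2)$. This gives a one-derivative gain in the ``horizontal'' directions $X_1, \dots, X_n$, matching the intuition from the Heisenberg example that $X$ and $Y$ control one full derivative.

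The heart of the proof is the commutator bootstrap that propagates this gain to the bracket directions. Introducing a pseudodifferential operator $\Lambda^{-1/2}$ of order $-1/2$, one shows that if $\|Zu\|_\sigma$ is controlled by the right-hand side of the target estimate, then so is $\|[X_i, Z] u\|_{\sigma - 1/2}$, essentially by writing $[X_i, Z] = X_i Z - Z X_i$, integrating by parts, and absorbing the resulting $X_i$-derivatives using the one-derivative gain above. Iterating this $k \le r-1$ times shows that every iterated bracket of depth at most $r$ can be controlled in $H^{s_k}$ for $s_k = -(1 - 2^{-k})$. Under the bracket-generating hypothesis (\ref{bracket-gen-condition}), at each point the iterated brackets of depth $\le r$ span $T_m M$, so a partition-of-unity argument and a linear algebra inversion recover $\|u\|_{2^{-r}}$, yielding the desired subelliptic estimate with $\varepsilon = 2^{-r}$.

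The main obstacle is the commutator bootstrap itself: one must commute smooth coefficient vector fields with fractional-order pseudodifferential operators and carefully track the lower-order error terms so that the loss of derivatives at each iteration is exactly $1/2$ and no more. Controlling these errors uniformly across the $r$ levels of bracketing — and then globalizing the pointwise span by inverting a smoothly varying matrix whose minimum singular value may degenerate — is where the real work lies. The alternative probabilistic route (Stroock's Chapter 7) avoids pseudodifferential calculus but relocates the same difficulty into proving that the Malliavin covariance matrix of the associated diffusion has negative moments of every order, which ultimately rests on the same bracket-generating hypothesis.
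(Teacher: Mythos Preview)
The paper does not prove this theorem. It is stated as a cited result, attributed to H\"ormander's original paper, with the remark that ``a simplified proof can be found in Chapter 7 of \cite{stroock-pde-probabilists}.'' So there is no paper proof to compare against.

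Your sketch is a reasonable outline of the classical PDE approach (essentially Kohn's method refining H\"ormander's original argument): integration by parts yields $L^2$ control of the horizontal derivatives $X_i u$, and a commutator/interpolation bootstrap pushes fractional regularity into the bracket directions, producing a subelliptic estimate $\|u\|_\varepsilon \lesssim \|Lu\|_0 + \|u\|_0$ with $\varepsilon$ depending on the step $r$, from which hypoellipticity follows by iteration. The reference the paper actually points to, Stroock's Chapter 7, takes instead the probabilistic route you mention at the end: one shows the diffusion generated by $L$ has a Malliavin covariance matrix whose inverse has moments of all orders precisely under the bracket-generating hypothesis, and smoothness of the transition density (hence hypoellipticity) follows. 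Both approaches are standard and you have identified them correctly; the paper simply invokes the result without reproducing either.
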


In particular, this implies that ``harmonic'' functions on $M$ (satisfying
$L f = 0$) are automatically smooth.  Also, if we replace $M$ by $M
\times (0, \infty) = \{ (m, t) : m \in M, t > 0\}$ and set $Y =
\pp{t}$, we see that solutions $u$ to the heat equation $\left(L -
\pp{t}\right)u = 0$ are also smooth functions of $m$ and $t$. 

The bracket generating condition (\ref{bracket-gen-condition})
requires that $L$ be built out of enough vector fields to fill out the
tangent space to $M$ at each point, when their brackets are included.
This serves to rule out situations like Example \ref{D2-example}.  In
particular, if $X_1, \dots, X_n$ satisfy
(\ref{bracket-gen-condition}), it is easy to see that if $X_i f \equiv 0$
for all $i$ (i.e. its ``gradient'' is identically zero) then $f$ must
be constant.

As in the case of the Heisenberg group, the bracket generating
condition is also related to a geometric fact about horizontal paths.

\begin{theorem}[Chow]\label{chow-thm}
\index{Chow's theorem}
  If $M$ is a connected manifold with vector fields $X_1, \dots, X_n$
  satisfying (\ref{bracket-gen-condition}), then any pair of points in
  $M$ can be joined by a path which is tangent at each point to some
  linear combination of $X_1, \dots, X_n$.
\end{theorem}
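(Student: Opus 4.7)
The plan is to define a reachability equivalence relation on $M$ and show that each equivalence class is open; connectedness will then force there to be only one class. Declare $p \sim q$ if there is a piecewise smooth path from $p$ to $q$ which is tangent at each point to $\spanop\{X_1(\cdot), \dots, X_n(\cdot)\}$. This relation is reflexive (constant paths), symmetric (time-reversal, which negates the tangent coefficients), and transitive (concatenation), so its classes partition $M$. If each class is open, then each is also closed (as the complement of a union of the other classes), so connectedness of $M$ gives a single class, proving the theorem.

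First I would note that for any $p$ and any vector field $X$ in the span of $\{X_1, \dots, X_n\}$, the time-$t$ flow $\phi_t^X(p)$ lies in the class of $p$ for as long as it is defined, because $s \mapsto \phi_s^X(p)$ is horizontal by construction. Next I would introduce the commutator trick: for two horizontal fields $X, Y$, the reparametrized composed flow
\begin{equation*}
  \psi_s^{[X,Y]}(p) := \phi_{-\sqrt{s}}^Y \circ \phi_{-\sqrt{s}}^X \circ \phi_{\sqrt{s}}^Y \circ \phi_{\sqrt{s}}^X(p), \qquad s \ge 0
\end{equation*}
(with a symmetric definition for $s < 0$) is horizontal as a concatenation of flows of $X$ and $Y$, and satisfies $\diffat{s}{0} \psi_s^{[X,Y]}(p) = [X,Y](p)$. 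Iterating this construction, replacing one of the factor fields by a previously built commutator flow, produces for every iterated bracket $Z$ appearing in \eqref{bracket-gen-condition} a $C^1$ reachability curve $s \mapsto \psi_s^Z(p)$ with velocity $Z(p)$ at $s = 0$.

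By the bracket generating condition I can choose iterated brackets $Z_1, \dots, Z_d$, with $d = \dim M$, whose values at $p$ form a basis of $T_p M$. Consider then the map
\begin{equation*}
  \Phi(s_1, \dots, s_d) := \psi_{s_d}^{Z_d} \circ \cdots \circ \psi_{s_1}^{Z_1}(p)
\end{equation*}
defined on a neighborhood of $0$ in $\R^d$. Its differential at $0$ carries $\pp{s_i}$ to $Z_i(p)$ and is therefore an isomorphism, so by the inverse function theorem $\Phi$ maps a neighborhood of $0$ onto an open neighborhood of $p$. Every point in the image is reachable from $p$ by a horizontal path, so the equivalence class of $p$ is open.

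The main technical obstacle is verifying the velocity identity for the commutator curves. The double composition $\phi_{-t}^Y \circ \phi_{-t}^X \circ \phi_t^Y \circ \phi_t^X$ is only $O(t^2)$, so in order to obtain a $C^1$ curve one must reparametrize by $t = \sqrt{s}$ and then perform a careful Taylor expansion, tracking the cancellation of the first-order terms along $X$ and $Y$ and isolating the surviving coefficient $[X,Y](p)$ from the second-order remainder. Iterating this to higher-depth brackets requires the same expansion inductively, with one of the factor fields replaced by a previously constructed commutator flow whose $C^1$ behavior has already been established. Once this is in hand, the inverse function theorem argument and the topological conclusion are routine.
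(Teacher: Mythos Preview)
The paper does not actually prove Chow's theorem: it is stated here without argument, and the subriemannian restatement (Theorem~\ref{chow-thm-subriemannian}) is accompanied only by a reference to Chapter~2 and Appendix~D of \cite{montgomery}. So there is no in-paper proof to compare against; your sketch supplies what the paper deliberately omits.

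Your architecture is the standard one and is sound: the reachability equivalence relation, openness of classes via a local surjectivity argument, and connectedness to finish. One point deserves tightening. In the inductive step you propose to build depth-$k$ brackets by substituting a previously constructed commutator curve $\psi_s^{[X,Y]}$ in place of a genuine flow. But that curve is only $C^1$ at $s=0$ after the $\sqrt{s}$ reparametrization, and the next layer of commutator expansion needs a second-order Taylor remainder, which $C^1$ alone does not provide. The clean fix is to stay in the smooth $t$-variable throughout: build the depth-$k$ object as a $2^k$-fold composition of the basic flows $\phi_{\pm t}^{X_i}$ (which is $C^\infty$ in $t$), verify by induction that its Taylor expansion at $t=0$ begins with the iterated bracket at order $t^k$, and only then reparametrize by $s=\pm t^k$ to extract a $C^1$ curve with the correct initial velocity. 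With that adjustment your map $\Phi$ is $C^1$ at the origin with invertible differential, and the inverse function theorem step goes through exactly as you describe.
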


This also allows a reasonable Carnot-Carath\'eodory distance
\index{Carnot-Carath\'eodory distance} to be
defined, as in $\mathbb{H}_1$.  More will be said about this idea in
Section \ref{subriemannian-sec}.

H\"ormander's theorem supplies a very large class of hypoelliptic
operators; indeed, too large for present purposes.  It is difficult to
say much about an operator on a general smooth manifold without having
some structure on the manifold.  Lie groups provide such structure
while at the same time not giving up too much generality, as we shall
see.

\begin{example}
  \index{Lie group}
  \index{group!Lie|see{Lie group}}
  Let $G$ be a Lie group with group operation $\groupop$, and let
  $X_1, \dots, X_n$ be left-invariant vector fields on $G$.  The
  bracket-generating condition (\ref{bracket-gen-condition}) is then
  equivalent to the condition that the vector fields $\{X_1, \dots,
  X_n\}$ generate the Lie algebra $\mathfrak{g} = \Lie G$ of all left-invariant
  vector fields on $G$.  (Note that in this case, the rank of $\{X_1,
  \dots, X_n\}$ is the same at every point of $G$.)  The
  left-invariant operator $L =
  X_1^2 + \dots + X_n^2$, called a \define{sublaplacian}{sublaplacian}
  is thus hypoelliptic. \index{hypoelliptic}
\end{example}

\begin{definition}
  A Lie algebra $\mathfrak{g}$ is \define{nilpotent}{Lie
    algebra!nilpotent} of step $r$ if all $r$-fold Lie brackets
  vanish, i.e. 
  $[X_1, [X_2, \dots [X_{r}, X_{r+1}]\dots]] =0$ for all $X_1, \dots,
  X_r, X_{r+1} \in \mathfrak{g}$.  A nilpotent Lie algebra
  $\mathfrak{g}$ is \define{stratified}{Lie algebra!stratified} if
  there is a decomposition $\mathfrak{g} = V_1 \oplus \dots \oplus
  V_r$ such that $[V_1, V_i] = V_{i+1}$ for $1 \le i < r$ and $[V_1,
    V_r] = 0$.  A Lie group is nilpotent (respectively, stratified) if its Lie
  algebra is.
  \define{}{Lie group!nilpotent}
  \define{}{Lie group!stratified}
\end{definition}

A theorem of Rothschild and Stein \cite{rothschild-stein} states,
informally speaking, that a bracket-generating set of vector fields
$\{X_i\}$ on a manifold $M$ can be locally approximated in a
neighborhood of a point $m \in M$ by a bracket-generating set of
left-invariant vector fields $\{Y_i\}$ on some nilpotent Lie group
$G$.  This approach involves first lifting the vector fields $\{X_i\}$
to vector fields $\{\tilde{X}_i\}$ on $M \times \R^k$ for some $k$
(effectively adding additional variables, to handle the possibility
that the $\{X_i\}$ are not linearly independent at $m$).  Then, $M
\times \R^k$ is identified with the free nilpotent Lie group $G$ with
$n$ generators, and under this identification the lifted vector fields
$\{\tilde{X}_i\}$ differ from left-invariant vector fields only to
small order.  Thus, it makes sense to study H\"ormander-type
hypoelliptic operators by studying nilpotent Lie groups.
\index{Lie group!approximation by}

With regard to the heat kernel inequalities we study in this
dissertation, much less is known for general nilpotent Lie groups than
for the Heisenberg group.  The known pointwise bounds on the heat
kernel corresponding to the left-invariant hypoelliptic operator $L$
are in general much less sharp than those in (\ref{H1-heat}); see
Section \ref{heat-previous-sec}.  Gradient bounds like
(\ref{H1-grad}) are also not known to hold with much generality,
although a weaker $L^p$-type estimate has been shown in
\cite{tai-thesis}; see Section \ref{grad-previous-sec}.

It is not clear at this stage what sort of tools are appropriate to
attack these problems in a general Lie group setting.  Therefore, for
this dissertation, we restrict our attention to a smaller class of
nilpotent Lie groups, the so-called H-type or Heisenberg-type groups,
which generalize in a more limited way the Heisenberg group $\mathbb{H}_1$ of Section
\ref{H1-sec}.  In this setting it is possible to carry out more
explicit computations involving heat kernels and thereby obtain
stronger results, analogous to (\ref{H1-heat}) and (\ref{H1-grad})
which are known for $\mathbb{H}_1$.

\chapter{H-type Groups}\label{h-type-chapter}

\section{Definition and elementary properties}

The objects of central study in this dissertation are the so-called
H-type or Heisenberg-type groups.  H-type groups were first introduced
by Kaplan in \cite{kaplan80}.  Chapter 18 of \cite{blu-book} is an
excellent reference for basic facts about these groups.

We begin with the definition.

\begin{definition}\label{h-type-def}
  Let $\mathfrak{g}$ be a finite dimensional real Lie algebra.  We say $\mathfrak{g}$ is an
  \define{H-type Lie algebra}{H-type Lie algebra} \index{Lie
    algebra!H-type|see{H-type Lie algebra}} if there exists an inner
  product $\inner{\cdot}{\cdot}$ on $\mathfrak{g}$ such that:
  \begin{enumerate}
  \item $[\mathfrak{z}^\perp, \mathfrak{z}^\perp] = \mathfrak{z}$,
    where $\mathfrak{z}$ is the center of $\mathfrak{g}$;
    and
\item \label{h-type-property} For each $Z \in \mathfrak{z}$, the map $J_Z :
  \mathfrak{z}^\perp \to \mathfrak{z}^\perp$ defined by
  \begin{equation}\label{Jz-relation}
    \inner{J_Z X}{Y} = \inner{Z}{[X,Y]} \quad \text{for $X,Y \in \mathfrak{z}^\perp$}
  \end{equation}
  is an orthogonal map when $\norm{Z}^2 := \inner{Z}{Z}=1$.
  \end{enumerate}
  We will say that such an inner product is
  \definenoindex{admissible}, and that $\mathfrak{g}$ is an H-type
  Lie algebra \definenoindex{under} $\inner{\cdot}{\cdot}$.  (This should
  not be interpreted as a restrictive statement; if one particular inner
  product will do, certainly others will do as well.)
\end{definition}

\begin{notation}
    If $G$ is a connected finite-dimensional Lie group, we write $\Lie
  G$ for the Lie algebra of left-invariant smooth vector fields on
  $G$, under the bracket operation $[X,Y] = X Y - Y X$. 
\end{notation}

\begin{definition}
 An \define{H-type group}{H-type group} 
 \index{group!H-type|see{H-type group}}
 \index{Lie group!H-type|see{H-type group}}
 is a connected, simply connected Lie group $G$ such that
 $\Lie G$ is an H-type Lie algebra.
\end{definition}

\begin{example}
  As in Example \ref{H1-example}, the classical Heisenberg group
  $\mathbb{H}_1$
  \index{Heisenberg group!as H-type group}
  is the Lie group consisting of
  $\R^3$ with the following group operation:
  \begin{equation}
    (x,y,z) \groupop (x', y', z') = \left(x+x', y+y', z+z'+\frac{1}{2}(x y'
    - x' y)\right).
  \end{equation}
  The Heisenberg Lie algebra
  $\mathfrak{h}_1 = \Lie \mathbb{H}_1$
  \index{Heisenberg Lie algebra!as H-type Lie algebra}
 is spanned by the vector fields
  \begin{equation}
    X = \pp{x} - \frac{1}{2} y \pp{z}, \quad Y = \pp{y} + \frac{1}{2}
    x \pp{z}, \quad Z = \pp{z}.
  \end{equation}
  We note that $[X,Y] = Z$, $[X,Z] = [Y,Z] = 0$.  Then
  $\mathfrak{h}_1$ is an H-type Lie algebra under an inner product
  such that $\{X,Y,Z\}$ are orthonormal.  (In particular, the center
  of $\mathfrak{h}_1$ is one-dimensional and spanned by $Z$, and $J_Z
  X = Y$, $J_Z Y = -X$.)  Thus $\mathbb{H}_1$ is an H-type group.
\end{example}

\begin{example}
  For $n \ge 1$, the (isotropic) \define{Heisenberg-Weyl
    group}{Heisenberg-Weyl group!isotropic} $\mathbb{H}_n$ is the Lie group consisting of
  $\R^{2n+1}$ with the following group operation: 
  \begin{equation}
    \begin{split}
    &(x_1, \dots, x_{2n}, z) \groupop (x_1', \dots, x_{2n}', z') = (  x_1 + x_1', \dots, x_{2n} + x_{2n}', \\
      &\quad\quad
      z + z' +  \frac{1}{2}((x_1 x_2' - x_1' x_2) + (x_3 x_4' - x_3' x_4) + \dots +
    (x_{2n-1} x_{2n}' - x_{2n-1}' x_{2n}))).
    \end{split}
  \end{equation}
  The Lie algebra $\mathfrak{h}_n = \Lie \mathbb{H}_n$ is spanned by
  \begin{equation}
    X_{2i-1} = \pp{x_{2i-1}} - \frac{1}{2} x_{2i} \pp{z}, \quad X_{2i}
    = \pp{x_{2i}} + \frac{1}{2} x_{2i-1} \pp{z}, \quad Z = \pp{z}
  \end{equation}
  where $i$ ranges from $1$ to $n$.  Note that $[X_{2i-1}, X_{2i}] =
  Z$ and all other independent brackets are zero.  Then
  $\mathfrak{h}_n$ is an H-type Lie algebra under an inner product
  such that $\{X_1, \dots, X_{2n}, Z\}$ are orthonormal.  The center
  of $\mathfrak{h}_n$ is one-dimensional and spanned by $Z$, and $J_Z
  X_{2i-1} = X_{2i}$, $J_Z X_{2i} = -X_{2i-1}$. Thus $\mathbb{H}_n$ is an
  H-type group.
\end{example}

\begin{example}
  The \define{complex Heisenberg group}{Heisenberg group!complex} is
  the Lie group $G$ consisting of $\C^3$ with the group operation
  \begin{equation}
    (x,y,z) \groupop (x', y', z') = \left(x+x', y+y', z+z'+\frac{1}{2}(x y'
    - x' y)\right).
  \end{equation}
  If we write $x = x_1 + i x_2$, $y = y_1 + i y_2$, $z = z_1 + i z_2$,
  we find that $\Lie G$ is spanned by the vector fields
  \begin{align*}
    X_1 &= \pp{x_1} - \frac{1}{2} y_1 \pp{z_1} - \frac{1}{2} y_2
    \pp{z_2} & 
    X_2 &= \pp{x_2} + \frac{1}{2} y_2 \pp{z_1} - \frac{1}{2}
    y_1 \pp{z_2} \\
    Y_1 &= \pp{y_1} + \frac{1}{2} x_1 \pp{z_1} + \frac{1}{2} x_2
    \pp{z_2} & 
    Y_2 &= \pp{y_2} - \frac{1}{2} x_2 \pp{z_1} + \frac{1}{2}
    x_1 \pp{z_2} \\
    Z_1 &= \pp{z_1} & Z_2 &= \pp{z_2}
  \end{align*}
  We have $[X_1, Y_1] = -[X_2, Y_2] = Z_1$, $[X_2, Y_1] = [X_1, Y_2] =
  Z_2$, and all other independent brackets vanish.  $\Lie G$ is an
  H-type Lie algebra under an inner product so that $\{X_1, X_2, Y_1,
  Y_2, Z_1, Z_2\}$ are orthonormal.  The center is two-dimensional and
  spanned by $Z_1, Z_2$.  We have
  \begin{align*}
    J_{Z_1} X_1 &= Y_1 & J_{Z_1} X_2 &= -Y_2 
    & J_{Z_1} Y_1 &= -X_1 & J_{Z_1} Y_2 &= X_2 \\
    J_{Z_2} X_1 &= Y_2 & J_{Z_2} X_2 &= Y_1  &
    J_{Z_2} Y_1 &= -X_2 & J_{Z_2} Y_2 &= -X_1 
  \end{align*}
  Thus $G$ is an H-type group.  This example shows explicitly that the
  H-type groups consist of more than the Heisenberg-Weyl groups, and
  may have centers with dimension larger than $1$.
\end{example}

We now list a number of elementary algebraic properties of H-type Lie
algebras, which are useful in computations.

\index{H-type Lie algebra!elementary properties of}
\begin{proposition}\label{Jz-props}
  Let $\mathfrak{g}$ be an H-type Lie algebra under the inner product
  $\inner{\cdot}{\cdot}$, with center
  $\mathfrak{z}$ and $J_Z$ defined by (\ref{Jz-relation}).  If $Z,W
  \in \mathfrak{z}$, $X,Y \in \mathfrak{z}^\perp$, we have:
  \begin{enumerate}
  \item $J_Z$ is linear in $Z$, i.e. $J_{aZ+W} = a J_Z +
    J_W$. \label{Jz-linear}
  \item $J_Z$ is skew-adjoint, i.e. $\inner{J_Z X}{Y} = -\inner{X}{J_Z
    Y}$.  In particular $\inner{J_Z X}{X} = 0$. \label{Jz-skew}
  \item $J_Z^2 = -\norm{Z}^2 I$. \label{Jz-square}
  \item If $Z \ne 0$, $J_Z$ is invertible and $J_Z^{-1} =
    -\norm{Z}^{-2} J_Z$.   \label{Jz-inverse}
  \item $J_Z J_W + J_W J_Z = -2\inner{Z}{W} I$.  (This is the
    fundamental relation that defines Clifford algebras, and suggests
    a connection between H-type Lie algebras and Clifford algebras.
    This connection is more fully explored in Section
    \ref{clifford-sec}.)  \label{Jz-clifford}
  \item $\inner{J_Z X}{J_W X} = \inner{Z}{W}
    \norm{X}^2$. \label{Jz-inner-a}
  \item $\inner{J_Z X}{J_Z Y} = \inner{X}{Y}
    \norm{Z}^2$. \label{Jz-inner-b}
  \item $[X, J_Z X] = \norm{X}^2 Z$. \label{Jz-bracket}
  \item Define $\ad_X : \mathfrak{g} \to \mathfrak{g}$ as usual by
    $\ad_X Y = [X,Y]$.  If $\norm{X}=1$ then $\ad_X$ maps $(\ker
    \ad_X)^\perp$ isometrically onto $\mathfrak{z}$.  (This is
    sometimes taken as part of the definition of an H-type Lie
    algebra, in place of item \ref{h-type-property} of Definition \ref{h-type-def}.) \label{adX}
  \item $\dim \mathfrak{z}^\perp$ is even. \label{horiz-even}
  \item $\dim \mathfrak{z}^\perp \ge \dim \mathfrak{z} + 1$.  (This
    bound is far from sharp; see Theorem
    \ref{dimension-classification} below.) \label{horiz-dim}
  \end{enumerate}
\end{proposition}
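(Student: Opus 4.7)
The plan is to prove the eleven items in the listed order, since each builds on the previous, and nearly all of them reduce to the defining relation (\ref{Jz-relation}) combined with bilinearity and the anti-symmetry of the Lie bracket. I will avoid coordinates throughout.

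For items (\ref{Jz-linear}) and (\ref{Jz-skew}), I would simply pair the defining relation with bilinearity of $\inner{\cdot}{\cdot}$ and $[\cdot,\cdot]$, and with $[X,Y]=-[Y,X]$; both follow in one line. For (\ref{Jz-square}), I would first treat the case $\norm{Z}=1$: here $J_Z$ is orthogonal by hypothesis and skew by (\ref{Jz-skew}), and an orthogonal skew-adjoint operator satisfies $J_Z^{-1}=J_Z^\ast=-J_Z$, so $J_Z^2=-I$. I would then extend to arbitrary $Z$ via the scaling $J_Z=\norm{Z}\, J_{Z/\norm{Z}}$ supplied by (\ref{Jz-linear}). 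Item (\ref{Jz-inverse}) is immediate. For the Clifford relation (\ref{Jz-clifford}), I would polarize (\ref{Jz-square}): compute $J_{Z+W}^2$ two ways using (\ref{Jz-linear}) and (\ref{Jz-square}) and compare with the expansion of $-\norm{Z+W}^2$.

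Items (\ref{Jz-inner-a}) and (\ref{Jz-inner-b}) then drop out of (\ref{Jz-skew}), (\ref{Jz-square}), and (\ref{Jz-clifford}): write $\inner{J_ZX}{J_WX}=-\inner{X}{J_ZJ_WX}$, symmetrize in $Z,W$, and apply the Clifford identity; likewise $\inner{J_ZX}{J_ZY}=-\inner{X}{J_Z^2Y}=\norm{Z}^2\inner{X}{Y}$. For (\ref{Jz-bracket}), observe that $[X,J_ZX]$ lies in $[\mathfrak{z}^\perp,\mathfrak{z}^\perp]=\mathfrak{z}$, so it is determined by testing against every $W\in\mathfrak{z}$; by (\ref{Jz-relation}) and (\ref{Jz-inner-a}),
\begin{equation*}
\inner{W}{[X,J_ZX]}=\inner{J_WX}{J_ZX}=\inner{W}{Z}\norm{X}^2,
\end{equation*}
so $[X,J_ZX]=\norm{X}^2 Z$.

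Item (\ref{adX}) is the main obstacle, since it is the first statement about a global map rather than an identity. Fix $X\in\mathfrak{z}^\perp$ with $\norm{X}=1$. Since $[X,\mathfrak{z}]=0$ trivially, $\ker\ad_X\supseteq\mathfrak{z}$, hence $(\ker\ad_X)^\perp\subseteq\mathfrak{z}^\perp$. Define $T\colon\mathfrak{z}\to\mathfrak{z}^\perp$ by $T(Z):=J_ZX$. By (\ref{Jz-inner-b}), $T$ is an isometric embedding (its image has dimension $\dim\mathfrak{z}$ and $\inner{TZ}{TW}=\inner{Z}{W}$). I next check that $T(\mathfrak{z})$ is precisely the orthogonal complement in $\mathfrak{z}^\perp$ of $\ker\ad_X\cap\mathfrak{z}^\perp$: for $Y\in\mathfrak{z}^\perp$, the relation $\inner{J_ZX}{Y}=\inner{Z}{[X,Y]}$ shows $Y\perp T(\mathfrak{z})$ for all $Z$ if and only if $[X,Y]=0$. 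Combined with the inclusion above, this gives $(\ker\ad_X)^\perp=T(\mathfrak{z})$. Finally, (\ref{Jz-bracket}) gives $\ad_X\circ T=\mathrm{id}_{\mathfrak{z}}$, which forces $\ad_X$ to be an isometry onto $\mathfrak{z}$ when restricted to $(\ker\ad_X)^\perp=T(\mathfrak{z})$.

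Item (\ref{horiz-even}) follows because (\ref{Jz-square}) says $J_Z/\norm{Z}$ is a complex structure on $\mathfrak{z}^\perp$ for any nonzero $Z$, and spaces admitting a complex structure are even-dimensional. Item (\ref{horiz-dim}) is then a dimension count using (\ref{adX}): for any unit $X\in\mathfrak{z}^\perp$, the subspace $T(\mathfrak{z})=\{J_ZX:Z\in\mathfrak{z}\}$ has dimension $\dim\mathfrak{z}$ and is orthogonal to $X$ itself (by (\ref{Jz-skew}), $\inner{X}{J_ZX}=0$), so $\mathfrak{z}^\perp\supseteq\R X\oplus T(\mathfrak{z})$ forces $\dim\mathfrak{z}^\perp\ge 1+\dim\mathfrak{z}$.
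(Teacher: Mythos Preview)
Your proof is correct and follows essentially the same route as the paper's, item by item; the only organizational difference is in item~(\ref{adX}), where you build the inverse map $T(Z)=J_ZX$ first and identify $(\ker\ad_X)^\perp$ as its image, whereas the paper argues injectivity, surjectivity, and isometry separately---but the content is the same. One small slip: when you say ``By (\ref{Jz-inner-b}), $T$ is an isometric embedding \ldots\ $\inner{TZ}{TW}=\inner{Z}{W}$,'' you need item~(\ref{Jz-inner-a}) (varying $Z,W$ with $X$ fixed), not (\ref{Jz-inner-b}).
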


\begin{proof}
  \begin{enumerate}
  \item We have
    \begin{align*}
    \inner{J_{aZ+W} X}{Y} &= \inner{aZ+W}{[X,Y]} = a
    \inner{Z}{[X,Y]} + \inner{W}{[X,Y]} \\
    &= a \inner{J_Z X}{Y} +
    \inner{J_W X}{Y} = \inner{(a J_Z + J_W)X}{Y}.
    \end{align*}
  \item $\inner{J_Z X}{Y} = \inner{Z}{[X,Y]} = -\inner{Z}{[Y,X]} =
    -\inner{J_Z Y}{X}$.
  \item If $\norm{Z} = 1$, then $J_Z$ is orthogonal by definition and
    skew-adjoint by item \ref{Jz-skew}, so that
    \begin{equation*}
      \inner{J_Z^2 X}{Y} = -\inner{J_Z X}{J_Z Y} = -\inner{X}{Y}.
    \end{equation*}
    The general case follows by linearity (item \ref{Jz-linear}).
  \item An obvious consequence of item \ref{Jz-square}.
  \item Using polarization, we have
    \begin{align*}
      J_Z J_W + J_W J_Z &= (J_Z + J_W)^2 - J_Z^2 - J_W^2 \\
      &= J_{Z+W}^2 - J_Z^2 - J_W^2 \\
      &= -(\norm{Z+W}^2 - \norm{Z}^2 - \norm{W}^2) I \\
      &= -2\inner{Z}{W} I.
    \end{align*}
  \item By skew-adjointness, we have $\inner{J_Z X}{J_W X} =
    -\inner{J_Z J_W X}{X} = -\inner{J_W J_Z X}{X}$, so that
    \begin{align*}
      \inner{J_Z X}{J_W X} = -\frac{1}{2} \inner{(J_Z J_W + J_W
        J_Z)X}{X} = -\frac{1}{2} \inner{-2\inner{Z}{W} X}{X} =
      \inner{Z}{W}\norm{X}^2
    \end{align*}
    using item \ref{Jz-clifford}.
  \item An obvious consequence of items \ref{Jz-skew} and
    \ref{Jz-square}.
  \item Given $W \in \mathfrak{z}$, we have $\inner{W}{[X,J_Z X]} =
    \inner{J_W X}{J_Z X} = \inner{W}{Z}\norm{X}^2$ by definition of
    $J_Z$ and item \ref{Jz-inner-a}.  Given $Y \in
    \mathfrak{z}^\perp$, we have $\inner{Y}{[X,J_Z X]} = 0 =
    \inner{Y}{Z}$.  Thus $\inner{U}{[X,J_Z X]} =
    \inner{U}{Z}\norm{X}^2$ for all $U \in \mathfrak{g}$, so that
    $[X,J_Z X] = \norm{X}^2 Z$.
  \item Suppose $\norm{X}=1$.  It is obvious that the restriction of
    $\ad_X$ to $(\ker \ad_X)^\perp$ is injective.  We next show the
    restriction maps onto $\mathfrak{z}$.  Given
    $Z \in \mathfrak{z}$, let $Y = J_Z X$.  Then $\ad_X Y = [X,Y] =
    [X, J_Z X] = Z$ by item \ref{Jz-bracket}.  Moreover, suppose $U
    \in \ker \ad_X$; then $\inner{Y}{U} = \inner{J_Z X}{U} =
    \inner{Z}{[X,U]} = \inner{Z}{\ad_X U} = 0$, so $Y \in (\ker
    \ad_X)^\perp$.

    To show isometry, suppose $\ad_X Y = Z$.  By injectivity $Y = J_Z
    X$.  Then $\norm{Y}^2 = \norm{J_Z X}^2 = \norm{Z}^2 \norm{X}^2 =
    \norm{Z}^2$.
  \item For any nonzero $Z \in \mathfrak{z}$, $J_Z :
    \mathfrak{z}^\perp \to \mathfrak{z}^\perp$ is a nonsingular
    skew-adjoint linear transformation.  It follows that $\dim
    \mathfrak{z}^\perp$ must be even.  (In particular, all the
    eigenvalues of $J_Z$ are imaginary and must come in conjugate
    pairs.)
  \item Let $\{Z_1, \dots, Z_m\}$ be an orthonormal basis for
    $\mathfrak{z}$, and $X \in \mathfrak{z}^\perp$ be a unit vector.  The
    vectors $\{J_{Z_1}X, \dots, J_{Z_m}X\} \subset \mathfrak{z}^\perp$
    are unit vectors, which are mutually orthogonal by
    item \ref{Jz-inner-a}, and all are orthogonal to $X$ by item
    \ref{Jz-skew}.  Thus $\{X, J_{Z_1}X, \dots, J_{Z_m}X\}$ is a set
    of $m+1$ orthonormal vectors in $\mathfrak{z}^\perp$.
  \end{enumerate}
\end{proof}

\begin{proposition}
  If $G$ is an H-type group, then $G$ is nilpotent of step $2$ and
  stratified.
  \index{H-type group!nilpotent}
  \index{H-type group!stratified}
\end{proposition}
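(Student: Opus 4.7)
The plan is to work entirely at the level of the Lie algebra $\mathfrak{g} = \Lie G$, since by definition a Lie group is nilpotent (respectively stratified) iff its Lie algebra is. There are two things to verify: step-$2$ nilpotency, and the existence of a stratification $\mathfrak{g} = V_1 \oplus V_2$. Both should follow directly from the two defining properties of an H-type Lie algebra, with the center doing most of the work.

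For nilpotency, the key reduction is to show that every bracket $[Y,Z]$ with $Y,Z\in\mathfrak{g}$ lands in the center $\mathfrak{z}$. I would decompose $Y = Y_\perp + Y_0$ and $Z = Z_\perp + Z_0$ according to the orthogonal splitting $\mathfrak{g}=\mathfrak{z}^\perp\oplus\mathfrak{z}$. Three of the four resulting brackets vanish because one factor lies in the center, leaving $[Y,Z] = [Y_\perp,Z_\perp]$. By the first H-type axiom, $[\mathfrak{z}^\perp,\mathfrak{z}^\perp]=\mathfrak{z}$, so $[Y,Z]\in\mathfrak{z}$. Consequently, for any third element $X\in\mathfrak{g}$, we get $[X,[Y,Z]] = 0$ because $\mathfrak{z}$ is central. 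This shows $\mathfrak{g}$ is nilpotent of step (at most) $2$. Note also that the step is exactly $2$ rather than $1$, since $[\mathfrak{z}^\perp,\mathfrak{z}^\perp]=\mathfrak{z}$ is nonzero (by part \ref{horiz-dim} of Proposition \ref{Jz-props}, $\mathfrak{z}^\perp$ is nontrivial, and the $J_Z$ machinery forces $\mathfrak{z}$ itself to be nonzero in any interesting example—though strictly speaking for the step-$2$ conclusion we only need ``at most $2$'').

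For the stratification, I would simply set $V_1 := \mathfrak{z}^\perp$ and $V_2 := \mathfrak{z}$. Then $\mathfrak{g} = V_1 \oplus V_2$ as vector spaces by definition of the orthogonal complement. The relation $[V_1,V_1] = V_2$ is exactly the first H-type axiom. The relation $[V_1,V_2] = 0$ (which corresponds to the ``top'' condition $[V_1,V_r]=0$ with $r=2$) is immediate from the fact that $V_2 = \mathfrak{z}$ is the center. So $(V_1,V_2)$ is a stratification with $r=2$.

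There is essentially no obstacle here: everything is a direct unpacking of Definition \ref{h-type-def}. The only small point to be careful about is the decomposition argument for $[Y,Z]\in\mathfrak{z}$, where I must remember that the centrality of $\mathfrak{z}$ kills brackets with a central factor regardless of which slot it sits in, and that the remaining ``horizontal $\times$ horizontal'' bracket is controlled by the first axiom rather than by $J_Z$. The $J_Z$ data is not needed at all for this proposition — it will be needed later, but here property \ref{h-type-property} of Definition \ref{h-type-def} does not play a role.
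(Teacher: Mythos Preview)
Your proposal is correct and takes essentially the same approach as the paper: the paper's proof simply says the result is obvious from $[\mathfrak{z}^\perp,\mathfrak{z}^\perp]=\mathfrak{z}$ and names the stratification $V_1=\mathfrak{z}^\perp$, $V_2=\mathfrak{z}$, which is exactly what you unpack in more detail.
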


\begin{proof}
  This is obvious from the condition that $[\mathfrak{z}^\perp,
    \mathfrak{z}^\perp] = \mathfrak{z}$, where $\mathfrak{z}$ is the
  center of $\mathfrak{g} = \Lie G$.  An appropriate stratification is
  $V_1 = \mathfrak{z}^\perp$, $V_2 = \mathfrak{z}$.
\end{proof}

Not all step $2$ stratified Lie groups are H-type.

\begin{example}
\index{Lie group!example of stratified, not H-type}
  The abelian Lie group $G = \R^n$ is step $1$ stratified, but not
  H-type.  (The Lie algebra $\mathfrak{g} = \Lie G$ has center
  $\mathfrak{z} = \mathfrak{g}$, so $[\mathfrak{z}^\perp,
    \mathfrak{z}^\perp] = [0,0] \ne \mathfrak{z}$).
\end{example}

\begin{example}
\index{Lie group!examples of stratified, not H-type}
  For $n \ge 2$, let $a_1, \dots, a_n \in \R$ be constants, and let
  $G$ be the 
  \define{anisotropic Heisenberg-Weyl group}{Heisenberg-Weyl group!anisotropic}
  $G$ consisting
  of $\R^{2n+1}$ with the following group operation:
  \begin{equation}
    \begin{split}
    &(x_1, \dots, x_{2n}, z) \groupop (x_1', \dots, x_{2n}', z') = (  x_1 + x_1', \dots, x_{2n} + x_{2n}', \\
      &\quad\quad
      z + z' +  \frac{1}{2}(a_1(x_1 x_2' - x_1' x_2) + a_2(x_3 x_4' - x_3' x_4) + \dots +
    a_n(x_{2n-1} x_{2n}' - x_{2n-1}' x_{2n}))).
    \end{split}
  \end{equation}
  The Lie algebra $\mathfrak{h}_n = \Lie \mathbb{H}_n$ is spanned by
  \begin{equation}
    X_{2i-1} = \pp{x_{2i-1}} - \frac{a_i}{2} x_{2i} \pp{z}, \quad X_{2i}
    = \pp{x_{2i}} + \frac{a_i}{2} x_{2i-1} \pp{z}, \quad Z = \pp{z}
  \end{equation}
  where $i$ ranges from $1$ to $n$.  Note that $[X_{2i-1}, X_{2i}] =
  a_i Z$ and all other independent brackets are zero, so that $G$ is
  step $2$ stratified (with $V_1 = \spanop\{X_j : 1 \le j \le 2n\}$,
  $V_2 = \spanop Z$).  If the $a_i$ are not all equal, then there is
  no inner product on $\mathfrak{g} = \Lie G$ under which it is an
  H-type Lie algebra.  If there were, then we would have $J_Z X_{2i-1}
  = a_i X_{2i}$, $J_Z X_{2i} = - a_i X_{2i-1}$, so that $J_Z^2
  X_{2i-1} = -a_i^2 X_{2i-1}$.  Since the $a_i$ are not all equal,
  this contradicts item \ref{Jz-square} of Proposition \ref{Jz-props}.
  Thus $G$ is not an H-type group.
\end{example}

Any H-type group can be realized in terms of Euclidean space, as we
now show.  \fixnotme{The proposition and proof are kind of confusing.  Is
  there a better way?}

\begin{proposition}
  \index{H-type group!realization as $\R^{2n+m}$}
  Let $G$ be an H-type group, with $\Lie G = (\mathfrak{g},
  [\cdot,\cdot])$ an H-type Lie algebra under some inner product
  $\inner{\cdot}{\cdot})$, and $\mathfrak{z}$ the center of
  $\mathfrak{g}$.  There exists $n,m \ge 0$, and a bijective linear
  isometry $\phi : (\mathfrak{g}, \inner{\cdot}{\cdot}) \to
  (\R^{2n+m}, \inner{\cdot}{\cdot}_e)$, where $\inner{\cdot}{\cdot}_e$
  is the usual Euclidean inner product on $\R^{2n+m}$, such that
  $\phi(\mathfrak{z}) = 0 \oplus \R^m$.

  Define a bracket $[\cdot,\cdot]'$ on $\R^{2n+m}$ via $[\phi(X),
    \phi(Y)]' = [X,Y]$.  Then $(\R^{2n+m}, [\cdot,\cdot]')$ is an
  H-type Lie algebra under the Euclidean inner product, and $\phi :
  (\mathfrak{g}, [\cdot,\cdot]) \to (\R^{2n+m}, [\cdot,\cdot]')$ is an
  isomorphism of Lie algebras.

  Define a group operation $\groupop'$ on $\R^{2n+m}$ via the
  Baker-Campbell-Hausdorff formula:
  \index{Baker-Campbell-Hausdorff formula}
  \begin{equation}
    v \groupop' w := v + w + \frac{1}{2} [v,w]'.
  \end{equation}
  Then $(\R^{2n+m}, \groupop')$ is an H-type Lie group which is
  isomorphic to $G$, and whose Lie algebra is canonically isomorphic
  to $(\R^{2n+m}, [\cdot,\cdot]')$.
\end{proposition}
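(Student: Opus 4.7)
The plan is to build the isometry $\phi$ explicitly from orthonormal bases, transport the Lie structure, and finally recover the group via the exponential map, which in the step-$2$ nilpotent setting is captured exactly by the truncated Baker-Campbell-Hausdorff formula.

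First I would set $m := \dim \mathfrak{z}$ and $2n := \dim \mathfrak{z}^\perp$, where the fact that $\dim \mathfrak{z}^\perp$ is even is exactly item \ref{horiz-even} of Proposition \ref{Jz-props}. Choose any orthonormal basis $\{e_1, \dots, e_{2n}\}$ of $\mathfrak{z}^\perp$ and any orthonormal basis $\{f_1, \dots, f_m\}$ of $\mathfrak{z}$, and let $\phi: \mathfrak{g} \to \R^{2n+m}$ send $e_i$ to the $i$-th standard basis vector and $f_j$ to the $(2n+j)$-th standard basis vector. Since $\mathfrak{g} = \mathfrak{z}^\perp \oplus \mathfrak{z}$ as orthogonal subspaces and the standard decomposition $\R^{2n+m} = \R^{2n} \oplus \R^m$ is also orthogonal, $\phi$ is a bijective linear isometry and $\phi(\mathfrak{z}) = 0 \oplus \R^m$ by construction.

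Next I would transport the bracket: defining $[\phi(X),\phi(Y)]' := \phi([X,Y])$ makes $\phi$ tautologically a Lie algebra isomorphism, so $(\R^{2n+m}, [\cdot,\cdot]')$ is nilpotent of step $2$ with center $0 \oplus \R^m$. Because $\phi$ is an isometry, the two H-type conditions in Definition \ref{h-type-def} — namely that brackets of horizontal vectors span the center, and that $J_Z$ is orthogonal on unit $Z$ — are preserved verbatim under $\phi$, so $(\R^{2n+m}, [\cdot,\cdot]')$ is an H-type Lie algebra under the Euclidean inner product.

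Finally I would construct the group. Since $\mathfrak{g}$ is nilpotent of step $2$, the Baker-Campbell-Hausdorff series terminates: $\log(\exp X \exp Y) = X + Y + \frac{1}{2}[X,Y]$. Thus the formula $v \groupop' w = v + w + \frac{1}{2}[v,w]'$ does define a group law on $\R^{2n+m}$ — associativity is a short calculation using the Jacobi identity together with the step-$2$ vanishing of triple brackets, and the identity and inverses are $0$ and $-v$ respectively. Let me call this group $\tilde G$. Because $\tilde G$ is a connected, simply connected nilpotent Lie group, and because the map $\Exp: \mathfrak{g} \to G$ is a diffeomorphism for any such group (with group law pulled back to $\mathfrak{g}$ being precisely the BCH law), the composition $\tilde G \xrightarrow{\phi^{-1}} \mathfrak{g} \xrightarrow{\Exp} G$ is a Lie group isomorphism; its Lie algebra is canonically $(\R^{2n+m}, [\cdot,\cdot]')$ via identifying the tangent space at $0$ with $\R^{2n+m}$ itself.

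The main obstacle is really just citing the right piece of nilpotent Lie theory cleanly: namely, that for a connected simply connected nilpotent Lie group the exponential map is a global diffeomorphism and the pulled-back group law is given by (the truncated) BCH. Everything else — the existence of $\phi$, the preservation of the H-type conditions, and the step-$2$ simplification of BCH — is essentially bookkeeping once Proposition \ref{Jz-props} is in hand.
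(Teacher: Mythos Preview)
Your proposal is correct and follows essentially the same route as the paper: both set $m=\dim\mathfrak{z}$, $2n=\dim\mathfrak{z}^\perp$ (citing item~\ref{horiz-even} of Proposition~\ref{Jz-props}), build $\phi$ from an orthonormal basis adapted to $\mathfrak{z}^\perp\oplus\mathfrak{z}$, transport the bracket, and then invoke the fact that for a connected simply connected step-$2$ nilpotent group the exponential map is a diffeomorphism with pulled-back group law given by the truncated BCH formula. Your write-up is slightly more explicit in verifying the group axioms and in naming the composition $\tilde G\xrightarrow{\phi^{-1}}\mathfrak{g}\xrightarrow{\Exp}G$, but the underlying argument is the same.
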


\begin{proof}
  Let $m = \dim \mathfrak{z}$ and $2n = \dim \mathfrak{z}^\perp$
  (recall from Proposition \ref{Jz-props} item \ref{horiz-even} that
  $\dim \mathfrak{z}^\perp$ is even).  By selecting an orthonormal
  basis for $(\mathfrak{g}, \inner{\cdot}{\cdot})$ which is adapted to
  the decomposition $\mathfrak{g} = \mathfrak{z}^\perp \oplus
  \mathfrak{z}$, we can construct a (non-canonical) bijective linear
  isometry $\phi : (\mathfrak{g}, \inner{\cdot}{\cdot}) \to
  (\R^{2n+m}, \inner{\cdot}{\cdot}_e)$ such that $\phi(\mathfrak{z}) =
  0 \oplus \R^m$.  If $[\cdot,\cdot]'$ is constructed as specified, it
  is obvious that $(\R^{2n+m}, [\cdot,\cdot]')$ is an
  H-type Lie algebra under the Euclidean inner product, and $\phi :
  (\mathfrak{g}, [\cdot,\cdot]) \to (\R^{2n+m}, [\cdot,\cdot]')$ is an
  isomorphism of Lie algebras.

  Since $G$ is a step 2 nilpotent Lie group which is connected and simply
  connected, if we define a group operation $\groupop$ on
  $\mathfrak{g}$ via
  \begin{equation}\label{BCH}
    X \groupop Y := X + Y + \frac{1}{2}[X,Y]
  \end{equation}
  then the exponential map $\Exp : (\mathfrak{g}, \groupop) \to G$ is
  a Lie group isomorphism.  Since $\phi : (\mathfrak{g},
  [\cdot,\cdot]) \to (\R^{2n+m}, [\cdot,\cdot]')$ is a Lie algebra
  isomorphism, and $\groupop'$ is defined appropriately in terms of
  $[\cdot,\cdot]'$, we have that $\phi : (\mathfrak{g}, \groupop) \to
  (\R^{2n+m}, \groupop')$ is a Lie group isomorphism.  Thus $G$ is
  isomorphic to $(\R^{2n+m}, \groupop')$.
\end{proof}

Thus, henceforth we can, and will, assume that our H-type group $G$ is
$\R^{2n+m}$ with a group operation $\groupop$ defined by (\ref{BCH})
for some Lie bracket $[\cdot,\cdot]$, and that $\Lie G \cong
(\R^{2n+m}, [\cdot,\cdot]$ is an H-type Lie algebra under the
Euclidean inner product.

\begin{notation}
  We will write elements of $G = \R^{2n+m}$ as $g = (x,z)$, where $x \in
  \R^{2n}$, $z \in \R^m$.  $\inner{\cdot}{\cdot}$ denotes the
  Euclidean inner product on $\R^{2n+m}$.  $\{e_1, \dots, e_{2n}\}$ is
  the standard orthonormal basis for $\R^{2n} \oplus 0$, and $\{u_1,
  \dots, u_m\}$ is the standard orthonormal basis for $0 \oplus
  \R^{m}$.  We will use the coordinates $x_i(g) = \inner{g}{e_i}$,
  $z_j(g) = \inner{g}{u_j}$.
\end{notation}

We now have several distinct structures on the single set $\R^{2n+m}$:
\begin{itemize}
\item A vector space, under the usual addition and scalar multiplication;
\item An inner product space, under the usual Euclidean inner product;
\item A Lie algebra, under the bracket $[\cdot,\cdot]$;
\item A Lie group, under the group operation $\groupop$.
\end{itemize}

These structures do not necessarily interact nicely.  In particular,
addition, scalar multiplication, and the Euclidean inner product are
not left- or right-invariant with respect to $\groupop$; the scalings $v
\mapsto c v$ are not Lie group homomorphisms for $c \ne 1$; and $\groupop$ is not
bilinear.  Nevertheless, we will sometimes view these structures,
especially the inner product and the operators $J_z \in \End(\R^{2n})$
for $z \in \R^m$, as functions on $\R^{2n+m}$ viewed as a group.  The
reader who prefers a more intrinsic view may insert the exponential
map (which is the identity on $\R^{2n+m}$) where appropriate.

\begin{notation}
Note that $\R^m = 0 \oplus \R^m$ is the center of the Lie
algebra and also of the Lie group.  Thus we will sometimes think of
the Lie bracket as a map $[\cdot,\cdot] : \R^{2n} \times \R^{2n} \to
\R^m$.  We can thus write the group operation as
\begin{equation}\label{BCH-realized}
  (x,z) \groupop (x', z') = \left(x + x', z + z' + \frac{1}{2} [x, x']\right).
\end{equation}
Note that $0$ is the group identity, and the inverse map is $(x,z)^{-1} = -(x,z)$.
\end{notation}

\begin{notation}
  For each $g \in G$, let $L_g, R_g : G \to G$ be the maps of left-
  and right-translation by $g$, and $j : G \to G$ be the inverse map.
  That is, $L_g h = g \groupop h$, $R_g h = h \groupop g$, $j h =
  h^{-1}$.  Of course, $L_g, R_g, j$ are diffeomorphisms of the
  smooth manifold $G$.
\end{notation}

\begin{proposition}
  \index{H-type group!Haar measure}
  \index{Haar measure}
  Lebesgue measure $\haar$ on $G = \R^{2n+m}$ is invariant under left
  and right translation and inverses with respect to $\groupop$.
  Thus, $G$ is unimodular, and we may take $\haar$ to be a
  bi-invariant Haar measure on $G$.
\end{proposition}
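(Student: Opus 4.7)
The plan is to verify invariance of Lebesgue measure under each of the three classes of maps $L_g$, $R_g$, and $j$ by showing that each is a diffeomorphism of $\R^{2n+m}$ whose Jacobian has absolute determinant equal to $1$, and then invoking the change-of-variables formula for Lebesgue measure.

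First I would compute explicit formulas for the three maps. Using (\ref{BCH-realized}), for a fixed $g = (a, c) \in G$ we have
\begin{equation*}
L_g(x, z) = \left(a + x,\; c + z + \tfrac{1}{2}[a, x]\right), \qquad R_g(x, z) = \left(x + a,\; z + c + \tfrac{1}{2}[x, a]\right),
\end{equation*}
and $j(x, z) = -(x, z)$. In each case the map is smooth with smooth inverse (namely $L_{g^{-1}}$, $R_{g^{-1}}$, and $j$ itself), confirming they are diffeomorphisms.

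Next I would compute the Jacobian matrices. The crucial observation is that in the coordinates $(x, z) \in \R^{2n} \oplus \R^m$, the $x$-component of $L_g(x,z)$ depends only on $x$ (with derivative $I_{2n}$) and the $z$-component depends affinely on both $x$ and $z$ with $z$-derivative equal to $I_m$. Since $[a, \cdot] : \R^{2n} \to \R^m$ is linear, the Jacobian matrix of $L_g$ has the block-triangular form
\begin{equation*}
DL_g = \begin{pmatrix} I_{2n} & 0 \\ \frac{1}{2}\,[a,\cdot] & I_m \end{pmatrix},
\end{equation*}
whose determinant is $1$. The identical computation applies to $R_g$ (with $[\cdot, a]$ in the lower-left block), and the Jacobian of $j$ is $-I_{2n+m}$, of determinant $(-1)^{2n+m}$, which has absolute value $1$.

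Finally, by the standard change-of-variables formula, a diffeomorphism $\Phi$ of $\R^{2n+m}$ with $|\det D\Phi| \equiv 1$ preserves Lebesgue measure: $\haar(\Phi^{-1}(E)) = \haar(E)$ for every Borel set $E$. Applying this to $L_g$ and $R_g$ gives left- and right-invariance, proving simultaneously that $\haar$ is both a left and right Haar measure on $G$ (so $G$ is unimodular), and applying it to $j$ gives inverse-invariance. No step here is a real obstacle; the entire argument rests on the single observation that the nonlinear correction $\frac{1}{2}[x,x']$ in the group law lives in the center and is linear in each argument separately, which forces the translation Jacobians to be unipotent.
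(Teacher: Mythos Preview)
Your proof is correct and follows exactly the same approach as the paper: observe that the Jacobian of $L_g$ (and $R_g$) is block lower triangular with identity diagonal blocks, hence has determinant $1$, and that $j$ has Jacobian $\pm I$. The paper states this in two sentences; you have simply fleshed out the details.
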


\begin{proof}
  By inspection of (\ref{BCH-realized}), it is clear that for any $g$
  the differentials of $L_g$ (and likewise $R_g$) is lower triangular, and thus
  the Jacobian determinant is $1$.  The Jacobian determinant of $j$ is
  obviously $1$ in absolute value.
\end{proof}

Given this measure, we can define convolution on $G$.

\begin{definition}\label{convolution-def}
  \index{H-type group!convolution $*$}
  \index{convolution}
  If $f_1, f_2 : G \to \R$, their convolution $f_1 * f_2$ is the function
  \begin{equation*}
    (f_1 * f_2)(g) := \int_G f_1(g \groupop k^{-1}) f_2(k)\,d\haar(k)
    = \int_G f_1(k) f_2(k^{-1} \groupop g)\,d\haar(k).
  \end{equation*}
  for all $g$ such that the integral makes sense.  We may also take
  $f_1, f_2$ to be appropriate distributions.
\end{definition}

The following properties are typical and we omit the proofs.

\begin{proposition}\label{convolution-props}
  \begin{enumerate}
  \item $(f_1 \circ j) * (f_2 \circ j) = (f_2 * f_1) \circ j$. \label{convolution-commute}
  \item If $f$ is a distribution on $G$ and $\psi \in C^\infty_c(G)$, then $f * \psi, \psi * f \in
    C^\infty(G)$.  The same holds if $f$ is a tempered distribution
    and $\psi$ is a Schwartz function on $G = \R^{2n+m}$.  \label{convolution-smoothing}
  \item (Young's inequality) If $f_1 \in L^1(G)$ and $f_2 \in L^p(G)$,
    then $f_1 * f_2 \in L^p(G)$ and $\norm{f_1 * f_2}_{L^p} \le
    \norm{f_1}_{L^1} \norm{f_2}_{L^2}$. \label{convolution-young}
    \index{Young's inequality}
  \item If $\hinner{\cdot}{\cdot}$ denotes the inner product on
    $L^2(G)$, $f_1, f_2 \in L^2(G)$ and $\psi \in L^1(G)$, then
    $\hinner{\psi * f_1}{f_2} = \hinner{f_1}{\tilde{\psi} * f_2}$ and
    $\hinner{f_1 * \psi}{f_2} = \hinner{f_1}{f_2 * \tilde{\psi}}$,
    where $\tilde{\psi}(g) =
    \conj{\psi(g^{-1})}$. \label{convolution-adjoint}
  \end{enumerate}
\end{proposition}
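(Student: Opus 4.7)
My plan is to verify each of the four items by a short direct computation, relying on three standard ingredients that are already available: the bi-invariance of Haar measure $\haar$ established in the preceding proposition, the inversion formula $j(h) = -h$ on $G = \R^{2n+m}$ (so $\haar$ is $j$-invariant), and the identity $(g \groupop k^{-1})^{-1} = k \groupop g^{-1}$ coming from associativity of $\groupop$. Item \ref{convolution-commute} is then just a one-line change of variables: unfolding both sides via Definition \ref{convolution-def}, using the inversion identity to rewrite $j(g \groupop k^{-1}) = k \groupop g^{-1}$, and substituting $k \mapsto k^{-1}$ turns the left side into $(f_2 * f_1)(g^{-1}) = ((f_2 * f_1) \circ j)(g)$.

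For item \ref{convolution-smoothing} I would start from the representation $(\psi * f)(g) = \int_G \psi(k)\, f(k^{-1} \groupop g)\,d\haar(k)$. When $\psi \in C_c^\infty(G)$ and $f$ is locally integrable, smoothness of $\psi * f$ follows by differentiation under the integral sign, justified by the compact support of $\psi$ and the polynomial character of $(g,k) \mapsto k^{-1} \groupop g$ under (\ref{BCH-realized}). For a general distribution $f$, I would reinterpret the convolution as the distributional pairing $(\psi * f)(g) = \langle f,\, h \mapsto \psi(g \groupop h^{-1}) \rangle$, substituting $k = g \groupop h^{-1}$ inside the defining integral, and invoke the standard fact that a distribution applied to a smoothly $g$-dependent family of test functions yields a smooth function of $g$. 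The Schwartz case is handled identically, using that $\groupop$ is polynomial so that smoothly varying translates preserve Schwartz decay.

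Items \ref{convolution-young} and \ref{convolution-adjoint} are direct consequences of Fubini combined with left-invariance. For Young's inequality, applying Minkowski's integral inequality to $(f_1 * f_2)(g) = \int_G f_1(k) f_2(k^{-1}\groupop g)\,d\haar(k)$ gives
\begin{equation*}
  \norm{f_1 * f_2}_{L^p} \le \int_G \abs{f_1(k)}\, \norm{f_2(k^{-1}\groupop \cdot)}_{L^p}\, d\haar(k) = \norm{f_1}_{L^1}\, \norm{f_2}_{L^p},
\end{equation*}
where the last equality is left-invariance of $\haar$. For the adjoint property, I would write $\hinner{\psi * f_1}{f_2}$ as a double integral, change variables $g \mapsto k \groupop g$, and recognize the inner integral as $(\tilde\psi * f_2)(h)$ after a further substitution $k \mapsto k^{-1}$ inside the definition of $\tilde\psi$; the formula $\tilde\psi(h) = \overline{\psi(h^{-1})}$ is designed precisely so that the conjugation comes out right. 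The right-convolution variant is symmetric.

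I do not anticipate any serious obstacle: every item reduces to bi-invariance of $\haar$, the inversion formula, and Fubini/Minkowski. The one spot demanding any care is the distribution case of item \ref{convolution-smoothing}, where I would sidestep all ``convolution of distributions'' subtleties by working with the distributional pairing throughout rather than trying to interpret the defining integral pointwise.
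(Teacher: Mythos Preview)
Your proposal is correct and each of the four computations goes through as you describe. The paper itself does not prove this proposition at all: immediately after the statement it says ``The following properties are typical and we omit the proofs.'' So there is no approach to compare against; your write-up simply fills in the standard details the author chose to skip.
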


An important operation on H-type groups is the following dilation.

\begin{definition}\label{dilation-def}
\index{H-type group!dilation $\varphi_\alpha$}
\index{dilation}
  For $\alpha \in (0,\infty)$, define $\varphi_\alpha : G \to G$ by
    $\varphi_\alpha(x,z) = (\alpha x, \alpha^2 z)$.
\end{definition}

Observe that
$\varphi_\alpha$ is a group automorphism of $G$, $\varphi_\alpha
\circ \varphi_\beta = \varphi_{\alpha \beta}$, and
$\varphi_\alpha^{-1} = \varphi_{\alpha^{-1}}$.  We also note the change of
variables $d\haar(\varphi_\alpha(g)) = \alpha^{2(n+m)}\,d\haar(g)$.

\section{Algebraic properties}\label{clifford-sec}

Algebraically, H-type Lie algebras (and hence also H-type groups)
correspond to representations of Clifford algebras.  In this section,
we describe this correspondence, and classify the possible dimensions
of H-type groups.

\begin{definition}
  Let $V$ be a real vector space and $B : V \times V \to \R$ be a
  bilinear form.  The \define{Clifford algebra}{Clifford algebra}
  $C\ell(V, B)$ is the algebra (with identity $1$) freely generated by $V$
  subject to the relation
  \begin{equation}\label{clifford-relation}
    u v + v u = - 2 B(u,v).
  \end{equation}
  (Some authors omit the negative sign.)
\end{definition}

\begin{definition}
  A \define{representation}{representation} of an
  algebra $\mathcal{A}$ is an algebra homomorphism $\pi : \mathcal{A}
  \to \End(W)$ for some finite-dimensional vector space $W$; the
  \definenoindex{dimension} of $\pi$ is the dimension of $W$.  (We
  require that $\pi(1) = I$.)
  If $\pi : \mathcal{A} \to \End(W)$, $\pi' : \mathcal{A}'
  \to \End(W')$ are representations of two algebras, we say they are
  \define{equivalent}{representation!equivalent} if there is an
  algebra isomorphism $\psi : \mathcal{A} \to \mathcal{A}'$ and a
  vector space isomorphism $T : W \to W'$ such that
  \begin{equation}\label{equivalent-representation}
    \pi(a) = T^{-1} \pi'(\psi(a)) T
  \end{equation}
  for all $a \in \mathcal{A}$.  If the vector spaces $V,W$ are
  equipped with inner products and $T$ is unitary, we say that $\pi,
  \pi'$ are \definenoindex{unitarily equivalent}.
\end{definition}

We note that if $\pi : \mathcal{A} \to \End(W)$ is a representation of
$\mathcal{A}$, then the left action $v \cdot w = \pi(v) w$ turns $W$
into a left $\mathcal{A}$-module.  This is an alternate, and indeed
more common, way to study representations of algebras.

One direction of the correspondence between H-type Lie algebras and
representations of Clifford algebras is rather easy.

\index{Clifford algebra!correspondence with H-type Lie algebra|(}
\index{H-type Lie algebra!correspondence with Clifford algebra|(}
\index{H-type group!correspondence with Clifford algebra|(}
\begin{proposition}
  \begin{enumerate}
    \item 
  If $\mathfrak{g}$ is an H-type Lie algebra under the inner product
  $\inner{\cdot}{\cdot}$, then the map 
  \begin{equation*}
    \mathfrak{z} \ni z \mapsto \pi(z) := J_z
    \in \End(\mathfrak{z}^\perp)
  \end{equation*}
  extends uniquely to a representation of $C\ell(\mathfrak{z},
  \inner{\cdot}{\cdot})$.  

  \item If $\mathfrak{g}'$ is another
  H-type Lie algebra, isomorphic to $\mathfrak{g}$, then there is an
  admissible inner product $\inner{\cdot}{\cdot}'$ on $\mathfrak{g}'$
  such that the map
  \begin{equation*}
    \mathfrak{z}' \ni z' \mapsto \pi'(z') := J_{z'}
    \in \End(\mathfrak{z}'^\perp)
  \end{equation*}
  defines a representation of
  $C\ell(\mathfrak{z}', \inner{\cdot}{\cdot}')$ which is unitarily equivalent to
  $\pi$.

  \item
  If $(\mathfrak{g}, \inner{\cdot}{\cdot}), (\mathfrak{g}',
  \inner{\cdot}{\cdot}')$ are two H-type Lie algebras with admissible
  inner products, and the corresponding representations $\phi, \phi'$
  are unitarily equivalent, then $\mathfrak{g}$ and $\mathfrak{g}'$
  are isomorphic.
  \end{enumerate}
\end{proposition}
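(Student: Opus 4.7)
For part (1), the plan is to invoke the universal property of the Clifford algebra. By Proposition \ref{Jz-props}(\ref{Jz-clifford}), the linear map $z \mapsto J_z$ from $\mathfrak{z}$ into $\End(\mathfrak{z}^\perp)$ satisfies $J_z J_w + J_w J_z = -2\inner{z}{w} I$, which is precisely the defining Clifford relation (\ref{clifford-relation}) for the quadratic form $\inner{\cdot}{\cdot}$ restricted to $\mathfrak{z}$. The universal property of $C\ell(\mathfrak{z}, \inner{\cdot}{\cdot})$ then yields a unique extension to an algebra homomorphism $\pi$, which is the desired representation.

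For part (2), let $\sigma : \mathfrak{g} \to \mathfrak{g}'$ be a Lie algebra isomorphism. I would transport the inner product by setting $\inner{X'}{Y'}' := \inner{\sigma^{-1}X'}{\sigma^{-1}Y'}$ so that $\sigma$ becomes an isometry. Any Lie algebra isomorphism carries the center to the center, so $\sigma(\mathfrak{z}) = \mathfrak{z}'$, and because $\sigma$ is an isometry, also $\sigma(\mathfrak{z}^\perp) = \mathfrak{z}'^\perp$. Combining the defining identity for $J_z$ on both sides with $\sigma$ being a bracket-preserving isometry shows $J'_{\sigma(z)} = \sigma \circ J_z \circ \sigma^{-1}$; in particular $J'_{z'}$ is orthogonal when $\norm{z'}'=1$, so $\inner{\cdot}{\cdot}'$ is admissible. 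The unitary equivalence is then witnessed by the algebra isomorphism $\Psi : C\ell(\mathfrak{z}, \inner{\cdot}{\cdot}) \to C\ell(\mathfrak{z}', \inner{\cdot}{\cdot}')$ induced by $\sigma|_{\mathfrak{z}}$, together with the unitary $T := \sigma|_{\mathfrak{z}^\perp}$.

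For part (3), the strategy is to run (2) in reverse. Given $(\Psi, T)$ realizing a unitary equivalence $\pi(a) = T^{-1}\pi'(\Psi(a))T$, I first want to extract an isometry $\psi_0 : \mathfrak{z} \to \mathfrak{z}'$ from $\Psi$. Granted this, I define $\sigma : \mathfrak{g} \to \mathfrak{g}'$ by $\sigma|_{\mathfrak{z}^\perp} = T$ and $\sigma|_{\mathfrak{z}} = \psi_0$, and verify the bracket identity: for $X, Y \in \mathfrak{z}^\perp$ and any $z \in \mathfrak{z}$, setting $z' = \psi_0(z)$,
\begin{align*}
  \inner{z'}{[TX, TY]'}' &= \inner{J'_{z'} TX}{TY}' = \inner{T J_z X}{TY}' \\
  &= \inner{J_z X}{Y} = \inner{z}{[X,Y]} = \inner{\psi_0(z)}{\psi_0[X,Y]}',
\end{align*}
so $[\sigma X, \sigma Y]' = \sigma[X,Y]$. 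Since all other brackets in $\mathfrak{g}$ vanish by nilpotency, $\sigma$ is a Lie algebra isomorphism.

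The hard part is the first step of (3): a priori an abstract algebra isomorphism $\Psi$ between two Clifford algebras need not send the generating subspace $\mathfrak{z}$ into $\mathfrak{z}'$, let alone isometrically. I expect this must be either built into a strengthened notion of ``unitarily equivalent as Clifford representations'' (which is the intended reading, since parts (1) and (2) produce $\Psi$ in this canonical form), or else extracted intrinsically from the intertwining relation by characterizing $\pi(\mathfrak{z}) \subset \End(\mathfrak{z}^\perp)$ as a distinguished subspace of operators (for instance, the skew-adjoint operators whose squares are negative scalar multiples of the identity, using Proposition \ref{Jz-props}(\ref{Jz-skew}) and (\ref{Jz-square})), and then observing that $A \mapsto \sqrt{-A^2}$ recovers the norm. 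Overcoming this identification of the generating subspace is the crux of the converse direction.
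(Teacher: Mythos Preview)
Your approach for all three parts is essentially identical to the paper's. For part (3), the paper defines $\phi(x+z) := Tx + \psi(z)$ and then verifies the bracket relation via exactly the chain of equalities you wrote down.

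You have, however, correctly identified a subtlety that the paper simply glosses over: the paper's argument tacitly assumes $\psi(\mathfrak{z}) \subset \mathfrak{z}'$ (so that writing $J'_{\psi(z)}$ makes sense and agrees with $\pi'(\psi(z))$), but never justifies this from the bare definition of ``equivalent representations'' stated earlier, which only asks for an abstract algebra isomorphism $\psi$. Your diagnosis---that the intended reading is the stronger one in which $\psi$ is required to carry the generating subspace $\mathfrak{z}$ isometrically onto $\mathfrak{z}'$, consistent with what part (2) actually produces---is exactly right, and the paper's proof only goes through under that reading. So your proposal is at least as complete as the paper's own argument, and more careful about the hypothesis needed.
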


\begin{proof}
  \begin{enumerate}
  \item This follows directly from item \ref{Jz-clifford} of Proposition \ref{Jz-props}.

  \item If $\phi : \mathfrak{g} \to \mathfrak{g}'$ is a Lie algebra
    isomorphism, then define the inner product $\inner{\cdot}{\cdot}'$
    on $\mathfrak{g}'$ via $\inner{\phi(v)}{\phi(w)}' :=
    \inner{v}{w}$.  It is clear that $\mathfrak{z}' =
    \phi(\mathfrak{z})$, $\mathfrak{z}'^\perp =
    \phi(\mathfrak{z}^\perp)$.  We also note that for $z \in
    \mathfrak{z}$ and $x,y \in \mathfrak{z}^\perp$, we have
    \begin{align*}
      \inner{J_{\phi(z)} \phi(x)}{\phi(y)}' &=
      \inner{\phi(z)}{[\phi(x), \phi(y)]'}' \\
      &= \inner{\phi(z)}{\phi([x,y])}' \\
      &= \inner{z}{[x,y]} \\
      &= \inner{J_z x}{y} \\
      &= \inner{\phi(J_z x)}{\phi(y)}'
    \end{align*}
    so that $J_{\phi(z)} \phi(x) = \phi(J_z x)$.  If $\norm{\phi(z)}'
    = \norm{z} = 1$, we have
    \begin{equation*}
      \inner{J_{\phi(z)} \phi(x)}{J_{\phi(z)} \phi(y)}' =
      \inner{\phi(J_z x)}{\phi(J_z y)}' = \inner{J_z x}{J_z y} =
      \inner{x}{y} = \inner{\phi(x)}{\phi(y)}'
    \end{equation*}
    so that $J_{\phi(z)}$ is indeed orthogonal.  Thus
    $\inner{\cdot}{\cdot}'$ is admissible for $\mathfrak{g}$.
    
    Moreover, the restriction of $\phi$ to $\mathfrak{z}$ extends
    uniquely to an isomorphism of algebras $\psi : C\ell(\mathfrak{z},
    \inner{\cdot}{\cdot}) \to C\ell(\mathfrak{z}',
    \inner{\cdot}{\cdot}')$, and the restriction to
    $\mathfrak{z}^\perp$ is a unitary map $T : \mathfrak{z}^\perp \to
    \mathfrak{z}'^\perp$.

    Let $\pi, \pi'$ be the representations corresponding to
    $(\mathfrak{g}, \inner{\cdot}{\cdot}), (\mathfrak{g}',
    \inner{\cdot}{\cdot}')$.  To show they are equivalent, it suffices
    to verify (\ref{equivalent-representation}) for $a = z \in
    \mathfrak{z}$.  But if $z \in \mathfrak{z}$, $x \in
    \mathfrak{z}^\perp$, we have
    \begin{align*}
      T^{-1} \pi'(\psi(z)) T x = \phi^{-1}(J_{\phi(z)} \phi(x)) = J_z
      x = \pi(z) x
    \end{align*}
    since $J_{\phi(z)} \phi(x) = \phi(J_z x)$ as shown above.

  \item Suppose there exists an algebra isomorphism $\psi :
    C\ell(\mathfrak{z}, \inner{\cdot}{\cdot}) \to C\ell(\mathfrak{z}',
    \inner{\cdot}{\cdot}')$ and a unitary $T : \mathfrak{z}^\perp \to
    \mathfrak{z}'^\perp$ such that (\ref{equivalent-representation})
    holds.  Given $x \in \mathfrak{z}^\perp$, $z \in \mathfrak{z}$,
    and set $\phi(x+z) = Tx + \psi(z)$.  Clearly $\phi : \mathfrak{g}
    \to \mathfrak{g}'$ is a well defined linear bijection; we verify
    it is a Lie algebra homomorphism.  First, we note that
    $[\phi(x+z), \phi(y+w)]' = [\phi(x), \phi(y)]'$ by linearity and
    the fact that $\phi(\mathfrak{z}) = \mathfrak{z}'$.  Next, if $x,
    y \in \mathfrak{z}^\perp$ and $z \in \mathfrak{z}$, we have
    \begin{align*}
      \inner{\phi(z)}{[\phi(x), \phi(y)]'}' &= \inner{J_{\phi(z)}
        \phi(x)}{\phi(y)}' \\
      &= \inner{\pi'(\phi(z)) Tx}{Ty}' \\
      &= \inner{T \pi(z) x}{Ty}' \\
      &= \inner{\pi(z) x}{y} \\
      &= \inner{J_z x}{y} \\
      &= \inner{z}{[x,y]} \\
      &= \inner{\phi(z)}{\phi([x,y])}'.
    \end{align*}
    Thus $[\phi(x),\phi(y)]' = \phi([x,y])$, so that $\phi$ is indeed
    a Lie algebra homomorphism.
  \end{enumerate}
\end{proof}

The converse is only a little more involved.

\begin{theorem}\label{clifford-gives-h-type}
  If $V$ is a finite-dimensional vector space equipped with an inner
  product $\inner{\cdot}{\cdot}_V$ and $\pi : C\ell(V,
  \inner{\cdot}{\cdot}_V) \to \End(W)$ is a representation of the corresponding
  Clifford algebra, then there exist a bracket $[\cdot,\cdot]$ and an
  inner product $\inner{\cdot}{\cdot}$ on $W \oplus V$ under which it
  is an H-type Lie algebra with center $V$, and $J_z = \pi(z)$ for $z
  \in V$.
\end{theorem}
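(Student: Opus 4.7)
The plan is to build up the H-type Lie algebra structure on $W \oplus V$ in three stages: first endow $W$ with a compatible inner product so that the operators $\pi(z)$ become skew-adjoint, then define the bracket by duality using $\pi$, and finally verify that the resulting Lie algebra satisfies the two conditions in Definition \ref{h-type-def}.

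The heart of the construction is the first step: producing an inner product on $W$ under which $\pi(z)$ is skew-adjoint for every $z \in V$. The natural idea is to start from an arbitrary inner product $\inner{\cdot}{\cdot}_0$ on $W$ and average it over a carefully chosen finite subgroup of $C\ell(V,\inner{\cdot}{\cdot}_V)^\times$. Fix an orthonormal basis $\{e_1,\dots,e_k\}$ of $V$ and let $\Gamma \subset C\ell(V,\inner{\cdot}{\cdot}_V)^\times$ be the finite subgroup generated by $\{\pm 1, \pm e_1,\dots,\pm e_k\}$; its image $\pi(\Gamma) \subset \End(W)$ is a finite group. Set
\begin{equation*}
  \inner{x}{y}_W \; := \; \sum_{\gamma \in \Gamma} \inner{\pi(\gamma) x}{\pi(\gamma) y}_0,
\end{equation*}
so that every $\pi(\gamma)$ is orthogonal with respect to $\inner{\cdot}{\cdot}_W$. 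In particular each $\pi(e_i)$ is orthogonal, and since $\pi(e_i)^2 = -I$ by the Clifford relation $e_i^2 = -1$, this forces $\pi(e_i)$ to be skew-adjoint. By linearity of $\pi$ on $V$, every $\pi(z)$ is then skew-adjoint.

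Next I would put on $W \oplus V$ the orthogonal direct sum of $\inner{\cdot}{\cdot}_W$ and $\inner{\cdot}{\cdot}_V$, and declare $V$ to lie in the center; that is, set $[V,V] = 0$ and $[V,W] = [W,V] = 0$. For $x,y \in W$ I define $[x,y] \in V$ to be the unique element satisfying
\begin{equation*}
  \inner{z}{[x,y]}_V \; = \; \inner{\pi(z) x}{y}_W \quad \text{for all } z \in V,
\end{equation*}
which exists since the right-hand side is linear in $z$. Skew-symmetry $[x,y] = -[y,x]$ follows immediately from skew-adjointness of $\pi(z)$. The Jacobi identity is automatic because $[W,W] \subseteq V$ and $V$ brackets trivially with everything, so every triple bracket vanishes term by term. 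Extending bilinearly gives a Lie algebra structure.

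It remains to verify the H-type axioms. By construction $J_z := \pi(z)$, so for $\norm{z}_V = 1$ the operator $J_z$ is both skew-adjoint and satisfies $J_z^2 = -I$, hence is orthogonal; this handles item 2 of Definition \ref{h-type-def}. For item 1, I must show the center is exactly $V$, and that $[W,W] = V$. If $x \in W$ were central, then $\inner{\pi(z)x}{y}_W = 0$ for all $z \in V, y \in W$, forcing $\pi(z)x = 0$ for all $z$; taking $z \neq 0$ and using that $\pi(z)^2 = -\norm{z}_V^2 I$ is invertible, we get $x = 0$. Thus the center equals $V$. To see $[W,W] = V$, fix a unit $x \in W$ (assuming $W \neq 0$) and any $z \in V$; a direct computation using the Clifford relation gives $\inner{\pi(z')x}{\pi(z)x}_W = \inner{z'}{z}_V \norm{x}^2_W$ for all $z' \in V$, whence $[x,\pi(z)x] = z$. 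Finally, I must check that $\pi$ is recovered as $J_z$, which is true by the very definition of the bracket.

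The main technical obstacle is the first step: without the averaged inner product one cannot get skew-adjointness, and therefore cannot even make the bracket skew-symmetric. Everything after that is essentially formal bookkeeping using Proposition \ref{Jz-props} in reverse and the Clifford identity.
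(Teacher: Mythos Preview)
Your proof is correct and follows essentially the same approach as the paper: average an arbitrary inner product on $W$ over the finite Clifford group generated by an orthonormal basis of $V$ to make each $\pi(e_i)$ orthogonal (hence skew-adjoint, since $\pi(e_i)^2=-I$), define the bracket on $W$ by duality via $\pi$, and then verify the H-type axioms using the Clifford relation. The only cosmetic differences are that the paper writes the bracket explicitly as $[w,w']=\sum_j \inner{\pi(u_j)w}{w'}\,u_j$ rather than via Riesz representation, and deduces skew-adjointness by a direct substitution $g\mapsto gu_j$ in the averaging sum rather than from ``orthogonal with square $-I$''.
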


\begin{proof}
  We begin by constructing an inner product on $W$ by a standard
  averaging technique.  Let $u_1, \dots, u_m$ be an orthonormal basis
  for $V$, so that $\inner{u_i}{u_j}_V = \delta_{ij}$.  Notice that
  $u_i^2 = -1$, $u_i u_j = -u_j u_i$ for $i \ne j$.  Then the finite
  subset $H$ of $C\ell(V, \inner{\cdot}{\cdot}_V)$ defined by
  \begin{equation}\label{H-clifford-def}
    H = \{ \pm 1, \pm u_{i_1}u_{i_2}\dots u_{i_n} : 1 \le i_1, \dots, i_n \le
      m, n \ge 1\}
  \end{equation}
  is a group under the algebra multiplication.  Let
  $\inner{\cdot}{\cdot}_{W,1}$ be any inner product on $W$.  For $w, w' \in W$, let
  \begin{equation}
    \inner{w}{w'}_{W,2} := \frac{1}{\abs{H}} \sum_{g \in H} \inner{\pi(g) w}{\pi(g) w'}_{W,1}.
  \end{equation}
  Clearly $\inner{\cdot}{\cdot}_{W,2}$ is again an inner product on $W$.

  Define an inner product $\inner{\cdot}{\cdot}$ on $W \oplus V$ by
  \begin{equation*}
    \inner{w + v}{w' + v'} := \inner{w}{w'}_{W,2} + \inner{v}{v'}_V.
  \end{equation*}
  Observe that with respect to this inner product, we have for each
  basis vector $u_j$ that
  \begin{align*}
    \inner{\pi(u_j) w}{w'} &=  \frac{1}{\abs{H}} \sum_{g \in
      H} \inner{\pi(g u_j) w}{\pi(g) w'}_{W,1} \\
    &= \frac{1}{\abs{H}} \sum_{g' \in
  H} \inner{\pi(g') w}{\pi(g' u_j^{-1}) w'}_{W,1} \\
    &= \inner{w}{\pi(u_j^{-1}) w'} \\
    &= -\inner{w}{\pi(u_j) w'}
  \end{align*}
  by making the change of dummy variables $g' = g u_j$, and noticing
  that $u_j^{-1} = -u_j$ (the inverse taken in $H$).  Thus $\pi(u_j)$
  is skew-adjoint on $W$ with respect to $\inner{\cdot}{\cdot}$, and
  by linearity this is also true of $\pi(v)$ for any $v \in V$.  From
  the Clifford algebra relation, we also have $\pi(u)^2 = -I$, for any
  unit vector $u \in V$, from which it follows that $\pi(u)$ is an
  orthogonal linear transformation with respect to
  $\inner{\cdot}{\cdot}$.

  Define a bracket $[\cdot,\cdot]$ on $W \oplus V$ by
  \begin{equation*}
    [w + v, w' + v'] := \sum_{j = 1}^m \inner{\pi(u_j) w}{w'} u_j \in V.
  \end{equation*}
  It is obvious that this bracket is bilinear, and its skew-symmetry
  follows from the skew-symmetry of $\pi(u_j)$ established above.  The
  Jacobi identity is trivial, since $[W \oplus V, V] = 0$.  Thus
  $\mathfrak{g} = (W
  \oplus V, [\cdot,\cdot])$ is a Lie algebra.

  Let $\mathfrak{z}$ be the center of $\mathfrak{g}$.  We already have
  that $V \subset \mathfrak{z}$.  To see the reverse inclusion,
  suppose there exists $w,v$ such that $[w, w'] = 0$ for all $w' \in
  W$.  In particular,
  \begin{align*}
    0 = \inner{u_1}{[w, \pi(u_1) w]} = \sum_{j = 1}^m \inner{\pi(u_j) w}{\pi(u_1)
      w} \inner{u_1}{u_j} = \inner{\pi(u_1) w}{\pi(u_1) w} = \inner{w}{w}
  \end{align*}
  so that $w = 0$.  Thus $W \cap \mathfrak{z} = 0$, so that
  $\mathfrak{z} = V$.

  To see that $\mathfrak{g}$ is an H-type Lie algebra under
  $\inner{\cdot}{\cdot}$, we first note that $[\mathfrak{z}^\perp,
    \mathfrak{z}^\perp] = [W, W] \subset V = \mathfrak{z}$.  Next, $v
  \in V$, $w, w' \in W$ we have
  \begin{equation*}
    \inner{v}{[w, w']} = \sum_{j = 1}^m \inner{\pi(u_j) w}{w'}
    \inner{v}{u_j} = \inner{\pi(v) w}{w'} 
  \end{equation*}
  so that $J_v = \pi(v)$.  We have already shown that $\pi(v)$ is
  orthogonal when $\norm{v} = 1$.  Finally, we must show $[W,W] = V$.
  For nonzero $w \in W$, we have $\inner{u_k}{[w, \pi(u_l) w]} =
  \inner{\pi(u_k) w}{\pi(u_l) w}$.  If $k=l$ then the isometry of
  $\pi(u_k)$ gives $\inner{u_l}{[w, \pi(u_l) w]} = \inner{w}{w}$.  If
  $k \ne l$ then $u_k, u_l$ anticommute, so by skew-symmetry 
  \begin{equation*}
    \inner{\pi(u_k) w}{\pi(u_l) w} = -\inner{\pi(u_l) \pi(u_k) w}{w} =
    \inner{\pi(u_k) \pi(u_l) w}{w} = -\inner{\pi(u_l) w}{\pi(u_k) w}
  \end{equation*}
  so that $\inner{u_k}{[w, \pi(u_l) w]} = 0$.  Thus $[w, \pi(u_l) w] =
  u_l \inner{w}{w}$.  It follows by linearity that $[w, \pi(v) w] = v$
  for any $v \in V$.
\end{proof}
\index{Clifford algebra!correspondence with H-type Lie algebra|)}
\index{H-type Lie algebra!correspondence with Clifford algebra|)}
\index{H-type group!correspondence with Clifford algebra|)}

From this theorem we can immediately derive some consequences
regarding the possible dimensions of H-type Lie algebras and their
centers.

\begin{corollary}
  For any $m \ge 1$, there exists an H-type Lie algebra
  $\mathfrak{g}$ with center $\mathfrak{z}$ such that $\dim
  \mathfrak{z} = m$ and $\dim \mathfrak{g}$ is arbitrarily large.  
\end{corollary}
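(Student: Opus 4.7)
The plan is to apply Theorem \ref{clifford-gives-h-type} to suitable Clifford algebra representations of arbitrarily large dimension. By that theorem, once we exhibit, for a given $m$, a representation $\pi : C\ell(\R^m, \inner{\cdot}{\cdot}_e) \to \End(W)$ with $\dim W$ as large as we wish, we immediately get an H-type Lie algebra $\mathfrak{g} = W \oplus \R^m$ whose center is $\R^m$, hence $\dim \mathfrak{z} = m$ and $\dim \mathfrak{g} = \dim W + m$.

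First, I would produce a single nontrivial representation. The cleanest choice is the regular representation: the Clifford algebra $\mathcal{A} := C\ell(\R^m, \inner{\cdot}{\cdot}_e)$ is a finite-dimensional associative unital algebra (of dimension $2^m$, as one sees from the basis coming from $H$ in (\ref{H-clifford-def})), and it acts faithfully on itself by left multiplication, giving a representation $\pi_0 : \mathcal{A} \to \End(\mathcal{A})$ with $\dim \pi_0 = 2^m$.

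Next, to make the dimension arbitrarily large, I would take direct sums. For any $k \ge 1$, set $W_k := \mathcal{A}^{\oplus k}$ and let $\pi_k : \mathcal{A} \to \End(W_k)$ be the diagonal action $\pi_k(a)(w_1, \dots, w_k) = (\pi_0(a) w_1, \dots, \pi_0(a) w_k)$. One checks immediately from the definition that $\pi_k$ is an algebra homomorphism sending $1 \mapsto I$, so it is a valid representation, with $\dim W_k = k \cdot 2^m$.

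Finally, I would apply Theorem \ref{clifford-gives-h-type} to $\pi_k$ for each $k$. This yields an H-type Lie algebra $\mathfrak{g}_k := W_k \oplus \R^m$ with center $\mathfrak{z} = \R^m$, so $\dim \mathfrak{z} = m$ as required, while $\dim \mathfrak{g}_k = k \cdot 2^m + m \to \infty$ as $k \to \infty$. Since there is no real obstacle here—the work has all been done in Theorem \ref{clifford-gives-h-type}, and the only remaining input is the trivial observation that direct sums of representations are representations—I would expect the proof to be quite short.
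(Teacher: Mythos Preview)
Your proposal is correct and essentially identical to the paper's proof: both invoke Theorem \ref{clifford-gives-h-type} on some nontrivial representation of $C\ell(\R^m,\inner{\cdot}{\cdot}_e)$ and then take direct sums $\pi \oplus \pi \oplus \cdots$ to make $\dim W$ arbitrarily large. The only cosmetic difference is that you specify the regular representation as your starting point, whereas the paper just says ``any nontrivial representation'' and remarks that one can be obtained from a group representation of $H$.
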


\begin{proof}
  Take $V = \R^m$ with the Euclidean inner product.
  Let $\pi : C\ell(V, \inner{\cdot}{\cdot}) \to \End(W)$ be any
  nontrivial representation of the corresponding Clifford algebra.
  (Note that the group $H$ defined in (\ref{H-clifford-def}) forms a
  basis for $C\ell(V, \inner{\cdot}{\cdot}_V)$, so any group
  representation of $H$ extends by linearity to an algebra
  representation of $C\ell(V, \inner{\cdot}{\cdot}_V)$.)  Then Theorem
  \ref{clifford-gives-h-type} gives an H-type Lie algebra
  $\mathfrak{g}$ with $\dim \mathfrak{z} = \dim V = m$ and $\dim
  \mathfrak{g} = m + \dim W$.  To make $\dim \mathfrak{g}$ larger,
  replace $\pi$ with $\pi \oplus \pi$, et cetera.
\end{proof}


Necessary and sufficient conditions on the dimension of a Clifford
algebra and its representations are given by the Hurwitz-Radon-Eckmann
theorem \index{Hurwitz-Radon-Eckmann theorem} \cite{eckmann}.  The
corresponding statement for H-type Lie algebras was given in
\cite{kaplan80}; we restate it here.

\begin{theorem}\label{dimension-classification}
  For any nonnegative integer $k$, we can uniquely write $k=a
  2^{4p+q}$ where $a$ is odd and $0 \le q \le 3$; let $\rho(k) :=
  8p+2^q$.  ($\rho$ is sometimes called the \define{Hurwitz-Radon
    function}{Hurwitz-Radon function $\rho$}.)
  There exists an H-type Lie algebra 
  of dimension $2n+m$ with center of dimension $m$ if and only if $m <
  \rho(2n)$.
\end{theorem}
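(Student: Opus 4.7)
The plan is to reduce the statement to the classical Hurwitz-Radon-Eckmann theorem about orthogonal square roots of $-I$ via the Clifford algebra correspondence already established. The preceding results show that an H-type Lie algebra with $\dim \mathfrak{z}^\perp = 2n$ and $\dim \mathfrak{z} = m$ exists if and only if there is a representation of $C\ell(\R^m, \inner{\cdot}{\cdot}_e)$ on a $2n$-dimensional real vector space: the forward implication follows from Proposition \ref{Jz-props}(\ref{Jz-clifford}) applied to an orthonormal basis of $\mathfrak{z}$, and the converse is Theorem \ref{clifford-gives-h-type}. So the theorem reduces to the purely algebraic question: for which pairs $(m,2n)$ does $C\ell(\R^m)$ (with the sign convention of (\ref{clifford-relation})) admit a $2n$-dimensional real representation?

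Unpacking the representation condition, one needs endomorphisms $J_1,\dots,J_m$ of $\R^{2n}$ satisfying $J_i J_j + J_j J_i = -2\delta_{ij} I$. By the averaging argument used in Theorem \ref{clifford-gives-h-type}, one may assume each $J_i$ is skew-adjoint (equivalently, orthogonal with $J_i^2=-I$) for some inner product on $\R^{2n}$, and then any two such inner products are equivalent. So the question becomes: what is the maximal number of pairwise anticommuting orthogonal complex structures on $\R^{2n}$? This is exactly the quantity answered by the Hurwitz-Radon-Eckmann theorem: the maximum is $\rho(2n)-1$. Hence the representation exists precisely when $m \le \rho(2n)-1$, i.e.\ $m < \rho(2n)$.

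To carry this out I would (i) state the Clifford-representation reformulation as a lemma, citing the propositions above; (ii) quote the Hurwitz-Radon-Eckmann theorem in the precise form ``$\R^{2n}$ admits $s$ pairwise anticommuting orthogonal complex structures iff $s \le \rho(2n) - 1$'' from \cite{eckmann}; (iii) combine these to get both the necessity and sufficiency of $m < \rho(2n)$. For existence, Theorem \ref{clifford-gives-h-type} directly produces the H-type Lie algebra from the representation; for non-existence, observe that Proposition \ref{Jz-props}(\ref{Jz-clifford}) and (\ref{Jz-square}) force the $J_{u_j}$ (on an orthonormal basis of $\mathfrak{z}$) to give exactly such a system of anticommuting complex structures, so the Hurwitz-Radon bound is forced.

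The only real obstacle is that the proof is not intrinsically self-contained: it rests on the deep combinatorial/topological classification of Hurwitz-Radon, which I would simply cite rather than reprove. The bookkeeping step most prone to sign errors is verifying that the sign convention in (\ref{clifford-relation}) matches the convention under which Hurwitz-Radon is usually stated (namely $J_i^2=-I$), but Proposition \ref{Jz-props}(\ref{Jz-square}) makes this match transparent. Aside from that, the argument is a clean translation between the Lie-algebraic and representation-theoretic pictures.
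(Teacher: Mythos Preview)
Your proposal is correct and matches the paper's approach: the paper does not actually prove this theorem but simply cites the Hurwitz--Radon--Eckmann theorem \cite{eckmann} together with Kaplan's translation to H-type algebras \cite{kaplan80}, which is exactly the reduction you outline. Your write-up in fact supplies more detail than the paper does, making explicit how the Clifford correspondence (Proposition~\ref{Jz-props} and Theorem~\ref{clifford-gives-h-type}) effects the translation in both directions.
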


In particular, suppose $m < \rho(2n)$ where $2n = a 2^{4p+q}$ as
above.  Since $2^q \le 2q+2$ for $0 \le q \le 3$, we have
\begin{equation*}
  2^m < 2^{8p+2^q} \le 2^{8p+2q+2} = 4(2^{4p+q})^2 \le 4(2n)^2
\end{equation*}
so that $n > \frac{1}{4}2^{m/2}$.  Thus, in order for an H-type Lie
algebra to have a large-dimensional center, the complement of the
center must be of very large dimension.

Much more information about Clifford algebras, including a
classification of their representations, can be found in
\cite{atiyah-bott-shapiro}.  This in particular could be useful in
constructing examples of H-type groups for computations.  

\section{The sublaplacian $L$}

In this section, we construct the sublaplacian operator which will be
the focus of this dissertation.  $G$ denotes an H-type Lie group
identified with $\R^{2n+m}$.

\begin{notation}
  For $i=1, \dots, 2n$, let $X_i, \hat{X}_i$ be respectively the
  unique left- and right-invariant vector fields on $G$ with $X_i(0) =
  \hat{X}_i(0) = \pp{x_i}$.  For $j = 1, \dots, m$, let $Z_j$ be the
  bi-invariant vector field $Z_j = \pp{u_j}$.
  \define{}{H-type group!vector fields $X_i$}
  \define{}{H-type group!vector fields $Z_j$}
\end{notation}

\begin{proposition}
  The vector fields $\{X_1, \dots, X_{2n}\}$ are bracket generating in
  the sense of \ref{bracket-gen-condition}.
\end{proposition}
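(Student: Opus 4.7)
The plan is to leverage the defining algebraic property of an H\nobreakdash-type Lie algebra, namely $[\mathfrak{z}^\perp, \mathfrak{z}^\perp] = \mathfrak{z}$, together with left-invariance, to verify the bracket-generating condition at every point.

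First, I would identify the left-invariant vector fields $X_1, \dots, X_{2n}$ with their values at the origin: under the realization $G = \R^{2n+m}$ with $\Exp$ the identity, $X_i(0) = \pp{x_i} = e_i$, so $\{X_1, \dots, X_{2n}\}$ corresponds to an orthonormal basis of $\mathfrak{z}^\perp = \R^{2n} \oplus 0 \subset \mathfrak{g}$. Similarly $\{Z_1, \dots, Z_m\}$ corresponds to an orthonormal basis of $\mathfrak{z} = 0 \oplus \R^m$.

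Next, I would check the condition at the identity $g = 0$. Because $\mathfrak{g}$ is an H\nobreakdash-type Lie algebra, $[\mathfrak{z}^\perp, \mathfrak{z}^\perp] = \mathfrak{z}$, so the single-bracket elements $\{[X_i, X_j](0) : 1 \le i, j \le 2n\}$ span $\mathfrak{z}$, and therefore together with the $X_i(0)$ themselves they span all of $\mathfrak{g} = T_0 G$. Thus (\ref{bracket-gen-condition}) holds at $0$ with $r = 2$ (only one bracket is needed).

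Finally, to extend this to an arbitrary point $g \in G$, I would invoke left-invariance. Since each $X_i$ is left-invariant, the Lie bracket $[X_i, X_j]$ is again left-invariant, so for any $g \in G$ we have $[X_i, X_j](g) = (dL_g)_0 [X_i, X_j](0)$ and $X_i(g) = (dL_g)_0 X_i(0)$. Because $(dL_g)_0 : T_0 G \to T_g G$ is a linear isomorphism, the span of $\{X_i(g)\} \cup \{[X_i, X_j](g)\}$ is the image under $(dL_g)_0$ of $\mathfrak{g}$, which is all of $T_g G$. Hence (\ref{bracket-gen-condition}) holds at every point with uniform rank $r = 2$.

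No step presents a genuine obstacle; the entire content of the proposition is packaged into the H\nobreakdash-type condition $[\mathfrak{z}^\perp, \mathfrak{z}^\perp] = \mathfrak{z}$ and the automatic propagation of bracket relations by left translation.
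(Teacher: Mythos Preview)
Your proof is correct and takes essentially the same approach as the paper: both arguments reduce immediately to the H-type condition $[\mathfrak{z}^\perp,\mathfrak{z}^\perp]=\mathfrak{z}$, with the paper simply working at the Lie algebra level (implicitly using left-invariance) while you spell out the propagation from the identity to arbitrary points via $(dL_g)_0$.
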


\begin{proof}
  $\spanop \{X_1, \dots, X_{2n}\} = \mathfrak{z}^\perp$, and we have
  $[\mathfrak{z}^\perp, \mathfrak{z}^\perp] = \mathfrak{z}$.  Thus any
  element of $\mathfrak{z}$ can be written as a linear combination of
  brackets of pairs of the $X_i$.
\end{proof}

We can write
\begin{equation}\label{Xi-dds}
  X_i f(g) = \diffat{s}{0} f(g \groupop (s e_i, 0)),
  \quad   \hat{X}_i f(g) = \diffat{s}{0} f((s e_i,
  0) \groupop g).
\end{equation}
A straightforward calculation shows
\begin{equation}\label{Xi-formula}
  \begin{split}
    X_i &= \pp{x_i} + \frac{1}{2} \sum_{j=1}^m
    \inner{J_{u_j}x}{e_i} \pp{z_j} \\
    \hat{X}_i &= \pp{x_i} - \frac{1}{2} \sum_{j=1}^m
    \inner{J_{u_j}x}{e_i} \pp{z_j} \\
  \end{split}
\end{equation}

We note that $[X_i, \hat{X}_j] = 0$ for all $i,j$.  

The vector fields $X_i$ interact with the dilations $\varphi_\alpha,
\alpha > 0$ of
Definition \ref{dilation-def} via
\begin{equation}\label{Xi-dilation}
  X_i (f \circ \varphi_\alpha) = \alpha (X_i f)
\circ \varphi_\alpha.
\end{equation}
We also note that
\begin{equation}\label{Xi-inverse}
  X_i (f \circ j) = - (\hat{X}_i f) \circ j
\end{equation}
where $j(g) = g^{-1}$.

\begin{definition}
The left-invariant 
\define{gradient}{H-type group!subgradient $\grad$}
\index{H-type group!subgradient|seealso{gradient bounds}}
 (or
``subgradient'') $\grad$ on $G$ is given by $\grad f = (X_1 f, \dots,
X_{2n} f)$, with the right-invariant $\hat{\grad}$ defined
analogously.  We shall also use the notation $\grad_x f := \left(
\pp{x_i}f,\dots, \pp{x_{2n}}f\right)$ and $\grad_z f := \left(
\pp{z_1}f,\dots, \pp{z_m}f\right)$ to denote the usual Euclidean
gradients in the $x$ and $z$ variables, respectively.  Note that
$\grad_z$ is both left- and right-invariant.
\end{definition}

From (\ref{Xi-formula}) it is easy to verify that
\begin{equation}\label{gradient-formula}
  \begin{split}
    \grad f(x,z) &= \grad_x f(x,z) + \frac{1}{2} J_{\grad_z f(x,z)} x
    \\
    \hat{\grad} f(x,z) &= \grad_x f(x,z) - \frac{1}{2} J_{\grad_z f(x,z)} x.
  \end{split}
\end{equation}
As shorthand, we write 
\begin{equation}\label{gradient-bare}
\grad = \grad_x + \frac{1}{2} J_{\grad_z} x.
\end{equation}
In particular, since $J_z$ depends linearly on $z$ and is orthogonal
for $\abs{z}=1$, we have 
\begin{equation}\label{abs-grad-difference}
\abs{(\grad -
  \hat{\grad})f(x,z)} = \abs{x} \abs{\grad_z f(x,z)}.
\end{equation}

The gradient has an even nicer form when applied to functions with
appropriate symmetry.

\begin{definition}
  A function $f : G \to \R$ is \define{radial}{radial function} if
  $f(x,z) = \tilde{f}(\abs{x},\abs{z})$ for some $\tilde{f} :
  [0,\infty) \times [0,\infty) \to \R$. By abuse of notation, we will
      also write $f(x,z) = f(\abs{x}, \abs{z})$.
\end{definition}

For a radial function $f$, we have
\index{radial function!gradient of}
\begin{equation}\label{gradient-radial}
  \grad f(x,z) = f_{\abs{x}} (\abs{x}, \abs{z}) \hat{x} + \frac{1}{2}f_{\abs{z}}(\abs{x},
  \abs{z}) \abs{x} J_{\hat{z}} \hat{x} 
\end{equation}
where we use the notation $\hat{u} := \frac{u}{\abs{u}}$ to denote the
unit vector in the $u$ direction.  We draw attention to the fact that
$\hat{x}$ and $J_{\hat{z}}\hat{x}$ are orthogonal unit vectors in
$\R^{2n}$ for any nonzero $x,z$.

\begin{definition}
  The left-invariant 
  \define{sublaplacian}{H-type group!sublaplacian $L$}
  \index{sublaplacian|seealso{under H-type group}}
  $L$ is
  the second-order differential operator defined by
  \begin{equation}
    L := X_1^2 + \dots + X_{2n}^2.
  \end{equation}
\end{definition}

For convenience in computations involving $L$, we adopt the following
notation.  If $\vec{A}, \vec{B}$ are $k$-tuples of operators,
e.g. $\vec{A} = (A_1, \dots, A_k)$, we let $\ncinner{\vec{A}}{\vec{B}}
:= \sum_{i=1}^k A_i B_i$.  (Note that in general
$\ncinner{\vec{A}}{\vec{B}} \ne \ncinner{\vec{B}}{\vec{A}}$.)  We can
write $L$ in terms of the gradient $\grad$ as $L =
\ncinner{\grad}{\grad}$, which by (\ref{gradient-bare}) gives
\begin{equation}\label{L-threeterms}
  L = \ncinner{\grad_x + \frac{1}{2} J_{\grad_z}x}{\grad_x + \frac{1}{2}
    J_{\grad_z}x} = \Delta_x + \ncinner{\grad_x}{J_{\grad_z}x} +
  \frac{1}{4} \abs{x}^2 \Delta_z.
\end{equation}
We used the fact that $\ncinner{\grad_x}{J_{\grad_z}x} =
\ncinner{J_{\grad_z}x}{\grad_x}$, because $\pp{x_i} \inner{J_u x}{e_i}
= \inner{J_u e_i}{e_i} = 0$ for any $u \in \R^m$ by item \ref{Jz-skew}
of Proposition \ref{Jz-props}.

By H\"ormander's theorem (Theorem \ref{hormander-thm}), the bracket
generating condition on the vector fields $\{X_i\}$ implies  that
$L$ is hypoelliptic.  
\index{H-type group!sublaplacian $L$!hypoellipticity}
However, $L$ is not elliptic, as we now verify.
We give a definition here to ensure that there is no doubt about
terminology.

\begin{definition}\label{elliptic-def}
   Let
   \begin{equation*}
     A = \sum_{i,j=1}^n a_{ij}(\vec{x}) \pp{x_i} \pp{x_j} +
     \sum_{i=1}^n b_i(\vec{x}) \pp{x_i} + c(\vec{x})
   \end{equation*}
   be a general second-order partial differential operator.  The
   \define{principal symbol}{operator!principal symbol of} of $A$ is
   the quadratic form-valued function
   \begin{equation*}
     Q_A(\vec{x})(\vec{\xi}) = \sum_{i,j=1}^n a_{ij}(\vec{x}) \xi_i \xi_j.
\end{equation*}
   We will say $A$ is \define{elliptic}{operator!elliptic} if
   $Q_A(\vec{x})$ is (strictly) positive definite for all $\vec{x} \in
   \R^n$, i.e. $Q_A(\vec{x})(\vec{\xi}) > 0$.  
   $A$ is
   \define{degenerate elliptic}{operator!degenerate elliptic} if
   $Q_A(\vec{x})$ is positive semidefinite for all $\vec{x} \in \R^n$,
   i.e. $Q_A(\vec{x})(\vec{\xi}) \ge 0$.
 \end{definition}

The principal symbol of our sublaplacian $L$ is then given by
\begin{align*}
  Q_L(x,z)(\xi,\eta) = \abs{\xi + \frac{1}{2} J_{\eta}x}^2 \ge 0
\end{align*}
so that $Q_L(x,z)$ is positive semidefinite for all $(x,z)$.  However,
$Q(x,z)$ is also degenerate for all $(x,z)$, which can be seen by
taking $\xi = -\frac{1}{2} J_{\eta} x$.  Therefore $L$ is degenerate
elliptic, but not elliptic.

We record at this point some convolution formulas involving $X_i$,
which will be used later.

\begin{proposition}\label{Xi-integration}
  \begin{enumerate}
  \item If $f$ is a distribution on $G$ and $\psi \in C^\infty_c(G)$,
    then $X_i(f * \psi) = f * X_i \psi$.
    Hence also $L(f * \psi) = f *
    L\psi$.  The same holds if $f$ is a
    tempered distribution on $G$ and $\psi$ is a Schwartz
    function.  \label{Xi-convolve}
  \item (Integration by parts) If $f \in C^\infty(G)$, $\psi \in
    C^\infty_c(G)$ then $\hinner{X_i f}{\psi} = -\hinner{f}{X_i
      \psi}$, and $\hinner{\hat{X}_i f}{\psi} = -\hinner{f}{\hat{X}_i
      \psi}$. Hence also $\hinner{L f}{\psi} = \hinner{f}{L
      \psi}$.  \label{Xi-parts}
  \end{enumerate}
  \end{proposition}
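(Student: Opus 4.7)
The proposal is to handle (1) by pushing the left-invariant differentiation through the convolution integral, and (2) by reducing to standard Euclidean integration by parts via the explicit formula (\ref{Xi-formula}).

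For part (1), I would use the second form of the convolution from Definition \ref{convolution-def}: if $f \in C^\infty(G)$, then
\begin{equation*}
(f * \psi)(g) = \int_G f(k)\,\psi(k^{-1} \groupop g)\,d\haar(k).
\end{equation*}
Applying $X_i$ in the variable $g$, the key step is to commute $X_i$ with the integral (justified because $\psi \in C^\infty_c$, so the integrand and its $g$-derivatives are uniformly supported in $k$). Since $X_i$ is left-invariant, we have $X_i[\psi \circ L_{k^{-1}}] = (X_i\psi) \circ L_{k^{-1}}$, so $X_i(f*\psi)(g) = \int_G f(k)(X_i\psi)(k^{-1}\groupop g)\,d\haar(k) = (f*X_i\psi)(g)$. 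For a distribution $f$, I would define $(f * \psi)(g) := \langle f, \psi((\cdot)^{-1} \groupop g)\rangle$ and run the same argument, differentiating under the distributional pairing (which is legitimate because $g \mapsto \psi((\cdot)^{-1}\groupop g)$ is smooth into $C^\infty_c(G)$). The tempered/Schwartz case is identical, using smoothness into Schwartz space instead. Iterating gives $L(f*\psi) = f * L\psi$.

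For part (2), I would use the formula (\ref{Xi-formula}):
\begin{equation*}
X_i = \pp{x_i} + \frac{1}{2}\sum_{j=1}^m \inner{J_{u_j}x}{e_i}\,\pp{z_j}.
\end{equation*}
For the first term, ordinary Euclidean integration by parts applies directly since $\psi$ has compact support. For each cross term, the coefficient $\frac{1}{2}\inner{J_{u_j}x}{e_i}$ depends only on $x$, not on $z_j$, so it passes freely through $\pp{z_j}$ under integration by parts. Equivalently, $X_i$ has vanishing Euclidean divergence: $\pp{x_i}(1) + \sum_j \pp{z_j}\bigl(\tfrac{1}{2}\inner{J_{u_j}x}{e_i}\bigr) = 0$, so $X_i^* = -X_i$ on $C^\infty_c$. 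The identical argument applies to $\hat{X}_i$ using the second line of (\ref{Xi-formula}). Iterating gives self-adjointness of $L$ on compactly supported smooth functions: $\hinner{Lf}{\psi} = \sum_i \hinner{X_i^2 f}{\psi} = \sum_i \hinner{f}{X_i^2\psi} = \hinner{f}{L\psi}$.

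The main potential obstacle is the justification of differentiating under the distributional pairing in part (1); the rest is routine once one is careful about the order of the translation ($k^{-1}\groupop g$ versus $g\groupop k^{-1}$) so that left-invariance of $X_i$ applies. Everything else reduces to calculus in $\R^{2n+m}$ thanks to the fact that $\haar$ is Lebesgue measure and the coefficients appearing in $X_i$ are polynomials in the appropriate subset of coordinates.
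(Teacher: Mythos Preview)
The paper actually omits the proof of this proposition entirely (it is stated and then the text moves on), so there is nothing to compare against. Your argument is correct and is the standard one: for part (1) you correctly use the second form of the convolution so that the $g$-dependence sits inside a left translation of $\psi$, and left-invariance of $X_i$ does the rest; for part (2) the observation that the coefficients of $X_i$ (and $\hat{X}_i$) in (\ref{Xi-formula}) have vanishing Euclidean divergence is exactly what makes the integration by parts go through with no boundary or coefficient-derivative terms. The only point worth tightening is the distributional case of (1): rather than ``differentiating under the pairing'' informally, you can simply note that $g \mapsto \psi((\cdot)^{-1}\groupop g)$ is a $C^\infty$ map from $G$ into $C^\infty_c(G)$ (or Schwartz space), so the composition with the continuous linear functional $f$ is smooth and its derivative is computed termwise---this is the standard argument and is what you have in mind.
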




We now mention some (rather weak) positivity-preserving properties of
the heat equation.

\begin{theorem}\label{inf-increases}
  Suppose $u \in C^{2,1}(G \times [0, T])$ has the following
    properties:
    \begin{enumerate}
    \item $u$ solves the heat equation $\left(L - \pp{t}\right)u = 0$;
    \item $u(\cdot, t)$ vanishes at infinity uniformly in $t$.  That
      is, for any $\epsilon > 0$ there exists a compact set $K \subset
      G$ such that $\sup_{K^C \times [0,T]} \abs{u} < \epsilon$. \label{vanish-at-infinity}
    \end{enumerate}
    Then $u \ge \inf_G u(\cdot, 0)$.
\end{theorem}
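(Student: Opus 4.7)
My plan is to adapt the standard weak parabolic minimum principle, with two adjustments: handle the non-compactness of $G$ via the uniform decay hypothesis, and handle the degeneracy of $L$ via its positive semidefinite principal symbol. I will argue by contradiction, introducing a linear-in-$t$ perturbation so that the heat equation itself forces a strict sign.

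Write $m := \inf_G u(\cdot, 0)$, and note that since $u(\cdot, t)$ vanishes at infinity (in particular at $t = 0$), we have $m \le 0$, and $u$ is globally bounded on $G \times [0, T]$. Suppose for contradiction that $u(g_1, t_1) < m$ for some $(g_1, t_1)$; necessarily $t_1 > 0$, and $\delta := m - u(g_1, t_1) > 0$. I will then consider
\begin{equation*}
v(g, t) := u(g, t) + \epsilon t, \qquad \epsilon \in (0, \delta / (2T)),
\end{equation*}
so that $v(g_1, t_1) < m \le 0$ while $\inf_G v(\cdot, 0) = m$. Choosing any $\eta \in (0, -v(g_1, t_1))$ and invoking the uniform vanishing hypothesis yields a compact $K \subset G$ with $v \ge -\eta > v(g_1, t_1)$ on $K^C \times [0, T]$. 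Hence the infimum of $v$ on $G \times [0, T]$ is approached only within $K \times [0, T]$ and, by continuity and compactness, is attained at some $(g_0, t_0)$, necessarily with $t_0 > 0$ (since $v \ge m > v(g_1, t_1)$ on $\{t = 0\}$).

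To extract a contradiction at $(g_0, t_0)$, I use that in Euclidean coordinates on $G = \R^{2n+m}$, $L$ is a second-order operator $L = \sum_{k, \ell} a_{k\ell}(g) \partial_k \partial_\ell + \sum_\ell b_\ell(g) \partial_\ell$ whose principal symbol matrix $A(g) = (a_{k\ell}(g))$ is positive semidefinite by degenerate ellipticity. Interior spatial minimality at $g_0$ gives $\nabla_g v(g_0, t_0) = 0$ and spatial Hessian $D^2_g v(g_0, t_0) \ge 0$, so
\begin{equation*}
L v(g_0, t_0) = \operatorname{tr}\bigl(A(g_0)\, D^2_g v(g_0, t_0)\bigr) \ge 0,
\end{equation*}
since the trace of a product of two symmetric positive semidefinite matrices is nonnegative. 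On the other hand, the heat equation gives
\begin{equation*}
L v(g_0, t_0) = L u(g_0, t_0) = \partial_t u(g_0, t_0) = \partial_t v(g_0, t_0) - \epsilon \le -\epsilon < 0,
\end{equation*}
with $\partial_t v(g_0, t_0) \le 0$ by temporal minimality (recall $t_0 \in (0, T]$). This contradiction finishes the argument.

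The main subtlety is the second step: ensuring that the infimum of the perturbed function $v$ is genuinely attained at an interior space-time point (rather than escaping to spatial infinity) is precisely where the uniform vanishing hypothesis is used. The degeneracy of $L$ is not a real obstacle, because the PSD structure of $A$ still forces $L v \ge 0$ at any spatial critical point with PSD Hessian; it is this structural fact, not strict ellipticity, that the argument really needs.
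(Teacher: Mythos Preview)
Your proof is correct and follows the same strategy as the paper's: perturb the solution so that the heat equation forces a strict sign at an interior space--time minimum, use the uniform decay hypothesis to localize to a compact set where that minimum is actually attained, and use the positive semidefinite principal symbol of $L$ to conclude $Lv \ge 0$ at a spatial minimum. The only difference is cosmetic: you use the standard additive perturbation $v = u + \epsilon t$ and argue by contradiction, whereas the paper uses a multiplicative perturbation $v = e^{ct}(u - A + 1)$ (with $A = \inf_G u(\cdot,0)$), shows $v \ge 1$ by the same minimum-point contradiction, and then lets $c \downarrow 0$; both devices manufacture the needed strict inequality at the minimum.
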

The proof is adapted from an argument in \cite{hunt}.
\begin{proof}

  Let $\bar{G} = G \cup \{\infty\}$ be the one-point compactification
  of $G$.  By condition \ref{vanish-at-infinity}, $u$ extends
  continuously to $\bar{G} \times [0,T]$ by setting $u(\infty, t) =
  0$.
  
  Let $A := \inf_G u(\cdot, 0)$.  Let $c > 0$ be a constant, and let
  $v(g, t) = e^{ct} (u(g,t) - A + 1)$.
  We show that $v \ge 1$.  Suppose the contrary; then $v(g_0, t_0) =
  a$ for some $0 < a < 1$, $(g_0, t_0) \in G \times (0,T]$.  Since $v$
    is a continuous function on the compact set $\bar{G} \times
    [0,T]$, there exists $h \in G$, $s > 0$ such that $v(h, s) = a$,
    and $v(g, t) > a$ for all $g \in G$, $t < s$.  (The set
    $v^{-1}((-\infty, a]) \subset \bar{G} \times [0,T]$ is compact and
  disjoint from $\bar{G} \times \{0\}$, hence there is some point $(h,
  s)$ of $v^{-1}((-\infty, a])$ whose distance from $\bar{G} \times
    \{0\}$ is minimum.)  In particular, $v_t(h, s) \le 0$.

    On the other hand, we must have $v(\cdot, s) \ge a$, so that
    $v(\cdot, s)$ has a global minimum at $h$; hence so does $u$.
    Since $L$ is an operator with positive semidefinite principal
    symbol, we must have $0 \le Lu(h, s) = u_t(h, s)$.  Then
    \begin{equation*}
      v_t(h,s) = c e^{ct} (u(h,s) - A +1) + e^{ct} u_t(h,s) \ge c e^{ct}
      (u(h,s) - A +1) = c a > 0
    \end{equation*}
    which is a contradiction.  Therefore $v \ge 1$.  By letting $c$
    tend to $0$, it follows that $u \ge A$.
\end{proof}

Moreover, nonnegative solutions of the heat equation immediately
become positive everywhere.  This is a manifestation of the idea that
``heat propagates at infinite speed.''  One way to show this is by
using the following Harnack inequality,
which is proved in \cite{purplebook}.  This seems to be a rather
larger hammer than should be needed to crush this insect, but the
author is not presently aware of a simpler approach.

\begin{theorem}[Special case of \cite{purplebook} Theorem III.2.1]\label{harnack}
\index{Harnack inequality}
  Let $M$ be a smooth manifold, $\{X_1, \dots, X_k\}$ a bracket generating
  set of vector fields on $M$, $K$ a compact subset of $M$, and $0 <
  s < t < \infty$.  Then there exists a constant $C$ such that if
  $u \in C^{2,1}(M \times [0, \infty))$ is a positive solution of $\left(\sum
    X_i^2 - \pp{t}\right)u = 0$, then
    \begin{equation*}
      \sup_{x \in K} \abs{u(x, s)} \le C \inf_{x \in K} \abs{u(x, t)}.
    \end{equation*}
\end{theorem}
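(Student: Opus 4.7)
The plan is to follow the standard Moser-iteration route to parabolic Harnack inequalities, adapted to the sub-Riemannian setting generated by $\{X_1, \dots, X_k\}$. Since $M$ is only a general smooth manifold, I would first reduce to a local problem: cover $K$ by finitely many coordinate patches on which the vector fields can be lifted, via the Rothschild--Stein construction, to left-invariant vector fields on a nilpotent Lie group. On each patch it then suffices to prove the corresponding Harnack inequality for the lifted operator, and to check that the constants depend only on the compact set $K$ and on $0 < s < t$.

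With this local reduction in hand, the next step is to set up sub-Riemannian geometry. By Chow's theorem (Theorem \ref{chow-thm}), the bracket generating condition yields a Carnot--Carath\'eodory distance $d_{CC}$ in which $K$ has finite diameter, and metric balls $B(x, r)$ are relatively compact for small $r$. The two analytic ingredients I would then establish are:
\begin{enumerate}
\item \emph{Volume doubling}: there exists $C_D$ so that $\mu(B(x, 2r)) \le C_D \, \mu(B(x, r))$ for all $x$ in a neighborhood of $K$ and all $r$ below a fixed threshold, where $\mu$ is a smooth volume measure.
\item \emph{A weak (sub-Riemannian) Poincar\'e inequality}: $\int_{B(x,r)} |f - f_{B(x,r)}|^2 \, d\mu \le C_P \, r^2 \int_{B(x, \kappa r)} \sum_{i=1}^k (X_i f)^2 \, d\mu$, for smooth $f$, where $\kappa \ge 1$ is a fixed dilation factor.
\end{enumerate}
These are the two hypotheses of the general framework in \cite{purplebook} (following Saloff-Coste and Grigor'yan), and they are known to hold in the H\"ormander setting because the Nagel--Stein--Wainger polynomial estimates on $\mu(B(x,r))$ give doubling, while Jerison's theorem gives the sub-Riemannian Poincar\'e inequality.

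Once volume doubling and the Poincar\'e inequality are available, the proof proceeds by Moser iteration applied to positive solutions $u$ of $(\sum X_i^2 - \partial_t) u = 0$. One derives reverse H\"older estimates on parabolic cylinders $Q_r(x_0, t_0) = B(x_0, r) \times (t_0 - r^2, t_0)$, leading to $L^\infty$ bounds of the form $\sup_{Q_{r/2}^-} u \le C \, \bigl( \fint_{Q_r^-} u^p \bigr)^{1/p}$ on a past cylinder and a matching lower bound $\inf_{Q_{r/2}^+} u \ge c \, \bigl( \fint_{Q_r^+} u^p \bigr)^{1/p}$ on a future cylinder. A chaining argument, using a John-domain-type cover of $K$ by overlapping CC balls together with the Bombieri--Giusti lemma to pass from the $L^p$-inequality to a pointwise one, yields the asserted inequality $\sup_K u(\cdot, s) \le C \inf_K u(\cdot, t)$ with $C$ depending only on $K$, $s$, $t$, and the constants $C_D, C_P, \kappa$.

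The main obstacle is the Poincar\'e inequality: unlike volume doubling, which reduces to counting via the ball-box theorem, the Poincar\'e inequality requires a genuinely sub-Riemannian argument (representation formulas in terms of CC geodesic segments or subunit paths), and this is where the bracket generating condition enters in an essential way rather than merely through Chow's theorem. Everything else (covering, Caccioppoli inequality, Moser iteration, chaining) is then a routine, though technical, adaptation of the elliptic Euclidean scheme to parabolic cylinders scaled according to $r \mapsto r^2$ in time.
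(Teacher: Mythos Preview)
The paper does not give its own proof of this theorem; it is quoted as a special case of Theorem III.2.1 of \cite{purplebook} and used as a black box (the author even remarks that it is ``a rather larger hammer than should be needed''). Your outline via Rothschild--Stein lifting, Nagel--Stein--Wainger doubling, Jerison's Poincar\'e inequality, and Moser iteration is exactly the route taken in \cite{purplebook}, so your proposal is consistent with the cited source, but there is nothing in the present paper to compare it against.
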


We observe that the word ``positive'' in this theorem can immediately
be replaced with the word ``nonnegative,'' by replacing $u$ with $u +
\epsilon$ and letting $\epsilon \downto 0$.  (The constant $C$ is
independent of $u$.)

\begin{corollary}\label{infinite-speed}
  If $u \in C^{2,1}(M \times [0, \infty))$ is a nonnegative solution
    of $\left(\sum X_i^2 - \pp{t}\right)u = 0$, and $u(\cdot, 0)$ is
    not identically zero, then $u(\cdot, t) > 0$ for all $t > 0$.
\end{corollary}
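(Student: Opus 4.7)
The natural approach is to use the Harnack inequality (Theorem \ref{harnack}) to propagate the strict positivity of $u$ from a point where it is nonzero to any point of $M$ at any later time. The first preliminary step, noted in the remark immediately following Theorem \ref{harnack}, is to upgrade the Harnack inequality from positive to merely nonnegative solutions: apply the stated theorem to $u + \varepsilon$ (which still solves the heat equation, since $L$ kills constants and $\partial_t \varepsilon = 0$) to get $\sup_K(u+\varepsilon)(\cdot,s) \le C\, \inf_K(u+\varepsilon)(\cdot,t)$, and then let $\varepsilon \downto 0$. Observe that the resulting constant $C = C(K,s,t)$ is independent of $u$.

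Fix now $x_0 \in M$ and $t > 0$; we aim to show $u(x_0,t) > 0$. Since $u \ge 0$ and $u(\cdot,0) \not\equiv 0$, there exists $y_0 \in M$ with $u(y_0,0) > 0$. Because $u \in C^{2,1}(M \times [0,\infty))$ is in particular continuous up to $t=0$, we can choose some $s \in (0,t)$ small enough that $u(y_0,s) > 0$. Assuming $M$ is connected (which is implicit in the statement, since otherwise the conclusion is false), choose a compact subset $K \subset M$ containing both $x_0$ and $y_0$, for instance the image of a continuous path joining them.

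Applying the extended Harnack inequality to $K$ and the times $s < t$, there is a constant $C = C(K,s,t) > 0$ with
\begin{equation*}
  \sup_{x \in K} u(x,s) \;\le\; C \inf_{x \in K} u(x,t).
\end{equation*}
The left-hand side is at least $u(y_0,s) > 0$, so the right-hand side is positive, and in particular $u(x_0,t) \ge \inf_K u(\cdot,t) > 0$, as desired.

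The only real step that needs care is the upgrade from positive to nonnegative solutions in Theorem \ref{harnack}, and the tacit connectedness of $M$; both are essentially bookkeeping once Theorem \ref{harnack} is in hand. The Harnack inequality itself is, as the author notes, doing the real work.
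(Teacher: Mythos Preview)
Your proof is correct and essentially identical to the paper's: both pick a point $y_0$ with $u(y_0,0)>0$, use continuity to get $u(y_0,s)>0$ for some $0<s<t$, enclose $y_0$ and the target point in a compact set $K$, and apply the Harnack inequality (extended to nonnegative solutions by the $u+\varepsilon$ trick) to conclude. Your explicit remarks on the nonnegative upgrade and the tacit connectedness of $M$ are welcome clarifications but do not change the argument.
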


\begin{proof}
  Fix $y \in M$, $t > 0$.  Since $u(\cdot, 0)$ is not identically
  zero, there exists $y_0 \in M$ with $u(y_0, 0) > 0$.  Then $u(y_0,
  s) > 0$ for some $0 < s < t$.  Let $K$ be a compact set containing
  both $y$ and $y_0$.  Then by Theorem \ref{harnack},
  \begin{equation*}
    0 < u(y_0, s) \le \sup_{x \in K} \abs{u(x, s)} \le C \inf_{x \in
      K} \abs{u(x, t)} \le C u(y,t).
  \end{equation*}
\end{proof}

To conclude this section, we remark that the operators $L$ and $\grad$
are not intrinsic to the group $G$, since they were constructed in
terms of the vector fields $\{X_i\}$ which in turn depend on the
choice of orthonormal basis $\{e_i\}$ for $\mathfrak{z}^\perp$.
Actually, $L$ and $f \mapsto \abs{\grad f}$ depend only on the choice
of admissible inner product $\inner{\cdot}{\cdot}$.  

Thus, given an abstract H-type group $G$, there is no canonical
sublaplacian $L$ unless further choices are made.  Selecting a
specific admissible inner product on $\mathfrak{g}$ will suffice.  (In
our treatment, with $G$ realized as $\R^{2n+m}$, we have selected the
Euclidean inner product, without loss of generality.)  

More generally, if $G$ is replaced with a subriemannian manifold $M$,
the subriemannian metric can be used to construct a canonical
sublaplacian.  The construction proceeds along similar lines to the
construction of the Laplace-Beltrami operator on a Riemannian
manifold, with the role of the Riemannian volume form taken instead by
the so-called Popp measure. 
\index{Popp measure}
 See sections 10.5--10.6 of
\cite{montgomery} for more details.  In the case of an H-type group,
the Popp measure corresponds to (a multiple of) the Haar measure, and
the sublaplacian thus obtained is the same as the one used here.

\section{The heat kernel $p_t$}\label{pt-sec}

\index{H-type group!heat kernel $p_t$}
The purpose of this section is to derive an explicit integral formula
for the heat kernel $p_t$ which is the fundamental solution of the
heat equation $\left(L - \pp{t}\right)u = 0$.  Loosely speaking, $p_t$
should be a solution with initial condition $p_0 = \delta_0$ a delta
distribution on $\R^{2n+m}$, so that solutions with other initial
conditions can be found via convolution.

We begin with an informal computation that yields a formula for a
candidate function $p_t$.  Afterwards, we verify that this function
has the properties that one would expect of a heat kernel.

Our computation proceeds by obtaining the heat kernel as the Fourier
transform of the Mehler kernel, similar in spirit to the computation
in \cite{cygan}.  For general step 2 nilpotent groups, \cite{gaveau77}
derived a formula probabilistically from a formula in \cite{levy50}
regarding the L\'evy area process; \cite{luan-zhu} extended it to the
Cayley Heisenberg group.  \cite{taylor} has a similar computation.
\cite{randall} obtains the formula for H-type groups as the Radon
transform of the heat kernel for the Heisenberg group.  In the case of
the Heisenberg group itself, \cite{klingler} has a computation using
magnetic field heat kernels.  \cite{beals-fundamental-solutions} gives
a short, direct proof by assuming \emph{a priori} that the function
should be Gaussian in form; \cite{lust-piquard} is similar in spirit
but covers a broader class of groups.  \cite{hulanicki} proceeds via
approximation of Brownian motion by random walks;
\cite{abgr-intrinsic} extends this to a broader class of nilpotent Lie
groups by using noncommutative Fourier transforms.  \cite{bgg} uses
complex Hamiltonian mechanics.

First, we record a couple of formal algebraic identities that will
help in the computations.  $A,B,C$ should be interpreted as operators;
bold indicates $k$-tuples of operators, e.g. $\vec{B} = (B_1, \dots,
B_{k})$.
\begin{align*}
  \left[A, BC\right] &= ABC - BCA + BAC - BAC = \left[A,B\right]C - B\left[C,A\right] \\
  \left[A, \ncinner{\vec{B}}{\vec{C}}\right] &= \left[A, \sum B_i C_i\right] = \sum \left[A, B_i
   C_i\right]    \\
  &= \sum (\left[A, B_i\right] C_i  -B_i\left[C_i, A\right]) =
  \ncinner{\left[A,\vec{B}\right]}{\vec{C}} -
\ncinner{\vec{B}}{\left[\vec{C},A\right]}.
\end{align*}
We also note that if $\Delta$ is the usual Laplacian on $\R^k$ and $f$
is a smooth function, we have
\begin{equation*}
  \left[\Delta, f\right] = \Delta f + 2\ncinner{\grad f}{\grad}.
\end{equation*}

Formally, we begin with the expression $p_t = e^{tL} \delta_0$.  Using
(\ref{L-threeterms}) we write $L = \Delta_x + M + \frac{1}{4}
\abs{x}^2 \Delta_z$, where $M := \ncinner{\grad_x}{J_{\grad_z} x}$ is an
``angular momentum'' operator.  We first note that $M$ commutes with
$\Delta_x$ and with $\abs{x}^2 \Delta_z$:
\begin{align*}
  \left[\Delta_x, M\right] &= -\left[\Delta_x,
    \ncinner{x}{J_{\grad_z}\grad_x}\right] \\
  &= -\ncinner{\left[\Delta_x,x\right]}{J_{\grad_z}\grad_x} +
  \ncinner{x}{\cancel{\left[\Delta_x,J_{\grad_z}\grad_x\right]}} \\
  &= -\ncinner{2\grad_x}{J_{\grad_z}\grad_x} = 0
\end{align*}
and
\begin{align*}
  \left[\ncinner{J_{\grad_z}x}{\grad_x},\frac{1}{4}\abs{x}^2 \Delta_z\right] &=
  \frac{1}{4} \left[\ncinner{J_{\grad_z}x}{\grad_x},\abs{x}^2\right] \Delta_z \\
  &= \frac{1}{4} \left(\ncinner{J_{\grad_z}x}{\left[\grad_x,\abs{x}^2\right]} -
  \ncinner{\cancel{\left[\abs{x}^2,J_{\grad_z}x\right]}}{\grad_x}\right) \Delta_z \\
&= \frac{1}{4} {\ncinner{J_{\grad_z}x}{2x}} \Delta_z = 0.
\end{align*}
Thus by the Baker-Campbell-Hausdorff formula we have
\begin{equation*}
  e^{tL} \delta_0 = e^{t(\Delta_x + \frac{1}{4} \abs{x}^2 \Delta_z)}
  e^{tM} \delta_0.
\end{equation*}

However, note that $M$ annihilates radial functions.  Indeed, if $f
\in C^{\infty}(\R^{2n+m})$ is radial in $x$, so that $f(x,z) =
f(\abs{x}, z)$, then $\grad_x f = \hat{x} f_{\abs{x}}$, where $\hat{x}
= \frac{1}{\abs{x}}x$, so
\begin{equation*}
\inner{J_{\grad_z f}x}{\grad_x f} = \inner{J_{\grad_z
    f}x}{\hat{x}f_{\abs{x}} } = \abs{x} f_{\abs{x}} \inner{J_{\grad_z
    f}\hat{x}}{\hat{x}} = 0.
\end{equation*}
Thus, since $\delta_0$ can be approximated by smooth radial functions,
it is reasonable to write $M \delta_0 = 0$ and thus $e^{tM} \delta_0 =
\delta_0$.

Now we have $p_t = e^{t(\Delta_x + \frac{1}{4}\abs{x}^2 \Delta_z)}
\delta_0$, i.e. 
\begin{equation*}
\left(\frac{\partial}{\partial t} - \left(\Delta_x +
\frac{1}{4}\abs{x}^2 \Delta_z\right)\right) p_t = 0, \quad p_0 = \delta_0.
\end{equation*}
Taking a Fourier transform in the $z$ variables, we see that
$\hat{p}_t(x,\lambda) := \int_{\R^m} e^{-i \inner{\lambda}{z}}
p_t(x,z)\,dz$ satisfies the \define{quantum harmonic
    oscillator}{quantum harmonic oscillator} equation
\begin{equation}\label{oscillator}
  \left(\frac{\partial}{\partial t}-\left(\Delta_x -
  \frac{1}{4}\abs{x}^2\abs{\lambda}^2\right)\right) u_t = 0,
\end{equation}
with initial condition $\hat{p}_0 = \delta_0^{(x)} \otimes 1$, 
where $\delta_0^{(x)}$ is the delta distribution on $\R^{2n}$.

(\ref{oscillator}) says that $\hat{p}_t$ is the \define{Mehler
  kernel}{Mehler kernel} $m_{t, \lambda} := e^{t\left(\Delta_x -
  \frac{1}{4}\abs{x}^2\abs{\lambda}^2\right)} \delta_0$, the
  fundamental solution to the \define{quantum harmonic
    oscillator}{quantum harmonic oscillator}.  We now
  derive Mehler's formula for $m_{t, \lambda}$.  Other derivations can be found
  in \cite[pp. 38, 55]{simon-functional-integration},
  \cite[p. 29]{simon-field-theory}, and references therein.

By the Trotter product formula, we expect to have
\begin{equation*}
  m_{t, \lambda} = \lim_{N \to \infty} \left(
  e^{-\frac{1}{4 N}\abs{x}^2\abs{\lambda}^2}
  e^{\frac{t}{N} \Delta} \right) \delta_0.
\end{equation*}
Since $e^{\frac{t}{N} \Delta} \delta_0$ is a Gaussian (i.e. of the
form $A(t) e^{-b(t) \abs{x}^2}$), and the family of Gaussians is
preserved by the operators $e^{\frac{t}{N} \Delta} \delta_0$ and
$e^{-\frac{1}{4 N}\abs{x}^2\abs{\lambda}^2}$, we expect that $m_{t, \lambda}$
should be a Gaussian as well.  Thus we assume 
\begin{equation}\label{mehler-gaussian}
m_{t, \lambda}(x) = A(t) e^{-b(t)
  \abs{x}^2}
\end{equation}
and solve for the functions $A(t), b(t)$.

Since $m_{t, \lambda}$ should solve the quantum harmonic oscillator
equation (\ref{oscillator}), we have
\begin{equation}
  (A'(t) - \abs{x}^2 b'(t)) e^{-b(t)  \abs{x}^2} = 
  (4 b(t)^2 \abs{x}^2 - 4 n b(t) - \frac{1}{4} \abs{\lambda}^2
  \abs{x}^2) A(t) e^{-b(t)  \abs{x}^2}
\end{equation}
whence, by equating like powers of $\abs{x}$,
\begin{align}
  A'(t) &= - 4 n A(t) b(t)  \label{A-ode} \\
  b'(t) &= - 4 b(t)^2 + \frac{1}{4} \abs{\lambda}^2. \label{b-ode}
\end{align}

We first solve the separable ODE (\ref{b-ode}).  Note that the initial
condition $m_0 = \delta_0$ suggests the initial condition $b(0) =
+\infty$, so we write
\begin{align*}
  \int_{\infty}^{b(t)} \frac{d\beta}{-4 \beta^2 + \frac{1}{4}
    \abs{\lambda}^2} &= \int_0^t \,d\tau \\
  \frac{1}{\abs{\lambda}} \coth^{-1}\left(\frac{4
    b(t)}{\abs{\lambda}}\right) &= t \quad \text{(see \cite[p. 451]{rogawski})}\\
  b(t) &= \frac{1}{4} \abs{\lambda} \coth(t \abs{\lambda}).
  \end{align*}

We substitute into (\ref{A-ode}) and separate variables to find
\begin{align*}
  A(t) &= c(\lambda) e^{-n \abs{\lambda} \int \coth(t
    \abs{\lambda})\,dt} \\
    &= c(\lambda) \sinh(t \abs{\lambda})^{-n}
\end{align*}
since $\int \coth(t \abs{\lambda}) \,dt = \frac{1}{\abs{\lambda}} \ln
\sinh(t \abs{\lambda})$ (see \cite[end pages]{rogawski}).  $c(\lambda)$ is
some ``constant'' depending on $\lambda$ but not on $t$.

Hence
\begin{equation}
  m_{t, \lambda}(x) = c(\lambda) \sinh(t \abs{\lambda})^{-n} e^{- \frac{1}{4}
    \abs{\lambda} \coth(t \abs{\lambda}) \abs{x}^2}.
\end{equation}
To determine the constant $c(\lambda)$, we assert that since $m_0 =
\delta_0$, we should have 
\begin{equation*}\lim_{t \to 0} \int_{\R^{2n}} m_{t, \lambda}(x)\,dx =
1.
\end{equation*}
  We compute
\begin{align*}
  \int_{\R^{2n}} m_{t, \lambda}(x)\,dx &= c(\lambda) \sinh(t \abs{\lambda})^{-n} \int_{\R^{2n}} e^{- \frac{1}{4}
    \abs{\lambda} \coth(t \abs{\lambda}) \abs{x}^2} \,dx \\
  &= c(\lambda) \left(\frac{4 \pi}{\abs{\lambda}\cosh(t
    \abs{\lambda})}\right)^{n} \\
  &\to c(\lambda) \left(\frac{4 \pi}{\abs{\lambda}}\right)^{n}
\end{align*}
as $t \to 0$.  
Thus we take $c(\lambda) =
\left(\frac{\abs{\lambda}}{4 \pi}\right)^{n}$ to obtain
\begin{equation}\label{mehler-formula}
  m_{t, \lambda}(x) = \left(\frac{\abs{\lambda}}{4
  \pi \sinh(t \abs{\lambda})}\right)^{n} e^{- \frac{1}{4}
    \abs{\lambda} \coth(t \abs{\lambda}) \abs{x}^2}.
\end{equation}
By construction, $m_{t,\lambda}(x)$ as defined by
(\ref{mehler-formula}) solves (\ref{oscillator}).
We remark immediately on the scaling property
\begin{equation}\label{mehler-scaling}
  m_{t, \lambda}(x) = t^{-n} m_{1, t\lambda}(x/\sqrt{t}).
\end{equation}

\begin{proposition}\label{mehler-schwartz}
  $m(t, x, \lambda) = m_{t,\lambda}(x) \in C^\infty((0,
  \infty) \times \R^{2n+m})$, and for all
  multi-indices $\alpha, \beta$ and
  all $t > 0$, $m(t, \cdot, \cdot)$ is a Schwartz function on
  $\R^{2n+m}$.
\end{proposition}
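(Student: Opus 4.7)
The plan has two parts, corresponding to smoothness and to the Schwartz property. For smoothness, the only difficulty is at $\lambda = 0$, where $\abs{\lambda}$ is not differentiable. I resolve this by observing that $\abs{\lambda}$ enters the formula only through the combinations $\abs{\lambda}/\sinh(t\abs{\lambda})$ and $\abs{\lambda}\coth(t\abs{\lambda})$. The functions $h_1(u) := u/\sinh u$ and $h_2(u) := u\coth u$, continuously extended at $u=0$, are real analytic and even on $\R$, so each can be written as $H_i(u^2)$ for a real analytic $H_i$. Hence
\begin{equation*}
\frac{\abs{\lambda}}{\sinh(t\abs{\lambda})} = \frac{1}{t} H_1(t^2 \abs{\lambda}^2), \qquad \abs{\lambda}\coth(t\abs{\lambda}) = \frac{1}{t} H_2(t^2 \abs{\lambda}^2),
\end{equation*}
which are $C^\infty$ on $(0,\infty) \times \R^m$; composing with $\abs{x}^2$ and the exponential then gives $m \in C^\infty((0,\infty) \times \R^{2n+m})$.

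For the Schwartz property at fixed $t > 0$, I would first record two elementary pointwise bounds: $u\coth u \ge 1$ for $u \ge 0$ gives $\abs{\lambda}\coth(t\abs{\lambda}) \ge 1/t$, and $u/\sinh u \le C e^{-u/2}$ for $u \ge 1$ gives $\abs{\lambda}/\sinh(t\abs{\lambda}) \le C_t e^{-t\abs{\lambda}/2}$ whenever $\abs{\lambda} \ge 1/t$. Substituted into (\ref{mehler-formula}), these combine to
\begin{equation*}
\abs{m_{t,\lambda}(x)} \le C'_t\, e^{-\abs{x}^2/(4t)}\, e^{-nt\abs{\lambda}/2}
\end{equation*}
for $\abs{\lambda} \ge 1/t$, with the Gaussian-in-$x$ factor still dominating on the compact region $\{\abs{\lambda} \le 1/t\}$ (by the same bound with the exponential in $\lambda$ replaced by a constant). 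This envelope beats every polynomial in $(x,\lambda)$.

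To control derivatives, I would induct on $\abs{\gamma}+\abs{\delta}$ to show that $\partial_x^\gamma \partial_\lambda^\delta m_{t,\lambda}(x) = P_{\gamma,\delta}(t, x, \lambda)\, m_{t,\lambda}(x)$ for some $P_{\gamma,\delta}$ which, at each fixed $t$, is polynomial in $(x,\lambda)$ with coefficients built from $H_1, H_2$ and their derivatives evaluated at $t^2\abs{\lambda}^2$. Each $\partial_{x_i}$ contributes a linear-in-$x$ factor, and each $\partial_{\lambda_j}$ differentiates $H_1(t^2\abs{\lambda}^2)^n$ and $H_2(t^2\abs{\lambda}^2)\abs{x}^2$ via the chain rule, producing polynomial contributions. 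The envelope above then absorbs both $P_{\gamma,\delta}$ and any additional prefactor $x^\alpha \lambda^\beta$, so every Schwartz seminorm of $m(t,\cdot,\cdot)$ is finite. The only substantive point in the argument is the even analytic structure of $h_1, h_2$, which rescues smoothness at $\lambda = 0$; everything after is routine polynomial-versus-Gaussian/exponential bookkeeping.
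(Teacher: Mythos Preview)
Your proposal is correct and follows essentially the same strategy as the paper: both hinge on the observation that $u/\sinh u$ and $u\coth u$ are even analytic (hence smooth functions of $\abs{\lambda}^2$), together with the bounds $u\coth u \ge 1$ and the exponential decay of $u/\sinh u$ to control the Schwartz seminorms. The organizational differences are minor: the paper first invokes the scaling $m_{t,\lambda}(x) = t^{-n} m_{1,t\lambda}(x/\sqrt{t})$ to reduce to $t=1$ and then works in the radial variables $(s,\rho) = (\abs{\lambda},\abs{x})$, showing by induction that $\partial_s^k \partial_\rho^l\bigl(a(s)e^{-b(s)\rho^2}\bigr) = \sum_i A_i(s)\rho^i e^{-b(s)\rho^2}$ with each $A_i$ Schwartz; you instead stay at general $t$, work in the full variables, and factor derivatives as $P_{\gamma,\delta}\cdot m$. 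The one point worth making explicit in your write-up is that the ``coefficients built from $H_1,H_2$ and their derivatives'' have at most polynomial growth in $\lambda$ (e.g.\ $H_2(t^2\abs{\lambda}^2)\sim t\abs{\lambda}$ appears when $\partial_{x_i}$ hits the exponent, and ratios $H_1^{(k)}/H_1$ arise when factoring out $m$); the paper handles the analogous issue by noting directly that $a$ is Schwartz and that $b^{(k)}$ has at most linear growth.
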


\begin{proof}
  By (\ref{mehler-scaling}) it suffices to take $t=1$.

  Set $a(s) = \left(\frac{s}{4 \pi \sinh s}\right)^n$, $b(s) =
  \frac{1}{4} s \coth s$, so that 
  \begin{equation}\label{m-a-b}
    m(1, x, \lambda) = 
    a(\abs{\lambda}) e^{-b(\abs{\lambda}) \abs{x}^2}.
  \end{equation}
  Note that $a,b$
  are entire even functions, and thus $a(\abs{\lambda}),
  b(\abs{\lambda})$ are entire functions of $\abs{\lambda}^2$, making
  them smooth functions of $\lambda$.

  We observe that $a$ is a Schwartz function.  Next, noting that
  $\coth' s= -\csch s$ and $\csch' s = -\csch s
  \coth s$, we see that the derivatives of $b(s)$ are of the form
  \begin{equation*}
    b^{(k)}(s) = s P_k(\coth s, \csch s) +
    Q_k(\coth s, \csch s)
  \end{equation*}
  for polynomials $P_k, Q_k$.  Since $\lim_{s \to \infty} \coth s = 1$
  and $\lim_{s \to \infty} \csch s = 0$, it follows that $b^{(k)}(s)$
  is of at most linear growth.  In particular, if $A(s)$ is a Schwartz
  function, so are $A(s) b(s)$ and $A(s) b'(s)$.  By induction, it
  follows that for any $\beta, k$ we have
  \begin{equation*}
    \partial_s^k \partial_\rho^l  a(s) e^{-b(s) \rho^2} =
    \sum_{i=0}^{l} A_i(s) \rho^i e^{-b(s) \rho^2}
  \end{equation*}
  where the $A_i$ are Schwartz functions.  Also, $b(s) \ge
  \frac{1}{4}$, \footnote{To see this, set $y(s) = s \cosh s - \sinh
    s$ and write $b(s) - \frac{1}{4} =
    \frac{1}{4}\csch(s) y(s)$.  We have $y(0)=0$ and $y'(s)=s \sinh s
    \ge 0$, so $y(s) \ge 0$.} so $\rho^i e^{-b(s) \rho^2} \le \rho^i
  e^{-\frac{1}{4}\rho^2}$, which is also rapidly decaying.  Thus $a(s)
  e^{-b(s) \rho^2}$ is a Schwartz function of $(\rho, s)$.  By
  (\ref{m-a-b}), the proof is complete.
\end{proof}

We now define the heat kernel to be the inverse Fourier transform of
the Mehler kernel.

\begin{definition}
  \index{H-type group!heat kernel $p_t$!formula}
  \define{}{H-type group!heat kernel $p_t$}
  The \define{heat kernel}{heat kernel} on an H-type group $G =
  \R^{2n+m}$ is the function $p_t$ given by 
  \begin{align}
    p_t(x,z) &= \frac{1}{(2\pi)^m} \int_{\R^m} e^{i \inner{\lambda}{z}}
    m_{t,\lambda}(x)\,d\lambda \nonumber \\
    &= \frac{1}{(2\pi)^m} \int_{\R^m} e^{i \inner{\lambda}{z} - \frac{1}{4}
    \abs{\lambda} \coth(t \abs{\lambda}) \abs{x}^2} \left(\frac{\abs{\lambda}}{4
  \pi \sinh(t
  \abs{\lambda})}\right)^{n}\,d\lambda. \label{pt-formula-1}
  \end{align}
\end{definition}

The next proposition suggests that $p_t$ deserves to be called the
heat kernel, since it is the fundamental solution to the heat equation.

\begin{proposition}\label{pt-props}
  \begin{enumerate}
  \item   $p_t(x, z) \in C^\infty((0,
    \infty) \times \R^{2n+m})$, and $p_t$ is a Schwartz function on
    $\R^{2n+m}$ for each $t > 0$. \label{pt-schwartz}
  \item $\left(L - \pp{t}\right)p_t = 0$.
  \item $\int_{G} p_t(x,z)\,d\haar = 1$ for all $t > 0$. \label{pt-integral}
  \item For $\alpha > 0$, $p_t(x,z) = \alpha^{2(n+m)} p_{\alpha^2
    t}(\varphi_\alpha(x,z))$.  In particular, $\lim_{t \to 0} p_t(x,z)
    = 0$ for all $(x,z) \ne 0$. \label{pt-scaling}
  \item $p_t$ is a radial function (i.e. $p_t(x,z)$ depends only on
    $\abs{x}, \abs{z}$) as is $\pp{t} p_t = L p_t$.  In particular,
    $p_t(g^{-1}) = p_t(g)$ and $L p_t (g^{-1}) = L p_t (g)$. \label{pt-radial}
  \item $p_t$ vanishes uniformly at infinity.  That is, for any
    $\epsilon > 0$ there exists a compact set $K \subset G$ such that
    $\sup_{t > 0,\,g \in K^C} \abs{p_t(g)} < \epsilon$.\label{pt-infinity}
  \end{enumerate}
\end{proposition}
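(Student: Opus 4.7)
The strategy is to exploit the Fourier relationship $p_t(x,z) = (2\pi)^{-m}\int_{\R^m} e^{i\langle\lambda,z\rangle} m_{t,\lambda}(x)\,d\lambda$ throughout, reducing every claim about $p_t$ to a claim about the Mehler kernel together with a symmetry or scaling argument. Proposition \ref{mehler-schwartz} does the heavy lifting: it tells us that $m(t,x,\lambda)$ is jointly smooth in $t$ and Schwartz in $(x,\lambda)$, both of which pass immediately to $p_t$. In particular, for part (\ref{pt-schwartz}) the inverse Fourier transform in $\lambda$ preserves the Schwartz class in the $z$-variable, and joint smoothness in $(t,x,z)$ follows by differentiating under the integral sign, justified by the rapid decay bounds in Proposition \ref{mehler-schwartz}.

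For the heat equation, write $L = \Delta_x + M + \tfrac{1}{4}|x|^2\Delta_z$ as in (\ref{L-threeterms}), where $M = \ncinner{\grad_x}{J_{\grad_z}x}$. The integrand in (\ref{pt-formula-1}) depends on $x$ only through $|x|^2$, so $p_t$ is radial in $x$, and the same calculation used earlier (Section \ref{pt-sec}) shows $M p_t \equiv 0$. It remains to pass the operator $\Delta_x + \tfrac{1}{4}|x|^2\Delta_z - \partial_t$ inside the $\lambda$-integral; when we do, $\Delta_z$ produces a factor of $-|\lambda|^2$, and the integrand becomes $e^{i\langle\lambda,z\rangle}(\Delta_x - \tfrac{1}{4}|x|^2|\lambda|^2 - \partial_t)m_{t,\lambda}(x)$, which vanishes by construction (\ref{oscillator}). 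For part (\ref{pt-integral}), Fubini and Fourier inversion give $\int_{\R^m} p_t(x,z)\,dz = m_{t,0}(x)$; taking $\lambda\to 0$ in (\ref{mehler-formula}) reveals $m_{t,0}(x) = (4\pi t)^{-n} e^{-|x|^2/4t}$, the Euclidean heat kernel in $\R^{2n}$, which integrates to $1$.

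For the scaling (\ref{pt-scaling}), the substitution $\mu = \alpha^2\lambda$ in the defining integral converts $p_{\alpha^2 t}(\alpha x,\alpha^2 z)$ into $\alpha^{-2(n+m)}p_t(x,z)$ directly. Setting $\alpha = 1/\sqrt{t}$ gives the useful rewriting $p_t(x,z) = t^{-(n+m)} p_1(x/\sqrt{t},z/t)$, and since $p_1$ is Schwartz, for any $(x,z)\ne 0$ the rapid decay of $p_1$ at the blown-up argument $(x/\sqrt{t},z/t)$ defeats the $t^{-(n+m)}$ prefactor as $t\downto 0$. For radiality (\ref{pt-radial}), the $x$-dependence was already observed, and for the $z$-dependence the change of variable $\lambda\mapsto R\lambda$ for $R\in O(m)$ leaves $d\lambda$ and $|\lambda|$ invariant while sending $\langle\lambda,z\rangle$ to $\langle\lambda,R^{-1}z\rangle$, proving $p_t(x,Rz)=p_t(x,z)$. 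Radiality for $Lp_t = \partial_t p_t$ follows by applying these same invariances to the differentiated integrand, and $p_t(g^{-1}) = p_t(-g) = p_t(g)$ is immediate since radial functions are even in each block.

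The main obstacle is part (\ref{pt-infinity}), the \emph{uniform in $t$} vanishing at infinity, since the individual Schwartz bounds on $p_t$ degrade as $t\downto 0$. The plan is to combine the scaling with the Schwartz decay of $p_1$: fix $N > 2(n+m)$ and write
\begin{equation*}
  |p_t(x,z)| \le C_N t^{-(n+m)}\bigl(1 + |x|/\sqrt{t} + |z|/t\bigr)^{-N}.
\end{equation*}
Given $g=(x,z)$ with $|x|+|z| \ge \rho$, split into the cases $|x|\ge\rho/2$ and $|z|\ge\rho/2$. In the first, set $u=|x|/\sqrt{t}$ so that $t^{-(n+m)} = (u/|x|)^{2(n+m)}$, and the bound becomes $C_N |x|^{-2(n+m)}\cdot u^{2(n+m)}(1+u)^{-N}$; the last factor is uniformly bounded in $u>0$ since $N>2(n+m)$, yielding $|p_t(x,z)|\le C'\rho^{-2(n+m)}$. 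The case $|z|\ge\rho/2$ is analogous with $v=|z|/t$ and exponent $n+m$ in place of $2(n+m)$. Taking $K$ to be a sufficiently large Euclidean ball then gives the required uniform bound $<\epsilon$, with no dependence on $t$.
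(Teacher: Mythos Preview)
Your proof is correct and follows essentially the same route as the paper: both reduce everything to properties of the Mehler kernel via the Fourier relationship, and both use the scaling $p_t(x,z)=t^{-(n+m)}p_1(x/\sqrt t,\,z/t)$ together with the Schwartz decay of $p_1$ for part~(\ref{pt-infinity}). The one genuine variation is in the uniform-vanishing argument: the paper splits into the regimes $t\le 1$ and $t\ge 1$ and chooses the decay exponent accordingly (obtaining bounds $C(|x|^2+|z|^2)^{-(n+m)}$ and $C'(|x|^2+|z|^2)^{-(n+m)/2}$ respectively), whereas you fix a single large exponent $N>2(n+m)$ and split instead on whether $|x|$ or $|z|$ is large, then optimize over $t$ implicitly via the substitutions $u=|x|/\sqrt t$, $v=|z|/t$. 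Both are clean; your version avoids the $t$-dichotomy at the cost of a slightly less explicit final bound.
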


\begin{proof}
  \begin{enumerate}
  \item Clear from Proposition \ref{mehler-schwartz} and properties of
    the Fourier transform.
  \item Clear by properties of the Fourier transform, since
    $m_{t,\lambda}$ solves (\ref{oscillator}).
  \item By Fourier inversion, we have
    \begin{align*}
      \int_{\R^{2n}} \int_{\R^m} p_t(x,z)\,dz\,dx &=
      \frac{1}{(2\pi)^m} \int_{\R^{2n}} \int_{\R^m} \int_{\R^m}
      e^{i \inner{\lambda}{z}} m_{t,\lambda}(x)\,d\lambda\,dz\,dx \\
      &=  \int_{\R^{2n}} m_{t, 0}(x)\,dx \\
      &=  \int_{\R^{2n}} \left(\frac{1}{4
  \pi t}\right)^{n} e^{- \frac{1}{4t} \abs{x}^2}. \,dx = 1.
    \end{align*}
    \item To see the scaling $p_t(x,z) = \alpha^{2(n+m)} p_{\alpha^2
    t}(\varphi_\alpha(x,z))$, make the change of variables $\lambda =
      \alpha^2 \lambda'$ in (\ref{pt-formula-1}).  Taking $\alpha =
      t^{-1/2}$ we get 
      \begin{equation*}\lim_{t \to 0} p_t(x,z) = \lim_{\alpha \to
        \infty}  \alpha^{2(n+m)} p_1(\alpha x,\alpha^2 z) = 0
      \end{equation*} by
      item 1.
    \item It is clear from (\ref{pt-formula-1}) that $p_t$ is radial
      in $x$.  To see it is radial in $z$, suppose $\abs{z'} =
      \abs{z}$, so that $z' = Tz$ for some orthogonal matrix $T$.
      Then making the change of variables $\lambda = T \lambda'$ in
      (\ref{pt-formula-1}) shows that $p_t(x, z) = p_t(x, z')$.
    \item Suppose first that $t \le 1$.  By item \ref{pt-scaling} and
      the fact that $p_1$ is a Schwartz function, we have
      \begin{align*}
        \abs{p_t(x,z)} &= t^{-(n+m)} \abs{p_1(t^{-1/2}x, t^{-1} z)} \\
        &\le C t^{-(n+m)} \left(\abs{t^{-1/2}x}^2 + \abs{t^{-1}
          z}^2\right)^{-(n+m)} \\
        &= C \left(\abs{x}^2 + t^{-1} \abs{z}^2\right)^{-(n+m)} \\
        &\le C \left(\abs{x}^2 + \abs{z}^2\right)^{-(n+m)}.
      \end{align*}
      When $t \ge 1$, we have
      \begin{align*}
        \abs{p_t(x,z)} &= t^{-(n+m)} \abs{p_1(t^{-1/2}x, t^{-1} z)} \\
        &\le C' t^{-(n+m)} \left(\abs{t^{-1/2}x}^2 + \abs{t^{-1}
          z}^2\right)^{-(n+m)/2} \\
        &= C' \left(t \abs{x}^2 + \abs{z}^2\right)^{-(n+m)/2} \\
        &\le C' \left(\abs{x}^2 + \abs{z}^2\right)^{-(n+m)/2}.
      \end{align*}
      Thus, taking $K = \{ \abs{x}^2 + \abs{z}^2 \le
      \min\{(C^{-1}\epsilon)^{n+m}, (C'^{-1}\epsilon)^{(n+m)/2}\} \}$ suffices.
  \end{enumerate}
\end{proof}

\begin{definition}
  The 
  \define{heat semigroup}{H-type group!heat semigroup $P_t$}
  $P_t, t \ge 0$ is the
  one-parameter family of operators given by the convolution (see
  Definition \ref{convolution-def})
  \begin{equation}\label{Pt-convolution}
    \begin{split}
    P_t f (g) &:= (f * p_t)(g) , \quad t > 0 \\
    P_0 f &:= f
    \end{split} 
  \end{equation}
  for any distribution $f \in \mathcal{D}(G)$ such that the integral
  makes sense for
  all $g \in G$ and all $t > 0$.  Note that we can make the change of
  variables $k = k^{-1}$ in Definition \ref{convolution-def} and use
  the fact that $p_t(k) = p_t(k^{-1})$ (item \ref{pt-radial} of
  Proposition \ref{pt-props}) to write
  \begin{equation}\label{Pt-integral-symmetry}
    P_t f (g) = \int_G f(g k) p_t(k)\,d\haar(k).
  \end{equation}
\end{definition}

The following facts about $P_t$ are almost immediate.

\begin{proposition}\label{Pt-props}
  \begin{enumerate}
  \item If $f$ is a tempered distribution on $G$, then $P_t f \in
    C^\infty(G)$ and $P_t f$ satisfies the heat equation $\left(L -
    \pp{t}\right)P_t f = 0$.
  \item If $f$ is a (tempered distribution, $L^p(G)$ function,
    uniformly continuous function, bounded continuous function), then
    as $t \to 0$, $P_t f \to f$ (in the sense of distributions, in
    $L^p$, uniformly, uniformly on compact subsets of $G$)
    respectively. \label{Pt-continuous}
  \end{enumerate}
\end{proposition}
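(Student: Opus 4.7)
The plan is to treat the two items separately: item (1) follows immediately from the smoothing properties of convolution with a Schwartz function, while item (2) is the standard approximate identity argument, which requires only that we verify $\{p_t\}$ genuinely behaves as an approximate identity.

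For item (1), since $p_t$ is Schwartz (Proposition \ref{pt-props} item \ref{pt-schwartz}) and $f$ is a tempered distribution, Proposition \ref{convolution-props} item \ref{convolution-smoothing} gives $P_t f = f * p_t \in C^\infty(G)$. For the heat equation, Proposition \ref{Xi-integration} item \ref{Xi-convolve} yields $L(f*p_t) = f * L p_t$, and differentiation in $t$ passes through the convolution by the joint smoothness of $p_t$ together with the rapid decay provided by the Schwartz character of both $p_t$ and $\pp{t} p_t$ at each fixed $t>0$. Combining this with the fact $(L - \pp{t}) p_t = 0$ (Proposition \ref{pt-props} item 2) closes item (1).

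For item (2), the crux is the preparatory claim that $\{p_t\}_{t>0}$ is a positive approximate identity, by which I mean (a) $p_t \ge 0$; (b) $\int_G p_t\,d\haar = 1$; and (c) for any neighborhood $U$ of $0 \in G$, $\int_{G \setminus U} p_t\,d\haar \to 0$ as $t \downto 0$. Property (b) is Proposition \ref{pt-props} item \ref{pt-integral}. For (c), using the dilation scaling $p_t(g) = t^{-(n+m)} p_1(\varphi_{1/\sqrt t}(g))$ (Proposition \ref{pt-props} item \ref{pt-scaling}) and the change of variables $d\haar \circ \varphi_\alpha = \alpha^{2(n+m)} d\haar$ converts $\int_{G \setminus U} p_t\,d\haar$ into $\int_{\varphi_{1/\sqrt t}(G \setminus U)} p_1\,d\haar$; since $\varphi_{1/\sqrt t}(G \setminus U)$ eventually avoids any fixed compact set and $p_1$ is integrable, dominated convergence gives the limit. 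For (a), I would apply Theorem \ref{inf-increases} to $u := P_t \phi$ for nonnegative $\phi \in C_c^\infty(G)$: item (1) provides the heat equation, while the uniform vanishing at infinity follows from Proposition \ref{pt-props} item \ref{pt-infinity} together with compactness of $\supp \phi$. Theorem \ref{inf-increases} then yields $P_t \phi \ge 0$, and approximating $\delta_g$ by such bumps forces $p_t(g) \ge 0$ pointwise.

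Granting the approximate identity, the four convergence statements follow by familiar routes. For $f$ uniformly continuous and bounded, write
\begin{equation*}
P_t f(g) - f(g) = \int_G [f(g \groupop k) - f(g)]\, p_t(k)\,d\haar(k)
\end{equation*}
via (\ref{Pt-integral-symmetry}), split the integral over a small neighborhood $U$ of $0$ (bracket small by uniform continuity, $p_t$-mass at most $1$) and its complement ($f$ bounded, $p_t$-mass vanishing by (c)); the bounded continuous case is identical but restricted to a compact set on which $f$ is uniformly continuous. For $f \in L^p$, density of $C_c(G)$ together with Young's inequality (Proposition \ref{convolution-props} item \ref{convolution-young}) and $\|p_t\|_{L^1} = 1$ reduces to the uniformly continuous case. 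For tempered $f$, Proposition \ref{convolution-props} item \ref{convolution-adjoint} combined with the radial symmetry $\tilde{p_t} = p_t$ (Proposition \ref{pt-props} item \ref{pt-radial}) yields $\hinner{P_t f}{\psi} = \hinner{f}{P_t \psi}$ for $\psi$ Schwartz, so the claim reduces to showing $P_t \psi \to \psi$ in $\mathcal{S}(G)$.

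The main obstacle is this last reduction, namely control of the Schwartz seminorms $\sup_g (1+|g|)^N |\partial^\alpha(P_t \psi - \psi)(g)|$: Euclidean derivatives $\partial^\alpha$ do not commute neatly with the non-Euclidean convolution $*$, so one must instead express Euclidean partials in terms of $X_i$ and $\hat X_i$ (where $\hat X_i$ passes through the convolution trivially, and $X_i$ by Proposition \ref{Xi-integration} item \ref{Xi-convolve}), then absorb the resulting polynomial-in-$x$ factors into Schwartz norms of $\psi$. The other subtle point is the positivity of $p_t$, which is not visible in the oscillatory integral (\ref{pt-formula-1}) and must be extracted from the maximum principle of Theorem \ref{inf-increases} as above.
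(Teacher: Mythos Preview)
Your approach is essentially the paper's; the paper's own proof is a two-line sketch invoking Proposition~\ref{convolution-props} item~\ref{convolution-smoothing} and Proposition~\ref{Xi-integration} item~\ref{Xi-convolve} for item~(1), and ``standard approximate delta function argument, making use of items~\ref{pt-integral} and~\ref{pt-scaling} of Proposition~\ref{pt-props}'' for item~(2). You have simply supplied the details.

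One substantive difference: you treat positivity of $p_t$ as a required ingredient of the approximate identity and go to some trouble to extract it from Theorem~\ref{inf-increases}. The paper does not do this here; positivity is deferred to Theorem~\ref{pt-positive}, which comes \emph{after} this proposition. The approximate identity argument does not need $p_t \ge 0$: the scaling relation $p_t = t^{-(n+m)} p_1 \circ \varphi_{1/\sqrt t}$ together with the change of variables $d\haar \circ \varphi_\alpha = \alpha^{2(n+m)}\,d\haar$ gives $\|p_t\|_{L^1} = \|p_1\|_{L^1} < \infty$ (since $p_1$ is Schwartz) and $\int_{G\setminus U} |p_t|\,d\haar \to 0$ exactly as you argued for (c). That uniform $L^1$ bound is all Young's inequality and the splitting argument require. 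So your positivity detour is correct but unnecessary, and removing it keeps the logical order consistent with the paper, where Theorem~\ref{pt-positive} in fact relies on item~(1) of the present proposition.

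Your identification of the Schwartz-seminorm control in the tempered case as the genuine technical point is accurate; the paper simply does not address it.
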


\begin{proof}
  \begin{enumerate}
  \item That $P_t f \in C^\infty(G)$ follows from item
    \ref{convolution-smoothing} of Proposition
    \ref{convolution-props}.  By differentiating under the integral
    sign with respect to $t$ and using item \ref{Xi-convolve} of
    Proposition \ref{Xi-integration}, we have
    \begin{equation*}
      \pp{t} P_t f = f * \pp{t} p_t = f * L p_t = L P_t f.
    \end{equation*}
  \item This is a standard ``approximate delta function'' argument,
    making use of items \ref{pt-integral} and \ref{pt-scaling} of
    Proposition \ref{pt-props}.
  \end{enumerate}
\end{proof}

\begin{theorem}\label{pt-positive}
  \index{H-type group!heat kernel $p_t$!positivity}
  $p_t > 0$ for all $t > 0$.
\end{theorem}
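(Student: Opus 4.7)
My plan is to split the argument into two parts: first establish $p_t \ge 0$ via the parabolic minimum principle (Theorem \ref{inf-increases}) applied to convolutions $f * p_t$, then upgrade to strict positivity $p_t > 0$ via the Harnack-type statement (Corollary \ref{infinite-speed}) using a small time shift to dodge the fact that $p_0 = \delta_0$ is only a distribution.

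For nonnegativity, fix any nonnegative $f \in C_c^\infty(G)$ and set $u(g,t) := P_t f(g) = (f * p_t)(g)$. By Proposition \ref{Pt-props}, $u$ is smooth on $G \times (0,\infty)$, solves the heat equation there, and extends continuously to $t=0$ with $u(\cdot,0) = f$. The one hypothesis of Theorem \ref{inf-increases} that genuinely requires checking is the uniform decay at infinity on intervals $[0,T]$. I would verify this from the estimate
\begin{equation*}
|u(g,t)| \le \|f\|_\infty \int_{(\supp f)^{-1} \groupop g} p_t(k)\,d\haar(k),
\end{equation*}
combined with the dilation relation $p_t(x,z) = t^{-(n+m)} p_1(t^{-1/2}x, t^{-1}z)$ from item \ref{pt-scaling} of Proposition \ref{pt-props}, which after a change of variables bounds the tails $\int_{\|k\| > R} p_t(k)\,d\haar(k)$ uniformly in $t \in (0,T]$ by a quantity tending to $0$ as $R \to \infty$. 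As $|g| \to \infty$ the translated set $(\supp f)^{-1} \groupop g$ marches off to infinity, so $u(\cdot,t)$ vanishes at infinity uniformly in $t$. Theorem \ref{inf-increases} then gives $u \ge \inf_G f \ge 0$.

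Having shown that $(f * p_t)(g) \ge 0$ for every nonnegative $f \in C_c^\infty(G)$ and every $(g,t)$, I would pass to $p_t$ itself by letting $f$ run over a nonnegative smooth approximate identity concentrated at the origin. Since $p_t$ is continuous (item \ref{pt-schwartz} of Proposition \ref{pt-props}), $(f * p_t)(g) \to p_t(g)$ as the approximation sharpens, so $p_t(g) \ge 0$ pointwise. For strict positivity, I cannot apply Corollary \ref{infinite-speed} to $p_t$ directly because it needs a classical nonnegative initial value. Instead, fix $s > 0$ and set $v(g,t) := p_{t+s}(g)$: then $v \in C^\infty(G \times [0,\infty))$, solves the heat equation, is nonnegative by the previous step, and $v(\cdot,0) = p_s$ is not identically zero since $\int_G p_s\,d\haar = 1$ (item \ref{pt-integral} of Proposition \ref{pt-props}). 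Corollary \ref{infinite-speed} then yields $v(\cdot,t) > 0$ everywhere for $t > 0$, i.e.\ $p_\tau > 0$ for every $\tau > s$; since $s > 0$ is arbitrary, $p_t > 0$ for all $t > 0$.

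The main obstacle is the uniform vanishing-at-infinity verification for $u = P_t f$: without it Theorem \ref{inf-increases} cannot be invoked. I expect this to amount to a short but slightly fiddly calculation leveraging the dilation structure of $p_t$. The remaining pieces slot together cleanly; one also needs the mild observation that the proof of Theorem \ref{inf-increases}, as written, uses the $C^{2,1}$ hypothesis only at interior times $s > 0$ together with continuity up to $t=0$, which is exactly what $P_t f$ supplies for $f \in C_c^\infty(G)$.
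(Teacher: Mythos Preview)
Your proposal is correct and matches the paper's proof essentially step for step: minimum principle on $P_t f$ for nonnegative $f\in C_c^\infty(G)$, approximate-identity limit to get $p_t\ge 0$, then the time-shift $v(g,t)=p_{t+s}(g)$ to invoke Corollary~\ref{infinite-speed}. The only difference is that the ``fiddly'' uniform-vanishing-at-infinity check you flag has already been done as item~\ref{pt-infinity} of Proposition~\ref{pt-props} (whose proof is exactly the dilation-plus-Schwartz-decay argument you sketch), so you can simply cite that instead of redoing it inline.
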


\begin{proof}
  Let $f \in C^\infty_c(G)$ be nonnegative, with compact support $K$.
  We first show that $u(g,t) := P_t f(g)$ satisfies the hypotheses of
  Theorem \ref{inf-increases}.  Certainly $u$ solves the heat
  equation, and $u(\cdot, 0) = f \ge 0$.  To show $u$ vanishes
  uniformly at infinity, fix $\epsilon > 0$.  Let $K$ be a compact set
  such that $\abs{p_t(g)} \le \epsilon/\norm{f}_\infty$ for all $t >
  0$, $g \in K^C$ whose
  existence is guaranteed by item \ref{pt-infinity} of Proposition;
  note that $0 \in K$.
  Let $K' = K \groupop \supp f = \{ k \groupop h : k \in K, h \in
  \supp f\}$; note that $\supp f \subset K'$.  Suppose $g \notin K'$.
  If $t = 0$ we have $P_t f(g) = f(g) = 0$ since $g \notin \supp f$.
  If $t > 0$ we have
  \begin{align*}
    \abs{P_t f(g)} = \abs{p_t * f(g)} &\le \norm{f}_\infty \sup_{k \in
      \supp f} \abs{p_t(g
      \groupop k^{-1})}.
  \end{align*}
  But for $k \in \supp f$, we have $g \groupop k^{-1} \notin K$.  (If
  $g \groupop k^{-1} \in K$, then $g \in K \groupop k \subset K
  \groupop \supp f = K'$, contrary to our choice $g \notin K'$.)
  Therefore by definition of $K$ we have $\abs{p_t(g
      \groupop k^{-1})} \le \epsilon/\norm{f}_\infty$, so that
  $\abs{P_t f(g)} \le \epsilon$.

  Thus by Theorem \ref{inf-increases} we have $P_t f \ge 0$
  for $t \in [0, T]$ for any $T > 0$.  We now replace $f$ with
  a sequence of nonnegative approximate delta functions to see that
  $p_t \ge 0$ for all $t > 0$.  Corollary \ref{infinite-speed} does
  not apply directly to $p_t$, because $p_t$ is not continuous up to
  $t = 0$; however, it does apply to $p_{t+t_0}$ for arbitrary fixed
  $t_0 > 0$.  We conclude that $p_{t + t_0} > 0$ for all $t$, and
  since $t_0$ was arbitrary, $p_t > 0$ for all $t$.
\end{proof}

The reader unfamiliar with the functional analysis machinery in the
following theorem may omit it, as it is not essential to the remainder
of the dissertation, or refer to Chapter VIII of
\cite{reed-simon-vol1}.

\begin{theorem}\label{Pt-semigroup}
  \index{H-type group!heat semigroup $P_t$}
  $P_t$ is a strongly continuous self-adjoint contraction semigroup on
  $L^2(G)$, and if $L$ is viewed as a unbounded operator on $L^2(G)$
  with domain $\mathcal{D}(L) = C^\infty_c(G)$, then the infinitesimal
  generator of $P_t$ is $\bar{L}$, the closure of $L$, which is
  self-adjoint.  That is, $P_t = e^{t\bar{L}}$.
\end{theorem}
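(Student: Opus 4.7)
The plan is to verify each property in turn, with the identification of the generator as $\bar L$ being the main substantive step.

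For the semigroup and regularity properties on $L^2$: the contraction bound $\|P_tf\|_2\le\|f\|_2$ is immediate from Young's inequality (Proposition \ref{convolution-props} item \ref{convolution-young}) together with $\|p_t\|_{L^1}=1$ (Proposition \ref{pt-props} item \ref{pt-integral}, with positivity from Theorem \ref{pt-positive}). Strong continuity follows directly from Proposition \ref{Pt-props} item \ref{Pt-continuous}. For the semigroup identity $P_{s+t}=P_sP_t$, I would fix $f\in C_c^\infty$ and observe that both $t\mapsto P_{s+t}f$ and $t\mapsto P_sP_tf$ solve the heat equation with initial value $P_sf$, then invoke $L^2$-uniqueness via the energy estimate $\frac{d}{dt}\|u\|_2^2=2\langle u,Lu\rangle\le 0$ (using integration by parts, Proposition \ref{Xi-integration} item \ref{Xi-parts}), extending by density. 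Self-adjointness of $P_t$: by Proposition \ref{convolution-props} item \ref{convolution-adjoint}, $\langle P_tf_1,f_2\rangle=\langle f_1*p_t,f_2\rangle=\langle f_1,f_2*\tilde p_t\rangle$, and $\tilde p_t(g)=\overline{p_t(g^{-1})}=p_t(g)$ since $p_t$ is real and radial (Proposition \ref{pt-props} item \ref{pt-radial}).

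Next, let $A$ denote the infinitesimal generator of $P_t$. For $f\in C_c^\infty(G)$, left-invariance of $L$ gives $LP_sf=(Lf)*p_s=P_s(Lf)$, and combining the heat equation (Proposition \ref{Pt-props} item 1) with the fundamental theorem of calculus yields
\[
\frac{P_tf-f}{t}=\frac{1}{t}\int_0^tP_s(Lf)\,ds\longrightarrow Lf
\]
in $L^2$ as $t\downto 0$, by strong continuity applied to $Lf\in C_c^\infty\subset L^2$. Hence $C_c^\infty\subset D(A)$ and $A|_{C_c^\infty}=L$; since $A$ is closed (as the generator of a $C_0$ semigroup), $\bar L\subseteq A$.

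The main work is establishing $A\subseteq\bar L$. Since $A$ is self-adjoint (by the general theory: the generator of a self-adjoint $C_0$ contraction semigroup is self-adjoint) and since self-adjoint operators admit no proper symmetric extensions, it suffices to show that $L$ is essentially self-adjoint on $C_c^\infty$; then $\bar L$ is self-adjoint, and $\bar L\subseteq A$ with both self-adjoint forces equality. To prove essential self-adjointness, I will show that any $\phi\in L^2$ with $L^*\phi=\phi$ (equivalently, $L\phi=\phi$ in the distributional sense, using that $L$ equals its own formal transpose because each $X_i$ is divergence-free with respect to Lebesgue measure) must vanish. By hypoellipticity, $\phi\in C^\infty$. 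For any $\psi\in C_c^\infty$, the function $P_t\psi=\psi*p_t$ is Schwartz (using that $p_t\in\mathcal{S}(G)$ by Proposition \ref{pt-props} item \ref{pt-schwartz} and closure of $\mathcal{S}$ under convolution), so the tempered-distribution pairing $\langle\phi,P_t\psi\rangle$ is well-defined, and I would compute
\[
\frac{d}{dt}\langle\phi,P_t\psi\rangle=\langle\phi,LP_t\psi\rangle=\langle L\phi,P_t\psi\rangle=\langle\phi,P_t\psi\rangle.
\]
This gives $\langle P_t\phi,\psi\rangle=\langle\phi,P_t\psi\rangle=e^t\langle\phi,\psi\rangle$ using self-adjointness of $P_t$. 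The contraction estimate $\|P_t\phi\|_2\le\|\phi\|_2$ then forces $e^t|\langle\phi,\psi\rangle|\le\|\phi\|_2\|\psi\|_2$ for all $t\ge 0$ and all $\psi\in C_c^\infty$; letting $t\to\infty$ yields $\phi\perp C_c^\infty$, so $\phi=0$.

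The main obstacle I anticipate is carefully justifying the tempered-distribution manipulations in the essential-self-adjointness step: specifically, that $P_t$ maps the Schwartz class into itself, that $L$ acts as its own transpose on tempered distributions (so the distributional identity $L\phi=\phi$ really gives $\langle L\phi,g\rangle=\langle\phi,Lg\rangle$ for $g\in\mathcal{S}$), and that differentiation in $t$ commutes with the pairing against $\phi$. Each of these is routine given the Schwartz regularity of $p_t$ and the polynomial growth of the coefficients of $L$, but must be verified to make the argument rigorous.
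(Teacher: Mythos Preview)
Your proposal is correct, but differs from the paper's proof in two substantive places.

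For the semigroup identity, the paper works directly at the level of kernels: it sets $u(g,t)=(p_s*p_t)(g)-p_{s+t}(g)$, checks this solves the heat equation, vanishes uniformly at infinity (via the explicit decay in Proposition~\ref{pt-props} item~\ref{pt-infinity}), and has zero initial data, then applies the weak maximum principle Theorem~\ref{inf-increases} to $\pm u$. Your $L^2$ energy argument is fine in principle but needs more justification than you indicate: the difference $u=P_{s+t}f-P_sP_tf$ is not compactly supported, so the integration by parts behind $\tfrac{d}{dt}\|u\|_2^2=-2\|\nabla u\|_2^2$ requires a cutoff argument (exactly the kind the paper later uses for essential self-adjointness). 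Also, your claim ``left-invariance of $L$ gives $LP_sf=(Lf)*p_s$'' is not immediate---Proposition~\ref{Xi-integration} gives $L(f*p_s)=f*Lp_s$, and passing to $(Lf)*p_s$ uses the radial symmetry of $p_t$ and $Lp_t$, which the paper works out explicitly.

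For essential self-adjointness, the approaches genuinely diverge. You use the deficiency-index route: take $\phi\in\ker(L^*-1)$, invoke hypoellipticity to get $\phi\in C^\infty$, and use the semigroup to derive $\langle P_t\phi,\psi\rangle=e^t\langle\phi,\psi\rangle$, contradicting contraction. The paper instead shows $\mathcal D(L^*)\subset\mathcal D(\bar L)$ directly: for smooth $f\in\mathcal D(L^*)$ it builds dilation cutoffs $\psi_n=\psi\circ\varphi_{1/n}$ (so $\nabla\psi_n,L\psi_n\to 0$ uniformly), proves $|\nabla f|\in L^2$, and shows $\psi_nf\to f$, $L(\psi_nf)\to L^*f$; general $f\in\mathcal D(L^*)$ is then handled by mollification. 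Your route is cleaner conceptually but leans on two facts the paper does not establish: that group convolution preserves the Schwartz class (true on stratified groups since the multiplication is polynomial, but needs a separate argument), and that $L$ acts as its own formal transpose on~$\mathcal S'$. The paper's route is more self-contained and exploits the dilation structure, avoiding any appeal to hypoellipticity or tempered-distribution theory beyond what is already set up.
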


\begin{proof}
  It is trivial that $P_0 = I$ is a self-adjoint contraction.

  It follows from Theorem \ref{pt-positive} and item
  \ref{pt-integral} of Proposition \ref{pt-props} that $\int_G
  \abs{p_t}\,d\haar = \int_G
  p_t\,d\haar = 1$, so by Young's inequality (item
  \ref{convolution-young} of Proposition \ref{convolution-props}) $P_t$ is a
  contraction on $L^2(G)$ for each $t > 0$.  

  By item
  \ref{convolution-adjoint} of Proposition \ref{convolution-props}, we
  have for any $f_1, f_2 \in L^2(G)$ that
  \begin{equation*}
    \hinner{P_t f_1}{f_2} = \hinner{f_1 * p_t}{f_2} = \hinner{f_1}{f_2
      * \tilde{p}_t}.
  \end{equation*}
  But $p_t$ is a real-valued radial function (see item \ref{pt-radial}
  of Proposition \ref{pt-props}), so $\tilde{p}_t = p_t$ and thus
  $\hinner{P_t f_1}{f_2} = \hinner{f_1}{P_t f_2}$.  $P_t$ is
  self-adjoint for each $t > 0$.

  To show $P_t$ is a semigroup, it suffices to verify that $p_s * p_t
  = p_{s+t}$.  One approach is to notice that for any $s > 0$, $u(g,
  t) := p_s * p_t - p_{s+t}$ is a solution of the heat equation which
  vanishes uniformly at infinity and satisfies $u(\cdot, 0) \equiv 0$.
  By applying Theorem \ref{inf-increases} to $u$ and $-u$, we must
  have $u \equiv 0$.  

  That $P_t$ is strongly continuous in $t$ follows from item
  \ref{Pt-continuous} of Proposition \ref{Pt-props}.

  Since $P_t$ is a strongly continuous self-adjoint contraction
  semigroup, it has a self-adjoint infinitesimal generator $A$.  To
  show that $A = \bar{L}$, we show first that $L \subset A$, so that
  \begin{equation}\label{operator-inclusions}
    \bar{L} \subset A = A^* \subset L^*
  \end{equation}
  (since $A$ is closed).  Next, we show that $L$ is essentially
  self-adjoint, i.e. $\bar{L} = L^*$, so that equality must hold in
  (\ref{operator-inclusions}).
  (Note that item \ref{Xi-parts} of
  Proposition \ref{Xi-integration} verifies explicitly that $L$ is
  symmetric, i.e. $L \subset L^*$, which also follows from $L \subset
  A$ since $A$ is self-adjoint.)

  To see that $L \subset A$, let $f \in C^\infty_c(G)$.  First, we
  observe that for $t > 0$, $\frac{1}{\epsilon}(p_{t + \epsilon}
  - p_t) \to \pp{t}p_t = L p_t$ as $\epsilon \downto 0$, not only
  pointwise but also in $L^1(G)$.  To verify the latter, we note by the
  usual combination of the mean value theorem and dominated
  convergence that it suffices to show 
  \begin{equation*}\int_G \sup_{\tau \in [t, t+\epsilon]}
    \abs{\pp{\tau} p_\tau(g)}\,d\haar(g) < \infty.
  \end{equation*}
  By item \ref{pt-scaling} of Proposition \ref{pt-props}, we have for
  $\tau \in [t, t+\epsilon]$:
  \begin{align*}
    \abs{\pp{\tau} p_\tau(x,z)} &= \abs{\pp{\tau} \tau^{-(n+m)}
      p_1(\tau^{-1/2} x, \tau^{-1} z)} \\
    &\le \abs{ -(n+m)\tau^{-(n+m+1)}
      p_1(\tau^{-1/2} x, \tau^{-1} z)} \\
    &\quad+ \abs{\tau^{-(n+m)} \inner{\grad_x
      p_1(\tau^{-1/2} x, \tau^{-1} z)}{x}
      \left(-\frac{1}{2}\tau^{-3/2}\right)} \\
    &\quad+   \abs{\tau^{-(n+m)} \inner{\grad_z
      p_1(\tau^{-1/2} x, \tau^{-1} z)}{z} (-\tau^{-2})} \\
    &\le C \abs{-(n+m)\tau^{-(n+m+1)} - \frac{1}{2}\tau^{-(n+m+3/2)} -
      \tau^{-(n+m+2)}} 
    \\ &\quad\times \left(1 + \tau^{-1/2} \abs{x} + \tau^{-1}
    \abs{z}\right)^{-(2n+m+2)} \\
    &\le  C' t^{-(n+m+1)} \left(1 + (t+\epsilon)^{-1/2} \abs{x} + (t+\epsilon)^{-1}
    \abs{z}\right)^{-(2n+m+2)}
  \end{align*}
  whose integral over $G = \R^{2n+m}$ is finite, where we used the fact
  that $p_1$ is a Schwartz function to bound $p_1$ and its derivatives
  in terms of the integrable function 
\begin{equation*}
  \left(1 + \tau^{-1/2} \abs{x} + \tau^{-1}
    \abs{z}\right)^{-(2n+m+2)}.
\end{equation*}
  Therefore, by using Young's inequality and item \ref{Xi-convolve} of
  Proposition \ref{Xi-integration}, we have that 
  \begin{equation}\label{Pt-L2-diff}
    \frac{1}{\epsilon} (P_{t + \epsilon}
  - P_t) f \to f * (L p_t) = L P_t f \text{ in } L^2(G).
  \end{equation}

  Next, we note that
  \begin{align*}
    L P_t f(g) = f * (L p_t) &= \int_G f(g \groupop k^{-1}) L
    p_t(k)\,d\haar(k) \\
    &= \int_G f(g \groupop k) L
    p_t(k)\,d\haar(k)
    \intertext{(where we have made the change of variables $k \to
      k^{-1}$ and used item \ref{pt-radial} of Proposition
      \ref{pt-props} to see $L p_t (k^{-1}) = L p_t(k)$)}
    &= \hinner{f \circ L_g}{L p_t} && \text{where $L_g$ is left translation} \\
    &= \hinner{L(f \circ L_g)}{p_t}  && \text{by item \ref{Xi-parts}
      of Proposition \ref{Xi-integration}} \\
    &= \hinner{(Lf) \circ L_g}{p_t} \\
    &= \int_G Lf(g \groupop k) p_t(k) \,d\haar(k) \\
    &= \int_G Lf(g \groupop k^{-1}) p_t(k) \,d\haar(k) && \text{as
      before, since $p_t(k^{-1}) = p_t(k)$} \\
    &= P_t L f(g).
  \end{align*}
  Thus $P_t$ commutes with $L$, as one would expect.

  Now for any $t > 0$ we have
  \begin{equation*}
    (P_{s+t} - P_s)f = \int_s^t L P_\tau f\,d\tau = \int_s^t P_\tau L f\,d\tau
  \end{equation*}
  where the integral is a Riemann integral of an $L^2(G)$-valued
  continuous function of $\tau$, and we have used a corresponding
  version of the fundamental theorem of calculus thanks to
  (\ref{Pt-L2-diff}).  The integral is $L^2$-continuous in $s$, so
  letting $s \downto 0$ and using the semigroup property and the
  strong continuity of $P_t$, we have
  \begin{equation*}
    (P_t - I)f = \int_0^t P_\tau L f\,d\tau.
  \end{equation*}
  So
  \begin{align*}
    \norm{\frac{P_t f - f}{t} - Lf} &= \frac{1}{t} \norm{\int_0^t
      (P_\tau - I) Lf\,d\tau} \\
    &\le \sup_{\tau \in [0,t]} \norm{(P_\tau - I) Lf} \to 0
  \end{align*}
  by the strong continuity of $P_t$.  Therefore $L$ agrees with $A$,
  the generator of $P_t$, on $\mathcal{D}(L) = C^\infty_c(G)$, so $L
  \subset A$.

  \index{H-type group!sublaplacian $L$!essentially self-adjoint|(}

  To conclude, we show that $\bar{L} = L^*$, so that $L$ is
  essentially self-adjoint.  I learned the following
  standard argument from L. Gross.    Consider
  the vector fields $X_i$ as acting on $C^\infty(G)$, and likewise
  $L_0 := \sum X_i^2$, which is an extension of $L$.  Suppose
  first that $f \in \mathcal{D}(L^*) \cap C^\infty(G)$.  Then if $h
  \in C^\infty_c(G)$, we have
  \begin{equation*}
    \hinner{L^* f}{h} = \hinner{f}{L h} = \hinner{L_0 f}{h}
  \end{equation*}
  by item \ref{Xi-parts} of Proposition \ref{Xi-integration}.  

  Next, we show that $\abs{\grad f} \in L^2(G)$.  Use the Urysohn
  lemma to choose $\psi \in C^\infty_c(G)$ such that $\psi \equiv 1$
  on some neighborhood of $0$, and let $\psi_n = \psi \circ
  \varphi_{1/n}$.  We then have $\psi_n \to 1$ boundedly, and by
  (\ref{Xi-dilation}) we have $\grad \psi_n = \frac{1}{n} (\grad \psi)
  \circ \varphi_{1/n} \to 0$ uniformly and $L \psi_n = \frac{1}{n^2}
  (L \psi) \circ \varphi_{1/n} \to 0$ uniformly.  Then, integrating by
  parts as in item
  \ref{Xi-parts} of Proposition \ref{Xi-integration}, we have
  \begin{align*}
    \int_G \psi_n \abs{\grad{f}}^2\,d\haar &= \sum_i \int_G \psi_n
    (X_i f)^2\,d\haar \\
    &= -\sum_i \int_G f X_i(\psi_n X_i f)\,d\haar \\
    &= -\sum_i \int_G f \psi_n X_i^2 f\,d\haar - f X_i \psi_n
    X_i f \,d\haar \\
    &= - \int_G \psi_n f L_0 f + \sum_i X_i \psi_n X_i (f^2)\,d\haar \\
    &= - \int_G \psi_n f L^* f + f^2 L \psi_n \,d\haar.    
  \end{align*}
  Letting $n \to \infty$, so that $\psi_n \to 1$ and $L \psi_n \to 0$
  boundedly, the dominated convergence theorem gives
  \begin{equation*}
    \norm{\grad f} = -\hinner{f}{L^* f} < \infty.
  \end{equation*}

  Now we have $\psi_n f \in C^\infty_c$ and $\psi_n f \to f$ by
  dominated convergence.  We also have
  \begin{align*}
    L^* (\psi_n f) = L_0 (\psi_n f) = (L_0 \psi_n) f + \psi_n L_0 f +
    2 \inner{\grad \psi_n}{\grad f}.
  \end{align*}
  As $n \to \infty$, we find $L (\psi_n f) = L^* (\psi_n f) \to L_0 f
  = L^* f$.  Thus, $\mathcal{D}(L^*) \cap C^\infty \subset
  \mathcal{D}(\bar{L})$.

  Finally, suppose $f \in \mathcal{D}(L^*)$, and let $\phi_n \in
  C^\infty_c(G)$ be a sequence of ``approximate delta functions,'' so
  that $\phi_n * f \to f$ and $\phi_n * L^* f \to L^* f$.
  For any $h \in C^\infty_c(G)$ we then have
  \begin{align*}
    \hinner{\phi_n * L^* f}{h} &= \hinner{L^* f}{\tilde{\phi_n} * h}
    &&\text{(item \ref{convolution-adjoint} of Proposition
      \ref{convolution-props})} \\
      &= \hinner{f}{L(\tilde{\phi_n} * h)} && \text{as $\tilde{\phi_n},
        h, \tilde{\phi_n} * h \in C^\infty_c$} \\
      &= \hinner{f}{\tilde{\phi_n} * Lh} \\
      &= \hinner{\phi_n * f}{Lh}
  \end{align*}
  so that $\phi_n * f \in \mathcal{D}(L^*) \cap C^\infty(G)$ and
  $\bar{L}(\phi_n * f) = L^*(\phi_n * f) = \phi_n * L^* f \to L^* f$.
  Thus $\mathcal{D}(L^*) \subset \mathcal{D}(\bar{L})$, which completes
  the proof.
  \index{H-type group!sublaplacian $L$!essentially self-adjoint|)}

\end{proof}

Chapter \ref{h-type-chapter}, in part, is adapted from material
awaiting publication as \heatcite and \gradcite.  The dissertation
author was the sole author of these papers.

\chapter{Subriemannian Geometry}\label{subriemannian-chapter}

H-type groups lend themselves naturally to the structure of a
subriemannian manifold.  The geometry that arises in this sense will
be crucial in the sequel, particularly its geodesics and the
corresponding Carnot-Carath\'eodory distance function.  The goal of
this chapter will be to describe the necessary theory and obtain
explicit formulas for the geodesics and the distance function.  The
computation is a straightforward application of Hamiltonian mechanics,
but we have not previously seen it appear in the literature in the
case of H-type groups.  The corresponding computation for the
Heisenberg groups (where the center has dimension $m=1$) appeared in
\cite{bgg} as well as \cite{calin-book}; a computation for $m \le 7$,
which could be extended without great difficulty, can be found in the
preprint \cite{calin-H-type}.

\section{Subriemannian manifolds, geodesics and Hamiltonian mechanics}\label{subriemannian-sec}

\begin{definition}
  A \define{subriemmanian manifold}{subriemannian manifold} is a
  smooth manifold $Q$ together with a subbundle $\mathcal{H}$ of $TQ$
  (the \define{horizontal bundle}{horizontal bundle} or
  \define{horizontal distribution}{horizontal distribution}, whose
  elements are \define{horizontal vectors}{horizontal vectors}) and a
  metric $\inner{\cdot}{\cdot}_q$ on each fiber $\mathcal{H}_q$,
  depending smoothly on $q \in Q$.  $\mathcal{H}$ is
  \define{bracket-generating}{subriemannian
    manifold!bracket-generating} at $q$ if there is a local frame
  $\{X_i\}$ for $\mathcal{H}$ near $q$ such that
\begin{equation*}
  \spanop\{X_i(q), [X_i,
    X_j](q), [X_i, [X_j, X_k]](q), \dots\} = T_q Q.
\end{equation*}
\end{definition}

An H-type group $G$ can naturally be equipped as a subriemannian
manifold, by letting $\mathcal{H}_g := \{X(g) : X \in
\mathfrak{z}^\perp\}$, and using the inner product on $\mathfrak{g}$
as the metric on $\mathcal{H}$.  In other words, $\mathcal{H}_g$ is
spanned by $\{X_1(g), \dots, X_{2n}(g)\}$, which give it an
orthonormal basis.  The bracket generating condition is
obviously satisfied, since $\mathfrak{g} = \mathfrak{z}^\perp \oplus
[\mathfrak{z}^\perp,\mathfrak{z}^\perp]$.

\begin{definition}
  Let $\gamma : [0,1] \to Q$ be an absolutely continuous path.  We say
  $\gamma$ is \define{horizontal}{horizontal path} if $\dot{\gamma}(t)
  \in \mathcal{H}_{\gamma(t)}$ for almost every $t \in [0,1]$.  In such a case
  we define the \emph{length} of $\gamma$ as $\ell(\gamma) := \int_0^1
  \sqrt{\inner{\dot{\gamma}(t)}{\dot{\gamma}(t)}_{\gamma(t)}} \,dt$.
  The \define{Carnot-Carath\'eodory distance}{Carnot-Carath\'eodory
    distance} $d : Q \times Q \to [0,\infty]$ is defined by
  \begin{equation}
    d(q_1,q_2) = \inf\{\ell(\gamma) : \gamma(0)=q_1, \gamma(1)=q_2, \gamma
    \text{ horizontal}\}.
  \end{equation}
\end{definition}

Under the bracket generating condition, the Carnot-Carath\'eodory
distance is well behaved.  We refer the reader to Chapter 2 and
Appendix D of \cite{montgomery} for proofs of the following two
theorems, the first of which is largely a restatement of Chow's
theorem (Theorem \ref{chow-thm}).

\fixnotme{ball-box theorem here}

\begin{theorem}[Chow]\label{chow-thm-subriemannian}
  If $\mathcal{H}$ is bracket generating and $Q$ is connected, then
  any two points $q_1, q_2 \in Q$ are joined by a horizontal path whose length
  is finite.  Thus $d(q_1, q_2) < \infty$, and $d$ is easily seen to
  be a distance function on $Q$.  The topology induced by $d$ is equal
  to the manifold topology for $Q$.
\end{theorem}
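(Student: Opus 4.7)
The plan is to prove the three assertions in order: first connectedness by finite-length horizontal paths, then the metric axioms, and finally the topology equivalence.

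For the first, fix $q_1 \in Q$ and define the accessibility set $A(q_1) := \{q \in Q : \text{some horizontal path of finite length joins } q_1 \text{ to } q\}$. Time reversal and concatenation of horizontal paths show that accessibility is an equivalence relation, so the sets $\{A(q) : q \in Q\}$ partition $Q$. It therefore suffices to show $A(q_1)$ is open, for then its complement is a union of other open classes, making $A(q_1)$ clopen; connectedness of $Q$ then forces $A(q_1) = Q$. To prove openness at $q \in A(q_1)$, pick a local horizontal frame $X_1, \dots, X_k$ near $q$ and consider the end-point map
\begin{equation*}
F(t_1, \dots, t_N) := \Phi^{t_N}_{Y_N} \circ \cdots \circ \Phi^{t_1}_{Y_1}(q),
\end{equation*}
where each $Y_j$ is $\pm X_i$ for some $i$. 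The classical commutator-flow identity $\Phi^{-s}_Y \circ \Phi^{-s}_X \circ \Phi^{s}_Y \circ \Phi^{s}_X(q') = q' + s^2 [X,Y](q') + O(s^3)$ (in a local chart), iterated for higher brackets, shows that every iterated bracket of the $X_i$ can be simulated to leading order by a composition of horizontal flows. The bracket-generating hypothesis then lets one choose finitely many such commutator expressions whose values at $q$ span $T_q Q$; a suitable rescaling of the parameters yields a version of $F$ whose differential at the origin is surjective, so by the inverse function theorem $F$ covers a neighborhood of $q$. Since every flow segment is horizontal of finite length, that neighborhood lies in $A(q_1)$.

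The metric axioms are quick: symmetry by time reversal, the triangle inequality by concatenation and reparameterization, and non-degeneracy from the comparison $d \ge c \, d_R$ on any compact neighborhood, where $d_R$ is the distance function of a smooth Riemannian metric on $Q$ that extends $\inner{\cdot}{\cdot}$ to all of $TQ$. The comparison holds because any horizontal curve's Riemannian length is bounded above (up to a fixed constant on compact sets) by its subriemannian length.

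For the topology equivalence, one direction is the same comparison $d \ge c \, d_R$: small $d$-balls lie inside small $d_R$-balls, so the $d$-topology is at least as fine as the manifold topology. The reverse direction---that every manifold neighborhood of $q$ contains some $d$-ball $B_d(q, r)$---is the principal obstacle and requires a \emph{ball-box theorem}: in coordinates adapted to the flag $\mathcal{H} \subset \mathcal{H} + [\mathcal{H}, \mathcal{H}] \subset \cdots$, with weights $w_i$ equal to the bracket depth at which each coordinate direction first appears, one proves
\begin{equation*}
\{|u_i| \le c\, r^{w_i}\} \;\subset\; B_d(q, r) \;\subset\; \{|u_i| \le C\, r^{w_i}\}
\end{equation*}
for constants $0 < c < C$. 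The left inclusion is the quantitative form of the flow-composition argument above; the right inclusion follows from estimating how far a horizontal curve of length $r$ can move in each weighted direction. The technical heart of the proof is thus the bracket-approximation estimate, used in both the openness step and the ball-box bound; the full details are carried out in Appendix D of \cite{montgomery}, which we would invoke.
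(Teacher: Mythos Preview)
Your sketch is correct and follows the standard route (accessibility sets via flow compositions, commutator expansions to realize brackets, then the ball-box estimate for the topology equivalence), ultimately deferring to Appendix~D of \cite{montgomery} for the technical details. The paper itself does not give a proof of this theorem at all: it simply refers the reader to Chapter~2 and Appendix~D of \cite{montgomery}. So there is nothing to compare against beyond noting that your outline is exactly the argument one finds in that reference, with more detail supplied than the paper offers.
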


\fixnotme{Discuss Frobenius theorem as a partial converse?}

\begin{theorem}
  \index{geodesic!existence of}
  If $Q$ is complete under the Carnot-Carath\'eodory distance $d$,
  then the infimum in the definition of $d$ is achieved; that is, any
  two points $q_1, q_2 \in Q$ are joined by at least one shortest
  horizontal path.
\end{theorem}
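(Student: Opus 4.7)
My plan is to follow the classical Hopf-Rinow strategy adapted to the subriemannian setting. Let $q_1, q_2 \in Q$ and set $L := d(q_1, q_2)$, which is finite by Chow's theorem (Theorem \ref{chow-thm-subriemannian}). I would choose a minimizing sequence of horizontal paths $\gamma_n : [0,1] \to Q$ joining $q_1$ to $q_2$ with $\ell(\gamma_n) \to L$, and by the standard trick of reparametrizing by arc length and then rescaling back to $[0,1]$, I may assume each $\gamma_n$ has constant speed $\ell(\gamma_n)$, i.e.\ $\sqrt{\inner{\dot\gamma_n(t)}{\dot\gamma_n(t)}_{\gamma_n(t)}} = \ell(\gamma_n)$ for a.e.\ $t$.

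Next I would confine the $\gamma_n$ to a common compact set. By construction the images lie in the closed ball $\bar B_d(q_1, L+1)$ for large $n$, and the $\gamma_n$ are uniformly Lipschitz with respect to $d$ with constant $L+1$. The subriemannian Hopf-Rinow theorem (see Montgomery, Chapter 2 and Appendix D) asserts that completeness of $(Q,d)$ plus local compactness of $Q$ forces closed bounded balls to be compact; I would invoke this directly. Then Arzel\`a-Ascoli applied to the equicontinuous family $\{\gamma_n\}$ extracts a subsequence $\gamma_{n_k}$ converging uniformly to a continuous curve $\gamma : [0,1] \to Q$ with $\gamma(0)=q_1$, $\gamma(1)=q_2$.

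The substantive step is to show that $\gamma$ is horizontal with $\ell(\gamma) \le L$. Cover the image of $\gamma$ by finitely many coordinate charts on each of which $\mathcal{H}$ admits a smooth orthonormal local frame $X_1, \dots, X_{2n}$; for $k$ large enough each restriction of $\gamma_{n_k}$ to a corresponding time subinterval lies in the same chart as $\gamma$. There I would write $\dot\gamma_{n_k}(t) = \sum_i u^i_{n_k}(t) X_i(\gamma_{n_k}(t))$ with $\int_0^1 |u_{n_k}(t)|^2 \, dt = \ell(\gamma_{n_k})^2$ uniformly bounded. Weak compactness in $L^2$ then yields a further subsequence for which the controls $u^i_{n_k}$ converge weakly to some $u^i \in L^2$. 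A Gronwall-type estimate applied to the ODE $\dot\gamma = \sum u^i X_i(\gamma)$ lets me pass to the limit in the integral equation
\begin{equation*}
\gamma_{n_k}(t) = \gamma_{n_k}(0) + \int_0^t \sum_i u^i_{n_k}(s) X_i(\gamma_{n_k}(s))\,ds,
\end{equation*}
combining uniform convergence of the base points with weak convergence of the controls, to conclude that $\gamma$ is the horizontal curve driven by $u^i$. Lower semicontinuity of the $L^2$ norm under weak convergence gives $\ell(\gamma) \le \liminf \ell(\gamma_{n_k}) = L$, and since trivially $\ell(\gamma) \ge d(q_1,q_2) = L$, equality holds and $\gamma$ is the sought shortest path.

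The main obstacle is the last paragraph: identifying the weak $L^2$-limit $u^i$ of the controls with the horizontal derivative of the uniform limit $\gamma$. The difficulty is that the vector fields $X_i$ are evaluated along $\gamma_{n_k}$, which converges only uniformly, while the controls converge only weakly, so one cannot simply multiply limits. Handling this through a Gronwall-type estimate (or equivalently, through the continuity of the endpoint map on bounded subsets of the control space) is the technical heart of the proof; everything else is standard metric-space machinery.
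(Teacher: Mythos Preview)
The paper does not actually prove this theorem: immediately before the statement it refers the reader to Chapter~2 and Appendix~D of Montgomery's book for proofs of both this result and Chow's theorem. So there is no ``paper's own proof'' to compare against; your proposal supplies what the paper deliberately outsources.

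Your outline is the standard direct-method argument one finds in Montgomery and elsewhere, and it is essentially correct. Two remarks. First, when you invoke the subriemannian Hopf--Rinow theorem to obtain compactness of closed balls, be aware that in most presentations this compactness statement and the existence of minimizers are packaged together as a single theorem; you are using only the ``completeness $\Rightarrow$ Heine--Borel'' half, which can indeed be established independently (by the usual argument that the set of radii $r$ with $\bar B_d(q_1,r)$ compact is open and closed in $[0,\infty)$), so there is no genuine circularity, but it is worth flagging. Second, your identification of the limit curve as horizontal via weak $L^2$ convergence of controls plus uniform convergence of base points is exactly the right idea; the cleanest way to justify the limit passage in the integral equation is to note that $s \mapsto X_i(\gamma_{n_k}(s))$ converges \emph{strongly} in $L^2$ (by uniform convergence of $\gamma_{n_k}$ and continuity of $X_i$ on the compact set), and strong-times-weak products converge weakly, which handles the term you flagged as the obstacle without needing Gronwall.
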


\fixnotme{Include proofs?}

One way to compute the Carnot-Carath\'eodory distance is to find such
a shortest path and compute its length.  To find a shortest path, we
use Hamiltonian mechanics, following Chapters 1 and 5 of
\cite{montgomery}.  Roughly speaking, it can be shown that a length
minimizing path also minimizes the energy $\frac{1}{2}\int_0^1
\norm{\dot{\gamma}(t)}\,dt$, and as such should solve Hamilton's
equations of motion.  The argument uses the method of Lagrange
multipliers, and requires that the endpoint map taking horizontal
paths to their endpoints has a surjective differential.  This always
holds in the Riemannian setting, but is not generally true in
subriemannian geometry; the Martinet distribution (see Chapter 3 of
\cite{montgomery}) is a counterexample in which some shortest paths do
not satisfy Hamilton's equations.
\index{Martinet distribution}
    Additional assumptions on
$\mathcal{H}$ are needed.  One which is sufficient (but certainly not
necessary) is that the distribution be \emph{fat}:

\begin{definition}\label{fat-def}
  Let $\Theta$ be the canonical $1$-form on the cotangent bundle
  $T^*Q$ \fixnotme{definition would be nice}, $\omega = d\Theta$ the
  canonical symplectic $2$-form, and let $\mathcal{H}^0 := \{ p_q \in
  T^*Q : p_q(\mathcal{H}_q) = 0\}$ be the annihilator of
  $\mathcal{H}$.  (Note $\mathcal{H}^0$ is a sub-bundle, and hence
  also a submanifold, of $T^*Q$.)  We say $\mathcal{H}$ is
  \define{fat}{fat} if $\mathcal{H}^0$ is symplectic away from
  the zero section.  That is, if $p_q \in \mathcal{H}^0$ is not in the
  zero section, $v \in T_{p_q}\mathcal{H}^0$, and $\omega(v, w)=0$ for
  all other $w \in T_{p_q}\mathcal{H}^0$, then $v=0$.
\end{definition}

\begin{definition}
  If $(Q, \mathcal{H}, \inner{\cdot}{\cdot})$ is a subriemannian
  manifold, the subriemannian \define{Hamiltonian}{Hamiltonian} $H : T^*Q \to \R$ is
  defined by
  \begin{equation}\label{hamiltonian-subriemannian}
    H(p_q) = \sum_i p_q(v_i)^2
  \end{equation}
  where $\{v_i\}$ is an orthonormal basis for $(\mathcal{H}_q,
  \inner{\cdot}{\cdot}_q)$.  It is clear that this definition is
  independent of the chosen basis.  Let the \define{Hamiltonian vector
    field}{Hamiltonian vector field} $X_H$ on $T^*Q$ be the unique
  vector field satisfying $dH + \omega(X_H, \cdot) = 0$ (as elements
  of $T^*T^*Q$).  $X_H$ is well defined because $\omega$ is
  symplectic.  \define{Hamilton's equations of motion}{Hamilton's
    equations of motion} are the ODEs for the integral curves of
  $X_H$.
\end{definition}

The following theorem summarizes (a special case of) the argument of
Chapters 1 and 5 of \cite{montgomery}.

\begin{theorem}\label{fat-hamilton}
  If $\mathcal{H}$ is fat, then any length minimizing path $\sigma :
  [0,1] \to Q$, when parametrized with constant speed, is also energy
  minimizing and is the projection onto $Q$ of a path ${\gamma} :
  [0,1] \to T^*Q$ which satisfies Hamilton's equations of motion:
  $\dot{\gamma}(t) = X_H(\gamma(t))$.
\end{theorem}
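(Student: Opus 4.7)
The plan is to reduce the length minimization problem to an application of the Lagrange multiplier theorem on an endpoint map, and then use the fat condition to rule out the ``abnormal'' case in which the multiplier rule degenerates.

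First I would dispose of the reparametrization step. By the Cauchy--Schwarz inequality
\begin{equation*}
\ell(\sigma)^2 = \left(\int_0^1 \sqrt{\inner{\dot\sigma}{\dot\sigma}}\,dt\right)^2 \le \int_0^1 \inner{\dot\sigma}{\dot\sigma}\,dt = 2E(\sigma),
\end{equation*}
with equality precisely when $\sqrt{\inner{\dot\sigma}{\dot\sigma}}$ is constant in $t$. Hence reparametrizing $\sigma$ by arc length (rescaled to $[0,1]$) gives a path of the same length whose energy equals $\frac12\ell(\sigma)^2$, which is then the minimum of $E$ among horizontal paths joining the same endpoints, since any competitor $\tau$ satisfies $\ell(\tau) \ge \ell(\sigma)$ and hence $2E(\tau) \ge \ell(\tau)^2 \ge \ell(\sigma)^2 = 2E(\sigma)$.

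Next I set up the endpoint map on a suitable path space. Fix $q_0 = \sigma(0)$, trivialize $\mathcal{H}$ locally by an orthonormal frame $X_1,\dots,X_{2n}$, and represent horizontal paths starting at $q_0$ by their control functions $u = (u^1,\dots,u^{2n}) \in L^2([0,1],\R^{2n})$ via $\dot\gamma_u(t) = \sum u^i(t)X_i(\gamma_u(t))$. The energy is then $E(u) = \frac12\int_0^1|u(t)|^2\,dt$, which is smooth, and the endpoint map $\mathrm{end}(u) := \gamma_u(1)$ is smooth into $Q$. Because $\sigma$ (in constant-speed parametrization) minimizes $E$ on the level set $\mathrm{end}^{-1}(q_1)$, the Lagrange multiplier principle supplies a covector $(\lambda_0,\lambda_1)\in\R\times T_{q_1}^*Q$, not both zero, with $\lambda_0\,dE(u_\sigma) = \lambda_1\circ d\,\mathrm{end}(u_\sigma)$. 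A standard computation in Chapter~1 of \cite{montgomery} shows that this extremality condition is equivalent to the existence of a lift $\gamma : [0,1]\to T^*Q$ of $\sigma$ that solves Hamilton's equations for the Hamiltonian $\lambda_0 H$: in the \emph{normal} case $\lambda_0\neq 0$ (rescale to $\lambda_0=1$) one gets Theorem~\ref{fat-hamilton}, while in the \emph{abnormal} case $\lambda_0=0$ one gets $\gamma(t)\in\mathcal{H}^0\setminus\{0\}$ with $\dot\gamma(t)\in\ker(\omega|_{\mathcal{H}^0})$ for a.e.\ $t$.

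The hard part, and the reason for assuming fatness, is to exclude the abnormal case. Here I would invoke Definition~\ref{fat-def}: if $\mathcal{H}^0$ is symplectic off the zero section, then the restricted $2$-form $\omega|_{\mathcal{H}^0}$ is nondegenerate on $T_{p_q}\mathcal{H}^0$ for every nonzero $p_q\in\mathcal{H}^0$, so its kernel is trivial. Consequently the only absolutely continuous curves in $\mathcal{H}^0$ whose velocity lies in $\ker(\omega|_{\mathcal{H}^0})$ almost everywhere are the constant curves in the zero section; these project to constant paths in $Q$, which carry no length to minimize. Hence no abnormal multiplier can arise for a nonconstant length minimizer $\sigma$, we must have $\lambda_0\neq 0$, and the lift $\gamma$ satisfies $\dot\gamma = X_H\circ\gamma$ as claimed.
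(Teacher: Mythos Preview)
The paper itself does not prove this theorem: it states the result as a summary of Chapters~1 and~5 of \cite{montgomery} and leaves an internal note to include a proof later. Your sketch is exactly the argument the paper describes informally in the paragraph preceding the theorem (energy--length reduction via Cauchy--Schwarz, Lagrange multipliers on the endpoint map, the normal/abnormal dichotomy, and fatness to exclude abnormals), and it matches Montgomery's treatment. One minor slip: you write that the curves with $\dot\gamma\in\ker(\omega|_{\mathcal{H}^0})$ are ``constant curves in the zero section''; in fact nondegeneracy of $\omega|_{\mathcal{H}^0}$ at nonzero points forces $\dot\gamma=0$ a.e., so $\gamma$ is constant at some (possibly nonzero) point of $\mathcal{H}^0$---but such a curve still projects to a constant in $Q$, so your conclusion is unaffected.
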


\fixnotme{include a proof}

\section{Geodesics for H-type groups}

In this section, we will verify that Theorem \ref{fat-hamilton}
applies to H-type groups, and then find a formula for the geodesics
(shortest paths) by
solving Hamilton's equations.  To begin, we adopt a coordinate system
for the cotangent bundle $T^*G$.

\begin{notation}
  Let $(x, z, \xi, \eta) : T^*G \to \R^{2n} \times \R^{m} \times
  \R^{2n} \times \R^m$ be the coordinate system on $T^*G$ such that
  $x^i(p_g) = x^i(g)$, $z^j(p_g)=z^j(g)$, $\xi_i(p_g) =
  p(\frac{\partial}{\partial x^i})$, $\eta_j(p_g) =
  p(\frac{\partial}{\partial z^j})$.  That is,
  \begin{equation*}
    p_g = \left(x(g), z(g),
   \sum_i \xi_i dx^i + \sum_j \eta_j dz^j\right).
  \end{equation*}
  In these
  coordinates, the canonical $2$-form $\omega$ has the expression
  $\omega = \sum_i  d\xi_i \wedge dx^i + \sum_j d\eta_j \wedge dz^j$.
\end{notation}

\begin{proposition}
If $G$ is an H-type group with horizontal distribution $\mathcal{H}$
spanned by the vector fields $X_i$, then $\mathcal{H}$ is fat.
\index{H-type group!fat}
\index{fat}
\end{proposition}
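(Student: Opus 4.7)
The plan is to unwind Definition \ref{fat-def} directly in the coordinates $(x,z,\xi,\eta)$ on $T^*G$. Applying $p_g$ to the explicit form of $X_i$ in (\ref{Xi-formula}) gives $p_g(X_i(g)) = \xi_i + \tfrac{1}{2}\inner{J_\eta x}{e_i}$, so $\mathcal{H}^0$ is cut out by the $2n$ independent equations $\xi = -\tfrac{1}{2} J_\eta x$ and is globally parametrized by $(x,z,\eta) \in \R^{2n+m+m}$. Since this constraint forces $\xi=0$ whenever $\eta=0$, a point of $\mathcal{H}^0$ lies off the zero section precisely when $\eta \neq 0$. The goal is to show that at every such point the pullback of $\omega$ to $\mathcal{H}^0$ is nondegenerate.

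To do this, I would use the natural frame on $T\mathcal{H}^0$ induced by the parametrization,
\[
V_i := \pp{x_i} - \tfrac{1}{2}\sum_k \inner{J_\eta e_i}{e_k}\pp{\xi_k},\quad W_j := \pp{z_j},\quad U_j := \pp{\eta_j} - \tfrac{1}{2}\sum_k \inner{J_{u_j}x}{e_k}\pp{\xi_k},
\]
and plug into $\omega = \sum_k d\xi_k\wedge dx^k + \sum_l d\eta_l\wedge dz^l$. Using only the skew-adjointness of $J_\eta$ (Proposition \ref{Jz-props}, item \ref{Jz-skew}), one computes
\[
\omega(V_i,V_{i'}) = -\inner{J_\eta e_i}{e_{i'}},\qquad \omega(V_i,U_j) = \tfrac{1}{2}\inner{J_{u_j}x}{e_i},\qquad \omega(W_j,U_{j'}) = -\delta_{jj'},
\]
with all other pairings zero. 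Writing $A$ for the matrix of $J_\eta$ on $\R^{2n}$ (which, after using skew-symmetry to cancel a sign and a transpose, is exactly the block of entries $-\inner{J_\eta e_i}{e_{i'}}$) and $B_{ij}:=\tfrac{1}{2}\inner{J_{u_j}x}{e_i}$, the matrix of $\omega|_{T\mathcal{H}^0}$ in the basis $(V_1,\ldots,V_{2n},W_1,\ldots,W_m,U_1,\ldots,U_m)$ is
\[
\Omega = \begin{pmatrix} A & 0 & B\\ 0 & 0 & -I_m \\ -B^{T} & I_m & 0 \end{pmatrix}.
\]

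To conclude, suppose $\Omega(a,b,c)^T=0$ with $a \in \R^{2n}$ and $b,c \in \R^m$. The second block row gives $c=0$, the third gives $b = B^T a$, and the first reduces to $Aa=0$. But $A$ is invertible whenever $\eta \neq 0$ by the H-type identity $J_\eta^2 = -\abs{\eta}^2 I$ (Proposition \ref{Jz-props}, item \ref{Jz-square}), so $a=0$ and hence $b=0$, giving the required nondegeneracy. The main obstacle is purely book-keeping: getting the signs and indices right in the wedge-product computation, particularly the absorption of the skew-symmetric contributions to $\omega(V_i,V_{i'})$. Once the block form of $\Omega$ is in hand, the H-type property delivers invertibility immediately. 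A coordinate-free alternative would pass through the standard characterization of fatness via nondegeneracy of the bilinear form $(X,Y) \mapsto \inner{Z}{[X,Y]}$ on $\mathfrak{z}^\perp$ for $Z \in \mathfrak{z}\setminus\{0\}$; by (\ref{Jz-relation}) this form equals $(X,Y) \mapsto \inner{J_Z X}{Y}$, which is visibly nondegenerate for precisely the same reason.
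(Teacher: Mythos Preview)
Your argument is correct and is essentially the same as the paper's: both parametrize $\mathcal{H}^0$ by $(x,z,\eta)$ via $\xi=-\tfrac12 J_\eta x$ and reduce nondegeneracy of $\omega|_{\mathcal{H}^0}$ to the invertibility of $J_\eta$ for $\eta\neq 0$. The only cosmetic difference is that the paper tests against the directions $W_j$, $V_i$, $U_j$ one at a time (as tangent vectors to curves) rather than assembling them into your block matrix $\Omega$; the logical content is identical.
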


\begin{proof}
For an H-type group $G$, we have $p_g \in \mathcal{H}^0$ iff
$p_g(X_i(g)) = 0$ for all $i$.  We can thus form a basis for
$\mathcal{H}^0_g \subset T^*_g G$ by
\begin{align*}
  w^j &= dz^j - \sum_i dz^j(X_i(g)) dx^i \\
  &= dz^j - \frac{1}{2}\sum_i \innerp{J_{u_j} x(g)}{e_i} dx^i.
\end{align*}
Expressing $p_g$ in this basis as $p_g = \sum_j \theta_j w^j$ yields a
system of coordinates $(x, z, \theta)$ for $\mathcal{H}^0$, where
$\theta$ can be identified with the element $(\theta^1, \dots,
\theta^m)$ of $\R^m$.  In terms of the coordinates $(x,z,\xi,\eta)$
for $T^*G$, we have $\eta = \theta$, $\xi = -\frac{1}{2}J_\theta x$.

So let $\gamma : (-\epsilon, \epsilon) \to \mathcal{H}^0$ be a curve
in $\mathcal{H}^0$ which avoids the zero section.  $\dot{\gamma}(0)$
is thus a generic element of $T \mathcal{H}^0$.  We write
$\gamma(t)$ in coordinates as $(x(t), z(t), \theta(t))$, where
$\theta(t) \ne 0$. In terms of the coordinates $(x,z,\xi,\eta)$ on
$T^*G$, we have $\eta(t)=\theta(t), \xi(t) = -\frac{1}{2}J_{\theta(t)}x(t)$.
Differentiating the latter gives 
\begin{equation*}
\dot{\xi}(t) =
-\frac{1}{2} (J_{\dot{\theta}(t)}x(t) + J_{\theta(t)}\dot{x}(t)).
\end{equation*}

Suppose that for all other such curves $\gamma'$ which avoid the zero
section and satisfy $\gamma'(0)=\gamma(0)$, we have
$\omega(\dot{\gamma}(0), \dot{\gamma}'(0)) = 0$.  In terms of
coordinates,
\begin{align*}
  0 = \omega(\dot{\gamma}(0), \dot{\gamma}'(0)) 
  &= \sum_i (\dot{\xi}_i(0) \dot{x}'^i(0) -
  \dot{\xi}'_i(0)\dot{x}^i(0)) + \sum_j (\dot{\eta}_j(0)\dot{z}'^j(0)
  - \dot{\eta}'_j(0)\dot{z}^j(0)) \\
  &= \inner{\dot{\xi}(0)}{\dot{x}'(0)} -
  \inner{\dot{\xi}'(0)}{\dot{x}(0)} +
  \inner{\dot{\eta}(0)}{\dot{z}'(0)} -
  \inner{\dot{\eta}'(0)}{\dot{z}(0)} \\
  &= -\frac{1}{2} \inner{J_{\dot{\theta}(0)}x(0) +
    J_{\theta(0)}\dot{x}(0)}{\dot{x}'(0)} +\frac{1}{2} 
  \inner{J_{\dot{\theta}'(0)}x'(0) +
    J_{\theta'(0)}\dot{x}'(0)}{\dot{x}(0)} \\
  &\quad +
  \inner{\dot{\theta}(0)}{\dot{z}'(0)} -
  \inner{\dot{\theta}'(0)}{\dot{z}(0)} \\
  &= \frac{1}{2} \inner{x(0)}{J_{\dot{\theta}(0)} \dot{x}'(0) +
    J_{\dot{\theta}'(0)} \dot{x}(0)} + \inner{J_{\theta(0)} \dot{x}'(0)}{\dot{x}(0)} \\
  &\quad +
  \inner{\dot{\theta}(0)}{\dot{z}'(0)} -
  \inner{\dot{\theta}'(0)}{\dot{z}(0)}.
\end{align*}

For arbitrary $u \in \R^m$, take $\gamma'(t) = (x(0), z(0)+tu,
\theta(0))$; then $0 = \omega(\dot{\gamma}(0), \dot{\gamma}'(0)) =
\inner{\dot{\theta}(0)}{u}$, so we must have $\dot{\theta}(0)=0$.
Next, for arbitrary $v \in \R^{2n}$, take $\gamma'(t) = (x(0) + tv,
z(0), \theta(0))$; then we have $0 =
\inner{J_{\theta(0)}u}{\dot{x}(0)}$.  But $\theta(0)\ne 0$ by
assumption, so $J_{\theta_0}$ is nonsingular and we must have
$\dot{x}(0)=0$.  Finally, take $\gamma'(t) = (x(0), z(0),
\theta(0)+tu)$; then $\inner{u}{\dot{z}(0)}=0$, so $\dot{z}(0)=0$.
Thus we have shown that if $\omega(\dot{\gamma}(0), \dot{\gamma}'(0))
= 0$ for all $\gamma'$, we must have $\dot{\gamma}(0)=0$, which
completes the proof.
\end{proof}

We now proceed to compute and solve Hamilton's equations of motion for
an H-type group $G$.

The subriemannian Hamiltonian on $T^*G$ is defined by (cf. (\ref{hamiltonian-subriemannian}))
\index{Hamiltonian}
\index{H-type group!Hamiltonian $H$}
\begin{equation}\label{hamiltonian-h-type}
  H(p_g) := \frac{1}{2}\sum_{i=1}^{2n} p_g(X_i(g))^2, \quad p_g \in T^*_g G.
\end{equation}

In terms of the above coordinates, we may compute
\begin{equation*}
  p_g(X_i(g)) = p_g\left(\frac{\partial}{\partial
    x^i} + \frac{1}{2} \sum_j \inner{J_{u_j} x}{e_i}
  \frac{\partial}{\partial z^j}\right) = \xi_i(p_g) + \frac{1}{2}\inner{J_{\eta(p_g)}
    x(g)}{e_i}
\end{equation*}
so that
\begin{equation*}
  H(p_g) = \frac{1}{2}\abs{\xi(p_g) + \frac{1}{2} J_{\eta(p_g)} x(g)}^2.
\end{equation*}

\index{Hamilton's equations of motion}
Recall that a path $\gamma : [0,T] \to T^*Q$ satisfies
Hamilton's equations iff $\dot{\gamma}(t) =
X_H(\gamma(t))$, i.e. $dH_{\gamma(t)} + \omega(\dot{\gamma}(t),
\cdot) = 0$.

In an $H$-type group $G$, we write $\gamma$ in coordinates as
$\gamma(t) = (x(t), z(t), \xi(t), \eta(t)) : [0,T] \to T^*G$, so that
we have
\begin{align*}
  \omega(\dot{\gamma}(t), \cdot) = \sum_i (\dot{\xi}_i(t) dx^i -
  \dot{x}^i(t) d\xi_i) + \sum_j (\dot{\eta}_j(t) dz^j - \dot{z}^j(t) d\eta_j).
\end{align*}
Thus Hamilton's equations of motion read
\index{Hamilton's equations of motion}
\index{H-type group!Hamilton's equations of motion}
\begin{equation}
  \dot{x}^i = \frac{\partial H}{\partial \xi^i}, \quad
  \dot{\xi}_i = -\frac{\partial H}{\partial x^i}, \quad
  \dot{z}_j = \frac{\partial H}{\partial \eta^j},\quad
  \dot{\eta}_j = -\frac{\partial H}{\partial z^j}.
\end{equation}

To compute the derivatives, we note that $\frac{1}{2} \grad_x
\abs{Ax+y}^2 = A^* Ax + A^*y$.  If we write $B_x \eta =
J_\eta x$, then $\inner{B_x \eta}{y} = \inner{\eta}{[x,y]}$,
so $B_x^* = [x,\cdot]$, and $B_x^* B_x = \abs{x}^2 I$.  So for a
path $\gamma(t) = (x(t), z(t), \xi(t), \eta(t)) : [0,T] \to T^*G$,
Hamilton's equations of motion read
\begin{align}
  \dot{x} &= \grad_\xi H = \xi + \frac{1}{2} J_\eta x \label{Hx} \\
  \dot{z} &= \grad_\eta H = \frac{1}{2} \grad_\eta \abs{\xi + \frac{1}{2} B_x \eta}^2 =
  \frac{1}{4} \abs{x}^2 \eta + \frac{1}{2}[x,\xi] \label{Hz} \\
  \dot{\xi} &= -\grad_x H = -\frac{1}{4}\abs{\eta}^2 x + \frac{1}{2}J_\eta \xi \label{Hxi}\\ 
  \dot{\eta} &= -\grad_z H = 0. \label{Heta}
\end{align}

\begin{notation}
  Define the function $\nu : \R \to \R$ by
  \begin{equation} \label{nu-def}
  \nu(\theta) = \frac{2\theta - \sin 2 \theta}{1-\cos 2 \theta} 
  = 
  \frac{\theta}{\sin^2 \theta} - \cot\theta = -\frac{d}{d\theta} [\theta
  \cot \theta]
  \end{equation}
  where the alternate form comes from the double-angle identities.
\end{notation}

\begin{theorem}\label{ham-soln}
  \index{H-type group!Hamilton's equations of motion!solutions}
  $(x(t), z(t))$ is the projection of a solution to Hamilton's
  equations with $x(0)=0$, $z(0)=0$ and $x(1)$, $z(1)$ given if and only if:
  \begin{enumerate}
  \item If $z(1)=0$, we have
    \begin{equation}\label{ham-line}
      x(t) = tx(1),\quad z(t)=0.
    \end{equation}
  \item If $z(1) \ne 0$, we have
\begin{align}
  x(t) &= 
  \frac{1}{\abs{\eta_0}^2}J_{\eta_0}(I-e^{tJ_{\eta_0}})\xi_0 \label{ham-arc-x}
  \\  
  z(t) &=
  \frac{\abs{\xi_0}^2}{2\abs{\eta_0}^3}(\abs{\eta_0}t-\sin(\abs{\eta_0}t))
  \eta_0 \label{ham-arc-z}
\end{align}
where, if $x(1) \ne 0$ we have
\begin{align}
  \eta_0 &= 2\theta \frac{z(1)}{\abs{z(1)}} \\
  \xi_0 &= -\abs{\eta_0}^2 (J_{\eta_0} (e^{J_{\eta_0}}-I))^{-1} x(1)
       \intertext{where $\theta$ is a solution to}
       \nu(\theta) &= \frac{4\abs{z(1)}}{\abs{x(1)}^2} \label{nu-dist}
\end{align}
with $\nu$ as given by (\ref{nu-def}); and if $x(1) = 0$ we have
\begin{align*}
  \eta_0 &= 2 \pi k \frac{z(1)}{\abs{z(1)}} \\
  \abs{\xi_0} &= \sqrt{4 k \pi \abs{z(1)}}
\end{align*}
for some integer $k \ge 1$.
  \end{enumerate}
\end{theorem}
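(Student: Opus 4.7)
The plan is to integrate Hamilton's equations \eqref{Hx}--\eqref{Heta} forward from the initial data $(x(0),z(0))=(0,0)$, treating $\xi_0:=\xi(0)$ and $\eta_0:=\eta(0)$ as free parameters, and then to invert the resulting endpoint map $(\xi_0,\eta_0)\mapsto(x(1),z(1))$.

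I would begin with equation \eqref{Heta}, which says $\eta(t)\equiv\eta_0$. Differentiating \eqref{Hx} and substituting from \eqref{Hxi} gives, after expressing $\xi=\dot x-\tfrac12 J_{\eta_0}x$ and applying $J_{\eta_0}^2=-\abs{\eta_0}^2 I$ (item \ref{Jz-square} of Proposition \ref{Jz-props}), the striking simplification $\ddot x=J_{\eta_0}\dot x$. With $x(0)=0$ and $\dot x(0)=\xi_0$ (from \eqref{Hx}), this integrates to $\dot x(t)=e^{tJ_{\eta_0}}\xi_0$ and
\[
x(t)=J_{\eta_0}^{-1}\bigl(e^{tJ_{\eta_0}}-I\bigr)\xi_0=-\frac{1}{\abs{\eta_0}^2}J_{\eta_0}\bigl(e^{tJ_{\eta_0}}-I\bigr)\xi_0,
\]
using item \ref{Jz-inverse} of Proposition \ref{Jz-props}. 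This is exactly \eqref{ham-arc-x}; the degenerate case $\eta_0=0$ recovers \eqref{ham-line} by continuity.

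Next I would handle $z$. The key simplification comes from applying item \ref{Jz-bracket} of Proposition \ref{Jz-props} to $[x,\xi]=[x,\dot x]-\tfrac12[x,J_{\eta_0}x]=[x,\dot x]-\tfrac12\abs{x}^2\eta_0$, which collapses \eqref{Hz} to the clean identity $\dot z=\tfrac12[x,\dot x]$. Writing $x(t)$ and $\dot x(t)$ as linear combinations of $\xi_0$ and $J_{\eta_0}\xi_0$ (using $e^{tJ_{\eta_0}}=\cos(t\abs{\eta_0})I+\abs{\eta_0}^{-1}\sin(t\abs{\eta_0})J_{\eta_0}$), the bracket $[x(t),\dot x(t)]$ expands bilinearly: the $[\xi_0,\xi_0]$ and $[J_{\eta_0}\xi_0,J_{\eta_0}\xi_0]$ terms vanish, while the surviving cross term is a scalar multiple of $[\xi_0,J_{\eta_0}\xi_0]=\abs{\xi_0}^2\eta_0$ (item \ref{Jz-bracket} again). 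After trigonometric bookkeeping the result is $[x(t),\dot x(t)]=\abs{\eta_0}^{-2}(1-\cos(t\abs{\eta_0}))\abs{\xi_0}^2\eta_0$, whose antiderivative produces \eqref{ham-arc-z}.

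Finally I would invert at $t=1$. Equation \eqref{ham-arc-z} shows $z(1)$ is a nonnegative scalar multiple of $\eta_0$, so when $z(1)\neq 0$ I can write $\eta_0=2\theta\,z(1)/\abs{z(1)}$ for some $\theta>0$. Substituting back in \eqref{ham-arc-z} gives $\abs{z(1)}=\abs{\xi_0}^2(2\theta-\sin 2\theta)/(8\theta^2)$. A parallel computation of $\abs{x(1)}^2$ from \eqref{ham-arc-x}, again using that $\xi_0$ and $J_{\eta_0}\xi_0$ are orthogonal with $\abs{J_{\eta_0}\xi_0}=\abs{\eta_0}\abs{\xi_0}$ (items \ref{Jz-skew} and \ref{Jz-square}), collapses to $\abs{x(1)}^2=\abs{\xi_0}^2\sin^2\theta/\theta^2$. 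Dividing the two relations and using $2\sin^2\theta=1-\cos 2\theta$ yields exactly $4\abs{z(1)}/\abs{x(1)}^2=\nu(\theta)$, as in \eqref{nu-dist}. The main subtlety, which I want to handle carefully, is the splitting: when $x(1)\neq 0$ the operator $\xi\mapsto\abs{\eta_0}^{-2}J_{\eta_0}(I-e^{J_{\eta_0}})\xi$ is invertible (since $\sin\theta\neq 0$) and determines $\xi_0$ uniquely from $x(1)$; when $x(1)=0$ one must have $\sin\theta=0$, forcing $\theta=k\pi$ with integer $k\geq 1$, and then only the magnitude $\abs{\xi_0}=\sqrt{4k\pi\abs{z(1)}}$ is pinned down by \eqref{ham-arc-z} (consistent with the rotational ambiguity in $\xi_0$ when its image under the map above collapses to zero).
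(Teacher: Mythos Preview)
Your proof is correct and follows essentially the same route as the paper's: integrate \eqref{Heta}, reduce to a linear ODE in $x$ (you pass through the second-order equation $\ddot x=J_{\eta_0}\dot x$ where the paper writes the equivalent first-order equation $\dot x=J_{\eta_0}x+\xi_0$), derive $\dot z=\tfrac12[x,\dot x]$ from \eqref{Hz}, and then invert the endpoint map via the ratio $4\abs{z(1)}/\abs{x(1)}^2=\nu(\theta)$. The only point you leave slightly implicit is that $z(1)=0$ forces $\eta_0=0$ (or $\xi_0=0$), which follows immediately from your formula \eqref{ham-arc-z} since $\abs{\eta_0}-\sin\abs{\eta_0}>0$ for $\eta_0\neq 0$.
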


\begin{proof}
  
We solve (\ref{Hx}--\ref{Heta}), assuming $x(0)=0$, $z(0)=0$.  By
(\ref{Heta}) we have $\eta(t) \equiv \eta(0) = \eta_0$.  If $\eta_0 =
0$, we can see by inspection that the solution is 
\begin{equation}\label{eta0-0-soln}
\eta(t) = 0, \quad \xi(t) = \xi_0, \quad x(t)=t\xi_0, \quad z(t)=0,
\end{equation}
namely, a straight line from the origin, whose length is clearly
$\abs{x(1)}$.  This is (\ref{ham-line}), which
we shall see is forced when $z(1)=0$.

Otherwise, assume $\eta_0 \ne 0$.  We may solve (\ref{Hx}) for $\xi$
to see that
\begin{equation} \label{xi-solved}
\xi = \dot{x} - \frac{1}{2} J_{\eta_0} x.
\end{equation}  
Notice that substituting (\ref{xi-solved}) into (\ref{Hz}) shows that
\begin{equation}\label{zdot-bracket}
  \dot{z} = \frac{1}{2}[x, \dot{x}]
\end{equation}
from which an easy computation
verifies that $(x(t),z(t))$ is indeed a horizontal path.  (This is
analogous to the formula (\ref{H1-green}) for $\mathbb{H}_1$.)

Substituting (\ref{xi-solved}) into the right side of (\ref{Hxi}) shows that
\begin{equation*}
  \dot{\xi} = -\frac{1}{4} \abs{\eta_0}^2 x + \frac{1}{2} J_{\eta_0}
  (\dot{x} - \frac{1}{2} J_{\eta_0} x) =
  \frac{1}{2} J_{\eta_0} \dot{x}
\end{equation*}
since $J_{\eta_0}^2 x = -\abs{\eta_0}^2 x$.  Thus 
\begin{equation}\label{Hxi-x}
  \xi = \frac{1}{2} J_{\eta_0} x + \xi_0
\end{equation}
where $\xi_0 = \xi(0)$.  If $\xi_0 = 0$, it is easily seen that we
have the trivial solution $x(t)=0$, $z(t)=0$, $\xi(t)=0$, $\eta(t)=\eta_0$,
so we assume now that $\xi_0 \ne 0$.  (\ref{Hxi-x}) may be substituted back
into (\ref{Hx}) to get
\begin{equation}
  \dot{x} =  J_{\eta_0} x + \xi_0
\end{equation}
so that
\begin{equation}\label{Hxsol}
  x = (J_{\eta_0})^{-1} (e^{tJ_{\eta_0}}-I)\xi_0 =
  -\frac{1}{\abs{\eta_0}^2}J_{\eta_0}(e^{tJ_{\eta_0}}-I)\xi_0.
\end{equation}
Differentiation (or substitution) shows 
\begin{equation}\label{xdot}
\dot{x} = e^{tJ_{\eta_0}} \xi_0.
\end{equation}

Note that
\begin{equation}\label{normx-formula}
  \abs{x}^2 = \frac{1}{\abs{\eta_0}^2}
  \abs{(e^{tJ_{\eta_0}}-I)\xi_0}^2 = \frac{2}{\abs{\eta_0}^2}(1-\cos(\abs{\eta_0}t))\abs{\xi_0}^2.
\end{equation}

It is easy to see from (\ref{Hxsol}) that $x(t)$ lies in the plane
spanned by $\xi_0$ and $J_{\eta_0} \xi_0$, and $x(t)$ sweeps out a
circle centered at $\frac{1}{\abs{\eta_0}^2}J_{\eta_0} \xi_0$ and
passing through the origin.  In particular, the radius of the circle
is ${\abs{\xi_0}}/{\abs{\eta_0}}$.

Now substituting (\ref{Hxsol}) and (\ref{xdot}) into
(\ref{zdot-bracket}), we have
\begin{align*}
  \dot{z} &= -\frac{1}{2 \abs{\eta_0}^2}\left([J_{\eta_0} e^{tJ_{\eta_0}}
    \xi_0, e^{tJ_{\eta_0}} \xi_0] - [J_{\eta_0} \xi_0, e^{tJ_{\eta_0}}
    \xi_0]\right) \\
  &= \frac{1}{2 \abs{\eta_0}^2}\left(\abs{e^{tJ_{\eta_0}}
    \xi_0}^2 \eta_0 + [J_{\eta_0}\xi_0, e^{tJ_{\eta_0}} \xi_0]\right) \\
  &= \frac{\abs{\xi_0}^2}{2 \abs{\eta_0}^2}\left(1 -
  \cos(\abs{\eta_0} t)\right) \eta_0.
\end{align*}

By integration,
\begin{equation}\label{z-formula}
  z = \frac{\abs{\xi_0}^2}{2\abs{\eta_0}^3}(\abs{\eta_0}t-\sin(\abs{\eta_0}t)) \eta_0.
\end{equation}
In particular,
\begin{equation}\label{normz-formula}
  \abs{z} = \frac{\abs{\xi_0}^2}{2\abs{\eta_0}^2}( \abs{\eta_0}t-\sin(\abs{\eta_0}t)).
\end{equation}
We note that inspection of (\ref{normz-formula}) shows that $z(t) \ne
0$ for $t > 0$.  Thus the only solution with $z(1)=0$ is that of
(\ref{ham-line}). 

To make more sense of this, let $r=\abs{\xi_0}/\abs{\eta_0}$ be the
radius of the arc swept out by $x(t)$, and $\phi =  \abs{\eta_0} t$
be the angle subtended by the arc.  Then
\begin{equation*}
  \abs{z} = \frac{1}{2} r^2 \phi - \frac{1}{2} r^2 \sin \phi
\end{equation*}
which is the area of the region between an arc of radius $r$
subtending an angle $\phi$ and the chord which spans it.

We must determine $\xi_0, \eta_0$ in terms of $x(1), z(1)$.  We have
already ruled out the case $z(1)=0$.  If $x(1)=0$, then
(\ref{normx-formula}) shows we must have $\abs{\eta_0} = 2 k \pi$ for
some integer $k \ge 1$.  (\ref{z-formula}, \ref{normz-formula}) then
shows $\eta_0 = 2k \pi z(1) / \abs{z(1)}$, and $\abs{\xi_0} =
\sqrt{4k\pi\abs{z(1)}}$, as desired.  In this case the direction of
$\xi_0$ is not determined and $\xi_0$ may be any vector with the given
length.

On the other hand, if $x(1) \ne 0$, then $\abs{\eta_0}$ is not an
integer multiple of $2\pi$, so we may divide (\ref{normz-formula}) by
(\ref{normx-formula}) to obtain
\begin{equation}\label{eta0-eqn}
  \frac{\abs{z(1)}}{\abs{x(1)}^2} = \frac{\abs{\eta_0} - \sin
    \abs{\eta_0}}{4(1-\cos \abs{\eta_0})} = \frac{1}{4}
  \nu(\theta)
\end{equation}
taking $\theta = \frac{1}{2} \abs{\eta_0}$, where $\nu$ is as in (\ref{nu-def}).
Then by (\ref{normx-formula}) we have
\begin{equation}\label{norm-xi0}
  \abs{\xi_0}^2 = \frac{1}{2} \abs{x(1)}^2 \frac{
    \abs{\eta_0}^2}{1-\cos(\abs{\eta_0})} = \abs{x(1)}^2
  \frac{\theta^2}{\sin^2\theta}.
\end{equation}

Note that once the magnitudes of $\eta_0$, $\xi_0$ are known, their
directions are determined: $\eta_0 = z(1) \abs{\eta_0} / \abs{z(1)}$
by (\ref{z-formula}), while $\xi_0$ can be recovered from (\ref{Hxsol}):
\begin{align*}
  \xi_0 &= -\eta_0^2 (J_{\eta_0} (e^{J_{\eta_0}}-I))^{-1} x(1).
\end{align*}
So $\eta_0, \xi_0$ and hence $x(t), z(t)$ are all determined by a
choice of $\abs{\eta_0}$ satisfying (\ref{eta0-eqn}).  Writing $\theta
= \abs{\eta_0}$ gives (\ref{ham-arc-x}--\ref{ham-arc-z}).

The ``if'' direction of the theorem requires verifying that the given
formulas in fact satisfy Hamilton's equations, which is routine.
\end{proof}


To compute the Carnot-Carath\'eodory distance function for $G$, we must decide which of the
solutions given in Theorem \ref{ham-soln} is the shortest, and compute
its length.  We collect, for future reference, some facts about the
function $\nu$ of (\ref{nu-def}).

\begin{lemma}\label{nu-increase}
  There is a constant $c > 0$ such that $\nu'(\theta) > c$ for all
  $\theta \in [0,\pi)$.
\end{lemma}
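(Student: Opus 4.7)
The plan is to compute $\nu'$ explicitly and then exploit continuity plus the boundary behavior at $0$ and $\pi^-$ to extract a positive lower bound. Differentiating the definition
\[
\nu(\theta) = \frac{\theta}{\sin^2\theta} - \cot\theta
\]
term-by-term gives
\[
\nu'(\theta) = \frac{\sin\theta - 2\theta\cos\theta}{\sin^3\theta} + \frac{1}{\sin^2\theta}
= \frac{2(\sin\theta - \theta\cos\theta)}{\sin^3\theta}.
\]
So the whole question reduces to studying $g(\theta) := \sin\theta - \theta\cos\theta$ together with the denominator $\sin^3\theta$ on the interval $(0,\pi)$.

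Next, I would show $g > 0$ on $(0,\pi)$ by the standard derivative trick: $g(0) = 0$ and $g'(\theta) = \theta\sin\theta > 0$ for $\theta \in (0,\pi)$. Since also $\sin^3\theta > 0$ there, this yields $\nu'(\theta) > 0$ for all $\theta \in (0,\pi)$.

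The remaining step is to upgrade this pointwise positivity to a uniform lower bound on $[0,\pi)$, which requires handling the two endpoints. At $\theta = 0$, Taylor expansion yields $g(\theta) = \theta^3/3 + O(\theta^5)$ and $\sin^3\theta = \theta^3 + O(\theta^5)$, so $\nu'$ extends continuously to $\theta = 0$ with value $2/3$. As $\theta \to \pi^-$, the numerator $2(\sin\theta - \theta\cos\theta)$ tends to $2\pi > 0$ while the denominator $\sin^3\theta \downto 0^+$, so $\nu'(\theta) \to +\infty$. Consequently we can pick $\theta_0 \in (0,\pi)$ with $\nu'(\theta) \ge 1$ for $\theta \in [\theta_0, \pi)$; on the compact set $[0,\theta_0]$ the continuous positive function $\nu'$ attains a positive minimum $c_0 > 0$, and the desired constant is $c := \min(c_0, 1) > 0$.

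No step here is a serious obstacle: the entire argument is a routine exercise once the clean formula $\nu'(\theta) = 2(\sin\theta - \theta\cos\theta)/\sin^3\theta$ is in hand. The only place one has to be slightly careful is confirming that $\nu'$ really does extend continuously to $\theta = 0$ (so that the interval $[0,\pi)$ splits into a compact piece plus a neighborhood of $\pi$ on which $\nu'$ is large), which is why the Taylor computation is included.
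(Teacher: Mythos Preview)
Your proof is correct and follows essentially the same approach as the paper: compute $\nu'(\theta) = 2(\sin\theta - \theta\cos\theta)/\sin^3\theta$, show the numerator is positive on $(0,\pi)$ via its derivative $\theta\sin\theta$, use Taylor expansion to get $\nu'(0)=2/3$, observe $\nu'\to+\infty$ as $\theta\uparrow\pi$, and conclude by continuity. The paper's version is slightly terser in the final compactness step, but the argument is the same.
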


\begin{proof}
  By direct computation, $\nu'(\theta) =
  \frac{2(\sin\theta-\theta\cos\theta)}{\sin^3\theta}$.  By Taylor
  expansion of the numerator and denominator we have $\nu'(0) = 2/3 >
  0$.  For all $\theta \in (0,\pi)$ we have $\sin^3\theta > 0$, so it
  suffices to consider $y(\theta) := \sin\theta-\theta\cos\theta$.
  Now $y(0)=0$ and $y'(\theta) = \theta\sin\theta > 0$ for $\theta \in
  (0,\pi)$, so $y(\theta) > 0$ for $\theta \in (0,\pi)$.  Thus
  $\nu'(\theta) > 0$ for $\theta \in [0,\pi)$, and continuity and the
    fact that $\lim_{\theta \uparrow \pi} \nu'(\theta) = +\infty$
    establishes the existence of the constant $c$.
\end{proof}

\begin{corollary}\label{nu-c}
  $\nu(\theta) \ge c\theta$ for all
  $\theta \in [0,\pi)$, where $c$ is the constant from Lemma \ref{nu-increase}.
\end{corollary}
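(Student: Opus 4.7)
The plan is to use the mean value theorem (or equivalently the fundamental theorem of calculus) together with the fact that $\nu(0)=0$.

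First I would verify that $\nu(0)=0$. This is a short Taylor-expansion check: the numerator $2\theta - \sin 2\theta$ behaves like $\frac{4\theta^3}{3}$ near $0$, while the denominator $1 - \cos 2\theta$ behaves like $2\theta^2$, so $\nu$ extends continuously to $0$ with value $0$ (indeed, $\nu(\theta) \to \tfrac{2}{3}\theta$ as $\theta \to 0$). I would note that this is consistent with the value $\nu'(0) = 2/3$ computed in the proof of Lemma \ref{nu-increase}.

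Next, since $\nu$ is continuous on $[0,\pi)$ and differentiable on $(0,\pi)$, I would apply the fundamental theorem of calculus to write
\begin{equation*}
\nu(\theta) = \nu(0) + \int_0^\theta \nu'(s)\,ds = \int_0^\theta \nu'(s)\,ds
\end{equation*}
for any $\theta \in [0,\pi)$. Using the lower bound $\nu'(s) > c$ from Lemma \ref{nu-increase}, the integral is bounded below by $c\theta$, giving $\nu(\theta) \ge c\theta$ as required.

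There is no real obstacle here; this is a one-line consequence of the previous lemma once one observes $\nu(0)=0$. The only place one must be slightly careful is in handling the endpoint $\theta=0$, where $\nu$ is defined only by continuous extension, but this is harmless since the integral representation is valid on the closed interval $[0,\theta]$ and Lemma \ref{nu-increase} already provides the bound on $\nu'$ at $0$.
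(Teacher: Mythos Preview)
Your proposal is correct and matches the paper's approach exactly: the paper's proof is simply ``Integrate the inequality in Lemma \ref{nu-increase}. Note that $\nu(0)=0$.'' Your additional Taylor-expansion verification of $\nu(0)=0$ is more detailed than what the paper provides, but the core argument is identical.
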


\begin{proof}
  Integrate the inequality in Lemma \ref{nu-increase}.  Note that $\nu(0)=0$.
\end{proof}

For an H-type group, we obtain the following explicit formula for the
distance.  Note that since our notions of horizontal paths, length and
distance are all defined in terms of the left-invariant vector fields
$X_i$, these concepts are all left-invariant.  That is, if $\gamma$ is
a horizontal path in $G$, then for any $k \in G$, $L_k \gamma$ is a
horizontal path with $\ell(L_k \gamma) = \ell(\gamma)$, and therefore
we have $d(g,h) = d(kg, kh)$ for all $g, h, k \in G$.  Thus the
distance function is completely determined by distance from the
identity.  We write this as $d_0(g) = d(0,g)$ for short.

\index{H-type group!Carnot-Carath\'eodory distance $d$}
\index{H-type group!Carnot-Carath\'eodory distance $d$!formula}
\begin{theorem}\label{distance}
  In an H-type group, the Carnot-Carath\'eodory distance from the
  identity $0$ to a point $(x,z)$ is given by 
  \begin{equation}\label{distance-formula}
    d_0(x,z) = d(0, (x,z)) = 
    \begin{cases}
      \abs{x} \frac{\theta}{\sin\theta}, &  z \ne 0, x \ne 0 \\
      \abs{x}, & z=0 \\
      \sqrt{4\pi\abs{z}}, & x=0
    \end{cases}
  \end{equation}
  where $\theta$ is the unique solution in $[0,\pi)$ to $\nu(\theta) =
    \frac{4 \abs{z}}{\abs{x}^2}$.
\end{theorem}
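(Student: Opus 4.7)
The plan is to combine Theorems \ref{fat-hamilton} and \ref{ham-soln} to enumerate the candidates for a length-minimizing horizontal path from $0$ to $(x,z)$, compute their lengths, and select the shortest. By left-invariance of the vector fields $X_i$ (and hence of the Carnot-Carath\'eodory distance), it suffices to establish the formula for $d_0$. The fatness of $\mathcal{H}$ (shown in the preceding proposition) combined with Theorem \ref{fat-hamilton} implies that any constant-speed length-minimizing horizontal path from $0$ to $(x,z)$ is the projection of a Hamilton solution $\gamma(t) = (x(t), z(t), \xi(t), \eta(t))$ satisfying $x(0) = z(0) = 0$, $x(1) = x$, $z(1) = z$; existence of such a minimizer follows by a standard Arzel\`a-Ascoli argument applied to a minimizing sequence (using that Carnot-Carath\'eodory balls are relatively compact). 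Since equation (\ref{xdot}) gives $\dot{x}(t) = e^{tJ_{\eta_0}}\xi_0$ and $e^{tJ_{\eta_0}}$ is orthogonal, $|\dot{x}(t)| = |\xi_0|$ is constant in $t$, so the length of the horizontal projection equals $|\xi_0|$.

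Applying Theorem \ref{ham-soln}, I next enumerate the candidates. If $z = 0$, the unique solution is the straight line with length $|x|$. If $x = 0$ and $z \neq 0$, solutions are indexed by positive integers $k$ with length $\sqrt{4 k \pi |z|}$, minimized at $k = 1$ to give $\sqrt{4 \pi |z|}$. If $x \neq 0$ and $z \neq 0$, solutions are parametrized by $\theta > 0$ not a multiple of $\pi$ satisfying the constraint $\nu(\theta) = C$ with $C := 4|z|/|x|^2$, and have length $|x|\theta/|\sin\theta|$. By Lemma \ref{nu-increase}, $\nu$ is strictly increasing on $[0, \pi)$ with $\nu(0) = 0$ and $\nu(\theta) \to +\infty$ as $\theta \to \pi^-$ (since $\sin\theta \to 0$ there), so there exists a unique $\theta_0 \in (0, \pi)$ with $\nu(\theta_0) = C$, giving the candidate length $|x|\theta_0 / \sin\theta_0$ asserted by the theorem.

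The main obstacle is showing that no solution $\theta^* > \pi$ to $\nu(\theta^*) = C$ (such solutions do exist for sufficiently large $C$, on each interval $(k\pi, (k+1)\pi)$ with $k \ge 1$) produces a shorter path. My approach is through the auxiliary function
\begin{equation*}
  g(\theta) := C\theta + \theta \cot\theta,
\end{equation*}
which, upon substituting $\theta / \sin^2 \theta = C + \cot \theta$ from the constraint, coincides with $\theta^2/\sin^2\theta$ (and hence with the squared length divided by $|x|^2$) on the constraint, and whose derivative $g'(\theta) = C - \nu(\theta)$ vanishes precisely at the candidate $\theta$-values. I analyze $g$ on each interval $(k\pi, (k+1)\pi)$, noting that $g$ has poles at each $k\pi$ with $g(k\pi^-) = -\infty$ and $g(k\pi^+) = +\infty$: on $(0,\pi)$, $g$ has the single critical point $\theta_0$, a local maximum; on $(\pi, 2\pi)$, $g$ has at most two critical points, a local minimum $\theta_1$ and local maximum $\theta_2$. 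A direct comparison based on the identity $2\theta - C = \sqrt{1 + C^2}\,\sin(2\theta - \arctan C)$, which is equivalent to the constraint and locates the various solutions relative to each other on the sinusoidal graph, yields $g(\theta^*) > g(\theta_0)$ for every candidate $\theta^* > \pi$. Therefore the minimum over all Hamilton candidates equals $|x|\theta_0/\sin\theta_0$; consistency with the boundary cases $z = 0$ (where $\theta_0 = 0$ gives $|x|$, using $\theta/\sin\theta \to 1$) and $x = 0$ (recovered in the limit $\theta_0 \to \pi^-$, where a short calculation shows $|x|\theta_0/\sin\theta_0 \to \sqrt{4\pi|z|}$) completes the proof.
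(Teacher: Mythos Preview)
Your framework is the same as the paper's: invoke fatness and Theorem \ref{fat-hamilton} to reduce to the Hamilton solutions of Theorem \ref{ham-soln}, read off the length as $|\xi_0| = |x|\,\theta/|\sin\theta|$, and select the smallest. Your auxiliary function $g(\theta) = C\theta + \theta\cot\theta$ is a legitimate device, and your observations that $g = \theta^2/\sin^2\theta$ on the constraint and $g' = C - \nu$ are correct; in fact $g$ at constraint points is exactly $C$ times the paper's function $F(\theta) = \theta^2/(\theta - \sin\theta\cos\theta)$, so the two comparisons are the same problem.

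The gap is the sentence ``A direct comparison based on the identity \ldots\ yields $g(\theta^*) > g(\theta_0)$.'' You have correctly rewritten the constraint as $2\theta - C = \sqrt{1+C^2}\,\sin(2\theta - \arctan C)$, but locating the solutions as intersections of a line with a sinusoid does not by itself compare the values of $g$ (equivalently, of $\theta^2/\sin^2\theta$) at those points; some additional argument is required and none is supplied. You also describe the structure of $g$ only on $(\pi,2\pi)$, whereas the claim must cover every $\theta^* > \pi$. The paper closes this gap by working directly with $F$: first, for $C \le \pi/2$ one checks $\nu(\theta) > \pi/2$ for all $\theta > \pi/2$, so $\theta_0$ is the only solution; otherwise $\theta_0 \in (\pi/2,\pi)$, and one computes $F'(\theta) = 2\theta\cos\theta(\theta\cos\theta - \sin\theta)/(\theta - \sin\theta\cos\theta)^2 > 0$ on $(\pi/2,\pi)$ to get $F(\theta_0) < F(\pi) = \pi$, then shows $F(\theta^*) > \pi$ for every $\theta^* > \pi$ by checking the endpoint behavior ($F(\pi^+) > \pi$, $F(+\infty) = +\infty$) and evaluating $F$ at its critical points (where either $\cos\theta_c = 0$, giving $F(\theta_c) = \theta_c > \pi$, or $\theta_c = \tan\theta_c$, giving $F(\theta_c) = \theta_c/\sin^2\theta_c \ge \theta_c > \pi$). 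That is the missing piece in your argument.
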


\begin{proof}[Proof of Theorem \ref{distance}]
  \index{H-type group!geodesics!length}
  We compute the lengths of the paths given in Lemma \ref{ham-soln}.
  The $z=0$ case is obvious.  Observe that for a horizontal path
  $\sigma(t) = (x(t), z(t))$, we have $\dot{\sigma}(t) =
  \sum_{i=1}^{2n} \dot{x}^i(t) X_i(\gamma(t))$, so that
  $\norm{\dot{\sigma}(t)} = \abs{\dot{x}(t)}$.  For paths solving
  Hamilton's equations, (\ref{xdot}) shows that $\abs{\dot{x}(t)} =
  \abs{\xi_0}$, so $\ell(\gamma)=\abs{\xi_0}$.  In the case $x=0$, we
  have $\abs{\xi_0} = \sqrt{4k\pi\abs{z(1)}}$, where $k$ may be any
  positive integer; clearly this is minimized by taking $k=1$.

  Now we must handle the case $x \ne 0$, $z \ne 0$.  In this case we
  have $\ell(\gamma) = \abs{\xi_0} = \abs{x}
  \frac{\theta}{\sin\theta}$, by (\ref{norm-xi0}), where $\theta$
  solves (\ref{nu-dist}) (recall $\theta = \frac{1}{2} \abs{\eta_0}$).  The
  function $\nu$ has $\nu(0)=0$, $\nu(\pi)=+\infty$, and by Lemma
  \ref{nu-increase} $\nu$ is strictly increasing on $[0,\pi)$.
    \ignorethis{(]} Thus among the solutions of (\ref{nu-dist}) there
  is exactly one in $[0,\pi)$.  \ignorethis{(]} We show this is the
  solution that minimizes $\left(\frac{\theta}{\sin\theta}\right)^2$
  and hence also minimizes $\ell(\gamma)$.

  For brevity, let $y=\frac{4 \abs{z}}{\abs{x}^2}$.  If $y \in
  [0,\pi/2]$ then $y=\nu(\theta)$ for a unique $\theta \in
  [0,\infty)$.  \ignorethis{(]} This is because $\nu(\theta) >
  \nu(\pi/2) = \pi/2$ for $\theta > \pi/2$.  Since $\theta$ is
  increasing on $[0,\pi)$ \ignorethis{(]} it suffices to show this for
  $\theta > \pi$.  But for such $\theta$ we have
  \begin{align*}
    \nu(\theta) = \frac{\theta - \sin\theta \cos\theta}{\sin^2\theta}
    \ge \frac{\theta - \frac{1}{2}}{\sin^2\theta} \ge \theta -
    \frac{1}{2} > \pi-\frac{1}{2} > \frac{\pi}{2}
  \end{align*}
  since $\sin\theta\cos\theta \le \frac{1}{2}$ for all $\theta$.
  
  Otherwise, suppose $y > \pi/2$.  Let
  \begin{equation*}
    F(\theta) :=
    \frac{\left(\frac{\theta}{\sin\theta}\right)^2}{\nu(\theta)} =
    \frac{\theta^2}{\theta - \sin\theta\cos\theta}
  \end{equation*}
  which is smooth on $(\pi/2,\infty)$ after removing the removable
  singularities.  We will show that if $\pi/2 < \theta_1 < \pi <
  \theta_2$, then $F(\theta_1) < F(\theta_2)$.  Thus if $\theta_1$ is
  the unique solution to $y=\nu(\theta)$ in $(\pi/2,\pi)$ and
  $\theta_2 > \pi$ is another solution, we will have
  \begin{equation*}
    \left(\frac{\theta_1}{\sin\theta_1}\right)^2 = \nu(\theta_1)
    F(\theta_1) = y F(\theta_1) < y F(\theta_2) = \nu(\theta_2)
    F(\theta_2) = \left(\frac{\theta_2}{\sin\theta_2}\right)^2
  \end{equation*}

Toward this end, we compute
  \begin{align*}
    F'(\theta) &= \frac{2\theta(\theta-\sin\theta\cos\theta) -
      \theta^2(1-\cos^2\theta+\sin^2\theta)}{(\theta-\sin\theta\cos\theta)^2}
    \\
    &= \frac{2\theta\cos\theta(\theta\cos\theta-\sin\theta)}{(\theta-\sin\theta\cos\theta)^2}.
  \end{align*}
  For $\theta \in (\pi/2,\pi)$ we have $\cos\theta < 0$, $\sin\theta >
  0$ and thus $F'(\theta) > 0$.  So $F(\theta_1) < F(\pi)$ and it
  suffices to show $F(\pi) = \pi < F(\theta_2)$.  We have $F'(\pi)=2>0$ so
  this is true for $\theta_2$ near $\pi$, and $F(+\infty)=+\infty$ so
  it is also true for large $\theta_2$.  To complete the argument we
  show that it holds at critical points of $F$.  Suppose
  $F'(\theta_c)=0$ where $\theta_c > \pi$; then either $\cos\theta_c=0$ or
  $\theta_c\cos\theta_c-\sin\theta_c=0$.  If the former then
  $F(\theta_c)=\theta_c>\pi$.  If the latter, then $\theta_c =
  \tan\theta_c$, so
  \begin{align*}
    F(\theta_c) = 
    \frac{\theta_c^2}{\theta_c - \sin\theta_c\cos\theta_c} =
    \frac{\theta_c^2}{\theta_c - \tan\theta_c \cos^2\theta_c} =
    \frac{\theta_c^2}{\theta_c(1- \cos^2\theta_c)} \ge \theta_c > \pi
  \end{align*}
which completes the proof.      
\end{proof}

We note that it is apparent from (\ref{distance-formula}) that we have
the scaling property
\begin{equation}\label{distance-dilate}
  d_0(\varphi_\alpha(x,z)) = \alpha d_0(x,z)
\end{equation}
with $\varphi$ as in Definition \ref{dilation-def}.

\begin{notation}
  If $f,h : G \to \R$, we write $f(g) \asymp
h(g)$ to mean there exist finite positive constants $C_1, C_2$ such
that $C_1 h(g) \le f(g) \le C_2 h(g)$ for all $g \in G$, or some
specified subset thereof.
\define{}{$\asymp$}
\end{notation}

\index{H-type group!Carnot-Carath\'eodory distance $d$!asymptotics}
\begin{corollary}\label{distance-estimate}
  $d_0(x,z) \asymp \abs{x} + \abs{z}^{1/2}$.  Equivalently, $d_0(x,z)^2
  \asymp \abs{x}^2 + \abs{z}$.
\end{corollary}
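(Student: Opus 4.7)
The plan is to prove the equivalent statement $d_0(x,z)^2 \asymp \abs{x}^2 + \abs{z}$ by working directly with the explicit formula from Theorem~\ref{distance}. The two degenerate cases are immediate: when $z = 0$, $d_0^2 = \abs{x}^2 = \abs{x}^2 + \abs{z}$; and when $x = 0$, $d_0^2 = 4\pi\abs{z} = 4\pi(\abs{x}^2 + \abs{z})$, so both bounds hold with concrete constants.

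For the generic case $x, z \ne 0$, the defining relation $\nu(\theta) = 4\abs{z}/\abs{x}^2$ gives $\abs{z} = \frac{1}{4}\abs{x}^2 \nu(\theta)$, so the $\abs{x}^2$ factors cancel and
$$\frac{d_0(x,z)^2}{\abs{x}^2 + \abs{z}} = \frac{(\theta/\sin\theta)^2}{1 + \nu(\theta)/4} =: h(\theta), \qquad \theta \in (0,\pi).$$
Since by Lemma~\ref{nu-increase} the function $\nu$ is a continuous strictly increasing bijection from $[0,\pi)$ onto $[0,\infty)$, every $\theta \in (0,\pi)$ is realized by some admissible $(x,z)$, so it suffices to bound $h$ above and below by positive constants on the whole interval $(0,\pi)$.

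I would do this by showing that $h$ extends continuously and positively to the closed interval $[0,\pi]$, after which compactness concludes the argument. Continuity on the open interval is immediate from the smoothness of $\theta/\sin\theta$ and $\nu$. At $\theta = 0$, both numerator and denominator tend to $1$, so $h(0) = 1$. The main computation is the limit at $\theta \to \pi^-$: setting $\epsilon := \pi - \theta$, one has $\sin\theta = \sin\epsilon \sim \epsilon$, $-\cot\theta \sim 1/\epsilon$, and $\theta/\sin^2\theta \sim \pi/\epsilon^2$, which yields $(\theta/\sin\theta)^2 \sim \pi^2/\epsilon^2$ and $\nu(\theta) = \theta/\sin^2\theta - \cot\theta \sim \pi/\epsilon^2$. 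Therefore $h(\pi^-) = \pi^2 \big/ (\pi/4) = 4\pi$. Being continuous and positive on the compact interval $[0,\pi]$, $h$ is bounded between positive constants, completing the proof.

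The only delicate step is matching the rates of blowup of the numerator and denominator as $\theta \to \pi^-$; everything else is mechanical. An abstract alternative would exploit the common homogeneity $d_0 \circ \varphi_\alpha = \alpha \, d_0$ (from~(\ref{distance-dilate})) and the analogous scaling of $N(x,z) := \abs{x} + \abs{z}^{1/2}$, combined with continuity of $d_0$ from Theorem~\ref{chow-thm-subriemannian}, to reduce to compactness of the set $\{N = 1\}$; however, the direct computation via the explicit formula is more informative in that it produces the endpoint constants $1$ and $4\pi$ explicitly.
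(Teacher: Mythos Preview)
Your proof is correct, and in fact the paper presents \emph{both} of the routes you describe: its primary proof is precisely the ``abstract alternative'' you sketch at the end (homogeneity under $\varphi_\alpha$ plus compactness of the $d'$-unit sphere), and it then gives, as a secondary verification, exactly your direct computation showing that $(\theta/\sin\theta)^2/(1+\nu(\theta))$ extends to a positive continuous function on $[0,\pi]$. You have simply reversed the emphasis, leading with the explicit calculation rather than the scaling argument; the content is the same.
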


\begin{proof}
  (Based on \cite[Proposition 5.4]{blu-book}.)  We know $d_0(x,z)$ is a continuous function (with respect to the
  manifold topology on $G$) which is positive except at $(0,0)$.
  $d'(x,z) := \abs{x} + \abs{z}^{1/2}$ is another such function, so
  the conclusion obviously holds on the unit sphere of $d'$.  Now
  $d'(\varphi_\alpha(x,z)) = \alpha d'(x,z)$, and inspection of
  (\ref{distance-formula}) shows that the same holds for $d$, so for
  general $(x,z)$ it suffices to apply the previous statement with
  $\alpha = d'(x,z)^{-1}$.
\end{proof}

This can also be verified by direct computation.  By continuity we can
assume $x \ne 0$, $z \ne 0$.  If $\theta$ is the unique solution in
$[0,\pi)$ to $\nu(\theta)=\frac{4\abs{z}}{\abs{x}^2}$, \ignorethis{(]}
we have $d_0(x,z)^2 = \abs{x}^2
\left(\frac{\theta}{\sin\theta}\right)^2$, so if we let
\begin{equation}
  F(\theta) :=
  \frac{\left(\frac{\theta}{\sin\theta}\right)^2}{1+\nu(\theta)} =
  \frac{d_0(x,z)^2}{\abs{x}^2 + 4\abs{z}}
\end{equation}
it will be enough to show there exist $D_1, D_2$ with $0 < D_1 \le
F(\theta) \le D_2$ for all $\theta \in [0,\pi)$.  \ignorethis{(]} $F$
is obviously continuous and positive on $(0,\pi)$.  We can simplify
$F$ as
\begin{equation*}
  F(\theta) = \frac{\theta^2}{\sin^2\theta + \theta - \sin\theta
    \cos\theta}
\end{equation*}
from which it is obvious that $\lim_{\theta \uparrow \pi} F(\theta) =
\pi > 0$, and easy to compute that $\lim_{\theta \downarrow 0}
F(\theta) = 1 > 0$, which is sufficient to establish the corollary.

Results of this form apply to general stratified Lie groups.  A
standard argument, paraphrased from \cite{blu-book}, where many more
details can be found, is as follows.  Once it is known that $d$
generates the Euclidean topology on $G$, then $d_0(x,z)$ is a continuous
function which is positive except at $(0,0)$.  $d'(x,z) := \abs{x} +
\abs{z}^{1/2}$ is another such function, so the conclusion obviously
holds on the unit sphere of $d'$.  Now $d'(\varphi_\alpha(x,z)) =
\alpha d'(x,z)$, and inspection of (\ref{distance-formula}) shows that
the same holds for $d$, so for general $(x,z)$ it suffices to apply
the previous statement with $\alpha = d'(x,z)^{-1}$.

\fixnotme{
\section{Probabilistic interpretation of the heat kernel}\label{prob-sec}

Define horizontal Brownian motion.  Point back to Figure
\ref{bm-heis-fig}.  Show that it's the diffusion
corresponding to the sublaplacian.  Possibly give a probabilistic
proof that the heat kernel is smooth and positive.}

Chapter \ref{subriemannian-chapter}, in part, is adapted from material
awaiting publication as \heatcite.  The
dissertation author was the sole author of this paper.

\chapter{Heat Kernel Estimates}\label{heat-chapter}

\section{Statement of results}\label{heat-kernel-statement-sec}

The goal of this section is to establish pointwise upper and lower
estimates on the heat kernel $p_t$ on an H-type group $G$, as well as
its gradient $\grad p_t$.  See Corollary \ref{main-corollary} and
Theorems \ref{main-gradient-theorem} and
\ref{vertical-gradient-theorem} below.

\begin{theorem}\label{main-theorem}
  There exists $D_0 > 0$ such that
  \begin{equation}\label{main-theorem-eqn}
  p_1(x,z) \asymp \frac{d_0(x,z)^{2n-m-1}}{1+(\abs{x}d_0(x,z))^{n-\frac{1}{2}}}
    e^{-\frac{1}{4}d_0(x,z)^2}.
  \end{equation}
  for $d_0(x,z) \ge D_0$.
\end{theorem}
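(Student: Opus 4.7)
The plan is to analyze the explicit integral formula (\ref{pt-formula-1}) for $p_1$ by the method of steepest descent, using the fact that the leading exponential $e^{-d_0^2/4}$ must arise from a saddle point whose location encodes the Carnot--Carath\'eodory distance via Theorem \ref{distance}. First I would use the radial symmetry of $p_1$ in $z$ (Proposition \ref{pt-props}, item \ref{pt-radial}) to assume $z=|z|e_1$, and then perform a spherical decomposition $\lambda = \rho\omega$ with $\omega \in S^{m-1}$. The angular integral yields a Bessel-type factor, reducing the formula to a one-dimensional integral of the form
\begin{equation*}
p_1(x,z) = C_m \int_0^\infty \rho^{m-1}\left(\frac{\rho}{4\pi\sinh\rho}\right)^n e^{-\frac{1}{4}\rho\coth(\rho)|x|^2} \frac{J_{(m-2)/2}(|z|\rho)}{(|z|\rho)^{(m-2)/2}}\,d\rho.
\end{equation*}
Using the integral representation of $J_\nu$ (or the evenness of the integrand in $\rho$), this can be rewritten as a contour integral in $\C$ to which steepest descent applies.

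Next I would identify the saddle point. The effective phase in the 1D integral is
\begin{equation*}
\Phi(\rho) = i|z|\rho - \tfrac{1}{4}\rho\coth(\rho)|x|^2 - n\log\sinh(\rho),
\end{equation*}
and setting $\Phi'(\rho)=0$ and writing $\rho = 2i\theta$ with $\theta \in (0,\pi)$ reduces (after the double-angle manipulations hidden in the definition (\ref{nu-def}) of $\nu$) to the equation $\nu(\theta) = 4|z|/|x|^2$, which is precisely (\ref{nu-dist}). Thus the saddle sits at $\rho^* = 2i\theta_*$ for $\theta_*$ the distance angle, and a direct substitution using $\sinh(i\theta)=i\sin\theta$, $\coth(i\theta)=-i\cot\theta$ gives
\begin{equation*}
\mathrm{Re}\,\Phi(\rho^*) = -\tfrac{1}{4}|x|^2\left(\tfrac{\theta_*}{\sin\theta_*}\right)^2 = -\tfrac{1}{4}d_0(x,z)^2,
\end{equation*}
which produces the claimed exponential factor.

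With the saddle in hand, I would deform the $\rho$-contour to the steepest-descent path through $\rho^*$ and expand $\Phi$ to second order, picking up a Gaussian factor of order $1/\sqrt{|\Phi''(\rho^*)|}$. Assembling this with the subleading factors $\rho^{m-1}(\rho/\sinh\rho)^n$ and the Bessel prefactor $(|z|\rho)^{-(m-2)/2}$ evaluated at $\rho^*$, and using Corollary \ref{distance-estimate} to compare $|z|, |x|$ with $d_0$, one should read off the prefactor
\begin{equation*}
\frac{d_0^{2n-m-1}}{1+(|x|d_0)^{n-1/2}}.
\end{equation*}
This interpolates the two natural regimes: when $|x|d_0 \gg 1$ the saddle $\theta_*$ is bounded away from $\pi$, the Laplace analysis is uniform, and one gets $(|x|d_0)^{-(n-1/2)}$; when $|x|d_0 \lesssim 1$ the factor $(|x|d_0)^{n-1/2}$ is negligible against $1$ and $d_0^{2n-m-1}$ dominates.

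The hard part will be justifying this expansion \emph{uniformly} for all $(x,z)$ with $d_0 \ge D_0$, and obtaining matching two-sided bounds rather than a pure asymptotic. The main obstacle is the degenerate regime $|x|\to 0$, where the saddle migrates toward the logarithmic singularity of $\log\sinh\rho$ at $\rho = i\pi$ and the quadratic model for $\Phi$ degenerates; there one must modify the local expansion to reflect that $\sin\theta_*$ is small and that the pole of $\coth$ competes with the Gaussian. Concretely, I would split the contour into a shrinking neighborhood of $\rho^*$ where the Gaussian or modified Gaussian approximation gives matching upper and lower bounds, and a tail where one shows $\mathrm{Re}\,\Phi \le -\tfrac14 d_0^2 - \epsilon(d_0)$ so that the tail contribution is dominated by the saddle term once $D_0$ is chosen large enough. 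This uniformity across the transition at $|x|d_0 \sim 1$, not the exponential decay itself, is where the real technical content lies.
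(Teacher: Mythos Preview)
Your saddle-point strategy is essentially right for the regime $|z|\lesssim|x|^2$ (equivalently $|x|d_0\gtrsim 1$), and this is what the paper does in Section~\ref{steepest-descent-sec}. Two corrections: the $n\log\sinh\rho$ factor should be treated as amplitude, not phase---otherwise the critical-point equation does \emph{not} reduce to $\nu(\theta)=4|z|/|x|^2$---and with the phase taken as $-i\nu(\theta)\rho+\rho\coth\rho$ the saddle sits at $\rho^*=i\theta$, not $2i\theta$. With those fixes your computation of the exponential $e^{-d_0^2/4}$ goes through. (The paper actually carries out the steepest descent directly in $\R^m$ without first reducing to one dimension, but that is a matter of taste.)

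The genuine gap is the degenerate regime $|z|\gg|x|^2$, where $\theta\to\pi$ and the saddle collides with the pole of $\coth$ at $i\pi$. Your proposal to ``modify the local expansion'' is not a plan that will work: in this regime the dominant contribution does not come from a saddle at all, and no local quadratic or near-quadratic model at $\rho^*$ yields the correct prefactor $d_0^{2n-m-1}$. The paper's mechanism (Section~\ref{polar-sec}) is to move the contour \emph{past} the singularity to $\Im\rho=3\pi/2$ and pick up the \emph{residue} at $i\pi$; that residue, written as a loop integral and analyzed via a Laurent expansion (Lemma~\ref{g_m}, Propositions~\ref{Fprop-II}--\ref{Fprop-III}), is what produces the factor $d_0^{2n-m-1}/(1+(|x|d_0)^{n-1/2})$. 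This is a different analytical device, not a refinement of steepest descent, and it is the main idea you are missing. There is also a structural issue you do not anticipate: your Bessel reduction only gives a contour-shiftable integrand with a clean $e^{i\rho|z|}$ factor when $m$ is odd (half-integer Bessel functions are elementary, cf.\ (\ref{hankel-expansion})); for $m$ even the paper requires a separate Hadamard-descent step (Section~\ref{hadamard-sec}), deducing the estimate from the already-proved odd case in dimension $m+1$.
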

\index{H-type group!heat kernel $p_t$!pointwise estimates}
\index{heat kernel bounds!H-type group}

\begin{corollary}\label{main-corollary}
  \begin{equation}\label{main-eqn-t}
      p_t(x,z) \asymp t^{-m-n} \frac{1 + (t^{1/2}d_0(x,z))^{2n-m-1}}{1+(t\abs{x}d_0(x,z))^{n-\frac{1}{2}}}
    e^{-\frac{1}{4t}d_0(x,z)^2}
  \end{equation}
  for $(x,z) \in G$, $t > 0$, with the implicit constants independent
  of $t$ as well as $(x,z)$.
\end{corollary}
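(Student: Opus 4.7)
The plan is to deduce Corollary \ref{main-corollary} from Theorem \ref{main-theorem} by parabolic rescaling, together with a separate elementary argument for the bounded regime near the identity.

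First, I would invoke the dilation identity from item \ref{pt-scaling} of Proposition \ref{pt-props} with $\alpha = t^{-1/2}$ to write $p_t(x,z) = t^{-(n+m)} p_1(t^{-1/2}x,\, t^{-1}z)$. By (\ref{distance-dilate}), the Carnot-Carath\'eodory distance transforms as $d_0(t^{-1/2}x,\, t^{-1}z) = t^{-1/2} d_0(x,z)$. Setting $\tilde g := (t^{-1/2}x,\, t^{-1}z)$ and $\tilde d := d_0(\tilde g) = t^{-1/2} d_0(x,z)$, one checks that $\abs{\tilde x}\tilde d = t^{-1}\abs{x}d_0(x,z)$ and $e^{-\tilde d^2/4} = e^{-d_0(x,z)^2/(4t)}$, so the problem is reduced to estimating $p_1(\tilde g)$ in terms of $\tilde d$ and $\abs{\tilde x}$.

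Next, I would split into two cases depending on the size of $\tilde d$ relative to $D_0$. In the regime $\tilde d \ge D_0$, Theorem \ref{main-theorem} applies directly at $\tilde g$ and gives $p_1(\tilde g) \asymp \tilde d^{2n-m-1} (1 + (\abs{\tilde x}\tilde d)^{n-1/2})^{-1} e^{-\tilde d^2/4}$. Multiplying by $t^{-(n+m)}$ and re-expressing in the original variables produces (\ref{main-eqn-t}) in this regime, with the ``$1+$'' terms absorbed into the dominant power terms. In the regime $\tilde d < D_0$, Corollary \ref{distance-estimate} shows that $\tilde g$ lies in a fixed Euclidean-compact subset $K = K(D_0) \subset G$. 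Since $p_1$ is a Schwartz function (item \ref{pt-schwartz} of Proposition \ref{pt-props}) and strictly positive everywhere (Theorem \ref{pt-positive}), it is bounded above and bounded below by a positive constant on $K$, so $p_1(\tilde g) \asymp 1$ and hence $p_t(x,z) \asymp t^{-(n+m)}$. One then verifies that the right-hand side of (\ref{main-eqn-t}) is also $\asymp t^{-(n+m)}$ in this regime, using $d_0(x,z) \le D_0 \sqrt t$ and $\abs{x} \le C\, d_0(x,z)$ (Corollary \ref{distance-estimate}) to control the polynomial factors.

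The main obstacle is essentially bookkeeping rather than conceptual: one must check that the single expression on the right-hand side of (\ref{main-eqn-t}) interpolates correctly between the two regimes as $\tilde d$ crosses $D_0$. Concretely, the ratio $(1+(t^{1/2}d_0)^{2n-m-1})/(1+(t\abs{x}d_0)^{n-1/2})$ must simultaneously be $\asymp 1$ when $\tilde d \le D_0$ and asymptotic to the scaled Theorem \ref{main-theorem} ratio when $\tilde d \ge D_0$. This requires verifying in each regime that the ``$1$'' summands are either dominant or negligible relative to the polynomial terms, and that the resulting implicit constants can be chosen uniformly in $t > 0$ and $(x,z) \in G$.
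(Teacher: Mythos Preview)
Your proposal is correct and uses exactly the same ingredients as the paper: the scaling identity (item \ref{pt-scaling} of Proposition \ref{pt-props} together with (\ref{distance-dilate})), Theorem \ref{main-theorem} for the large-distance regime, and positivity/continuity of $p_1$ (Theorem \ref{pt-positive}) on the compact set where $d_0 \le D_0$. The only organizational difference is that the paper first establishes (\ref{main-eqn-t}) in full for $t=1$ and all $(x,z)$, and \emph{then} scales to general $t$; since both sides of (\ref{main-eqn-t}) transform identically under the dilation, this ordering makes your ``bookkeeping'' concern about the crossover at $\tilde d = D_0$ disappear automatically.
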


\begin{proof}
  Theorem \ref{main-theorem} establishes (\ref{main-eqn-t}) for $t=1$
  and $d_0(x,z) \ge D_0$.  For $d_0(x,z) \le D_0$ the estimate follows
  from continuity and the fact that $p_t(x,z) > 0$ (Theorem \ref{pt-positive}).

  Once (\ref{main-eqn-t}) holds for all $(x,z)$ and $t=1$, item
  \ref{pt-scaling} of Proposition \ref{pt-props} and
  (\ref{distance-dilate}) show that it holds for all $t$, with the
  same constants.
\end{proof}

We also obtain precise upper and lower estimates on the gradient of
the heat kernel.  Again we work only on $d_0(x,z) \ge D_0$, and since
$\grad p_t$ vanishes for $x=0$, it is not as clear how to extend to
all of $G$.  However, the upper bound is sufficient to establish
(\ref{gradient-crude-upper}), which is of interest itself.

\begin{theorem}\label{main-gradient-theorem}
\index{H-type group!heat kernel $p_t$!pointwise gradient estimates}
  There exists $D_0 > 0$ such that
  \begin{equation}\label{main-gradient-eqn}
  \abs{\grad p_1(x,z)} \asymp \abs{x}
  \frac{d_0(x,z)^{2n-m+1}}{1+(\abs{x}d_0(x,z))^{n+\frac{1}{2}}}e^{-\frac{1}{4}d_0(x,z)^2}
  \end{equation}
  for $d_0(x,z) \ge D_0$.  In particular, we can combine this with the
  lower bound of Theorem \ref{main-theorem} to see that there exists
  $C > 0$ such that
  \begin{equation}\label{gradient-crude-upper}
    \abs{\grad p_1(x,z)} \le C(1+d_0(x,z))p_1(x,z).
  \end{equation}
\end{theorem}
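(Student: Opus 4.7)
The strategy is to reduce the sharp gradient estimates to the same asymptotic analysis that produced Theorem \ref{main-theorem}, by differentiating the integral representation (\ref{pt-formula-1}) directly. Since $p_1$ is radial in both $|x|$ and $|z|$ (item \ref{pt-radial} of Proposition \ref{pt-props}), the formula (\ref{gradient-radial}) for the subgradient of a radial function together with the orthogonality of $\hat{x}$ and $J_{\hat z}\hat{x}$ in $\R^{2n}$ gives
\begin{equation*}
\abs{\grad p_1(x,z)}^2 = \left(\partial_{\abs{x}} p_1\right)^2 + \tfrac{1}{4}\abs{x}^2 \left(\partial_{\abs{z}} p_1\right)^2.
\end{equation*}
Differentiating (\ref{pt-formula-1}) under the integral sign, justified by Proposition \ref{mehler-schwartz}, yields the representations
\begin{align*}
\partial_{\abs{x}}p_1(x,z) &= -\frac{\abs{x}}{2(2\pi)^m}\int_{\R^m}\abs{\lambda}\coth\abs{\lambda}\,e^{i\inner{\lambda}{z}}m_{1,\lambda}(x)\,d\lambda,\\
\partial_{\abs{z}}p_1(x,z) &= \frac{i}{(2\pi)^m}\int_{\R^m}\inner{\lambda}{\hat z}\,e^{i\inner{\lambda}{z}}m_{1,\lambda}(x)\,d\lambda.
\end{align*}

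The key observation is that these are precisely the oscillatory integrals already analyzed in the proof of Theorem \ref{main-theorem}, but with the amplitude multiplied by the smooth, polynomially-growing factors $\tfrac{\abs{x}}{2}\abs{\lambda}\coth\abs{\lambda}$ and $\inner{\lambda}{\hat z}$ respectively. The phase — and therefore the location of the relevant saddle point, the contour deformation producing the Gaussian, and the resulting exponent $-d_0(x,z)^2/4$ — is unchanged. I would thus reapply the steepest-descent method from Theorem \ref{main-theorem}, carefully tracking the extra amplitude factors. Using Corollary \ref{distance-estimate} to convert between $(\abs{x},\abs{z})$ and $d_0$, one finds that these factors modify the prefactor by appropriate powers of $d_0$ and $\abs{x}d_0$, yielding exactly (\ref{main-gradient-eqn}); the external factor of $\abs{x}$ reflects the vanishing of $\grad p_1$ on the center $\{x=0\}$.

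For (\ref{gradient-crude-upper}), I would form the ratio of the upper bound in (\ref{main-gradient-eqn}) to the lower bound in (\ref{main-theorem-eqn}), obtaining
\begin{equation*}
\frac{\abs{\grad p_1(x,z)}}{p_1(x,z)} \le C\,\abs{x}\,d_0(x,z)^2 \cdot \frac{1+(\abs{x}d_0(x,z))^{n-1/2}}{1+(\abs{x}d_0(x,z))^{n+1/2}}.
\end{equation*}
A case analysis on the size of $\abs{x}d_0$ shows this is bounded by $C d_0$: in the regime $\abs{x}d_0 \ge 1$ the last fraction is comparable to $1/(\abs{x}d_0)$ and the product is comparable to $d_0$, while in the regime $\abs{x}d_0 \le 1$ the fraction is comparable to $1$ and the product is bounded by $\abs{x}d_0^2 \le d_0$. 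On the complementary compact set $\{d_0 \le D_0\}$ one uses the smoothness and strict positivity of $p_1$ (Proposition \ref{pt-props} and Theorem \ref{pt-positive}) to absorb any finite constant.

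The main obstacle is the steepest-descent analysis of the two modified integrals. Although the phase is unchanged, the amplitude $\inner{\lambda}{\hat z}$ vanishes transversely at $\lambda=0$, which is the relevant saddle configuration when $\abs{z}$ is small; meanwhile $\abs{\lambda}\coth\abs{\lambda}$ behaves very differently at $\lambda=0$ than for large $\lambda$. This degeneracy is exactly what produces the transition at $\abs{x}d_0 \sim 1$ in the prefactor of (\ref{main-gradient-eqn}), and handling it carefully in each regime — so that both the upper and the matching lower bound emerge — is the delicate part of the argument.
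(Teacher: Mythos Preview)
Your high-level plan---write $\abs{\grad p_1}$ in terms of the radial derivatives $q_1=-\tfrac{2}{\abs{x}}\partial_{\abs{x}}p_1$ and $q_2=\partial_{\abs{z}}p_1$, observe that these are the same oscillatory integrals as in (\ref{pt-formula-1}) with extra amplitude factors, and re-run the analysis---is exactly what the paper does (see (\ref{grad-q1-q2})--(\ref{q2-def})). Your derivation of (\ref{gradient-crude-upper}) from the ratio of the two sharp bounds is also correct and is what the paper has in mind.

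The gap is in the phrase ``reapply the steepest-descent method from Theorem \ref{main-theorem}.'' The proof of Theorem \ref{main-theorem} is \emph{not} a single steepest-descent argument. Steepest descent (Section \ref{steepest-descent-sec}) only covers the region $\abs{z}\le B_1\abs{x}^2$; there the critical point $i\theta\hat z$ stays away from the poles of the integrand and your picture is accurate. But in the complementary region $\abs{z}\gg\abs{x}^2$, the critical point approaches the singularity at $\sqrt{\lambda^2}=i\pi$ and the steepest-descent estimates degenerate. The paper handles this region by passing to polar coordinates in $\lambda$, rewriting the spherical integral via a Hankel function, pushing the contour past the pole at $i\pi$, and analyzing the resulting residue term (Section \ref{polar-sec}, Theorems \ref{h-estimate} and \ref{gradient-thm-regionII}); this only works for $m$ odd, and even $m$ is then recovered by a Hadamard-descent integration from dimension $m+1$ (Section \ref{hadamard-sec}). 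The gradient estimates require re-running all three pieces with the modified amplitudes, not just the Gaussian saddle analysis.

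Relatedly, your ``main obstacle'' is misplaced. The vanishing of $\inner{\lambda}{\hat z}$ at $\lambda=0$ is not what produces the transition at $\abs{x}d_0\sim 1$: at the saddle $i\theta\hat z$ one has $a_2(i\theta\hat z)=(\theta/\sin\theta)^n\theta$, which is harmless. The transition comes from the residue analysis in the region $\abs{z}\gg\abs{x}^2$, where the extra $\cosh$ in $q_1$ raises the pole order at $i\pi$ from $n$ to $n+1$ and shifts the exponent in Theorem \ref{h-estimate} from $n-\tfrac12$ to $n+\tfrac12$; that is the mechanism behind the denominator $1+(\abs{x}d_0)^{n+1/2}$ in (\ref{main-gradient-eqn}).
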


By (\ref{gradient-radial}) and differentiation under the integral
sign, we have
\begin{align}
  \grad p_1(x,z) &= -\frac{1}{2}(2\pi)^{-m}(4\pi)^{-n}\abs{x} (q_1(x,z) \hat{x} + q_2(x,z)
  J_{\hat{z}}\hat{x}) \label{grad-q1-q2}\\
\intertext{where}
q_1(x,z) &= -\frac{2}{\abs{x}} \frac{\partial p_1(x,z)}{\partial
  \abs{x}} = \int_{\R^m} e^{{i} \inner{\lambda}{z} - \frac{1}{4} \abs{\lambda}\coth{\abs{\lambda}} \abs{x}^2}
\left(\frac{\abs{\lambda}}{\sinh(\abs{\lambda})}\right)^{n+1}\cosh(\abs{\lambda})\,d\lambda \label{q1-def}
\\
q_2(x,z) &=\frac{\partial p_1(x,z)}{\partial
  \abs{z}}= \int_{\R^m} e^{{i} \inner{\lambda}{z} - \frac{1}{4} \abs{\lambda}\coth{\abs{\lambda}} \abs{x}^2}
\left(\frac{\abs{\lambda}}{\sinh(\abs{\lambda})}\right)^{n} (-i)
\inner{\lambda}{\hat{z}}\,d\lambda \label{q2-def}
\end{align}

As before, (\ref{q1-def}) and (\ref{q2-def}) do not really depend on
$\hat{z}$ but only on $\abs{x},\abs{z}$.

The function $q_2$ is of interest in its own right, because it gives
the norm of the ``vertical gradient'' of $p_1$: $\abs{q_2} =
\abs{\grad_z p_1}$.  The proof of Theorem
\ref{main-gradient-theorem} includes estimates on $q_2$; we record
here the upper bound.
\begin{theorem}\label{vertical-gradient-theorem}
  There exists $D_0 \ge 0$ and a constant $C > 0$ such that
  \begin{equation}
    \abs{\grad_z p_1(x,z)} = \abs{q_2(x,z)} \le C
    \frac{d_0(x,z)^{2n-m-1}}{1+(\abs{x}d_0(x,z))^{n-\frac{1}{2}}} e^{-\frac{1}{4}d_0(x,z)^2}.
  \end{equation}
  whenever $d_0(x,z) \ge D_0$.  In particular, for all $(x,z) \in G$ we have
  \begin{equation}\label{vertical-gradient-crude}
    \abs{\grad_z p_1(x,z)} \le C p(x,z).
  \end{equation}
  \end{theorem}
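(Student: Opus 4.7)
The plan is to run essentially the same asymptotic analysis used to establish Theorem \ref{main-theorem} on $p_1$ and the upper-bound half of Theorem \ref{main-gradient-theorem}, applied now directly to the integral representation \eqref{q2-def}. That integrand differs from the one defining $p_1$ in \eqref{pt-formula-1} only by the additional scalar factor $(-i)\langle \lambda, \hat z\rangle$, after trivial normalization adjustments. So I expect $q_2$ to share the same leading asymptotic shape as $p_1$, up to a bounded multiplicative correction arising from this factor.

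First I would pass to polar coordinates $\lambda = \rho\omega$ with $\rho \ge 0$ and $\omega \in S^{m-1}$, and separate the angular and radial integrations. The angular factor $\langle \omega, \hat z\rangle e^{i\rho\langle \omega,z\rangle}$ integrates, after $\omega$-averaging, to a Bessel-type quantity that picks out the direction $\omega = \hat z$ as dominant. Then I would deform the contour in $\rho$ into the complex plane to turn the resulting oscillatory integral into a steepest-descent one. The phase $i \rho |z| - \frac{1}{4}\rho \coth(\rho) |x|^2$ has its saddle at $\rho^{*} = i\theta$, where $\theta \in (0,\pi)$ solves $\nu(\theta) = 4|z|/|x|^2$ --- the very equation appearing in the distance formula \eqref{distance-formula} --- and the value of the phase at $\rho^{*}$ equals precisely $-\frac{1}{4} d_0(x,z)^2$, producing the Gaussian decay. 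The extra factor $|\rho^{*}| = \theta$ brought in by the integrand of $q_2$ is uniformly bounded by $\pi$, so up to a uniform multiplicative constant the leading order reproduces exactly the expression controlling $p_1$ in Theorem \ref{main-theorem}, yielding the claimed upper bound on $|q_2|$ whenever $d_0(x,z) \ge D_0$.

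The hard part will be making the saddle-point approximation uniform across the relevant parameter range, in particular through the degenerate regime $|x| \to 0$ (where $\theta \uparrow \pi$ and the saddle approaches a branch point of the hyperbolic factors) and the complementary regime $|x|\, d_0 \to \infty$ (where the prefactor $(|x| d_0)^{n - \frac{1}{2}}$ in the denominator begins to dominate). These correspond roughly to $|z| \gg |x|^2$ and $|z| \lesssim |x|^2$, and their asymptotic expansions must be patched together with explicit control of the remainder terms coming from the deformation of contour and from the finite $\omega$-integration. Once the bound is established for $d_0(x,z) \ge D_0$, the global inequality \eqref{vertical-gradient-crude} follows by combining with the matching lower bound on $p_1$ from Theorem \ref{main-theorem} on the region $\{d_0 \ge D_0\}$, together with the continuity of $p_1$ and its strict positivity (Theorem \ref{pt-positive}) on the compact region $\{d_0 \le D_0\}$, where $|q_2|$ is likewise continuous and hence bounded.
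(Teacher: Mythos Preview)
Your outline matches the paper's strategy in broad strokes: split into the regimes $\abs{z}\lesssim\abs{x}^2$ and $\abs{z}\gg\abs{x}^2$, do a steepest-descent analysis through the critical point $i\theta$, and observe that the extra factor $(-i)\langle\lambda,\hat z\rangle$ only contributes a bounded multiplier at the saddle.  The deduction of \eqref{vertical-gradient-crude} from the asymptotic bound plus positivity and continuity of $p_1$ on the compact ball is exactly how the paper proceeds.

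There is, however, a genuine gap in the hard regime.  You correctly note that as $\abs{x}\to 0$ (equivalently $\theta\uparrow\pi$) the saddle approaches a singularity of the integrand, but you do not say how to handle it beyond ``patching asymptotic expansions.''  The singularity at $\rho=i\pi$ is a pole of order $n$ of $(\rho/\sinh\rho)^n$, and when the saddle coalesces with it the ordinary steepest-descent estimate blows up non-uniformly.  The paper's device is to push the contour \emph{past} the pole to the line $\Im\rho=3\pi/2$; the integral along that line is exponentially negligible (Lemma~\ref{hl-estimate}), and the entire contribution comes from the residue at $i\pi$, which is then analyzed via a Laurent expansion and a secondary steepest-descent on a small circle (Propositions~\ref{Fprop-II} and~\ref{Fprop-III}).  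This residue step is the missing idea in your plan, and without it the uniformity you need in the $\abs{z}\gg\abs{x}^2$ region will not come out.

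Two smaller points.  First, the polar-coordinate reduction to a one-dimensional contour integral via Bessel/Hankel functions (equation~\eqref{hankel-expansion}) only yields a finite closed-form sum when $m$ is odd; for $m$ even the paper recovers the estimate by Hadamard descent from dimension $m+1$ (Section~\ref{hadamard-sec}), and your sketch does not address this parity issue.  Second, in the ``easy'' regime $\abs{z}\le B_1\abs{x}^2$ the paper does \emph{not} pass to polar coordinates at all: it does the steepest descent directly in $\R^m$ by shifting the contour by $i\theta\hat z$ (Lemma~\ref{move-contour-Rm} and Theorem~\ref{region-I-theorem}).  Your polar approach could likely be made to work there too, but it is not what the paper does.
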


\begin{remark*}
  Since our proof is based on analysis of the formula
  (\ref{pt-formula-1}), we will henceforth treat (\ref{pt-formula-1}) as
  the definition of a function $p_1$ on $\R^{2n+m}$.  In particular,
  it makes sense for all $n,m$, whether or not an H-type group of the
  corresponding dimension actually exists (which can be ascertained
  via Theorem \ref{dimension-classification}).  The proofs of Theorems
  \ref{main-theorem} and \ref{main-gradient-theorem} do not depend on
  the values of $n$ and $m$, so they likewise remain valid for all
  $n,m$.  The estimates given are in terms of the distance function
  $d$, which likewise should be taken as a function defined by the
  formula (\ref{distance-formula}).  Indeed, the only place where we need
  $p_1$ to be a heat kernel is in the proof of Corollary
  \ref{main-corollary}, where we use the positivity of $p_1$ which
  follows from the general theory (Theorem \ref{pt-positive}).

  In particular, in Section \ref{hadamard-sec} we shall make use of
  estimates on $p_1$ for values of $n,m$ not necessarily corresponding
  to H-type groups.
\end{remark*}

The proofs of these two theorems are broken into two cases, depending
on the relative sizes of $\abs{x}$ and $\abs{z}$.
Section \ref{steepest-descent-sec} deals with the case when $\abs{z}
\lesssim \abs{x}^2$; here we apply a steepest descent type argument to
approximate the desired function by a Gaussian.  Section
\ref{polar-sec} handles the case $\abs{z} \gg \abs{x}^2$ by a
transformation to polar coordinates and a residue computation which
only works for odd $m$.  The result for $m$ even can be deduced from
that for $m$ odd by a Hadamard descent approach, which is contained in
Section \ref{hadamard-sec}.

\section{Previous work}\label{heat-previous-sec}

Estimates of the form (\ref{main-theorem-eqn}) for the classical
Heisenberg group first appeared in \cite{li-jfa}, in the context of a
gradient estimate for the heat semigroup, as did an estimate
equivalent to (\ref{gradient-crude-upper}).  A proof
for Heisenberg groups in all dimensions followed in
\cite{li-heatkernel}.  Our proof is similar in spirit to the latter,
in that it relies on the analysis of an explicit formula for $p_t$
using steepest descent methods and elementary complex analysis.

\index{heat kernel bounds!nilpotent Lie group}
\index{Lie group!heat kernel bounds}
Less precise versions of the inequalities (\ref{main-theorem-eqn}) are
known to hold in more general settings.  Using Harnack
inequalities one can show that for general nilpotent Lie groups,
\begin{equation}
  C_1 R_1(t) e^{-\frac{d^2}{ct}} \le p_t \le C_2(\epsilon) R_2(t) e^{-\frac{d^2}{(4+\epsilon)t}}
\end{equation}
for some constants $c, C_1, C_2$ and functions $R_1, R_2$, where $C_2$
depends on $\epsilon > 0$; see chapter IV of \cite{purplebook}.
\cite{davies-pang}, among others, improves the upper bound to
\begin{equation}
   p_t(g) \le C R_3(g,t) e^{-\frac{d_0(g)^2}{4t}},
\end{equation}
with $R$ a polynomial correction, using logarithmic Sobolev
inequalities, whereas \cite{varopoulos-II} improves the lower bound to
\begin{equation}
  p_t \ge C(\epsilon) R_4(t) e^{-\frac{d^2}{(4-\epsilon)t}}. 
\end{equation}
Similar but slightly weaker estimates were shown for more general
sum-of-squares operators satisfying H\"ormander's condition in
\cite{kusuoka-stroock-III} by means of Malliavin calculus, and in
\cite{jerison-sanchez} by more elementary methods involving
homogeneity and the regular dependence of $p_t$ on $t$.

\index{heat kernel bounds!Heisenberg group}
\index{Heisenberg group!heat kernel bounds}
In the specific case of the classical Heisenberg group, asymptotic
results similar to (\ref{main-theorem-eqn}) had been previously obtained
in \cite{gaveau77} and \cite{hueber-muller}, but without the necessary
uniformity to translate them into pointwise estimates.  A precise
upper bound equivalent to that of (\ref{main-theorem-eqn}) was given in
\cite{bgg} for Heisenberg groups of all dimensions.  All three of
these works, like \cite{li-heatkernel} and the present proof, were
based on an explicit formula for $p_t$ and involved steepest descent
type methods.  In \cite{garofalo-segala}, similar techniques were used
to obtain a Li-Yau-Harnack inequality for the heat equation on
Heisenberg groups.

\section{Steepest descent}\label{steepest-descent-sec}

\index{steepest descent|(}\ignorethis{)}
We first handle the region where $\abs{z} \le B_1 \abs{x}^2$ for some
constant $B_1$.  If $\theta = \theta(x,z)$ is as in Theorem
\ref{distance}, this implies $\nu(\theta) \le 4 B_1$; since $\nu$
increases on $[0,\pi)$ \ignorethis{(]} we have $0 \le \theta \le
\theta_0$ in this region.  Note also that by Corollary
\ref{distance-estimate} we have $d_0(x,z)^2 \le D_2(1+B_1)\abs{x}^2$, as
well as $d_0(x,z)^2 \ge \abs{x}^2$ which is clear from
(\ref{distance-formula}).  Thus for this region the bounds of Theorems
\ref{main-theorem}, \ref{main-gradient-theorem} and
\ref{vertical-gradient-theorem} are implied by the following:
\begin{theorem}\label{region-I-theorem}
  For each constant $B_1 > 0$ there exists $D_0 > 0$ such that
  \begin{align}
    p_1(x,z) &\asymp \frac{1}{\abs{x}^m} e^{-\frac{1}{4} d_0(x,z)^2} \label{I-p-both} \\
    \abs{q_i(x,z)} &\le  \frac{C_2}{\abs{x}^{m}} e^{-\frac{1}{4}
      d_0(x,z)^2}, \quad i=1,2 \label{I-q-upper} \\
    \frac{C_1}{\abs{x}^{m}} e^{-\frac{1}{4} d_0(x,z)^2} &\le
    \max\{\abs{q_1(x,z)}, \abs{q_2(x,z)}\} \label{I-q-lower}
  \end{align}
  for all $x,z$ with $d_0(x,z) \ge D_0$ and $\abs{z} \le B_1 \abs{x}^2$.
\end{theorem}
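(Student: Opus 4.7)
The plan is to prove Theorem \ref{region-I-theorem} by the method of steepest descent applied to the $\lambda$-integrals (\ref{pt-formula-1}), (\ref{q1-def}), (\ref{q2-def}) with $|x|^2$ as the large parameter. Since $p_1$, $q_1$ and $q_2$ are radial in $z$ (item \ref{pt-radial} of Proposition \ref{pt-props}), assume $z = |z|e_m$. Their integrands depend on $\lambda$ only through $\langle\lambda,z\rangle$ and $|\lambda|$, and the functions $|\lambda|\coth|\lambda|$ and $(|\lambda|/\sinh|\lambda|)^n$ are even in $|\lambda|$, so they extend to analytic functions of the complex bilinear form $\lambda\cdot\lambda$; the integrands are thus entire on $\C^m$ except on the singular surfaces $\lambda\cdot\lambda = -k^2\pi^2$, $k\geq 1$. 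A direct computation using $\nu(\theta) = -\tfrac{d}{d\theta}[\theta\cot\theta]$ shows that the phase $\psi(\lambda) := i\langle\lambda,z\rangle - \tfrac{1}{4}|x|^2|\lambda|\coth|\lambda|$ has its unique critical point at $\lambda_* = i\theta e_m$, where $\theta \in [0,\theta_0]$ is the solution of $\nu(\theta) = 4|z|/|x|^2$, and that $\psi(\lambda_*) = -\tfrac{1}{4}|x|^2 \theta(\nu(\theta)+\cot\theta) = -\tfrac{1}{4}|x|^2 \theta^2/\sin^2\theta = -d_0(x,z)^2/4$.

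The central step is to deform the contour $\R^m \mapsto \R^m + i\theta e_m$ in the $\lambda_m$-coordinate. Along the shift $\lambda_m \mapsto \lambda_m + is$ for $s \in [0,\theta]$, the real part of $\lambda\cdot\lambda$ is at least $-s^2 > -\pi^2$, so no singular surface is crossed; tail decay justifying Cauchy's theorem follows from $\Re\sqrt{\lambda\cdot\lambda}\to +\infty$ together with $\rho\coth\rho \sim \rho$ for large $|\rho|$ with $\Re\rho > 0$. A calculation with $g(s) := \sqrt{s}\coth\sqrt{s}$ shows the Hessian of $\psi$ at $\lambda_*$ is diagonal, with longitudinal eigenvalue $-\tfrac{1}{4}|x|^2 \nu'(\theta)$ and $m-1$ transverse eigenvalues $-\tfrac{1}{4}|x|^2 \nu(\theta)/\theta$; by Lemma \ref{nu-increase} and Corollary \ref{nu-c}, both are negative and comparable to $-|x|^2$ uniformly for $\theta \in [0,\theta_0]$, so the saddle is non-degenerate with width of order $|x|^{-1}$ in every direction.

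With the contour shifted, the upper bound on $p_1$ follows by bounding $\Re\psi(\lambda_*+\mu) - \psi(\lambda_*) \leq -c|x|^2\min(|\mu|^2,|\mu|)$ (the quadratic approximation near $\mu=0$ combined with the linear growth of $\rho\coth\rho$ at infinity) and integrating, giving $C|x|^{-m}e^{-d_0^2/4}$. The bounds (\ref{I-q-upper}) are obtained identically, since the integrands of $q_1$ and $q_2$ differ from that of $p_1$ only by the factors $(|\lambda|/\sinh|\lambda|)\cosh|\lambda|$ and $-i\langle\lambda,\hat z\rangle$ respectively, both analytic on the shifted contour and of at most polynomial growth. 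The matching lower bounds come from restricting the integral to the region $|\mu| \leq \delta|x|^{-1}$ where the Hessian approximation is sharp; the saddle-point prefactors $(\theta/\sin\theta)^n$ (for $p_1$), $(\theta/\sin\theta)^{n+1}\cos\theta$ (for $q_1$), and $\theta(\theta/\sin\theta)^n$ (for $q_2$) are all bounded on $[0,\theta_0]$, and the latter two vanish only at the disjoint points $\theta=\pi/2$ and $\theta=0$, so $\max(|q_1|,|q_2|)$ remains bounded below by $C|x|^{-m}e^{-d_0^2/4}$, proving (\ref{I-q-lower}).

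The main obstacle is making the $m$-dimensional contour deformation rigorous with uniform control in $(x,z)$: one must verify tail decay of the shifted integrand in every direction on $\R^m + i\theta e_m$, not only near the saddle, and track the even analytic extension of $|\lambda|\coth|\lambda|$ carefully through $\lambda\cdot\lambda$. A secondary subtlety is that when $B_1$ is large $\theta_0$ may exceed $\pi/2$, so $\cos\theta$ can vanish in the range and (\ref{I-q-lower}) genuinely requires using both $q_1$ and $q_2$ rather than $q_1$ alone.
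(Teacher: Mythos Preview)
Your proposal is correct and follows essentially the same steepest-descent approach as the paper: the analytic extension via $\lambda\cdot\lambda$, the contour shift to $\R^m+i\theta\hat z$, the Hessian eigenvalues $\nu'(\theta)$ and $\nu(\theta)/\theta$ bounded via Lemma~\ref{nu-increase} and Corollary~\ref{nu-c}, and the split between $q_1$ and $q_2$ for (\ref{I-q-lower}) (the paper cuts at $\theta=\pi/4$ rather than $\pi/2$, but the idea is identical). One small point to tighten: restricting to $|\mu|\le\delta|x|^{-1}$ for the lower bound captures only a fixed fraction of the Gaussian mass while the complement is of the same order, so cancellation is not ruled out; the paper takes the inner region of radius $\sim|x|^{-2/3}$ (so that $|x|^2\,\Im\psi=O(|x|^2|\mu|^3)\to 0$ there while $|x|\cdot|x|^{-2/3}\to\infty$ makes the tail contribution vanish), and any radius $|x|^{-\alpha}$ with $\tfrac{2}{3}<\alpha<1$ would work.
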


Our approach here will be a steepest descent argument.  Very
informally, the motivation is as follows: given a function $F(x) =
\int_{\R} e^{-x^2 f(\lambda)} a(\lambda)\,d\lambda$, move the contour
of integration to a new contour $\Gamma$ which passes through a
critical point $\lambda_c$ of $f$, so that $f(\lambda) \approx
f(\lambda_c) + \frac{1}{2} f''(\lambda_c)(\lambda-\lambda_c)^2$.  Then
we have
\begin{equation*}
F(x) \approx
e^{-x^2 f(\lambda_c)} \int_{\Gamma} e^{-x^2 f''(\lambda_c)
  (\lambda-\lambda_c)^2/2} a(\lambda)\,d\lambda.
\end{equation*}
For large $x$ the integrand looks like a Gaussian concentrated near
$\lambda_c$, so $F(x) \asymp e^{-x^2 f(\lambda_c)}
\frac{a(\lambda_c)}{x \sqrt{f''(\lambda_c)}}$.  Our proof essentially
follows this line, in $\R^m$ instead of $\R$, but more care is
required to establish the desired uniformity.

Our first task is to extend the integrand to a meromorphic function on
$\C^m$, so that we may justify moving the contour of integration.

Let $\cdot$ denote the bilinear (not sesquilinear) dot product on
$\C^m$, and for $\lambda \in \C^m$ write $\lambda^2 := \lambda \cdot
\lambda$; this defines an analytic function from $\C^m$ to $\C$, and
$\lambda^2 = \abs{\lambda}^2$ iff $\lambda \in \R^m$.  For $w \in \C$,
let $\sqrt{w}$ denote the branch of the square root function
satisfying $\Im \sqrt{w} \ge 0$ and $\sqrt{w} > 0$ for $w > 0$ (so the
branch cut is the positive real axis).  Thus if $g : \C \to \C$ is an
analytic even function, $\lambda \mapsto g(\sqrt{\lambda^2})$ is
analytic as well, and satisfies $g(\sqrt{\lambda^2}) =
g(\abs{\lambda})$ for $\lambda \in \R^m$.  This holds in particular
for the function $\frac{\sinh w}{w}$, and thus the functions
$\frac{\sqrt{\lambda^2}}{\sinh \sqrt{\lambda^2}}$ and $\sqrt{\lambda^2} \coth
\sqrt{\lambda^2}$ are analytic away from points with $\sqrt{\lambda^2}
= ik\pi$, $k = 1, 2, \dots$.  

Using this notation, we let
\begin{align*}
  a_0(\lambda) &:= \left(\frac{\sqrt{\lambda^2}}{\sinh \sqrt{\lambda^2}}\right)^n \\
  a_1(\lambda) &:=  \cosh \sqrt{\lambda^2} \left(\frac{\sqrt{\lambda^2}}{\sinh \sqrt{\lambda^2}}\right)^{n+1} \\
  a_2(\lambda) &:= -i \left(\frac{\sqrt{\lambda^2}}{\sinh
    \sqrt{\lambda^2}}\right)^n \lambda \cdot \hat{z} \in \C^{2n}.
\end{align*}
As mentioned previously, $\hat{z}$ may be any unit vector in $\R^m$
without affecting the computation.  Therefore we shall treat it as
fixed, while $\abs{z}$ is allowed to vary.

Also, for $\lambda \in \C^m, \theta \in [0,\theta_0], \hat{z} \in
S^{m-1} \subset \R^m$, we define
\begin{equation}\label{fdef}
  f(\lambda, \theta, \hat{z}) := -{i \nu(\theta)} \lambda \cdot
  \hat{z} + \sqrt{\lambda^2} \coth \sqrt{\lambda^2}
\end{equation}
so that
\begin{equation*}
  \frac{\abs{x}^2}{4} f(\lambda, \theta(x,z), \frac{z}{\abs{z}}) =
  -i \lambda \cdot z + \frac{1}{4} \sqrt{\lambda^2} \coth
  \sqrt{\lambda^2} \abs{x}^2.
\end{equation*}
We henceforth write $\theta$ for $\theta(x,z)$.  Thus we now have
\begin{align}
  p_1(x,z) &= (4 \pi)^{-m-n} \int_{\R^m}
  e^{-\frac{\abs{x}^2}{4} f(\lambda, \theta, \hat{z})}
  a_0(\lambda)\,d\lambda \\
  q_i(x,z) &= (4 \pi)^{-m-n} \int_{\R^m}
  e^{-\frac{\abs{x}^2}{4} f(\lambda, \theta, \hat{z})}
  a_i(\lambda)\,d\lambda , && i=1,2
\end{align}
Written thus, the integrands have obvious meromorphic extensions to
$\lambda \in \C^n$, analytic away from the set $\{ \sqrt{\lambda^2}  =
ik\pi,\,k=1,2,\dots\}$.

A simple calculation verifies that $\frac{d}{dw} w \coth w = i
\nu(-iw)$, so we can compute the gradient of $f$ with respect to
$\lambda$ as
\begin{equation}
  \grad_\lambda f(\lambda, \theta, \hat{z}) = -i \nu(\theta) \hat{z} +
  i \nu(-i \sqrt{\lambda^2}) \hat{\lambda}
\end{equation}
which vanishes when $\lambda = i \theta \hat{z}$.  Thus
$i \theta \hat{z}$ is the desired critical point.  We observe that
\begin{equation}
  f(i \theta \hat{z}, \theta, \hat{z}) = \theta \nu(\theta) + i \theta
  \coth(i \theta) = \theta(\nu(\theta) + \cot(\theta)) = \frac{\theta^2}{\sin^2\theta}
\end{equation}
so by (\ref{distance-formula}),
\begin{equation}
  \abs{x}^2 f(i \theta \hat{z}, \theta, \hat{z}) = d_0(x,z)^2.
\end{equation}
Thus we define
\begin{equation}\label{psidef}
  \psi(\lambda, \theta, \hat{z}) := f(\lambda, \theta, \hat{z}) - f(i
  \theta \hat{z}, \theta, \hat{z}) = -i\nu(\theta)\lambda\cdot\hat{z}
  + \sqrt{\lambda^2}\coth\sqrt{\lambda^2} - \frac{\theta^2}{\sin^2\theta}.
\end{equation}
We then have
\begin{equation}
  p_t(x,z) = (4 \pi)^{-m-n}e^{-d_0(x,z)^2/4} \int_{\R^m}
  e^{-\frac{\abs{x}^2}{4} \psi(\lambda, \theta, \hat{z})} a_0(\lambda)\,d\lambda
\end{equation}
and analogous formulas for $q_1, q_2$.  Thus let
\begin{equation}\label{hi-def}
  h_i(x,z) := \int_{\R^m}
  e^{-\frac{\abs{x}^2}{4} \psi(\lambda, \theta, \hat{z})} a_i(\lambda)\,d\lambda.
\end{equation}
It will now suffice to estimate $h_i$.


The first step in the steepest descent method is to move the
``contour'' of integration to pass through $i \theta \hat{z}$.  Some
preliminary computations are in order.

\begin{lemma}\label{sab}
  For $a, b \in \R^m$, we have
  \begin{equation}
\abs{a} - \abs{b} \le \abs{\Re \sqrt{(a+bi)^2}} \le \abs{a}, \quad 0 \le \Im
\sqrt{(a+bi)^2} \le \abs{b}.
  \end{equation}
  Equality holds in the upper bounds if and only if $a$ and $b$ are
  parallel, i.e. $a=rb$ for some $r \in \R$.
\end{lemma}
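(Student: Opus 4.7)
The plan is to reduce everything to a direct calculation with the single complex square root. Write $\lambda = a + ib$ and compute
\begin{equation*}
  \lambda^2 = a\cdot a - b\cdot b + 2i\,(a\cdot b) = (\abs{a}^2 - \abs{b}^2) + 2i\,(a\cdot b),
\end{equation*}
so that $\lambda^2$ lives in $\C$ and the question reduces to understanding the principal-branch square root of a single complex number. Set $w := \sqrt{\lambda^2} = u + iv$ with $u \in \R$ and $v \ge 0$ (the latter forced by the branch convention $\Im\sqrt{\cdot} \ge 0$). Then $w^2 = (u^2 - v^2) + 2iuv$, matching real and imaginary parts gives the two identities
\begin{equation*}
  u^2 - v^2 = \abs{a}^2 - \abs{b}^2, \qquad uv = a\cdot b.
\end{equation*}

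The key observation is that from these two identities one can recover $u^2 + v^2$: indeed,
\begin{equation*}
  (u^2+v^2)^2 = (u^2-v^2)^2 + 4u^2v^2 = (\abs{a}^2 - \abs{b}^2)^2 + 4(a\cdot b)^2.
\end{equation*}
By Cauchy--Schwarz, $(a\cdot b)^2 \le \abs{a}^2\abs{b}^2$ with equality iff $a \parallel b$, and a trivial algebraic manipulation turns this into
\begin{equation*}
  (\abs{a}^2 - \abs{b}^2)^2 + 4(a\cdot b)^2 \le (\abs{a}^2 + \abs{b}^2)^2,
\end{equation*}
again with equality iff $a \parallel b$. Hence $u^2 + v^2 \le \abs{a}^2 + \abs{b}^2$ with equality in the parallel case.

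Now combine this with the difference identity. Adding gives $2u^2 \le 2\abs{a}^2$, i.e.\ $\abs{u} \le \abs{a}$; subtracting gives $2v^2 \le 2\abs{b}^2$, i.e.\ $v \le \abs{b}$ (using $v \ge 0$), both with the claimed equality conditions. For the lower bound $\abs{u} \ge \abs{a} - \abs{b}$, if $\abs{a} \le \abs{b}$ there is nothing to show, and otherwise $u^2 \ge u^2 - v^2 = \abs{a}^2 - \abs{b}^2 \ge (\abs{a} - \abs{b})^2$.

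I do not anticipate a serious obstacle here; the only subtle point is keeping the branch convention consistent (which fixes the sign of $v$ but not of $u$, which is why the statement has $\abs{\Re\sqrt{\cdot}}$ but not $\abs{\Im\sqrt{\cdot}}$), and making sure the Cauchy--Schwarz equality case transfers correctly through the algebraic manipulation above. Everything else is a one-line computation.
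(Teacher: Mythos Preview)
Your proof is correct and follows essentially the same route as the paper: both reduce the question to the single complex number $(a+bi)^2 = (\abs{a}^2-\abs{b}^2)+2i(a\cdot b)$, bound its modulus by $\abs{a}^2+\abs{b}^2$ via Cauchy--Schwarz (with the same equality case), and then recover $u^2$ and $v^2$ from the sum and difference of $u^2\pm v^2$. The only cosmetic difference is that the paper packages the last step as the identity $(\Re\sqrt{z})^2 = \tfrac{1}{2}(\Re z + \abs{z})$, whereas you write out $u^2-v^2$ and $u^2+v^2$ separately and add; these are the same computation.
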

\begin{proof}
  First note that $(a+bi)^2 = \abs{a}^2 - \abs{b}^2 + 2i a \cdot b$.
  So by the Cauchy-Schwarz inequality,
  \begin{equation}\label{ab2-upper}
    \begin{split}
    \abs{(a+bi)^2}^2 &= (\abs{a}^2 - \abs{b}^2)^2 + (2 a \cdot b)^2
    \\
    &\le (\abs{a}^2 - \abs{b}^2)^2 + 4 \abs{a}^2 \abs{b}^2 \\
    &= (\abs{a}^2+\abs{b}^2)^2
    \end{split}
\end{equation}
  so that $\abs{(a+bi)^2} \le \abs{a}^2+\abs{b}^2$.  Equality holds in
  the Cauchy-Schwarz inequality iff $a$ and $b$ are parallel.  On the
  other hand,
  \begin{equation}\label{ab2-lower}
  \abs{(a+bi)^2} \ge \Re (a+bi)^2 = \abs{a}^2 - \abs{b}^2.    
  \end{equation}
  Now we can write
  \begin{align*}
    \left(\Re \sqrt{(a+bi)^2}\right)^2 &= \frac{1}{4} \left(\sqrt{(a+bi)^2} +
    \conj{\sqrt{(a+bi)^2}}\right)^2 \\
    &= \frac{1}{4} \left((a+bi)^2 + \conj{(a+bi)^2} + 2
    \abs{\sqrt{(a+bi)^2}}^2\right) \\
    &= \frac{1}{2} (\abs{a}^2 - \abs{b}^2 + \abs{(a+bi)^2}).
  \end{align*}
  The upper bound for $\abs{\Re \sqrt{(a+bi)^2}}$ then follows
  from (\ref{ab2-upper}).  The lower bound is trivial if $\abs{a} \le
  \abs{b}$, and otherwise we have by (\ref{ab2-lower}) that
  \begin{equation*}
    \left(\Re \sqrt{(a+bi)^2}\right)^2 \ge \abs{a}^2 - \abs{b}^2 \ge
    (\abs{a} - \abs{b})^2.
  \end{equation*}
  The lower bound for $\Im \sqrt{(a+bi)^2}$ holds by our definition of
  $\sqrt{\cdot}$, and the upper bound is similar to the previous one.
\end{proof}

\begin{lemma}\label{integrand-bounds}
  For each $\theta_0 \in [0,\pi)$ there exists $c(\theta_0) > 0$ such that if $a,b \in \R^n$ with
  $\abs{a} \ge c(\theta_0)$, $\abs{b} \le 2\pi$, we have
  \begin{align}
    \Re \psi(a+ib, \theta, \hat{z}) &\ge \abs{a}/2
    \intertext{and}
    \abs{a_i(a+ib)} &\le 1
  \end{align}
  for all $\theta \in [0,\theta_0]$, $\hat{z}, \hat{x} \in S^{m-1} \subset \R^{m}$.
\end{lemma}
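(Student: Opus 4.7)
The plan is to exploit the fact that for $\lambda = a+ib$ with $\abs{a}$ large and $\abs{b}$ bounded, $\sqrt{\lambda^2}$ has large real part (by Lemma \ref{sab}) and bounded imaginary part, so that $\coth\sqrt{\lambda^2}$ is close to $1$ and $\sinh\sqrt{\lambda^2}$ is exponentially large. Both estimates in the lemma then follow by bounding each piece of $\psi$ and each $a_i$ separately.

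First I would compute $\Re\psi$ term by term. Write $\lambda\cdot\hat{z} = a\cdot\hat{z} + i\,b\cdot\hat{z}$, so that
\begin{equation*}
\Re(-i\nu(\theta)\lambda\cdot\hat{z}) = \nu(\theta)\,b\cdot\hat{z},
\end{equation*}
which is bounded in absolute value by $2\pi\,\nu(\theta_0)$. The constant term $-\theta^2/\sin^2\theta$ is bounded by some $C(\theta_0)$. The main term is $\Re(\sqrt{\lambda^2}\coth\sqrt{\lambda^2})$: setting $w = \sqrt{\lambda^2} = u+iv$, Lemma \ref{sab} gives $u \ge \abs{a}-2\pi$ and $0\le v\le 2\pi$. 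Since $\coth w = 1 + 2e^{-2w}/(1-e^{-2w})$, we have $\abs{\coth w - 1}$ exponentially small in $u$, so
\begin{equation*}
\Re(w\coth w) \ge u - \abs{w}\cdot\abs{\coth w - 1} \ge \abs{a} - 2\pi - 1
\end{equation*}
once $\abs{a}$ is large enough. Summing these three contributions, for $\abs{a}$ larger than some $c(\theta_0)$ depending only on $\theta_0$, we obtain $\Re\psi(a+ib,\theta,\hat{z}) \ge \abs{a}/2$ uniformly in $\theta\in[0,\theta_0]$ and $\hat{z}\in S^{m-1}$.

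For the bounds on $\abs{a_i(a+ib)}$, the key estimate is $\abs{\sinh w}\ge \sinh(\Re w) = \sinh u$, which follows from $\abs{e^w - e^{-w}} \ge \bigl|\,\abs{e^w} - \abs{e^{-w}}\,\bigr| = 2\sinh u$. Together with $\abs{\cosh w}\le \cosh u$ and $\abs{w}\le \sqrt{\abs{a}^2+4\pi^2}$, this gives
\begin{equation*}
\left|\frac{\sqrt{\lambda^2}}{\sinh\sqrt{\lambda^2}}\right| \;\le\; \frac{\sqrt{\abs{a}^2+4\pi^2}}{\sinh(\abs{a}-2\pi)},
\end{equation*}
which tends to $0$ as $\abs{a}\to\infty$. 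Inserting this bound into the definitions of $a_0, a_1, a_2$ (together with $\abs{\lambda\cdot\hat{z}}\le\abs{\lambda}\le\sqrt{\abs{a}^2+4\pi^2}$ for $a_2$ and the $\cosh u$ bound for $a_1$), each $\abs{a_i(a+ib)}$ is dominated by a polynomial in $\abs{a}$ times $e^{-n(\abs{a}-2\pi)}$, which is $\le 1$ for $\abs{a}$ sufficiently large.

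The only mildly delicate point is that the required largeness of $\abs{a}$ must be chosen uniformly in the parameters $\theta\in[0,\theta_0]$ and $\hat{z}$; but since every bound above depended on those parameters only through $\nu(\theta_0)$ and $\theta_0^2/\sin^2\theta_0$, one may take a single $c(\theta_0)$ large enough that both conclusions hold simultaneously. I do not expect any step to present a serious obstacle beyond bookkeeping the constants carefully.
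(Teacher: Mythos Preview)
Your proposal is correct and follows essentially the same route as the paper: both arguments split $\Re\psi$ into the bounded piece $\nu(\theta)b\cdot\hat z - \theta^2/\sin^2\theta$ and the dominant piece $\Re(w\coth w)$, invoke Lemma~\ref{sab} to get $\abs{\Re w}\ge\abs a-2\pi$ and $\Im w\le 2\pi$, and then use exponential growth of $\sinh$ against polynomial numerators for the $a_i$ bounds. The only cosmetic difference is that the paper estimates $\Re((\alpha+i\beta)\coth(\alpha+i\beta))$ by an explicit real-variable identity, whereas you use $\coth w\to 1$; note also that Lemma~\ref{sab} gives $\abs u\ge\abs a-2\pi$ rather than $u\ge\abs a-2\pi$, but this is harmless since $w\coth w$, $w/\sinh w$, and $\cosh w$ are all even in $w$.
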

\begin{proof}
  Fix $\theta_0 \in [0,\pi)$.  Note first that
  \begin{equation}
    \Re \psi(a + i b, \theta, \hat{z}) = \nu(\theta) b
    \cdot \hat{z} - \Re f(i \theta \hat{z}, \theta, \hat{z}) +
    \Re\left[\sqrt{(a+bi)^2} \coth \sqrt{(a+bi)^2}\right].
  \end{equation}
  By continuity, $\nu(\theta) b
    \cdot \hat{z} - \Re f(i \theta \hat{z}, \theta, \hat{z})$ is
    bounded below by some constant independent of $a$ for all $\theta
    \in [0,\theta_0]$, $\abs{b} \le 2\pi$.  Thus it suffices to
      show that for sufficiently large $\abs{a}$,
  \begin{equation}\label{foo}
    \Re\left[\sqrt{(a+bi)^2} \coth \sqrt{(a+bi)^2}\right] \ge \frac{2}{3}\abs{a}.
  \end{equation}
  Now for $\alpha \in \R$, $\beta \in [-2\pi,2\pi]$ we have
  \begin{align*}
  \Re((\alpha+i\beta) \coth (\alpha + i\beta)) &= \frac{\alpha \sinh \alpha \cosh \alpha + \beta
      \sin \beta \cos \beta}{\cosh^2 \alpha - \cos^2 \beta} \\
    &\ge \alpha \coth \alpha - \frac{\beta}{\cosh^2 \alpha} \\
    &\ge \alpha \coth \alpha - \frac{2\pi}{\cosh^2 \alpha} \\
  &\ge \frac{3}{4} \abs{\alpha}
  \end{align*}
  for sufficiently large $\abs{\alpha}$.  (Recall that $\lim_{\alpha
    \to \pm \infty} \coth\alpha = \pm 1$.)  Thus, since 
  \begin{align*}
    \abs{\Re
  \sqrt{(a+bi)^2}} &\ge \abs{a} - \abs{b} \ge \abs{a} - 2\pi
    \intertext{and}
    \abs{\Im \sqrt{(a+bi)^2}} &\le 2\pi,
  \end{align*}
 it is clear that (\ref{foo}) holds for sufficiently large $\abs{a}$.

  For the bound on $a_i$, note that the $\sinh$ factor in the
  denominator of each $a_i$ can be estimated by
  \begin{equation*}
    \abs{\sinh(\alpha+i\beta)} = \abs{\frac{e^{\alpha+i\beta} -
        e^{-{\alpha+i\beta}}}{2}} \ge \abs{\frac{\abs{e^{\alpha+i\beta}} -
        \abs{e^{-\alpha+i\beta}}}{2}} = \abs{\sinh \alpha}
  \end{equation*}
  so that $\abs{\sinh \sqrt{(a+bi)^2}} \ge \abs{\sinh \Re
    \sqrt{(a+bi)^2}} \ge \abs{\sinh(\abs{a}-2\pi)}$ for $\abs{a} \ge
  2\pi$.  This grows exponentially with $\abs{a}$, so it certainly dominates
  the polynomial growth of the numerator, and we have $\abs{a_i(a+ib)}
  \le 1$ for large enough $\abs{a}$.
\end{proof}

\begin{lemma}\label{move-contour-Rm}
  Let $F(\lambda) := e^{-\frac{\abs{x}^2}{4} \psi(\lambda, \theta, \hat{z})} a_i(\lambda)$ be the
  integrand in (\ref{hi-def}), where $x,z$ are fixed.  If $\tau \in
  \R^m$ with $\abs{\tau} < \pi$, then
  \begin{equation}
    h_i(x,z) = \int_{\R^m} F(\lambda)\,d\lambda = 
    \int_{\R^m} F(\lambda + i \tau)\,d\lambda.
  \end{equation}
\end{lemma}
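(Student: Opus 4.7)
The plan is to reduce the claim to iterated one-dimensional Cauchy shifts, after checking that the integrand is actually holomorphic on a neighborhood of the slab we wish to cross.

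First I would identify the singular set of $F$. The functions $\sqrt{\lambda^2}\coth\sqrt{\lambda^2}$ and $\sqrt{\lambda^2}/\sinh\sqrt{\lambda^2}$ are even in $\sqrt{\lambda^2}$, hence analytic functions of $\lambda^2$ and so of $\lambda$ (no branch-cut issue), with poles exactly at the zeros of $\sinh\sqrt{\lambda^2}$, i.e.\ where $\sqrt{\lambda^2}=ik\pi$ for some positive integer $k$. Writing $\lambda=a+ib$ with $a,b\in\R^m$, one has
\begin{equation*}
\lambda^2=\abs{a}^2-\abs{b}^2+2i(a\cdot b),
\end{equation*}
so a pole forces $a\cdot b=0$ and $\abs{b}^2-\abs{a}^2=k^2\pi^2$, giving $\abs{b}\ge k\pi\ge\pi$. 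Consequently $F$ is holomorphic on the open slab $S:=\{a+ib:a\in\R^m,\ \abs{b}<\pi\}$, which contains both $\R^m$ and $\R^m+i\tau$ together with the line segment joining them.

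Next I would perform $m$ successive one-dimensional contour shifts. Writing $\tau=(\tau_1,\ldots,\tau_m)$, at the $j$-th step I fix $\lambda_k\in\R+i\tau_k$ for $k<j$ and $\lambda_k\in\R$ for $k>j$, and shift $\lambda_j$ from $\R$ to $\R+i\tau_j$ by integrating $F$ around the rectangle with vertices $\pm R,\pm R+i\tau_j$ in the $\lambda_j$-plane. The imaginary part of $\lambda$ throughout this step has magnitude at most $\sqrt{\tau_1^2+\cdots+\tau_j^2}\le\abs{\tau}<\pi$, so the rectangle lies in $S$ and Cauchy's theorem applies. Fubini is justified at every stage by the absolute integrability to be verified next.

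Finally I would handle the decay on the two vertical edges of each rectangle. The exponential decay comes from the $\sinh\sqrt{\lambda^2}$ in the denominator of $a_i$: by Lemma \ref{sab}, $\Re\sqrt{(a+ib)^2}\ge\abs{a}-\abs{b}$, and since $\abs{\sinh w}\ge\abs{\sinh\Re w}$ we obtain
\begin{equation*}
\bigl|\sinh\sqrt{(a+ib)^2}\bigr|\ge\sinh(\abs{a}-\abs{\tau})
\end{equation*}
for $\abs{a}>\abs{\tau}$. This, combined with the at-most-polynomial growth of $\sqrt{\lambda^2}$, $\cosh\sqrt{\lambda^2}$, $\psi$ and $\lambda\cdot\hat z$ on the slab, makes $F(a+ib)$ decay exponentially as $\abs{a}\to\infty$ uniformly for $\abs{b}\le\abs{\tau}$; hence the side-edge integrals vanish as $R\to\infty$ and the chain of equalities yields the claim. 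The only substantive point is the pole-avoidance computation of the first paragraph; everything else is a routine Cauchy-plus-Fubini argument.
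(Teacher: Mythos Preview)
Your overall architecture---iterated one-dimensional Cauchy shifts with Fubini, after checking analyticity on the slab $\{\abs{b}<\pi\}$---is exactly what the paper does, and your pole-avoidance computation is correct (the paper simply quotes Lemma~\ref{sab} for this). However, your decay argument contains two slips that leave a genuine gap.

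First, $\cosh\sqrt{\lambda^2}$ does \emph{not} have polynomial growth on the slab: since $\abs{\Re\sqrt{(a+ib)^2}}\ge\abs{a}-\abs{b}$ by Lemma~\ref{sab}, one has $\abs{\cosh\sqrt{\lambda^2}}\ge\abs{\sinh(\abs{a}-\abs{\tau})}$, which is exponential. For $a_1$ this is not fatal, because $\cosh/\sinh^{n+1}$ still decays like $e^{-n\abs{a}}$, but your sentence as written is false.

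Second, and more seriously, ``polynomial growth of $\psi$'' does not control the factor $e^{-\frac{\abs{x}^2}{4}\psi}$. What you need is a \emph{lower} bound on $\Re\psi$; polynomial growth of $\abs{\psi}$ only gives $\Re\psi\ge -C\abs{a}$, whence $\abs{e^{-\frac{\abs{x}^2}{4}\psi}}\le e^{\frac{C\abs{x}^2}{4}\abs{a}}$. Since $x$ is fixed but arbitrary, for large $\abs{x}$ this exponential growth could overwhelm the $e^{-n\abs{a}}$ decay you extract from $a_i$, and the side-edge integrals would not vanish. The paper avoids this by invoking Lemma~\ref{integrand-bounds}, which shows directly that $\Re\psi(a+ib,\theta,\hat z)\ge\abs{a}/2$ and $\abs{a_i(a+ib)}\le 1$ once $\abs{a}$ is large enough, yielding the clean bound $\abs{F(a+ib)}\le e^{-\abs{x}^2\abs{a}/8}$. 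You should either invoke that lemma or reproduce the observation that $\Re\bigl[\sqrt{\lambda^2}\coth\sqrt{\lambda^2}\bigr]\to+\infty$ as $\abs{a}\to\infty$, so that $\Re\psi$ is bounded below on the slab.
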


\begin{proof}
  Note first that $F$ is analytic at $\lambda + ib$ when $\abs{b} <
  \pi$, by the second inequality in Lemma \ref{sab}.  Also, by Lemma
  \ref{integrand-bounds}, we have 
  \begin{equation}\label{F-bound}
\abs{F(\lambda+ib)} \le e^{-\abs{x}^2 \abs{\lambda}/8}
  \end{equation}
as soon as $\abs{\lambda} > c(\theta)$.

  We view $\int_{\R^m} F(\lambda)\,d\lambda$ as $m$ iterated integrals
  and handle them one at a time.  For $1 \le k \le m$, suppose we
  have shown that
  \begin{equation}
    \int_{\R^m} F(\lambda)\,d\lambda
    = \int_\R \dots \int_\R F(\lambda_1 + i\tau_1, \dots, \lambda_{k-1} +
    i\tau_{k-1}, \lambda_{k}, \dots, \lambda_m)\,d\lambda_1 \dots d\lambda_m.
  \end{equation}
  Continuity of $F$ and (\ref{F-bound}) show that $F$ is
  integrable, so we may apply Fubini's theorem and evaluate the
  $d\lambda_k$ integral first:
  \begin{equation*}
     \int_{\R^m} F(\lambda)\,d\lambda
    = \int_\R \dots \int_\R F(\lambda_1 +i \tau_1, \dots, \lambda_{k-1} +
   i \tau_{k-1}, \lambda_{k}, \dots, \lambda_m)\,d\lambda_k d\lambda_1 \dots d\lambda_m.
  \end{equation*}
  Now
  \begin{align*}
    &\quad \int_\R F(\lambda_1 + i\tau_1, \dots, \lambda_{k-1} +
    i\tau_{k-1}, \lambda_{k}, \dots, \lambda_m)\,d\lambda_k  \\
    &= \lim_{\alpha \to \infty} \int_{-\alpha}^{\alpha} F(\lambda_1 +i \tau_1, \dots, \lambda_{k-1} +
   i \tau_{k-1}, \lambda_{k}, \dots, \lambda_m)\,d\lambda_k.
  \end{align*}
  Since $\lambda_k \mapsto F(\lambda_1 + i\tau_1, \dots, \lambda_{k-1} +
    i\tau_{k-1}, \lambda_{k}, \dots, \lambda_m)$ is analytic for
    $\abs{\Im \lambda_k} \le \tau_k$ (which holds because $\abs{(\tau_1, \dots,
    \tau_k)} \le \abs{\tau} < \pi$), we have
    \begin{align*}
      \int_{-\alpha}^{\alpha} F(\dots, \lambda_{k}, \dots)\,d\lambda_k
      = \int_{-\alpha}^{-\alpha+i\tau_k} F +
      \int_{-\alpha+i\tau_k}^{\alpha+i\tau_k} F + \int_{\alpha+i\tau_k}^{\alpha} F
    \end{align*}
    where the contour integrals are taken along straight (horizontal
    or vertical) lines.  But as soon as $\alpha$ exceeds $c(\theta)$ from
    Lemma \ref{integrand-bounds}, (\ref{F-bound}) gives
\begin{align*}
&\quad \int_{-\alpha}^{-\alpha+i\tau_k} \abs{F(\lambda_1 + i\tau_1, \dots, \lambda_{k-1} +
    i\tau_{k-1}, \lambda_{k}, \dots, \lambda_m)}\,d\lambda_k \\ 
  &\le
  \tau_k e^{-\abs{x}^2 \abs{(\lambda_1, \dots, \lambda_{k-1},
      -\alpha, \lambda_k, \dots, \lambda_m)}/8} \\
  &\le \pi e^{-\abs{x}^2 \abs{\alpha}/8} \to 0 \text{ as } \alpha \to \infty.
\end{align*}
A similar argument shows the same for
$\int_{\alpha+i\tau_k}^{\alpha} F$, so we have
\begin{align*}
  &\quad\int_\R F(\lambda_1 + i\tau_1, \dots, \lambda_{k-1} +
  i\tau_{k-1}, \lambda_{k}, \dots, \lambda_m)\,d\lambda_k \\&=
  \int_{-\infty+i\tau_k}^{\infty+i\tau_k} F(\lambda_1 + i\tau_1, \dots, \lambda_{k-1} +
  i\tau_{k-1}, \lambda_{k}, \dots, \lambda_m)\,d\lambda_k \\
  &= \int_\R F(\lambda_1 + i\tau_1, \dots, \lambda_{k-1} +
  i\tau_{k-1}, \lambda_{k}+i\tau_k, \dots, \lambda_m)\,d\lambda_k.
\end{align*}
Thus applying Fubini's theorem again, we have shown
\begin{equation}
    \int_{\R^m} F(\lambda)\,d\lambda
    = \int_\R \dots \int_\R F(\lambda_1 + i\tau_1, \dots, \lambda_{k-1} +
    i\tau_{k-1}, \lambda_{k}+i\tau_k, \dots, \lambda_m)\,d\lambda_1 \dots d\lambda_m.
\end{equation}
Applying this argument successively for $k=1, 2, \dots, m$ establishes
the lemma.
\end{proof}

For the remainder of this section, we assume that $\abs{z} \le
B_1\abs{x}^2$, so that $\theta \le \theta_0(B_1)$. We next show that
the contribution from $\lambda$ far from the origin is negligible.

\begin{lemma}\label{far-from-origin}
  There exist $r > 0$ and a constant $C > 0$ such that
  \begin{equation}
    \abs{\int_{B(0,r)^C} e^{-\frac{\abs{x}^2}{4}\psi(\lambda + i \theta \hat{z}, x, z)} a_i(\lambda +
      i \theta \hat{z})\,d\lambda} \le \frac{C}{\abs{x}^{2m}}. 
  \end{equation}
\end{lemma}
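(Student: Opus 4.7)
The plan is to bound the integrand pointwise using Lemma \ref{integrand-bounds} and then compute the resulting exponentially decaying integral in polar coordinates.

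First, I note that throughout the regime of Theorem \ref{region-I-theorem} we have $\abs{z} \le B_1 \abs{x}^2$, so $\theta \le \theta_0$ for some $\theta_0 = \theta_0(B_1) \in [0,\pi)$; and Corollary \ref{distance-estimate} gives $d_0(x,z) \asymp \abs{x}$ in this regime, so by taking $D_0$ large we may assume $\abs{x}$ is bounded below by any desired constant.

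Now apply Lemma \ref{integrand-bounds} with $a = \lambda \in \R^m$ and $b = \theta \hat{z}$; since $\theta < \pi \le 2\pi$ the hypothesis on $\abs{b}$ is satisfied, and setting $r := c(\theta_0)$ we obtain, for $\abs{\lambda} \ge r$, both $\Re \psi(\lambda + i\theta\hat{z}, \theta, \hat{z}) \ge \abs{\lambda}/2$ and $\abs{a_i(\lambda + i\theta\hat{z})} \le 1$ (uniformly in $\theta \in [0,\theta_0]$ and $\hat{z}$). Hence the integrand is bounded in modulus by $e^{-\abs{x}^2 \abs{\lambda}/8}$.

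Passing to polar coordinates and substituting $u = \abs{x}^2 \rho/8$ then gives
\begin{align*}
\int_{B(0,r)^C} e^{-\abs{x}^2 \abs{\lambda}/8}\,d\lambda
&= \omega_{m-1} \int_r^\infty \rho^{m-1} e^{-\abs{x}^2 \rho/8}\,d\rho \\
&= \omega_{m-1}\left(\frac{8}{\abs{x}^2}\right)^{\! m} \int_{\abs{x}^2 r/8}^\infty u^{m-1} e^{-u}\,du \\
&\le \omega_{m-1} \cdot 8^m \cdot \Gamma(m) \cdot \abs{x}^{-2m} =: C \abs{x}^{-2m},
\end{align*}
where $\omega_{m-1}$ is the surface area of the unit sphere in $\R^m$; this is the claimed bound.

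There is really no substantive obstacle here: the hard work is already done in Lemma \ref{integrand-bounds}, which extracted the needed exponential decay of $\Re \psi$ in $\abs{\lambda}$, uniformly in $\theta$ and $\hat{z}$. The estimate $C/\abs{x}^{2m}$ is in fact wasteful (the tail decays superpolynomially in $\abs{x}$), but this crude polynomial bound is exactly enough to render the tail negligible against the Gaussian main term to be extracted from a small neighborhood of the critical point $i\theta\hat{z}$ in the steepest-descent analysis to follow.
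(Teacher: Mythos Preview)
Your proof is correct and essentially identical to the paper's: both set $r = c(\theta_0)$, invoke Lemma \ref{integrand-bounds} to bound the integrand by $e^{-\abs{x}^2\abs{\lambda}/8}$, and then integrate in polar coordinates with the substitution $\rho \mapsto \abs{x}^2\rho/8$ to extract the factor $\abs{x}^{-2m}$. The only cosmetic difference is that the paper extends the lower limit of the radial integral from $r$ down to $0$ before substituting, whereas you keep the tail and bound by $\Gamma(m)$; both yield the same constant up to naming.
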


\begin{proof}
  From Lemma \ref{integrand-bounds}, if $r \ge c(\theta_0)$ we have
  \begin{align*}
    \int_{B(0,r)^C} \abs{e^{-\frac{\abs{x}^2}{4}\psi(\lambda + i \theta \hat{z}, x, z)} a_i(\lambda +
      i \theta \hat{z})}\,d\lambda &\le \int_{B(0,r)^C}
    e^{-\frac{\abs{x}^2}{8}\abs{\lambda}}\,d\lambda \\
    &= \omega_{m-1} \int_r^\infty e^{-\abs{x}^2\rho/8} \rho^{m-1}\,d\rho \\
    &\le \omega_{m-1} \int_0^\infty e^{-\abs{x}^2\rho/8} \rho^{m-1}\,d\rho \\
    &= \omega_{m-1} (b \abs{x}^2)^{-m} \int_0^\infty e^{-\rho}
    \rho^{m-1}\,d\rho \\
    &= \frac{C}{\abs{x}^{2m}}
  \end{align*}
  where $\omega_{m-1}$ is the hypersurface measure of $S^{m-1}$.
\end{proof}

We can now apply a steepest descent argument.  As a similar argument
will be used later in this paper (see Proposition \ref{Fprop-III}), we
encapsulate it in the following lemma.

\index{steepest descent}
 \begin{lemma}\label{abstract-lemma-multi}
   Let $\Sigma \subset \R^k$ for some $k$, $r > 0$, $B(0,r)$ the ball of
   radius $r$ in $\R^m$, and $g : B(0,r) \times \Sigma \to \R$, $k :
   \R^{2n} \times [-r,r] \times \Sigma \to \C$ be measurable.  Define $F
   : \R^{2n} \times \Sigma \to \C$ by
   \begin{equation}
     F(x,\sigma) := \int_{B(0,r)} e^{-\abs{x}^2 g(\lambda, \sigma)} k(x,\lambda,\sigma)\,d\lambda.
   \end{equation}
   Suppose:
   \begin{compactenum}
   \item \label{abstract-b1} There exists a positive constant $b_1$ such that
     $g(\lambda,\sigma) \ge b_1 \abs{\lambda}^2$ for all $\lambda \in B(0,r), \sigma
     \in \Sigma$;  
   \item \label{abstract-k2} $k$ is bounded, i.e. $k_2 := \sup_{x \in \R^{2n}, \lambda \in
     B(0,r), \sigma \in \Sigma} \abs{k(x,\lambda,\sigma)} < \infty$.
     \setcounter{continuehere}{\value{enumi}}
   \end{compactenum}

   Then there exists a positive constant $C_2'$ such that
   \begin{equation}
     \abs{F(x,\sigma)} \le \frac{C_2'}{\abs{x}^m}
   \end{equation}
   for all $x > 0$, $\sigma \in \Sigma$.

   If additionally we have:
   \begin{compactenum}\setcounter{enumi}{\value{continuehere}}
   \item \label{abstract-b2} There exists a positive constant $b_2$ such that
     $g(\lambda,\sigma) \le b_2 \abs{\lambda}^2$ for all $\lambda \in B(0,r), \sigma
     \in \Sigma$;
   \item \label{abstract-k1} There exists a function $\epsilon : \R^+ \to [0,r]$ such that
     $\lim_{\rho \to +\infty} \rho \epsilon(\rho) = +\infty$, and
     \begin{equation}
       k_1 := \inf_{x \in \R^{2n}, \lambda \in B(0,\epsilon(\abs{x})),
         \sigma \in \Sigma} \Re k(x,\lambda,\sigma) > 0.
     \end{equation}
   \end{compactenum}

   Then there exist positive constants $C_1'$ and $x_0$ such that
   for all $\abs{x} \ge x_0$ and $\sigma \in \Sigma$ we have 
   \begin{equation}
     \Re F(x,\sigma) \ge \frac{C_1'}{\abs{x}^m}.
   \end{equation}
 \end{lemma}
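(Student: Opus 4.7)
The plan is to handle both bounds by a rescaling argument $\mu = |x|\lambda$ that turns the Gaussian-like weight $e^{-|x|^2 g(\lambda,\sigma)}$ into a Gaussian in $\mu$ on a ball of growing radius, after which the remaining estimates become uniform in $\sigma$.

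For the upper bound, I would simply estimate
\begin{equation*}
|F(x,\sigma)| \le k_2 \int_{B(0,r)} e^{-b_1 |x|^2 |\lambda|^2}\,d\lambda
= k_2 |x|^{-m} \int_{B(0,r|x|)} e^{-b_1 |\mu|^2}\,d\mu
\le k_2 |x|^{-m} \int_{\R^m} e^{-b_1 |\mu|^2}\,d\mu,
\end{equation*}
which yields $C_2'$ depending only on $b_1, k_2, m$. This works for all $x \neq 0$ (for small $|x|$ one can absorb the case by enlarging $C_2'$, or note the integral is trivially bounded on a ball of fixed radius).

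For the lower bound, split the integration region at radius $\epsilon(|x|)$, writing $F = F_{\text{in}} + F_{\text{out}}$ with $F_{\text{in}}$ the integral over $B(0,\epsilon(|x|))$ and $F_{\text{out}}$ the integral over $B(0,r) \setminus B(0,\epsilon(|x|))$. On the inner ball I use hypotheses \ref{abstract-b2} and \ref{abstract-k1} to bound
\begin{equation*}
\Re F_{\text{in}}(x,\sigma) \ge k_1 \int_{B(0,\epsilon(|x|))} e^{-b_2 |x|^2 |\lambda|^2}\,d\lambda
= k_1 |x|^{-m} \int_{B(0, |x|\epsilon(|x|))} e^{-b_2 |\mu|^2}\,d\mu.
\end{equation*}
Because $|x|\epsilon(|x|) \to \infty$, the last integral converges to $\int_{\R^m} e^{-b_2 |\mu|^2}\,d\mu > 0$, so for $|x|$ sufficiently large it is at least half of that value. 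On the outer annulus I use the upper-bound hypotheses \ref{abstract-b1} and \ref{abstract-k2} together with $|\Re k| \le |k| \le k_2$ to get
\begin{equation*}
|\Re F_{\text{out}}(x,\sigma)| \le k_2 |x|^{-m} \int_{|\mu| \ge |x|\epsilon(|x|)} e^{-b_1 |\mu|^2}\,d\mu,
\end{equation*}
and again $|x|\epsilon(|x|) \to \infty$ forces the tail integral to vanish. Choosing $x_0$ large enough that the outer contribution is, say, half of the inner lower bound gives $\Re F \ge C_1'/|x|^m$.

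There is no real obstacle here; the only subtlety is making sure that all thresholds (the choice of $x_0$, the point at which the Gaussian tail becomes negligible) depend only on $b_1, b_2, k_1, k_2, r, m$ and the function $\epsilon$, but \emph{not} on $\sigma$. That is automatic because all the hypothesized bounds on $g$ and $k$ are uniform in $\sigma \in \Sigma$, so the two Gaussian integrals above are likewise $\sigma$-independent. Thus the same $C_1', C_2', x_0$ work for all $\sigma$.
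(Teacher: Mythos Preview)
Your proof is correct and essentially identical to the paper's: the same rescaling $\mu = |x|\lambda$ for the upper bound, and the same inner/outer split at radius $\epsilon(|x|)$ for the lower bound, with the inner contribution tending to a positive Gaussian integral and the outer tail vanishing as $|x|\epsilon(|x|)\to\infty$.
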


 \begin{proof}
   The upper bound is easy, since
   \begin{align*}
     \abs{F(x,\sigma)} &\le k_2 \int_{B(0,r)} e^{-\abs{x}^2 b_1 \abs{\lambda}^2}\,d\lambda
     \\
     &= \frac{k_2}{\abs{x}^m} \int_{B(0,rx)} e^{-b_1 \abs{\lambda}^2}\,d\lambda \\
     &\le \frac{k_2}{\abs{x}^m} \int_{\R^m} e^{-b_1 \abs{\lambda}^2}\,d\lambda
     \\
     &= \frac{k_2 (\pi/b_1)^{m/2}}{\abs{x}^m}.
   \end{align*}
   For the lower bound, let
   \begin{align*}
     F_1(x,\sigma) &:= \int_{B(0,r) \backslash B(0,\epsilon(\abs{x}))}
     e^{-\abs{x}^2 g(\lambda, \sigma)} k(x,\lambda,\sigma)\,d\lambda \\
     F_2(x,\sigma) &:= \int_{B(0, \epsilon(\abs{x}))} e^{-\abs{x}^2 g(\lambda, \sigma)}
     k(x,\lambda,\sigma)\,d\lambda
   \end{align*}
   so that $F = F_1 + F_2$.  Now we have
   \begin{align*}
     \abs{F_1(x,\sigma)} &\le k_2 \int_{B(0,r) \backslash B(0,\epsilon(\abs{x}))} e^{-\abs{x}^2 b_1 \abs{\lambda}^2}\,d\lambda \\
     &\le k_2 \int_{\R^m \backslash B(0,\epsilon(\abs{x}))} e^{-\abs{x}^2 b_1
       \abs{\lambda}^2}\,d\lambda \\
     &\le \frac{k_2}{\abs{x}^m} \int_{\R^m \backslash B(0,\abs{x}\epsilon(\abs{x}))} e^{- b_1
       \abs{\lambda'}^2}\,d\lambda'
   \end{align*}
   where we make the change of variables $\lambda' = \abs{x} \lambda$.
   For $F_2$ we have
   \begin{align*}
     \Re{F_2(x,\sigma)} &\ge k_1 \int_{B(0,\epsilon(\abs{x}))}
       e^{-\abs{x}^2 b_2 \abs{\lambda}^2}\,d\lambda \\
       &= \frac{1}{\abs{x}^m} k_1 \int_{B(0, \abs{x}\epsilon(\abs{x}))} e^{-b_2 \abs{\lambda'}^2}\,d\lambda'.
   \end{align*}
 So we have
 \begin{align*}
   \abs{x}^m \Re F(x,\sigma) &\ge \abs{x}^m \Re F_2(x,\sigma) - \abs{\abs{x}^m F_1(x,\sigma)} \\
   &\ge k_1 \int_{B(0,\abs{x}\epsilon(\abs{x}))} e^{-b_2
   \abs{\lambda'}^2}\,d\lambda' - k_2 \int_{\R^m \backslash B(0,\abs{x}\epsilon(\abs{x}))} e^{- b_1 \abs{\lambda'}^2}\,d\lambda' \\
   &\to  k_1 (\pi/b_2)^{m/2} - 0 > 0
 \end{align*}
 as $\abs{x} \to \infty$.
 So there exists $x_0$ so large that for all $\abs{x} \ge x_0$,
 \begin{equation}\label{F-lower}
   \Re F(x,\sigma) \ge \frac{1}{2} k_1 (\pi/b_2)^{m/2} \frac{1}{\abs{x}^m}
 \end{equation}
 as desired.
 \end{proof}

We need another computation before being able to apply this lemma.

  \begin{lemma}\label{zcothz}
    $\Re \sqrt{(\lambda + i\theta\hat{z})^2} \coth \sqrt{(\lambda +
      i\theta\hat{z})^2} \ge \theta \cot \theta$, with equality iff
    $\lambda = 0$.
  \end{lemma}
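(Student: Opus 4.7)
The plan is to reduce to a scalar calculus inequality using the formula for $\Re((\alpha + i\beta)\coth(\alpha + i\beta))$ that already appears in the proof of Lemma \ref{integrand-bounds}. Write $w = \sqrt{(\lambda + i\theta\hat{z})^2} = \alpha + i\beta$; by Lemma \ref{sab} we have $\beta \in [0, \theta] \subset [0, \pi)$, and the quantity to estimate is
\begin{equation*}
\phi(\alpha,\beta) := \Re\bigl[(\alpha+i\beta)\coth(\alpha+i\beta)\bigr] = \frac{\alpha\sinh(2\alpha) + \beta\sin(2\beta)}{\cosh(2\alpha) - \cos(2\beta)}.
\end{equation*}
Note that $\phi(0, \beta) = \beta\cot\beta$ and $\phi(\alpha, 0) = \alpha\coth\alpha$. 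So the bound $\phi(\alpha,\beta) \ge \theta\cot\theta$ would follow from two monotonicity statements: (a) for each fixed $\beta \in (0,\pi)$, $\phi(\alpha, \beta)$ is minimized in $\alpha$ at $\alpha = 0$; and (b) $\beta \mapsto \beta\cot\beta$ is strictly decreasing on $(0,\pi)$.

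For (a), I would compute directly, using the identities $1 - \cos 2\beta = 2\sin^2\beta$ and $\cosh 2\alpha - 1 = 2\sinh^2\alpha$, that
\begin{equation*}
\phi(\alpha,\beta) - \beta\cot\beta = \frac{4\sinh\alpha\sin\beta\,\bigl[\alpha\cosh\alpha\sin\beta - \beta\cos\beta\sinh\alpha\bigr]}{[\cosh(2\alpha) - \cos(2\beta)]\,[1 - \cos(2\beta)]}.
\end{equation*}
The denominator is strictly positive when $\beta \in (0,\pi)$, and for $\alpha > 0$ we have $\sinh\alpha\sin\beta > 0$; the bracketed factor equals $\sinh\alpha\sin\beta\,(\alpha\coth\alpha - \beta\cot\beta)$, and an elementary calculation shows $\alpha\coth\alpha > 1 > \beta\cot\beta$ on these ranges. (The first inequality follows from $\sinh(2\alpha) > 2\alpha$ for $\alpha > 0$; the second from $\tan\beta > \beta$ on $(0,\pi/2)$, with $\beta\cot\beta \le 0$ on $[\pi/2,\pi)$ handling the rest.) Evenness of $\phi$ in $\alpha$ handles $\alpha < 0$, with equality throughout iff $\alpha = 0$. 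Statement (b) is immediate from $\frac{d}{d\beta}(\beta\cot\beta) = (\sin\beta\cos\beta - \beta)/\sin^2\beta < 0$ on $(0,\pi)$.

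Combining (a) and (b) gives $\phi(\alpha,\beta) \ge \beta\cot\beta \ge \theta\cot\theta$ whenever $\beta \in (0, \theta]$. The remaining boundary cases are handled directly: if $\beta = 0$ (which forces $\lambda\cdot\hat{z} = 0$ and $|\lambda| \ge \theta$), then $\phi(\alpha, 0) = \alpha\coth\alpha \ge 1 > \theta\cot\theta$ strictly when $\theta > 0$; the case $\theta = 0$ reduces to $|\lambda|\coth|\lambda| \ge 1$ via the limit $\lim_{t \to 0} t\cot t = 1$. For equality, (a) and (b) force $\alpha = 0$ and $\beta = \theta$, so $w = i\theta$ and $(\lambda + i\theta\hat{z})^2 = w^2 = -\theta^2$; equating real and imaginary parts yields $\lambda\cdot\hat{z} = 0$ and $|\lambda|^2 - \theta^2 = -\theta^2$, hence $\lambda = 0$.

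The only nontrivial step is the algebraic simplification in (a), but it is routine; everything else is one-variable calculus together with the geometric fact from Lemma \ref{sab} that $\Im w \le \theta$. I do not foresee a serious obstacle.
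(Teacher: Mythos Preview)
Your proposal is correct and follows essentially the same route as the paper: both reduce to the scalar identity $\Re((\alpha+i\beta)\coth(\alpha+i\beta)) - \beta\cot\beta = \dfrac{\sinh^2\alpha\,(\alpha\coth\alpha - \beta\cot\beta)}{\cosh^2\alpha - \cos^2\beta}$ (your double-angle form simplifies to exactly this), combine it with the monotonicity of $\beta\cot\beta$, and invoke Lemma~\ref{sab} to pin $\beta\in[0,\theta]$. The only cosmetic difference is in the equality case, where the paper uses the parallelism clause of Lemma~\ref{sab} while you compute $(\lambda+i\theta\hat z)^2=-\theta^2$ directly; both work.
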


\begin{proof}
  We first note that the function $\beta
  \cot \beta$ is strictly decreasing on $[0,\pi)$.  To see this, note
    $\frac{d}{d\beta} \beta \cot \beta = -\nu(\beta)$.  By Corollary
    \ref{nu-c} $\nu(\beta) > 0$.  In particular, $\beta \cot \beta \le 1$.

    Next we observe that for $\alpha \in \R$, $\beta \in [0,\pi)$ we have
  \begin{equation}\label{claim1}
    \Re((\alpha+i\beta) \coth (\alpha + i\beta)) \ge \beta \cot \beta
  \end{equation}
  with equality iff $\alpha = 0$.  This can be seen by verifying that
  \begin{equation}
    \Re((\alpha+i\beta) \coth (\alpha + i\beta)) - \beta \cot \beta =
    \frac{\sinh^2 \alpha (\alpha \coth \alpha - \beta \cot
      \beta)}{\cosh^2 \alpha - \cos^2 \beta}
  \end{equation}
  which is a product of positive terms when $\alpha \ne 0$, since
  $\alpha \coth \alpha > 1 \ge \beta \cot \beta$ and $\cosh^2 \alpha > 1
  \ge \cos^2\beta$.
 
  Therefore, we have
    \begin{align}
      \Re \sqrt{(\lambda + i\theta\hat{z}^2} \coth \sqrt{(\lambda +
      i\theta\hat{z})^2}  &\ge \left(\Im \sqrt{(\lambda +
        i\theta\hat{z})^2}\right) \cot \left(\Im \sqrt{(\lambda +
        i\theta\hat{z})^2}\right) \label{claim-ineq1}\\
      &\ge \theta \cot \theta \label{claim-ineq2}
    \end{align}
    because $0 \le \Im \sqrt{(\lambda +
        i\theta\hat{z})^2} \le \theta < \pi$ by Lemma \ref{sab}.

    If equality holds in (\ref{claim-ineq2}), it must be that $\Im
    \sqrt{(\lambda + i\theta\hat{z})^2} = \theta$.  By Lemma \ref{sab}
    $\lambda$ and $\hat{z}$ are parallel, so $\sqrt{(\lambda +
      i\theta\hat{z})^2} = \pm\abs{\lambda} + i \theta$.  If equality also
    holds in (\ref{claim-ineq1}), we have
    \begin{equation*}
            \Re (\pm\abs{\lambda} + i\theta) \coth(\pm\abs{\lambda} +
            i\theta) = \theta \cot \theta
    \end{equation*}
    so by (\ref{claim1}) it must be that $\abs{\lambda}=0$.  This
    proves the claim.
\end{proof}

\begin{lemma}\label{psi-quadratic}
  Given $r > 0$, there exist constants $b_1, b_2, b_3 > 0$ depending only on
  $r$ and $\theta_0$ such that
  \begin{align}
    b_1 \abs{\lambda}^2 \le \Re \psi(\lambda + i \theta \hat{z}, \theta,
    \hat{z}) &\le b_2 \abs{\lambda}^2 \label{re-psi-quadratic} \\
\intertext{and}
    \abs{\Im \psi(\lambda + i \theta \hat{z}, \theta,
    \hat{z})} &\le b_3 \abs{\lambda}^3 \label{im-psi-cubic}
  \end{align}
  for all $\lambda \in B(0,r) \subset \R^m, \theta \in [0,\theta_0], \hat{z} \in
  S^{m-1} \subset \R^m$.
\end{lemma}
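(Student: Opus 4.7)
The plan is to perform a uniform Taylor expansion of $\psi(\lambda + i\theta\hat{z}, \theta, \hat{z})$ in $\lambda$ about $\lambda = 0$. By the definition (\ref{psidef}), the function vanishes there, and since $i\theta\hat{z}$ was chosen precisely as the critical point of $f(\cdot, \theta, \hat{z})$, we also have $\nabla_\lambda \psi|_{\lambda = 0} = 0$. Hence $\psi(\lambda + i\theta\hat{z}, \theta, \hat{z}) = \tfrac{1}{2}\lambda^T H(\theta,\hat{z})\lambda + R(\lambda,\theta,\hat{z})$, where $H$ is the Hessian of $\psi$ at $\lambda = 0$. Writing $\sqrt{\lambda^2}\coth\sqrt{\lambda^2} = h(\lambda^2)$ for an analytic function $h$, a direct calculation gives $H_{ij} = 2h'(-\theta^2)\delta_{ij} - 4\theta^2 h''(-\theta^2)\hat{z}_i\hat{z}_j$. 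Using the identity $h(-\theta^2) = \theta\cot\theta$ together with $(\theta\cot\theta)' = -\nu(\theta)$ and $(\theta\cot\theta)'' = -\nu'(\theta)$, one extracts $2h'(-\theta^2) = \nu(\theta)/\theta$ and $-4\theta^2 h''(-\theta^2) = \nu'(\theta) - \nu(\theta)/\theta$, yielding
\begin{equation*}
H(\theta,\hat{z}) = \frac{\nu(\theta)}{\theta}I + \left(\nu'(\theta) - \frac{\nu(\theta)}{\theta}\right)\hat{z}\hat{z}^T.
\end{equation*}
This is a real symmetric matrix whose eigenvalues are $\nu'(\theta)$ on $\spanop \hat{z}$ and $\nu(\theta)/\theta$ on $\hat{z}^\perp$, both bounded below by a positive constant uniformly in $\theta \in [0,\theta_0]$ by Corollary \ref{nu-c} and Lemma \ref{nu-increase}, and bounded above by continuity (including at $\theta = 0$, where both limit to $\nu'(0) = 2/3$).

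Second, joint smoothness of $\psi(\lambda + i\theta\hat{z}, \theta, \hat{z})$ on the compact set $\overline{B(0,r)} \times [0,\theta_0] \times S^{m-1}$, which follows from its analyticity in $\lambda$ on a complex neighborhood (the real part of $(\lambda + i\theta\hat z)^2$ stays $\ge -\theta_0^2 > -\pi^2$, keeping us bounded away from the singular points $-k^2\pi^2$ of $h$), provides a uniform cubic remainder $|R(\lambda,\theta,\hat{z})| \le C|\lambda|^3$. Because $H$ is real, the quadratic form $\lambda^T H\lambda$ is purely real on $\R^m$, so $\Im\psi = \Im R$ satisfies $|\Im\psi| \le C|\lambda|^3$, establishing (\ref{im-psi-cubic}). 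The upper bound in (\ref{re-psi-quadratic}) is likewise immediate from the uniform bounds on $H$ and on $R$.

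For the lower bound in (\ref{re-psi-quadratic}), the Taylor estimate alone yields $\Re\psi \ge (b_1'/2)|\lambda|^2 - C|\lambda|^3 \ge (b_1'/4)|\lambda|^2$ on the small ball $|\lambda| \le \epsilon := b_1'/(4C)$, for a uniform $b_1' > 0$ coming from the eigenvalue lower bound on $H$. On the complementary annulus $\{\epsilon \le |\lambda| \le r\}$, Lemma \ref{zcothz} gives $\Re\psi > 0$ strictly (a direct computation shows $\Re\psi \ge \nu(\theta)\theta + \theta\cot\theta - \theta^2/\sin^2\theta = 0$ with equality iff $\lambda = 0$), so continuity and compactness in $(\lambda, \theta, \hat{z})$ furnish $\Re\psi \ge c > 0$ uniformly, which rewrites as $\Re\psi \ge (c/r^2)|\lambda|^2$; taking $b_1$ to be the minimum of $b_1'/4$ and $c/r^2$ completes the proof. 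The main technical step is the Hessian computation and its identification with $\nu, \nu'$; everything else is Taylor's theorem with uniform remainder on a compact parameter set.
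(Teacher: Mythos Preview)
Your proof is correct and follows essentially the same approach as the paper: compute the Hessian at the critical point, identify its eigenvalues as $\nu'(\theta)$ and $\nu(\theta)/\theta$ (bounded uniformly via Lemma~\ref{nu-increase} and Corollary~\ref{nu-c}), invoke Taylor's theorem with a uniform cubic remainder for the local estimates, and then use Lemma~\ref{zcothz} plus compactness to push the lower bound on $\Re\psi$ out to the full ball $B(0,r)$. The only cosmetic difference is that you compute the Hessian by writing $\sqrt{\lambda^2}\coth\sqrt{\lambda^2} = h(\lambda^2)$ and differentiating, whereas the paper computes the second partials of $\psi$ directly; both routes land on the same eigenvalue decomposition.
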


\begin{proof}
  Note first that $\psi(\lambda+i\theta\hat{z}, \theta, \hat{z})$ is
  smooth for $\theta \in [0,\theta_0]$ since $\Im
  \sqrt{(\lambda+i\theta\hat{z})} \le \theta \le \theta_0 < \pi$, so
  that we are avoiding the singularities of $w \coth w$.

  We have $\psi(i\theta\hat{z}, \theta, \hat{z}) = 0$ and
  $\grad_{\lambda} \psi(i\theta\hat{z}, \theta, \hat{z}) = 0$.  We now
  show the Hessian $H(i \theta \hat{z})$ of $\psi$ at $i\theta\hat{z}$
  is real and uniformly positive definite.
  \index{Hessian}

  By direct computation, we can find
  \begin{equation}
    \frac{\partial^2}{\partial \lambda_i \partial \lambda_j}
    \psi(\lambda, \theta, \hat{z}) = \nu'(-i \sqrt{\lambda^2})
    \frac{\lambda_i \lambda_j}{\lambda^2} + i \frac{\nu(-i\sqrt{\lambda^2})}{\sqrt{\lambda^2}}
    \left({\delta_{ij}} - \frac{\lambda_i
      \lambda_j}{\lambda^2}\right)
  \end{equation}
  so that for $u \in \R^m$,
  \begin{equation}
    H(\lambda) u \cdot u = \nu'(-i \sqrt{\lambda^2})
    \frac{(\lambda \cdot u)^2}{\lambda^2} + i \frac{\nu(-i\sqrt{\lambda^2})}{\sqrt{\lambda^2}}
    \left(\abs{u}^2 - \frac{(\lambda \cdot u)^2}{\lambda^2}\right)
  \end{equation}
  and in particular
  \begin{align*}
    H(i \theta \hat{z}) u \cdot u &= \nu'(\theta)
    (\hat{z} \cdot u)^2 + \frac{\nu(\theta)}{\theta}
    \left(\abs{u}^2 - \hat{z} \cdot u)^2\right) \\
    &= \abs{u}^2 \left(s \nu'(\theta)
    + \frac{\nu(\theta)}{\theta}(1-s)\right)
  \end{align*}
  where $s := \left(\frac{\hat{z} \cdot u}{\abs{u}}\right)^2$, so $0
  \le s \le 1$.  Note this is a real number whenever $u \in \R^m$.
  Thus we have $H(i\theta\hat{z}) u \cdot u$ written as a convex
  combination of two real functions of $\theta$, so
  \begin{equation}
    H(i\theta\hat{z}) u \cdot u \ge \abs{u}^2 
    \min\{\frac{\nu(\theta)}{\theta}, \nu'(\theta)\} \ge c \abs{u}^2 
  \end{equation}
  where $c$ is the lesser of the two constants provided by Lemma
  \ref{nu-increase} and Corollary \ref{nu-c} respectively.  This is
  valid for $\theta > 0$ and hence by continuity also for $\theta =
  0$.

  By Taylor's theorem, this shows that (\ref{re-psi-quadratic}) and
  (\ref{im-psi-cubic}) hold for small $\lambda$.  The upper bounds
  thus automatically hold for all $\lambda \in B(0,r)$ by continuity.
  To obtain the lower bound on $\Re \psi$, it will suffice to show
  $\Re \psi > 0$ for all $\lambda \ne 0$.  But we have 
    \begin{align*}
      \Re \psi(\lambda + i \theta \hat{z}, \theta, \hat{z}) &= \theta \nu(\theta) - \Re f(i \theta \hat{z}, \theta, \hat{z}) +
      \Re\left[\sqrt{(\lambda+i\theta\hat{z})^2} \coth
        \sqrt{((\lambda+i\theta\hat{z})^2}\right] \\
      &= \theta \nu(\theta) - \frac{\theta^2}{\sin^2\theta} +
      \Re\left[\sqrt{(\lambda+i\theta\hat{z})^2} \coth
        \sqrt{((\lambda+i\theta\hat{z})^2}\right] \\
      &= -\theta \cot \theta + \Re\left[\sqrt{(\lambda+i\theta\hat{z})^2} \coth
        \sqrt{((\lambda+i\theta\hat{z})^2}\right] \\ &\ge 0
    \end{align*}
    by Lemma \ref{zcothz}, with equality iff $\lambda = 0$.
\end{proof}

The proof of Theorem \ref{region-I-theorem} can now be completed.

\begin{proof}[Proof of Theorem \ref{region-I-theorem}]
  
We establish (\ref{I-p-both}) first.  We can apply Lemma
\ref{abstract-lemma-multi} with $\Sigma := [0, \theta_0] \times
S^{m-1}$, $\sigma = (\theta, \hat{z})$, $r$ the value from Lemma
\ref{far-from-origin}, and
\begin{align*}
  g(\lambda, (\theta, \hat{z}))
&:= \frac{1}{4} \Re\psi(\lambda + i \theta \hat{z}, \theta, \hat{z}) \\
  k(x, \lambda, (\theta, \hat{z})) &:= e^{i \frac{\abs{x}^2}{4}
    \Im\psi(\lambda + i \theta \hat{z}, \theta, \hat{z})} a_0(\lambda+i\theta\hat{z}).
\end{align*}
The necessary bounds on $g$ come from (\ref{re-psi-quadratic}).  For
an upper bound on $k$, we have $\abs{k(x, \lambda, (\theta, \hat{z}))}
= \abs{a_0(\lambda+i\theta\hat{z})}$, which is bounded by the fact
that $(\lambda, \theta, \hat{z})$ ranges over the bounded region
$B(0,r) \times [0,\theta_0] \times S^{m-1}$ which avoids the
singularities of $a_0$.

Now for the lower bound on $k$.  By direct
computation, we have $a_0(i \theta \hat{z}) =
\left(\frac{\theta}{\sin\theta}\right)^n \ge 1$; by continuity there
exists $\delta$ such that $\Re e^{is} a_0(\lambda +i \theta \hat{z}) \ge \frac{1}{2}$
for all $\abs{\lambda} \le \delta$ and $\abs{s} \le \delta$, where $s
\in \R$.  If $\abs{\lambda} \le \abs{x}^{-2/3} \delta / b_3$, where
$b_3$ is as in (\ref{im-psi-cubic}), we will have $\abs{x}^2 \abs{\Im
  \psi(\lambda + i \theta \hat{z})} \le \delta$.  Thus set
$\epsilon(x) := \min\{\delta, \abs{x}^{-2/3} \delta / b_3\}$, so that
$\Re k(x, \lambda, (\theta, \hat{z}) \ge \frac{1}{2}$ for all $\abs{\lambda}
\le \epsilon(x)$ and all $(\theta, \hat{z}) \in \Sigma$, and
$\lim_{\rho \to \infty} \rho \epsilon(\rho) = \lim_{\rho \to \infty}
\rho^{1/3} \delta / b_3 = +\infty$.

Thus Lemma \ref{abstract-lemma-multi} applies, and so combining it
with Lemmas \ref{move-contour-Rm} and \ref{far-from-origin} we have
that there exist positive constants $C, C_1', C_2', x_0$ such that
\begin{equation}
  \left(\frac{C_1'}{\abs{x}^m} - \frac{C}{\abs{x}^{2m}}\right)
  e^{-\frac{1}{4}d_0(x,z)^2} \le p_t(x,z) \le \left(\frac{C_2'}{\abs{x}^m} + \frac{C}{\abs{x}^{2m}}\right)
  e^{-\frac{1}{4}d_0(x,z)^2}.
\end{equation}
whenever $\abs{x} \ge x_0$.  We can choose $x_0$ larger if necessary
so that $\abs{x}^{-m} \gg \abs{x}^{-2m}$.  Then taking $D_0 = x_0$
will establish (\ref{I-p-both}).

For $q_i$, the upper bound is similar; $\abs{a_i}$ is bounded above
just like $\abs{a_0}$, establishing (\ref{I-q-upper}).

For (\ref{I-q-lower}), we cannot necessarily bound both $\abs{q_i}$
below simultaneously, but it suffices to take them one at a time.  For
$0 \le \theta(x,z) \le \frac{\pi}{4}$, we have $a_1(i \theta \hat{z})
= \cos \theta \left(\frac{\theta}{\sin\theta}\right)^{n+1} \ge
\frac{1}{\sqrt{2}}$, so by the above logic we obtain the desired lower
bound on $\abs{q_1}$ for such $\theta$.  If $\frac{\pi}{4} \le \theta
\le \theta_0$, we estimate $q_2$ in the same way, since we have $a_2(i
\theta \hat{z}) = \left(\frac{\theta}{\sin\theta}\right)^n \theta \ge
\frac{\pi}{4}.$
\end{proof}
\ignorethis{(}
\index{steepest descent|)}

\section{Polar coordinates}\label{polar-sec}

\index{polar coordinates|(}
In this section, we obtain estimates for $p_1(x,z)$ and $\abs{\grad
  p_1(x,z)}$ when $\abs{z} \ge B_1 \abs{x}^2$, where $B_1$ is
sufficiently large.  This means that $\theta(x,z) \ge \theta_0$ for
some $\theta_0$ near $\pi$.  Note that by Corollary
\ref{distance-estimate}, we have  $d_0(x,z) \asymp \sqrt{\abs{z}}$ in
this region.

We first consider $p_1$ and show the following.

\begin{theorem}\label{region-II-III-theorem}
  For $m$ odd, there exist constants $B_1, D_0$ such that
  \begin{equation}
    p_1(x,z) \asymp
    \frac{\abs{z}^{n-\frac{m+1}{2}}}{1+(\abs{x}\sqrt{\abs{z}})^{n-\frac{1}{2}}}
    e^{-\frac{1}{4}d_0(x,z)^2}
  \end{equation}
  or, equivalently,
  \begin{equation}
  p_1(x,z) \asymp \frac{d_0(x,z)^{2n-m-1}}{1+(\abs{x}d_0(x,z))^{n-\frac{1}{2}}}
    e^{-\frac{1}{4}d_0(x,z)^2}
  \end{equation}
  for $\abs{z} \ge B_1 \abs{x}^2$ and $\abs{z} \ge D_0$
  (equivalently, $d_0(x,z) \ge D_0$).
\end{theorem}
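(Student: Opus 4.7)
Proof plan. The plan is to reduce the $m$-dimensional Fourier integral defining $p_1$ to a one-dimensional complex contour integral and extract its leading asymptotics by a saddle-point analysis near $\rho = i\pi$. By rotation invariance in $\lambda$ I may assume $z = \abs{z}e_1$; passing to polar coordinates $\lambda = \rho\omega$ with $\rho \ge 0$, $\omega \in S^{m-1}$ gives
\begin{equation*}
p_1(x,z) = C_{m,n}\int_0^\infty \rho^{m-1}g(\rho)\int_{S^{m-1}} e^{i\rho\abs{z}\omega_1}\,d\sigma(\omega)\,d\rho,
\end{equation*}
where $g(\rho) := (\rho/\sinh\rho)^n e^{-(\abs{x}^2/4)\rho\coth\rho}$ is an even function of $\rho$. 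For odd $m = 2\ell + 1$ the angular integral equals $(2\pi)^{m/2}(\rho\abs{z})^{1 - m/2}J_{\ell - 1/2}(\rho\abs{z})$, and the half-integer Bessel function $J_{\ell - 1/2}$ is \emph{elementary}: a linear combination of $\sin$ and $\cos$ multiplied by rational functions of $\rho\abs{z}$. Using the evenness of $g$ to combine the $e^{\pm i\rho\abs{z}}$ terms, I rewrite $p_1(x,z)$ as a finite sum of one-dimensional integrals over $\R$ of the form $\abs{z}^{-a_j}\int_\R \rho^{b_j}g(\rho)e^{i\rho\abs{z}}\,d\rho$ for various indices.

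Next I deform the contour of integration upward in the complex plane. The integrand is meromorphic away from $\rho = ik\pi$, $k \ne 0$; at each such point, $(\rho/\sinh\rho)^n$ has a pole of order $n$, while $e^{-(\abs{x}^2/4)\rho\coth\rho}$ has an essential singularity, since $\rho\coth\rho$ acquires a simple pole with residue $ik\pi$ there. The factor $e^{i\rho\abs{z}}$ contributes $e^{-k\pi\abs{z}}$ on the line $\Im\rho = k\pi$, so only $k = 1$ matters at leading order. Setting $\varphi := \abs{x}\sqrt{\pi/(4\abs{z})}$, I deform from $\R$ to a horizontal contour at $\Im\rho = \pi - \varphi$, which passes through the saddle point $\rho_c = i(\pi - \varphi)$ of the phase $i\rho\abs{z} - (\abs{x}^2/4)\rho\coth\rho$, located just below $i\pi$. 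Tail control at $\Re\rho \to \pm\infty$ uses the exponential decay of $(\sinh\rho)^{-n}$ together with $\Re(\rho\coth\rho) \ge 0$ as in Lemma \ref{zcothz}, and the contributions from higher poles $\rho = ik\pi$, $k \ge 2$, are $O(e^{-2\pi\abs{z}})$ which is absorbed in the error.

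On the shifted contour, applying Laplace's method at the saddle gives, up to a bounded nonvanishing prefactor,
\begin{equation*}
p_1(x,z) \asymp \abs{z}^{-(m-1)/2}\cdot\varphi^{3/2 - n}\cdot\abs{x}^{-1}\cdot e^{-d_0(x,z)^2/4}.
\end{equation*}
Here the exponent is identified with $-d_0^2/4$ via the expansion $d_0(x,z)^2 = 4\pi\abs{z} - 4\abs{x}\sqrt{\pi\abs{z}} + O(\abs{x}^2)$, obtained from the implicit definition of $\theta = \theta(x,z)$ near $\theta = \pi$ using $\nu(\theta) = \pi/(\pi-\theta)^2 + O(1)$. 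Substituting $\varphi = \abs{x}\sqrt{\pi/(4\abs{z})}$ yields $\abs{x}^{-(n-1/2)}\abs{z}^{(2n-2m-1)/4}$, which coincides with $\abs{z}^{n-(m+1)/2}/(\abs{x}\sqrt{\abs{z}})^{n-1/2}$, giving the claimed bound in the regime $\abs{x}\sqrt{\abs{z}} \gtrsim 1$. In the regime $\abs{x}\sqrt{\abs{z}} \lesssim 1$ the saddle merges with the essential singularity and Laplace's method degenerates; I instead estimate the contour integral directly by extracting the $e^{-\pi\abs{z}}$ factor and bounding the remaining near-pole integrand, which produces the ``$1+$'' in the denominator $1 + (\abs{x}\sqrt{\abs{z}})^{n-1/2}$.

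The main obstacle is uniformity: carrying the subleading expansion of $d_0^2$ accurately enough to convert the saddle-point value $-\pi\abs{z} + \abs{x}\sqrt{\pi\abs{z}} + O(\abs{x}^2)$ into $-d_0^2/4$ with matching constants, and verifying that the several contributions from the Bessel polynomial and from the factor $\rho^{(m-1)/2}$ combine harmlessly into the leading-order prefactor, uniformly over $\abs{z} \ge B_1\abs{x}^2$, $\abs{z} \ge D_0$. A second, more mechanical obstacle is the essential singularity at $\rho = i\pi$, which rules out a naive residue computation and forces the explicit steepest-descent deformation described above rather than contour-closing with residues.
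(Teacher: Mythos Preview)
Your Bessel-function reduction to a sum of one-dimensional integrals is exactly what the paper does, and your saddle-point analysis in the regime $\abs{x}\sqrt{\abs{z}}\gtrsim 1$ is essentially equivalent to the paper's Proposition~\ref{Fprop-III}. The difference lies in the contour strategy and, crucially, in the small-$\abs{x}\sqrt{\abs{z}}$ regime.

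You deform to the horizontal line $\Im\rho=\pi-\varphi$ just \emph{below} the singularity and do Laplace there. The paper instead deforms \emph{past} $i\pi$ to $\Im\rho=3\pi/2$, where the line integral $h_l$ is trivially $O(e^{-d_0^2/8})$ (Lemma~\ref{hl-estimate}), and isolates the contribution of $i\pi$ as a loop integral $h_r=\oint_{\partial B(i\pi,r)}$. Your claim that the essential singularity ``rules out a naive residue computation and forces'' staying below $i\pi$ is a misdiagnosis: the essential singularity only means that the loop integral is not given by a finite derivative formula, not that it cannot be analyzed. In fact the paper's whole proof rests on that loop integral, rewritten (after the change of variables $w=i\pi-\rho$, $s=\pi-\theta$, $y=\pi\abs{x}^2/s\asymp\abs{x}\sqrt{\abs{z}}$) as an entire function $F(y,s)$ of the parameters.

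This is where your proposal has a real gap. In the regime $\abs{x}\sqrt{\abs{z}}\lesssim 1$ you write that you will ``estimate the contour integral directly by extracting the $e^{-\pi\abs{z}}$ factor and bounding the remaining near-pole integrand,'' but this does not explain where the prefactor $\abs{z}^{n-(m+1)/2}$ comes from, nor how you obtain a matching \emph{lower} bound. On your line $\Im\rho=\pi-\varphi$ with $\varphi\to 0$, the amplitude $(\rho/\sinh\rho)^n$ blows up like $\varphi^{-n}$ while the Gaussian width $\varphi^{3/2}/\abs{x}$ becomes comparable to the distance $\varphi$ to the singularity, so Laplace's method genuinely degenerates and there is no obvious replacement. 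The paper handles this regime by expanding the loop integral $F(y,s)$ in powers of $y$ (Lemma~\ref{g_m}, Corollary~\ref{Fstructure}): the key structural fact is that $s^{n-1}$ clears the pole, the coefficients $g_k(0)$ are nonnegative with $g_{n-1}(0)>0$, and hence $F(y,s)\asymp y^{n-1}$ for small $y$ and $s\lesssim y$ (Proposition~\ref{Fprop-II}). This power-series positivity is what produces both the correct power of $\abs{z}$ and the lower bound, and it has no analogue in your line-contour picture.

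In short: keep your reduction and your large-$\abs{x}\sqrt{\abs{z}}$ saddle analysis, but for the complementary regime you should push the contour past $i\pi$ and analyze the resulting loop integral as an analytic function of $(\abs{x}\sqrt{\abs{z}},\,\pi-\theta)$ rather than trying to estimate on a line that grazes the singularity.
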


The effect of the requirement that $\abs{z} \le B_1 \abs{x}^2$ in the
previous section was to ensure that the critical point
$i\theta\hat{z}$ stayed away from the singularities of the integrand.
As $B_1 \to \infty$, the critical point approaches the set of
singularities, and the change of contour we used is no longer
effective; the constants in the estimates of Theorem
\ref{region-I-theorem} blow up.  In the case of the Heisenberg groups,
where the center of $G$ has dimension $m=1$, the singularity is a
single point, and the technique used in \cite{hueber-muller} and
\cite{bgg} is to move the contour past the singularity and concentrate
on the resulting residue term.  For $m > 1$, the singularities form a
large manifold and this technique is not easy to use directly.
However, by making a change to polar coordinates, we can reduce the
integral over $\R^m$ to one over $\R$; this replaces the Fourier
transform by the so-called Hankel transform.  (A similar approach is
used in \cite{randall} in the context of $L^p$ estimates for the
analytic continuation of $p_t$.)  When $m$ is odd, we recover a
formula very similar to that for $m=1$, and the above-mentioned
technique is again applicable.

For the rest of this section, we assume that $m$ is odd.

For $m \ge 3$, we write (\ref{pt-formula-1}) in polar coordinates to obtain
\begin{align}
  p_1(x,z) &= (2\pi)^{-m}(4\pi)^{-n} \int_0^\infty \int_{S^{m-1}} e^{{i}
    \rho \sigma \cdot z}\,d\sigma e^{-\frac{\abs{x}^2}{4} \rho \coth
    \rho} \left(\frac{\rho}{\sinh\rho}\right)^n \rho^{m-1} \,d\rho \\
  &= \frac{(2\pi)^{-m}(4\pi)^{-n}}{2} \int_{-\infty}^\infty \int_{S^{m-1}} e^{{i}
    \rho \sigma \cdot z}\,d\sigma e^{-\frac{\abs{x}^2}{4} \rho \coth
    \rho} \left(\frac{\rho}{\sinh\rho}\right)^n \rho^{m-1} \,d\rho
\end{align}
since the integrand is an even function of $\rho$.  (To see this, make
the change of variables $\sigma \to -\sigma$ in the $d\sigma$
integral.  It is not true when $m$ is even.)

The $d\sigma$ integral can be written in terms of a Bessel function.
\index{Bessel function}
Using spherical coordinates, we can write, for arbitrary $\hat{v} \in
S^{m-1}$ and $w \in \C$,
\begin{align*}
  \int_{S^{m-1}} e^{i w \sigma \cdot \hat{v}}\,d\sigma
  &= \frac{2 \pi^{\frac{m-1}{2}}}{\Gamma\left(\frac{m-1}{2}\right)}
  \int_{0}^{\pi} e^{iw \cos\varphi} \sin^{m-2}\varphi \,d\varphi \\
  &= \frac{4 \pi^{\frac{m-1}{2}}}{\Gamma\left(\frac{m-1}{2}\right)}
  \int_{0}^{\frac{\pi}{2}} \cos(w \cos\varphi) \sin^{m-2}\varphi
  \,d\varphi && \text{(by symmetry)}\\
  &= \frac{(2\pi)^{m/2}}{w^{m/2-1}}J_{m/2-1}(w) && \text{(see page 79
    of \cite{magnus})} \\
  &= \Re \frac{(2\pi)^{m/2}}{w^{m/2-1}} H_{m/2-1}^{(1)}(w)
\end{align*}
where $H_\nu(w)$ is the Hankel function of the first kind, defined by
$H_\nu(w) = J_\nu(w) + i Y_\nu(w)$, with $Y_\nu$ the Bessel function
of the second kind.  
\index{Hankel function}
Page 72 of \cite{magnus}
has a closed-form expression for $H_\nu$ which yields
\begin{equation}\label{hankel-expansion}
  \int_{S^{m-1}} e^{i w \sigma \cdot \hat{v}}\,d\sigma =  2(2\pi)^{\frac{m-1}{2}} \Re \left[\frac{e^{iw}}{w^{m-1}}
  \sum_{k=1}^{\frac{m-1}{2}} c_{m,k} (-iw)^k\right]
\end{equation}
where the coefficients are
\begin{align*}
  c_{m,k} &=  \frac{(m-k-2)!}{ 2^{\frac{m-1}{2}-k}
    \left(\frac{m-1}{2}-k\right)! (k-1)!} > 0.
\end{align*}

The reason for the use of the Hankel function is the appearance of the
$e^{iw}$ factor, which gives us an integrand looking much like that
for $p_t$ when $m=1$.  This will allow us to apply similar techniques
to those which have been used previously for $m=1$.
We have
\begin{align}
  p_1(x,z) &= (\Re) \sum_{k=1}^{(m-1)/2} c_{m,k} \abs{z}^{k-m+1} \int_{-\infty}^\infty
  e^{{i} \rho \abs{z} -\frac{\abs{x}^2}{4} \rho \coth
    \rho} \frac{\rho^{n}}{\sinh^n\rho} (-i\rho)^{k}
  \,d\rho \label{p-expansion} \\
  &= \sum_{k=1}^{(m-1)/2} c_{m,k} \abs{z}^{k-m+1} e^{-\frac{1}{4}d_0(x,z)^2}\int_{-\infty}^\infty
  e^{-\frac{\abs{x}^2}{4} \psi(\rho, \theta)} a_k(\rho)
  \,d\rho \label{p-expansion-phi}
\end{align}
where, using similar notation as before,
\begin{align}
  \psi(\rho, \theta) &:= -i \nu(\theta) \rho + \rho \coth \rho -
  \frac{\theta^2}{\sin^2\theta} \\
  a_k(\rho) &:= \left(\frac{\rho}{\sinh\rho}\right)^n (-i\rho)^k.
\end{align}
The constants and coefficients have all been absorbed into the
$c_{m,k}$; we note that $c_{1,0} > 0$, $c_{m,k} > 0$ for $k \ge 1$,
and $c_{m,0}=0$ for $m > 1$. We dropped the $(\Re)$ because the
imaginary part vanishes, being the integral of an odd function.

For $m=1$, we can write
\begin{equation}\label{p-expansion-m1}
  p_1(x,z) = (4\pi)^{-n} e^{-\frac{1}{4}d_0(x,z)^2}\int_{-\infty}^\infty
  e^{-\frac{\abs{x}^2}{4} \psi(\rho, \theta)} a_0(\rho)
  \,d\rho.
\end{equation}

The integrals appearing in the terms of the sum in
(\ref{p-expansion-phi}), as well as in (\ref{p-expansion-m1}), are all
susceptible to the same estimate, as the following theorem shows.

\begin{theorem}\label{h-estimate}
  Let $S \subset \C$ be the strip $S = \{ 0 \le \Im \rho \le 3
  \pi/2\}$.  Suppose $a(\rho)$ is a function analytic on $S \backslash
  \{i\pi\}$, with a pole of order $n$ at $\rho = i\pi$, $a(i \theta)
  \ge 1$ for $\theta_0 \le \theta < \pi$, and $\int_\R
  \abs{a(\rho+3i\pi/2)}\,d\rho < \infty$.  Let
  \begin{equation}
    h(x,z) := \int_{-\infty}^\infty e^{-\frac{\abs{x}^2}{4} \psi(\rho,
      \theta)} a(\rho) \,d\rho.
  \end{equation}
 There exist $B_1, D_0$ such that
 \begin{equation}
\Re  h(x,z) \asymp \frac{\abs{z}^{n-1}}{1+(\abs{x}\sqrt{\abs{z}})^{n-\frac{1}{2}}}
 \end{equation}
 for all $(x,z)$ with $\abs{z} \ge B_1 \abs{x}^2$ and $\abs{z} \ge D_0$.
\end{theorem}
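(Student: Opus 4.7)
The plan is to evaluate $h(x,z)$ by exploiting the fact that in the region $\abs{z}\gg\abs{x}^2$ the critical point $i\theta\hat z$ from the steepest-descent argument of the previous section has moved essentially onto the singularity of $\rho\coth\rho$ at $\rho=i\pi$. Rather than deforming the contour to pass through the (now inaccessible) saddle, I would push the contour \emph{across} the singularity. Specifically, first shift the contour from $\R$ to $\R+\frac{3i\pi}{2}$ inside the strip $S$; since the integrand is meromorphic on $S$ with a single isolated (essential) singularity at $\rho=i\pi$ coming from the product of the pole of $a$ and the singularity of $\rho\coth\rho$, and since $\int_\R|a(\rho+3i\pi/2)|\,d\rho<\infty$ together with decay of $e^{-\frac{\abs{x}^2}{4}\rho\coth\rho}$ for $\abs{\Re\rho}$ large allows the vertical segments at $\abs{\Re\rho}=R$ to be discarded as $R\to\infty$, the residue theorem gives
\begin{equation*}
h(x,z)=2\pi i\operatorname*{Res}_{\rho=i\pi}\Bigl[e^{-\frac{\abs{x}^2}{4}\psi(\rho,\theta)}a(\rho)\Bigr]+\int_\R e^{-\frac{\abs{x}^2}{4}\psi(\rho+\frac{3i\pi}{2},\theta)}a\!\left(\rho+\tfrac{3i\pi}{2}\right)d\rho.
\end{equation*}

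Second, I would show the shifted integral is negligible. Along $\R+\frac{3i\pi}{2}$ one has $\Re\psi=\frac{3\pi}{2}\nu(\theta)+\Re[(\rho+\frac{3i\pi}{2})\coth(\rho+\frac{3i\pi}{2})]-\theta^2/\sin^2\theta$. Using $\frac{\abs{x}^2}{4}\nu(\theta)=\abs{z}$ (the relation $\nu(\theta)=4\abs{z}/\abs{x}^2$ from Theorem~\ref{distance}) together with $\frac{\abs{x}^2}{4}\theta^2/\sin^2\theta=\frac{1}{4}d_0^2\le\pi\abs{z}(1+o(1))$ (from Corollary~\ref{distance-estimate} in region III), the exponential factor is bounded by $e^{-(3\pi/2-\pi)\abs{z}+O(1)}=e^{-c\abs{z}}$, and the integrability hypothesis on $a(\rho+3i\pi/2)$ completes the bound. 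Since the target estimate is only polynomial in $\abs{z}$ (in fact of order $\abs{z}^{n-1}$ up to corrections), this error is exponentially swamped.

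The heart of the argument is the residue computation, which I expect to be the main obstacle. Write $u=\rho-i\pi$; the identities $\sinh(i\pi+u)=-\sinh u$, $\cosh(i\pi+u)=-\cosh u$ give $\rho\coth\rho=(i\pi+u)\coth u=\tfrac{i\pi}{u}+1+\tfrac{i\pi u}{3}+\tfrac{u^2}{3}+O(u^3)$, and $-i\nu(\theta)\rho=\pi\nu(\theta)-i\nu(\theta)u$. Thus near $\rho=i\pi$,
\begin{equation*}
-\tfrac{\abs{x}^2}{4}\psi(\rho,\theta)=\tfrac{1}{4}d_0^2-\pi\abs{z}-\tfrac{\abs{x}^2}{4}-\tfrac{i\pi\abs{x}^2}{4u}-i\abs{z}u+E(u),
\end{equation*}
where $E$ is analytic at $0$ with $E(0)=0$. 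Expanding the essential singularity $e^{-i\pi\abs{x}^2/(4u)}=\sum_{j\ge 0}\frac{(-i\pi\abs{x}^2/4)^j}{j!}u^{-j}$, the holomorphic factor $e^{-i\abs{z}u+E(u)}=\sum_\ell \alpha_\ell(\abs{z})u^\ell$ (with $\alpha_\ell$ a polynomial in $\abs{z}$ of degree $\ell$ whose leading term is $(-i\abs{z})^\ell/\ell!$), and the Laurent series $a(\rho)=\sum_{k\ge -n}b_k u^k$, the residue (coefficient of $u^{-1}$) is the double sum
\begin{equation*}
e^{\frac{1}{4}d_0^2-\pi\abs{z}-\abs{x}^2/4}\sum_{j\ge 0}\frac{1}{j!}\!\left(-\tfrac{i\pi\abs{x}^2}{4}\right)^{\!j}\sum_{\ell+k=j-1}b_k\alpha_\ell(\abs{z}).
\end{equation*}
For large $\abs{z}$ the $k=-n$ contribution dominates, reducing the sum (up to lower-order analytic corrections from $E$) to $b_{-n}\sum_j\frac{1}{j!(j+n-1)!}(-i\pi\abs{x}^2/4)^j(-i\abs{z})^{j+n-1}$, which is, up to constant and phase, the modified Bessel function $\abs{z}^{n-1}\cdot I_{n-1}(c\abs{x}\sqrt{\abs{z}})$ (or a Bessel $J$-type combination, whose asymptotics match). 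Using $I_\nu(w)\asymp(w/2)^\nu/\Gamma(\nu+1)$ for small $w$ and $I_\nu(w)\asymp e^w/\sqrt{2\pi w}$ for large $w$ — combined with the exponential $e^{\frac{1}{4}d_0^2-\pi\abs{z}-\abs{x}^2/4}$ being bounded above and below by constants uniformly in the region $\abs{z}\gtrsim\abs{x}^2$ (since $\frac{1}{4}d_0^2=\pi\abs{z}+O(\abs{x}^2)$ there) — yields the bound
\begin{equation*}
\operatorname{Res}\asymp\frac{\abs{z}^{n-1}}{1+(\abs{x}\sqrt{\abs{z}})^{n-1/2}}.
\end{equation*}
The positivity statement for $\Re h$ in the lower bound is controlled by the assumption $a(i\theta)\ge 1$ for $\theta\in[\theta_0,\pi)$, which ensures the leading Laurent coefficient $b_{-n}$ has a definite sign and the resulting Bessel-type series has real part bounded below. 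The technical difficulties will be (i) verifying the claimed identification with a Bessel function uniformly across both asymptotic regimes and (ii) controlling the lower-order analytic corrections coming from $E(u)$ and from $b_k$ with $k>-n$, which I would handle by absorbing them into multiplicative perturbations $1+O(\abs{x}^2/\abs{z})+O(1/\abs{z})$ that do not affect the asymptotic order.
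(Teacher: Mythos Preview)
Your overall strategy matches the paper's exactly: shift the contour to $\Im\rho=3\pi/2$, bound the shifted line integral (this is Lemma~\ref{hl-estimate}, with essentially your argument), and concentrate on the residue at $i\pi$. For the residue you propose identifying the Laurent expansion with a Bessel function; the paper does not do this, instead introducing parameters $s=\pi-\theta$, $y=\pi\abs{x}^2/s$, writing the residue as $s^{-(n-1)}F(y,s)$, and analyzing $F$ by a power-series argument for small $y$ (Lemma~\ref{g_m}, Proposition~\ref{Fprop-II}) and by a second steepest-descent on the circle $\abs{w}=s$ for large $y$ (Proposition~\ref{Fprop-III}). The Bessel route is a legitimate and more conceptual alternative, but your execution has a genuine gap.

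The problem is twofold. First, a sign error: since the exponent is $-\tfrac{\abs{x}^2}{4}\psi$, the $u$-linear term is $+i\abs{z}u$, not $-i\abs{z}u$. This matters: with the correct sign one has $A=i\abs{z}$, $B=-i\pi\abs{x}^2/4$, $AB=\pi\abs{x}^2\abs{z}/4>0$, and the series is genuinely a modified Bessel $I_{n-1}$; your sign gives $AB<0$ and an ordinary $J_{n-1}$, which oscillates and has zeros, so no lower bound is possible. Second, and more seriously, your claim that $e^{\frac14 d_0^2-\pi\abs{z}-\abs{x}^2/4}$ is bounded because ``$\tfrac14 d_0^2=\pi\abs{z}+O(\abs{x}^2)$'' is false. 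Writing $\tfrac14 d_0^2=\abs{z}\theta+\tfrac{\abs{x}^2}{4}\theta\cot\theta$ and expanding near $\theta=\pi$ gives $\tfrac14 d_0^2-\pi\abs{z}-\tfrac{\abs{x}^2}{4}\approx -s\abs{z}-\tfrac{\pi\abs{x}^2}{4s}\asymp -y/2$, where $y\asymp\abs{x}\sqrt{\abs{z}}$ is exactly the parameter separating the two regimes and is unbounded on the region in question. The mechanism is that this decaying prefactor $e^{-y/2}$ cancels the growth $I_{n-1}(y/2)\sim e^{y/2}/\sqrt{\pi y}$, leaving the correct $y^{-1/2}$ for large $y$; with a bounded prefactor your own $I$-asymptotics would give exponential growth, contradicting the polynomial target. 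Once both points are corrected the Bessel approach does go through, though making the lower-order corrections (from $E(u)$ and from $b_k$, $k>-n$) uniform across both regimes still requires care comparable to what the paper does in Lemma~\ref{g_m} and Propositions~\ref{Fprop-II}--\ref{Fprop-III}.
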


The proof of Theorem \ref{h-estimate} occupies the rest of this
section.  Theorem \ref{region-II-III-theorem} follows, since Theorem
\ref{h-estimate} applies to each term of (\ref{p-expansion-phi}) (note
each $a_k$ satisfies the hypotheses), and the $k=(m-1)/2$ term will
dominate for large $\abs{z}$.

An argument similar to Lemma \ref{move-contour-Rm}, using the fact
that Lemma \ref{integrand-bounds} applies for $\abs{b} \le 2\pi$, will
allow us to move the contour to the line $\Im \rho = 3\pi/2$,
accounting for the residue at $i\pi$:
\begin{equation}
      h(x,z) := \underbrace{\int_{-\infty}^\infty e^{-\frac{\abs{x}^2}{4} \psi(\rho+3i\pi/2,
      \theta)} a(\rho+3i\pi/2) \,d\rho}_{h_l(x,z)} + \underbrace{\Res(e^{-\frac{\abs{x}^2}{4} \psi(\rho,
      \theta)} a(\rho); \rho = i\pi)}_{h_r(x,z)}.
\end{equation}

The following lemma shows that $h_l(x,z)$, the integral along the
horizontal line, is negligible.

 \begin{lemma}\label{hl-estimate}
   There exists $\theta_0 < \pi$ and a constant $C > 0$ such that for
   all $(x,z)$ with $\theta(x,z) \in [\theta_0,\pi)$ we have
   \begin{equation}
     \abs{h_l(x,z)} \le C e^{-d_0(x,z)^2/8}.
   \end{equation}
 \end{lemma}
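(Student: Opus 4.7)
The plan is to exploit the fact that the horizontal shift to $\Im \rho = 3\pi/2$ is specifically chosen so that $\coth$ becomes $\tanh$. First I would compute that for real $\rho$,
\[\cosh(\rho + 3i\pi/2) = -i\sinh\rho, \qquad \sinh(\rho + 3i\pi/2) = -i\cosh\rho,\]
hence $\coth(\rho + 3i\pi/2) = \tanh\rho$. Substituting into the definition of $\psi$ gives
\[\Re\psi(\rho + 3i\pi/2, \theta) = \rho\tanh\rho + \tfrac{3\pi}{2}\nu(\theta) - \tfrac{\theta^2}{\sin^2\theta}.\]
Using the identity $\nu(\theta) = \theta/\sin^2\theta - \cot\theta$, which implies $\theta^2/\sin^2\theta = \theta\nu(\theta) + \theta\cot\theta$, this rewrites as
\[\Re\psi(\rho + 3i\pi/2, \theta) = \rho\tanh\rho + (\tfrac{3\pi}{2} - \theta)\nu(\theta) - \theta\cot\theta.\]

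The key observation is that $\rho\tanh\rho \ge 0$ for all real $\rho$, so this term contributes only a nonnegative amount to $\Re\psi$. Dropping it and taking absolute values inside the integral yields
\[|h_l(x,z)| \le e^{-\frac{|x|^2}{4}[(\frac{3\pi}{2}-\theta)\nu(\theta) - \theta\cot\theta]} \int_\R |a(\rho + 3i\pi/2)|\,d\rho,\]
and the remaining integral is finite by the hypothesis on $a$. It thus suffices to show that for $\theta \in [\theta_0, \pi)$ with $\theta_0$ suitably chosen,
\[\frac{|x|^2}{4}\bigl[(\tfrac{3\pi}{2}-\theta)\nu(\theta) - \theta\cot\theta\bigr] \;\ge\; \frac{d_0(x,z)^2}{8} = \frac{|x|^2}{8}\bigl[\theta\nu(\theta) + \theta\cot\theta\bigr].\]
A routine manipulation reduces this to the clean inequality
\[(\pi - \theta)\nu(\theta) \ge \theta\cot\theta.\]

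This final inequality is the heart of the argument, but it is easy: for any $\theta \in [\pi/2, \pi)$ the right-hand side is $\le 0$ while the left-hand side is strictly positive (since $\nu > 0$ on $(0,\pi)$ by Corollary \ref{nu-c}), so the inequality holds trivially. Thus I can choose, say, $\theta_0 = \pi/2$ (this is compatible with the assumption $|z| \ge B_1 |x|^2$ for any $B_1 \ge \pi/8$, since $\nu(\theta) = 4|z|/|x|^2 \ge \nu(\pi/2) = \pi/2$ forces $\theta \ge \pi/2$). Combining the pieces, with $C := \int_\R |a(\rho + 3i\pi/2)|\,d\rho$, one obtains $|h_l(x,z)| \le C e^{-d_0(x,z)^2/8}$ as desired. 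The only potential subtlety is keeping track of the identity $\theta^2/\sin^2\theta = \theta\nu(\theta) + \theta\cot\theta$ to recognize that $d_0^2/8$ appears naturally; everything else is bookkeeping.
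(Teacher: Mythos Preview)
Your proof is correct and follows essentially the same route as the paper: compute $\coth(\rho+3i\pi/2)=\tanh\rho$, discard the nonnegative term $\rho\tanh\rho$, and reduce to an elementary inequality in $\theta$. The only difference is in verifying that inequality: the paper shows $\nu(\theta)\ge\frac{1}{\pi}\frac{\theta^2}{\sin^2\theta}$ for $\theta$ near $\pi$ via a limit-and-derivative argument on the ratio $\beta(\theta)=\nu(\theta)\sin^2\theta/\theta^2$, whereas you use the identity $\theta^2/\sin^2\theta=\theta\nu(\theta)+\theta\cot\theta$ to rewrite the same inequality as $(\pi-\theta)\nu(\theta)\ge\theta\cot\theta$, which is immediate by sign for $\theta\ge\pi/2$. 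Your verification is cleaner and gives an explicit $\theta_0=\pi/2$.
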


 \begin{proof}
   Observe that $\coth(\rho+3 i
 \pi/2)=\tanh \rho$.  So
 \begin{align*}
   \Re\psi(\rho+3i\pi/2,\theta) &= \rho \tanh \rho + \frac{3\pi}{2}
   \nu(\theta) - \frac{\theta^2}{\sin^2\theta}.
 \end{align*}
 Therefore we have
 \begin{align*}
   \abs{h_l(x,z)} &\le e^{-\frac{\abs{x}^2}{4} \left(\frac{3\pi}{2}\nu(\theta)
   - \frac{\theta^2}{\sin^2\theta}\right)} \int_\R
   e^{-\frac{\abs{x}^2}{4} \rho \tanh \rho}
   \abs{a(\rho+3i\pi/2)} \,d\rho \\
   &\le e^{-\frac{\abs{x}^2}{4} \left(\frac{3\pi}{2}\nu(\theta)
   - \frac{\theta^2}{\sin^2\theta}\right)} \int_\R
   \abs{a(\rho+3i\pi/2)} \,d\rho
 \end{align*}
 as $\tau\tanh\tau \ge 0$.  The integral in the last line is a finite
 constant, since $a(\cdot + 3i\pi/2)$ is integrable by assumption.

 However, for $\theta$ sufficiently close to $\pi$, we have
 $\nu(\theta) \ge \frac{1}{\pi} \frac{\theta^2}{\sin^2\theta}$.  (If
 $\beta(\theta) := \nu(\theta)
 \left(\frac{\theta^2}{\sin^2\theta}\right)^{-1}$, we have
 $\lim_{\theta \uparrow \pi} \beta(\theta) = 1/\pi$ and $\lim_{\theta
   \uparrow \pi} \beta'(\theta) = -2/\pi^2 < 0$.  Indeed, $\theta >
 0.51$ suffices.)  Thus for such $\theta$ we have
 \begin{equation}
   \abs{h_l(x,z)} \le C e^{-\frac{\abs{x}^2}{8}
     \frac{\theta^2}{\sin^2\theta}} = C e^{-d_0(x,z)^2/8}. 
 \end{equation}
 \end{proof}

\newcommand{\radius}{r}
\newcommand{\dummy}{\rho}
\newcommand{\oldlambda}{y}
\newcommand{\circlevar}{w}
\newcommand{\indexvar}{k}

 To handle the residue term $h_r$, write it as
 \begin{equation}\label{loop-integral}
   h_r(x,z) = \oint_{\partial B(i\pi, \radius)}
   e^{-\frac{\abs{x}^2}{4}\psi(\dummy,\theta)/4}(\dummy)\,d\dummy.
 \end{equation}
 We can choose any $\radius \in (0,\pi)$ because the integrand is analytic
 on the punctured disk.  To facilitate dealing with the singularity at
 $\theta = \pi$, we adopt the parameters
 \begin{equation}\label{lambda-s-def}
   \begin{split}
     s &:= \pi - \theta(x,z) \\
     \oldlambda &:= \pi \abs{x}^2/s.
   \end{split}
 \end{equation}
 Note that 
\begin{equation}\label{z-comparisons}
  \oldlambda/s \asymp \abs{z},\quad \oldlambda \asymp
  \abs{x}\sqrt{\abs{z}}.
\end{equation}
If we let (compare (\ref{psidef}))
\begin{align}
  \phi(\circlevar, s) &:= \frac{1}{4\pi}s
    \psi(i(\pi-\circlevar),\pi-s) \nonumber \\
    &= \frac{s}{4\pi}
    \left(\nu(\pi-s)(\pi-\circlevar) + (\pi-\circlevar)\cot(\pi-\circlevar)
    - \frac{(\pi-s)^2}{\sin^2 s}\right)\label{phidef} \\
  F(\oldlambda,s) &:= s^{n-1} \oint_{\partial B(0,\radius)}
  e^{-\oldlambda \phi(\circlevar, s)}
  a(i(\pi-\circlevar))(-i)\,d\circlevar \label{Fdef}
\end{align}
we have
\begin{equation}\label{hr-F}
  h_r(x,z) = s^{-(n-1)} F(y,s).
\end{equation}
Note we have made the change of variables $\dummy = i(\pi-\circlevar)$ from
(\ref{loop-integral}) to (\ref{Fdef}).  

Observe that $F$ is analytic in $\oldlambda$ and $s$ for $s \ne k\pi$,
$k \in \Z$, so we shall now consider $\oldlambda$ and $s$ as complex
variables.  The factor of $s^{n-1}$ in $F$ was inserted to clear a
pole of order $n-1$ at $s=0$, whose presence will be apparent later.

 Computing a Laurent series for $\phi$
\index{Laurent series}
 about $(i\pi,\pi)$, which converges for $0 < \abs{s} < \pi$, $0 <
 \abs{\circlevar} < \pi$, we find
 \begin{equation}\label{phi-series}
   \phi(\circlevar, s) = \frac{1}{2} - \frac{\circlevar}{4s} -
   \frac{s}{4\circlevar} - sU(\circlevar,s)
 \end{equation}
 with $U$ analytic for $\abs{s} < \pi$, $\abs{\circlevar} < \pi$.
 Also, by the hypotheses on $a$,
 \begin{equation}\label{a-series}
   a(i(\pi-\circlevar)) = \circlevar^{-n} V(\circlevar)
 \end{equation}
 where $V$ is analytic for $\abs{\circlevar} < \pi/2$ and
 $V(0)> 0$.  Thus we have
 \begin{equation}\label{FUV}
   F(\oldlambda,s) = s^{n-1} \oint_{\partial B(0,\radius)} e^{-\oldlambda\left(\frac{1}{2}- \frac{\circlevar}{4s} -
   \frac{s}{4\circlevar} - sU(\circlevar,s)\right)} \circlevar^{-n} V(\circlevar) (-i)\,d\circlevar
 \end{equation}

 The constant term in the expansion of $\psi$ is slightly inconvenient, so let
 $G(\oldlambda,s) = e^{\oldlambda/2} F(\oldlambda,s)$.  Then:
 \begin{align}
   G(\oldlambda,s) &= s^{n-1} \oint_{\partial B(0,\radius)}
     e^{\oldlambda\left(\frac{\circlevar}{4s} + \frac{s}{4\circlevar} + sU(\circlevar,s)\right)}
     \circlevar^{-n} V(\circlevar) (-i) \,d\circlevar \nonumber \\
   &= s^{n-1} \oint \sum_{\indexvar=0}^{\infty}
   \frac{\oldlambda^\indexvar}{\indexvar!} \left(\frac{\circlevar}{4s} +
   \frac{s}{4\circlevar} + sU(\circlevar,s)\right)^\indexvar \circlevar^{-n} V(\circlevar)\,d\circlevar (-i) \label{fubini-used} \\
   &= s^{n-1} \sum_{\indexvar=0}^{\infty}
   \frac{\oldlambda^\indexvar}{\indexvar!} \oint  \left(\frac{\circlevar}{4s} +
   \frac{s}{4\circlevar} + sU(\circlevar,s)\right)^\indexvar \circlevar^{-n} V(\circlevar) (-i) \,d\circlevar \nonumber\\ 
   &=: \sum_{\indexvar=0} \frac{\oldlambda^\indexvar g_\indexvar(s)}{\indexvar!} \label{Gsum}
 \end{align}
 where we let 
 \begin{equation}\label{g_m-def}
   g_\indexvar(s) := s^{n-1} \oint  \left(\frac{\circlevar}{4s} +
   \frac{s}{4\circlevar} + sU(\circlevar,s)\right)^\indexvar \circlevar^{-n} V(\circlevar) (-i) \,d\circlevar.
 \end{equation}
 The interchange of sum and integral in (\ref{fubini-used}) is
 justified by Fubini's theorem, since for fixed $s$ $U(s,\cdot)$ and
 $V$ are bounded on $B(0,\radius)$, and thus
 \begin{align*}
   &\quad \sum_{\indexvar=0}^{\infty} \oint_{B(0, \radius)}
   \abs{\frac{\oldlambda^\indexvar}{\indexvar!} \left(\frac{\circlevar}{4s} +
     \frac{s}{4\circlevar} + sU(\circlevar,s)\right)^\indexvar \left(\frac{\pi}{\circlevar} +
     V(\circlevar)\right)^n}\,d\circlevar \\
    &\le \sum_{\indexvar=0}^{\infty} \frac{\abs{\oldlambda}^\indexvar}{\indexvar!} 2 \pi \radius \left(\frac{\radius}{4\abs{s}} +
    \frac{\abs{s}}{4\radius} + \abs{s} \sup_{\abs{\circlevar}=\radius} \abs{U(\circlevar,s)}\right)^\indexvar \left(\frac{\pi}{\radius} +
    \sup_{\abs{\circlevar}=\radius} \abs{V(\circlevar)} \right)^n \\
    &= 2 \pi \radius  \left(\frac{\pi}{\radius} +
    \sup_{\abs{\circlevar}=\radius} \abs{V(\circlevar)}\right)^n \exp\left(\abs{\oldlambda} \left(\frac{\radius}{4\abs{s}} +
    \frac{\abs{s}}{4\radius} + \abs{s}\sup_{\abs{\circlevar}=\radius} \abs{U(\circlevar,s)}\right)\right) < \infty.
 \end{align*}

 We now examine more carefully the terms $g_\indexvar$ in (\ref{Gsum}--\ref{g_m-def}).
 \begin{lemma}\label{g_m}
   If $g_\indexvar$ is defined by (\ref{g_m-def}), then:
   \begin{enumerate}
     \item \label{g_m-analytic} $g_\indexvar$ is analytic for $\abs{s} \le s_0$;
     \item \label{g_m-size} There exists $C = C(s_0) \ge 0$ independent of $\indexvar$
     such that $\abs{g_\indexvar(s)} \le C^\indexvar$ for each $\indexvar$ and all $\abs{s} \le
     s_0$;
     \item \label{g_m-zero-low} For $\indexvar \le n-1$, $g_\indexvar(s) = s^{n-1-\indexvar}
     h_\indexvar(s)$, where $h_\indexvar$ is analytic for $\abs{s} \le s_0$.  In
     particular, $g_\indexvar(0)=0$ for $\indexvar < n-1$.
     \item \label{g_m-zero-high} For $\indexvar \ge n-1$, $g_\indexvar(0) > 0$ when
       $\indexvar+n$ is odd, and $g_\indexvar(0)=0$ when $\indexvar+n$ is even.
   \end{enumerate}
 \end{lemma}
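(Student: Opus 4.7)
The proof is essentially an exercise in coefficient extraction via residue computation, once the trinomial factor inside (\ref{g_m-def}) is expanded by the multinomial theorem. Writing
\begin{equation*}
  \left(\tfrac{w}{4s} + \tfrac{s}{4w} + sU(w,s)\right)^{k}
  = \sum_{a+b+c=k} \binom{k}{a,b,c}\, \frac{w^{a-b}}{4^{a+b}}\, s^{b+c-a}\, U(w,s)^{c},
\end{equation*}
and evaluating each contour integral $\oint_{|w|=r} w^{a-b-n} U(w,s)^{c} V(w)\,dw$ as $2\pi i$ times its residue at $w=0$, one arrives at the closed finite sum
\begin{equation*}
  g_{k}(s) = 2\pi \sum_{\substack{a+b+c=k \\ a-b \le n-1}} \binom{k}{a,b,c}\, \frac{s^{\,n-1+k-2a}}{4^{a+b}}\, \bigl[w^{\,n-1-a+b}\bigr]\bigl(U(w,s)^{c} V(w)\bigr),
\end{equation*}
where the restriction $a-b \le n-1$ is exactly the condition for the integrand to have a nontrivial residue at $w=0$, and each coefficient $[w^{j}](U^{c}V)(s)$ is analytic in $s$ because $U$ and $V$ are holomorphic.

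From this explicit formula, parts (\ref{g_m-analytic}) and (\ref{g_m-zero-low}) reduce to inspecting the exponent $n-1+k-2a$. The constraint $a-b \le n-1$ together with $b+c = k-a$ and $c \ge 0$ yields $2a + c \le n-1+k$, so $n-1+k-2a \ge c \ge 0$. Thus every term in the sum is analytic in $s$ at $0$ (and on $|s| \le s_{0}$ for any $s_{0} < \pi$, once $r < \pi/2$ is fixed so that $U, V$ are holomorphic on $|w| \le r$, $|s| \le s_{0}$). This proves (\ref{g_m-analytic}). When $k \le n-1$, the smallest exponent of $s$ in the sum is $n-1-k$, attained at $(a,b,c) = (k,0,0)$, which extracts the factorization $g_{k}(s) = s^{\,n-1-k} h_{k}(s)$ required for (\ref{g_m-zero-low}).

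For part (\ref{g_m-zero-high}), I will identify the terms surviving at $s = 0$: this requires $n-1+k-2a = 0$, which forces $a = (n+k-1)/2$ (so $n+k$ must be odd), and then the saturated constraint $2a + c \le n-1+k$ forces $c = 0$, hence $b = (k-n+1)/2$ (nonnegative exactly when $k \ge n-1$). The surviving coefficient is $[w^{0}] V = V(0) > 0$, producing the lone contribution
\begin{equation*}
  g_{k}(0) = 2\pi \binom{k}{(n+k-1)/2}\, 4^{-k}\, V(0) > 0
\end{equation*}
when $n+k$ is odd; when $n+k$ is even no integer $a$ solves $n-1+k-2a=0$, so $g_{k}(0)=0$.

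The one part that does not fall directly out of the multinomial expansion is the uniform exponential bound (\ref{g_m-size}), since the sum has $O(k^{2})$ terms whose sizes would each require separate estimates. The cleaner route is to return to the contour integral (\ref{g_m-def}) and choose the radius adaptively: for $0 < |s| \le \pi/8$, take $r = 4|s| < \pi/2$, so that the three summands inside the $k$th power are bounded by $\tfrac{r}{4|s|} = 1$, $\tfrac{|s|}{4r} = \tfrac{1}{16}$, and $|s| M_{U}$ respectively. The standard $M L$ estimate then gives
\begin{equation*}
  |g_{k}(s)| \le |s|^{n-1} \cdot 2\pi(4|s|) \cdot (4|s|)^{-n} M_{V} \cdot \left(1 + \tfrac{1}{16} + |s| M_{U}\right)^{k} \le C_{0}\, C_{1}^{k},
\end{equation*}
uniformly for $0 < |s| \le \pi/8$. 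Since $g_{k}$ is already known to be holomorphic on $|s| \le s_{0}$, the maximum modulus principle extends the bound to $s = 0$. The main obstacle throughout is the bookkeeping of the multinomial indices in the parity argument of (\ref{g_m-zero-high})—specifically, verifying that among all surviving terms at $s=0$ the $U$-factor never enters (i.e., $c = 0$ is forced) and the remaining coefficient is simply $V(0)$—but once the finite explicit formula is in hand, this reduces to elementary integer arithmetic.
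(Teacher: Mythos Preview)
Your argument for parts (\ref{g_m-analytic}), (\ref{g_m-zero-low}), and (\ref{g_m-zero-high}) is essentially identical to the paper's: both expand by the multinomial theorem, restrict to the surviving terms $a-b \le n-1$, and read off the exponent of $s$ as $n-1+k-2a \ge c \ge 0$; the identification of the unique term at $s=0$ (with $c=0$ forced and coefficient $V(0)$) is also the same.

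For part (\ref{g_m-size}) you take a different, and arguably cleaner, route. The paper stays with the multinomial expansion: using the constraint $a-b-n \le -1$ it bounds $|s|^{-(a-b-n)-1} \le s_0^{-(a-b-n)-1}$ (the exponent being nonnegative), reorganises the sum into the form $\sum \binom{k}{a,b,c}(\tfrac{r}{4s_0})^a(\tfrac{s_0}{4r})^b(s_0 U_0)^c$, and resums via the multinomial theorem with a \emph{fixed} radius $r$. Your adaptive choice $r = 4|s|$ avoids the term-by-term bookkeeping and gives the bound in one line from the $ML$ estimate; this is a nice simplification. One small gap: as written, your estimate covers only $0 < |s| \le \pi/8$ (and $s=0$ via maximum modulus), not the full disk $|s| \le s_0$ when $s_0 > \pi/8$. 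This is trivially patched---on the compact annulus $\pi/8 \le |s| \le s_0$ any fixed radius $r < \pi/2$ works, since then all three summands $\tfrac{r}{4|s|}$, $\tfrac{|s|}{4r}$, $|s|M_U$ and the prefactor $|s|^{n-1}$ are bounded directly---but you should say so.
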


 \begin{proof}
   By the multinomial theorem,
   \begin{align}
     g_\indexvar(s) &= \sum_{a+b+c=\indexvar} \binom{\indexvar}{a,b,c} s^{n-1} \oint_{\partial B(0,\radius)}
     \left(\frac{\circlevar}{4s}\right)^a 
     \left(\frac{s}{4\circlevar}\right)^b 
     \left(sU(\circlevar,s)\right)^c \circlevar^{-n} V(\circlevar) (-i)\,d\circlevar
     \\
     &= \sum_{a+b+c=\indexvar} \binom{\indexvar}{a,b,c} 4^{-(a+b)} \oint_{\partial B(0,\radius)}
     \circlevar^{a-b-n} s^{-(a-b-n)-1} \left(sU(\circlevar,s)\right)^c V(\circlevar) (-i) \,d\circlevar
     \\
     &= \sum_{\substack{a+b+c=\indexvar\\a-b-n \le -1}} \binom{\indexvar}{a,b,c} 4^{-(a+b)} \oint_{\partial B(0,\radius)}
     \circlevar^{a-b-n} s^{-(a-b-n)-1} \left(sU(\circlevar,s)\right)^c V(\circlevar) (-i) \,d\circlevar \label{abc}
   \end{align}
   since for terms with $a-b-n \ge 0$, the integrand is analytic in
   $\circlevar$ and the integral vanishes.  Now the integrand of each term of
   (\ref{abc}) is clearly analytic in $s$, hence so is $g_\indexvar$ itself,
   establishing item \ref{g_m-analytic}.  

   For item \ref{g_m-size}, let $U_0 := \sup_{\abs{\circlevar}=\radius, \abs{s}
     \le s_0} \abs{U(\circlevar,s)}$, and $V_0 := \sup_{\abs{\circlevar}=\radius}
   \abs{V(\circlevar)}$.  Then for $\abs{s} \le s_0$,
   \begin{align*}
     \abs{g_\indexvar(s)} &\le \sum_{\substack{a+b+c=\indexvar\\a-b-n \le -1}} \binom{\indexvar}{a,b,c}
     4^{-(a+b)} (2 \pi \radius) 
     \radius^{a-b-n} s_0^{-(a-b-n)-1} \left(s_0 U_0\right)^c V_0
     \\
     &\le 2 \pi \radius V_0 \frac{s_0^{n-1}}{\radius^n} \sum_{\substack{a+b+c=\indexvar\\a-b-n \le -1}}
     \binom{\indexvar}{a,b,c} \left( \frac{\radius}{4 s_0}\right)^a
     \left(\frac{s_0}{4 \radius}\right)^b \left(s_0 U_0\right)^c
     \\
     &\le 2 \pi \radius V_0 \frac{s_0^{n-1}}{\radius^n} \sum_{a+b+c=\indexvar}
     \binom{\indexvar}{a,b,c} \left( \frac{\radius}{4 s_0}\right)^a
     \left(\frac{s_0}{4 \radius}\right)^b \left(s_0 U_0\right)^c
     \\
     &\le 2 \pi \radius V_0 \frac{s_0^{n-1}}{\radius^n} \left(\frac{\radius}{4
       s_0} + \frac{s_0}{4 \radius} + s_0 U_0\right)^\indexvar
   \end{align*}
   so that a constant $C$ can be chosen with $g_\indexvar(s) \le C^\indexvar$,
   establishing item \ref{g_m-size}.

   For item \ref{g_m-zero-low}, suppose $\indexvar \le n-1$ and let $h_\indexvar(s) =
   s^{\indexvar-n+1} g_\indexvar(s)$, so that
   \begin{align*}
     h_\indexvar(s) = \sum_{\substack{a+b+c=\indexvar\\a-b-n \le -1}} \binom{\indexvar}{a,b,c} 4^{-(a+b)} \oint_{\partial B(0,\radius)}
     \circlevar^{a-b-n} s^{-(a-b-\indexvar)} \left(sU(\circlevar,s)\right)^c V(\circlevar) (-i) \,d\circlevar.
   \end{align*}
   But $a-b-\indexvar \le a-\indexvar \le 0$ since $a \le \indexvar$ by definition, so only
   positive powers of $s$ appear, and $h_\indexvar$ is analytic in $s$.

   For item \ref{g_m-zero-high}, we see that when $s=0$, each term of
   (\ref{abc}) will vanish unless $c=0$ and $a-b-n=-1$, i.e. $a+b=\indexvar$
   and $a-b=n-1$.  If $\indexvar$ and $n$ have the same parity, this happens
   for no term, so $g_\indexvar(0)=0$.  If $\indexvar$ and $n$ have opposite parity,
   this forces $a=(\indexvar+n-1)/2$, $b=(\indexvar-n+1)/2$, both of which are
   nonnegative integers.  In this case
   \begin{align*}
     g_\indexvar(s) &= \binom{\indexvar}{(\indexvar+n-1)/2}
     4^{-\indexvar} \oint \circlevar^{-1} V(\circlevar)
     (-i)\,d\circlevar \\
     &= \binom{\indexvar}{(\indexvar+n-1)/2}
     4^{-\indexvar} 2 \pi V(0) > 0
   \end{align*}
   since $V(0)>0$.
\end{proof}

 From this we derive corresponding properties of the function $F$.

 \begin{corollary}\label{Fstructure}
   Let $F(\oldlambda,s)$ be defined as in (\ref{Fdef}).  Then for all $s_0
   < \pi$:
   \begin{enumerate}
     \item $F$ is analytic for all $\oldlambda$ and all $0 \le s \le s_0$.
     \item We may write
       \begin{equation}
         F(\oldlambda,s) = e^{-\oldlambda/2} \left[ \sum_{\indexvar=0}^{n-1} \frac{\oldlambda^{\indexvar}
           s^{n-1-\indexvar}}{\indexvar!} h_\indexvar(s) + \oldlambda^n H(\oldlambda,s) \right]
       \end{equation}
       with $h_\indexvar, H$ analytic for all $\oldlambda$ and all $0 \le s \le s_0$.
       Furthermore, $h_{n-1}(0) > 0$
     \item $F(\oldlambda, 0) > 0$ for all $\oldlambda > 0$.
   \end{enumerate}
 \end{corollary}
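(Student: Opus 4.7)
The plan is to derive all three claims directly from the series representation (\ref{Gsum}), namely $G(\oldlambda,s) = \sum_{\indexvar=0}^\infty \oldlambda^\indexvar g_\indexvar(s)/\indexvar!$, combined with the four items of Lemma \ref{g_m}, and then to transfer everything back via $F(\oldlambda,s) = e^{-\oldlambda/2} G(\oldlambda,s)$. The heavy lifting---tracking when the negative powers of $s$ coming from the $s/(4\circlevar)$ term in (\ref{FUV}) get absorbed by the $s^{n-1}$ prefactor---has already been absorbed into Lemma \ref{g_m}, so what remains is essentially careful bookkeeping; I do not expect a substantive obstacle.

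For part (1), I would invoke the uniform bound $\abs{g_\indexvar(s)} \le C(s_0)^\indexvar$ from item \ref{g_m-size} of Lemma \ref{g_m}. The majorant $\sum \oldlambda^\indexvar C(s_0)^\indexvar/\indexvar! = \exp(C(s_0)\oldlambda)$ is entire, so by the Weierstrass M-test, together with the analyticity of each $g_\indexvar$ supplied by item \ref{g_m-analytic}, the series for $G$ converges uniformly on compact subsets of $\C \times \{\abs{s}\le s_0\}$ and defines a jointly analytic function there. Multiplication by the entire factor $e^{-\oldlambda/2}$ then gives analyticity of $F$.

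For part (2), I would split the series at index $n$ and write
$G(\oldlambda,s) = \sum_{\indexvar=0}^{n-1} \frac{\oldlambda^\indexvar g_\indexvar(s)}{\indexvar!} + \oldlambda^n H(\oldlambda,s), \quad H(\oldlambda,s) := \sum_{\indexvar=n}^\infty \frac{\oldlambda^{\indexvar-n} g_\indexvar(s)}{\indexvar!}.$
The same M-test argument, applied to the shifted series, makes $H$ jointly analytic. Item \ref{g_m-zero-low} of Lemma \ref{g_m} supplies the factorization $g_\indexvar(s) = s^{n-1-\indexvar} h_\indexvar(s)$ for $\indexvar \le n-1$ with $h_\indexvar$ analytic; substituting this into the finite sum and pulling out $e^{-\oldlambda/2}$ produces exactly the displayed form. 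For the positivity claim $h_{n-1}(0) > 0$, I observe that $h_{n-1} = g_{n-1}$ (the factor $s^{0}$ is trivial), and then item \ref{g_m-zero-high} with $\indexvar = n-1$ applies since $\indexvar + n = 2n-1$ is odd, yielding $g_{n-1}(0) > 0$.

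For part (3), I would set $s = 0$ in (\ref{Gsum}). Item \ref{g_m-zero-low} kills $g_\indexvar(0)$ for every $\indexvar < n-1$, while item \ref{g_m-zero-high} kills it for every $\indexvar \ge n-1$ with $\indexvar + n$ even, and gives strict positivity at the remaining indices $\indexvar = n-1+2j$, $j \ge 0$. Consequently
$F(\oldlambda,0) = e^{-\oldlambda/2} \sum_{j=0}^\infty \frac{g_{n-1+2j}(0)}{(n-1+2j)!}\,\oldlambda^{n-1+2j},$
which is $e^{-\oldlambda/2}$ times a power series with all strictly positive coefficients, hence strictly positive for every $\oldlambda > 0$.
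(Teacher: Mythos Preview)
Your proposal is correct and follows essentially the same approach as the paper: both work with $G = e^{\oldlambda/2}F$, use items \ref{g_m-analytic} and \ref{g_m-size} of Lemma \ref{g_m} for uniform convergence and analyticity, split the series at index $n$ using item \ref{g_m-zero-low}, and deduce positivity at $s=0$ from items \ref{g_m-zero-low} and \ref{g_m-zero-high}. Your write-up is in fact slightly more explicit than the paper's in verifying $h_{n-1}(0)>0$ via the parity check $\indexvar+n=2n-1$ odd.
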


 \begin{proof}
 We prove the corresponding facts about $G=e^{\oldlambda/2}F$.  By items
 \ref{g_m-analytic} and \ref{g_m-size} of Lemma \ref{g_m}, we have
 that $G$ is analytic for $\abs{s} \le s_0$ and all $\oldlambda$, since
 the sum in (\ref{Gsum}) is a sum of analytic functions and converges
 uniformly.  By item \ref{g_m-zero-low} we have that
 \begin{equation*}
   G(\oldlambda, s) = \sum_{\indexvar=0}^{n-1} \frac{\oldlambda^\indexvar s^{n-1-\indexvar}}{\indexvar!}
   h_\indexvar(s) + \oldlambda^n \sum_{\indexvar=0}^\infty \frac{\oldlambda^\indexvar}{(n+\indexvar)!} g_{n+\indexvar}(s).
 \end{equation*}
 And by items \ref{g_m-zero-low} and \ref{g_m-zero-high}, $G(\oldlambda,0)
 = \sum_{\indexvar=n-1}^\infty \frac{\oldlambda^\indexvar g_\indexvar(0)}{\indexvar!} > 0$ for all
 $\oldlambda > 0$.
 \end{proof}

 \begin{proposition}\label{Fprop-II}
   For all $\oldlambda_1 > 0$, there exist $\delta > 0$, and $0 <
   C_1' \le C_2' < \infty$ such that
   \begin{equation}\label{Fprop-II-eqn}
     C_1' \oldlambda^{n-1} \le \Re F(\oldlambda,s) \le \abs{F(\oldlambda,s)} \le
     C_2' \oldlambda^{n-1}
   \end{equation}
   for all $0 \le \oldlambda < \oldlambda_1$, $0 \le s < \delta
   \oldlambda$.  (Here we are treating $\oldlambda$ and $s$ as real variables.)
 \end{proposition}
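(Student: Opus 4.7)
The plan is to use the explicit structure given by Corollary \ref{Fstructure} to factor out $\oldlambda^{n-1}$ and reduce the claim to a statement about a jointly continuous, strictly positive function on a compact set.

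Concretely, I would introduce the rescaled quantity
\[
\tilde\Psi(\oldlambda, t) := e^{\oldlambda/2} F(\oldlambda, t\oldlambda)/\oldlambda^{n-1}
= \sum_{k=0}^{n-1} \frac{t^{\,n-1-k}}{k!}\, h_k(t\oldlambda) \;+\; \oldlambda\, H(\oldlambda, t\oldlambda),
\]
where I have written $s = t\oldlambda$ and used Corollary \ref{Fstructure}. The key point is that the denominator $\oldlambda^{n-1}$ has now been absorbed: $\tilde\Psi$ is a finite linear combination of analytic functions of $(\oldlambda, t)$ with no singular behavior at the origin, and therefore extends to a jointly continuous function on $[0,\oldlambda_1]\times[0,\delta_0]$ for any $\delta_0 > 0$.

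Next I would analyze the restriction to $t=0$. Only the $k=n-1$ term of the sum survives at $t=0$, giving
\[
\tilde\Psi(\oldlambda,0) = \frac{h_{n-1}(0)}{(n-1)!} + \oldlambda\, H(\oldlambda,0),
\]
which equals $e^{\oldlambda/2}F(\oldlambda,0)/\oldlambda^{n-1}$. By Corollary \ref{Fstructure}(3), $F(\oldlambda,0)>0$ for all $\oldlambda>0$, and at $\oldlambda=0$ the limiting value $h_{n-1}(0)/(n-1)! > 0$ is strictly positive as well. Thus $\tilde\Psi(\cdot, 0)$ is a continuous strictly positive real-valued function on $[0,\oldlambda_1]$, and by compactness there are $0<c_1\le c_2<\infty$ with
\[
c_1 \le \tilde\Psi(\oldlambda,0) \le c_2 \quad \text{for all } \oldlambda\in[0,\oldlambda_1].
\]

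The final step is to extend the estimate from $t=0$ to a small strip $t\in[0,\delta]$. Since $\tilde\Psi$ is uniformly continuous on the compact box $[0,\oldlambda_1]\times[0,\delta_0]$, I can choose $\delta\in(0,\delta_0]$ so that $|\tilde\Psi(\oldlambda,t)-\tilde\Psi(\oldlambda,0)|<c_1/2$ for all $\oldlambda\in[0,\oldlambda_1]$ and all $t\in[0,\delta]$. Because $\tilde\Psi(\oldlambda,0)$ is real, this yields
\[
\Re\tilde\Psi(\oldlambda,t) \ge \tfrac{c_1}{2}, \qquad |\tilde\Psi(\oldlambda,t)| \le c_2 + \tfrac{c_1}{2}
\]
uniformly on $[0,\oldlambda_1]\times[0,\delta]$. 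Multiplying through by $e^{-\oldlambda/2}\oldlambda^{n-1}$ and absorbing the bounded factor $e^{-\oldlambda/2}\in[e^{-\oldlambda_1/2},1]$ into the constants yields (\ref{Fprop-II-eqn}) with explicit $C_1', C_2'$.

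The only mildly delicate point is verifying that $\tilde\Psi$ really is jointly continuous up to $\oldlambda=0$, which relies on the fact (from Lemma \ref{g_m}(\ref{g_m-zero-low})) that $g_k(s)$ carries the factor $s^{n-1-k}$ for $k<n-1$, so that dividing by $\oldlambda^{n-1}$ produces only the nonnegative powers $(s/\oldlambda)^{n-1-k} = t^{n-1-k}$. Once that observation is in place, the proof reduces to a routine compactness and uniform continuity argument.
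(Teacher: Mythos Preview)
Your proof is correct and follows essentially the same approach as the paper: both exploit the structural decomposition of Corollary~\ref{Fstructure}, the positivity of $h_{n-1}(0)$ near $\oldlambda=0$, and the positivity of $F(\oldlambda,0)$ for $\oldlambda>0$, combined with compactness. Your change of variable $t=s/\oldlambda$ packages the argument into a single uniform-continuity step, whereas the paper carries out the term-by-term estimates explicitly and then splits into the cases of small $\oldlambda$ and $\oldlambda$ bounded away from zero; the content is the same.
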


 \begin{proof}
    Let $K$ be a positive constant so large that $\abs{h_\indexvar(s)}
    \le K$ and $\abs{H(\oldlambda,s)} \le K$ for all $0 \le \oldlambda
    < \oldlambda_1$, $0 \le s < \oldlambda_1$, $\indexvar \le n-1$.
    For any $\delta < 1$ and all $s \le \delta \oldlambda < \oldlambda_1$, we
    have
   \begin{align*}
     \Re G(\oldlambda, s) &= \frac{\oldlambda^{n-1}}{(n-1)!} \Re
     h_{n-1}(s) + 
            \sum_{\indexvar=0}^{n-2} \frac{\oldlambda^{\indexvar} s^{n-1-\indexvar}}{\indexvar!}
           \Re {h_\indexvar(s)} + \oldlambda^n \Re H(\oldlambda,s)
           \\ &\ge \frac{\oldlambda^{n-1}}{(n-1)!} \Re h_{n-1}(s) -
           \sum_{\indexvar=0}^{n-2} \frac{\oldlambda^{n-1} \delta^{n-1-\indexvar} K}{\indexvar!}
           - \oldlambda^n K \\
           &= \oldlambda^{n-1} \left[ \frac{\Re h_{n-1}(s)}{(n-1)!} - K \sum_{\indexvar=0}^{n-2} \frac{\delta^{n-1-\indexvar}}{\indexvar!}
           \right] - \oldlambda^n K.
   \end{align*}
   Since $h_{n-1}(0) > 0$, we may now choose $\delta$ so small that the bracketed term is positive
   for all $0 \le s \le \delta \oldlambda_1$.  Then there exists $\oldlambda_0 >
   0$ so small that for all $0 \le \oldlambda \le \oldlambda_0$, we have $\Re
   F(\oldlambda,s) \ge e^{-\oldlambda_0/2} \Re G(\oldlambda,s) \ge C_1'
   \oldlambda^{n-1}$ for some $C_1' > 0$.  On the other hand,
   \begin{align*}
     \abs{F(\oldlambda, s)} &\le \abs{G(\oldlambda,s)} \\
     &\le \sum_{\indexvar=0}^{n-1} \frac{\oldlambda^{\indexvar} s^{n-1-\indexvar}}{\indexvar!}
           \abs{h_\indexvar(s)} + \oldlambda^n \Re H(\oldlambda,s) \\
           &\le \oldlambda^{n-1} \sum_{\indexvar=0}^{n-1} \frac{K \delta^{n-1-\indexvar}}{\indexvar!}
            + \oldlambda^n K.
   \end{align*}
   Again, for small $\oldlambda$ (take $\oldlambda_0$ smaller if necessary),
   we have $\abs{F(\oldlambda,s)} \le C_2' \oldlambda^{n-1}$.

   It remains to handle $\oldlambda_0 \le \oldlambda \le \oldlambda_1$.  But
   this presents no difficulty; as $F(\oldlambda,0) > 0$ for all $\oldlambda
   > 0$, and $F$ is continuous, there exists $\delta$ so small that
   \begin{equation*}
     \inf_{\oldlambda_0 \le \oldlambda \le \oldlambda_1, 0 \le s \le \delta
     \oldlambda_1} \Re F(\oldlambda, s) > 0.
   \end{equation*}
   This completes the proof.
 \end{proof}

\index{steepest descent|(}
 \begin{proposition}\label{Fprop-III}
   There exists $\oldlambda_1 > 0$, $s_0 > 0$ and constants $C_1, C_2
   > 0$ such that
   \begin{equation}\label{Fprop-III-eqn}
     \frac{C_1}{\sqrt{\oldlambda}} \le \Re F(\oldlambda,s) \le
     \abs{F(\oldlambda,s)} \le \frac{C_2}{\sqrt{\oldlambda}}
   \end{equation}
   for all $\oldlambda > \oldlambda_1$, $0 < s < s_0$.
 \end{proposition}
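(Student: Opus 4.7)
The plan is to prove this by the method of steepest descent applied to the contour integral (\ref{FUV}). The underlying observation is that $\circlevar = s$ is an exact non-degenerate saddle point of $\phi(\cdot, s)$: one has $\phi(s,s) = 0$ and $\partial_\circlevar \phi(s,s) = 0$, both of which follow from (\ref{phidef}) together with the fact (evident from the definition (\ref{psidef})) that $\rho = i\theta$ is a critical point of $\psi(\cdot, \theta)$. A direct computation yields $\partial_\circlevar^2 \phi(s,s) = -\frac{1}{2s^2} + O(s^{-1})$ as $s \to 0$.

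First I would substitute $\circlevar = sv$ in (\ref{FUV}) to get
\begin{equation*}
F(\oldlambda, s) = \oint_{\abs{v} = \radius/s} e^{-\oldlambda \phi(sv, s)} v^{-n} V(sv) (-i) \, dv.
\end{equation*}
For $s < \radius$ the only singularity of the integrand inside the contour is at $v = 0$, so I can deform to the unit circle and parametrize $v = e^{i\theta}$:
\begin{equation*}
F(\oldlambda, s) = \int_{-\pi}^\pi e^{-\oldlambda \phi(se^{i\theta}, s)} V(se^{i\theta}) e^{-i(n-1)\theta} \, d\theta.
\end{equation*}
Since $e^{i\theta} - 1 = i\theta + O(\theta^2)$, the contour runs through $v = 1$ in the steepest descent direction of the saddle. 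Taylor expansion then gives $\phi(se^{i\theta}, s) = \frac{\theta^2}{4}(1 + O(s)) + O(\theta^3)$, so that $\Re \phi(se^{i\theta}, s) \ge c\theta^2$ uniformly on $[-\pi,\pi]$ for $s < s_0$ sufficiently small.

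The rest is a standard Laplace argument. I would split the integral at $\abs{\theta} = \delta$: the contribution from $\abs{\theta} \ge \delta$ is $O(e^{-c\oldlambda \delta^2})$, and on $\abs{\theta} \le \delta$ the substitution $\theta = \eta/\sqrt{\oldlambda}$ together with dominated convergence yields
\begin{equation*}
F(\oldlambda, s) = \frac{1}{\sqrt{\oldlambda}}\bigl(2\sqrt{\pi}\, V(0) + o(1)\bigr)
\end{equation*}
as $\oldlambda \to \infty$, uniformly in $s \in (0, s_0]$. Since $V(0) > 0$, this yields both the upper bound $\abs{F} \le C_2/\sqrt{\oldlambda}$ and the lower bound $\Re F \ge C_1/\sqrt{\oldlambda}$ for all $\oldlambda$ larger than some $\oldlambda_1$.

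The main obstacle will be making the Laplace estimate uniform in $s$. The imaginary part $\oldlambda \Im \phi(se^{i\theta}, s) = O(\oldlambda s \theta^2) + O(\oldlambda \theta^3)$ must be kept $o(1)$ on the effective window $\abs{\theta} \lesssim 1/\sqrt{\oldlambda}$, which requires $s + 1/\sqrt{\oldlambda}$ to be small; this forces $s_0$ and $1/\oldlambda_1$ to both be chosen small. Similarly, the $O(s)$ correction in the Hessian must not dominate the leading $-\frac{1}{2}$, again requiring $s_0$ small.
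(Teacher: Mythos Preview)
Your approach is essentially the paper's: both choose the contour of radius $s$ (your substitution $\circlevar = sv$ followed by $v=e^{i\theta}$ is exactly the paper's parametrization $\circlevar = se^{i\polar}$), identify the saddle at $\circlevar = s$, and run a Laplace argument. The paper packages the Laplace step by invoking the abstract Lemma~\ref{abstract-lemma-multi} with $m=1$, $x = \sqrt{\oldlambda}$, $g(\polar,s) = \Re\phi(se^{i\polar},s)$ and $k$ absorbing the imaginary phase and the amplitude $s^n a(i(\pi - se^{i\polar}))e^{i\polar} = V(se^{i\polar})e^{-i(n-1)\polar}$; you carry out the same estimate by hand, arriving at the same leading term $2\sqrt{\pi}\,V(0)/\sqrt{\oldlambda}$.

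One point to tighten: your claim that $\Re\phi(se^{i\theta},s) \ge c\theta^2$ \emph{uniformly on $[-\pi,\pi]$} does not follow from Taylor expansion alone, which only controls small $\theta$. The paper closes this by reading off from (\ref{phi-series}) the exact identity $\phi(se^{i\polar},s) = \tfrac{1}{2}(1-\cos\polar) - sU(se^{i\polar},s)$, so that $\Re\phi$ is $\tfrac{1}{2}(1-\cos\polar)$ plus a perturbation that is uniformly $O(s)$; taking $s_0$ small then gives the global quadratic lower bound. You should invoke this explicit formula rather than the Taylor remainder for the region $\abs{\theta}\ge\delta$.
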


 \begin{proof}
   Here the Gaussian approximation technique of Section
   \ref{steepest-descent-sec} is again applicable.  We will fix the
   contour in (\ref{Fdef}) as a circle of radius $\radius = s$,
   parametrize it, and examine the integrand directly.  Thus let $w =
   se^{i\polar}$ in (\ref{Fdef}) to obtain
 \begin{align}
   F(\oldlambda,s) = s^{n-1} \int_{-\pi}^\pi e^{-\oldlambda \phi(se^{i\polar},s)}
   a(i(\pi-se^{i\polar}))se^{i\polar}\,d\polar.
 \end{align}
 We shall apply Lemma \ref{abstract-lemma-multi}, with $m=1$, $\lambda=\polar$,
 $r=\pi$, $x=\sqrt{\oldlambda}$.  Let 
 \begin{align}
 g(\polar, s) &= \Re \phi(se^{i\polar},s) \\
 k(\sqrt{\oldlambda}, \polar, s) &= e^{-i \sqrt{\oldlambda}^2 \Im
   \phi(se^{i\polar},s)} s^n a(i(\pi-se^{i\polar}))e^{i\polar}
 \end{align}

 Since $\phi(s,s) = 0$ and $\circlevar=s$ is a critical point of
 $\phi(\circlevar,s)$, we have 
 \begin{align}
   \evalat{\frac{\partial^2}{\partial^2\polar} \phi(se^{i\polar},
   s)}{\polar=0} &= \frac{s}{4\pi} \phi''(s,s) (is)^2 = \frac{s^3
     \nu'(\pi-s)}{4\pi}
 \end{align}
which is bounded and positive for all small $s$ (recall $\nu(\pi-s)
\sim s^{-2}$).  Thus there exists $s_0, \epsilon$ small enough and
constants $b_1, b_2$ such that
\begin{equation}\label{g-bounds}
  b_1 \polar^2 \le g(\polar,s) \le b_2 \polar^2
\end{equation}
for $s < s_0$, $\abs{\polar} < \epsilon$.  Also, we have from (\ref{phi-series}) that
\begin{equation}
  \phi(se^{i\polar},s) = \frac{1}{2} - \frac{1}{2}\cos\polar -sU(se^{i\polar},s)
\end{equation}
so that by taking $s_0$ smaller if necessary, we can ensure
$g(\polar,s) > 0$ for all $s < s_0$ and $\epsilon \le \abs{\polar} \le
\pi$.  Thus (\ref{g-bounds}) holds for $s < s_0$ and all $\polar \in
   [-\pi,\pi]$, with possibly different constants $b_1, b_2$.

Boundedness of $k$ follows from the fact that $a$ has a pole of order
$n$ at $i\pi$, so $s^n a(i(\pi-se^{i\polar})) = V(se^{i\polar})$ is
bounded for small $s$.  Finally, since
$\evalat{\frac{\partial^2}{\partial^2\polar} \phi(se^{i\polar},
  s)}{\polar=0} > 0$ and $V(0)>0$, the argument used in the proof of Theorem
\ref{region-I-theorem} shows that the necessary lower bound on $k$
also holds.  Then an application of Lemma \ref{abstract-lemma-multi}
completes the proof.
 \end{proof}
\index{steepest descent|)}

 \begin{proof}[Proof of Theorem \ref{h-estimate}]
   Choose $\oldlambda_1, s_0$ so that Proposition \ref{Fprop-III} holds, and take
   $B_1$ large enough so that $\theta(x,z) \ge \pi-s$ when $\abs{z}
   \ge B_1 \abs{x}^2$.  Use this value of $\oldlambda_1$ and choose a
   $\delta$ such that Proposition \ref{Fprop-II} holds, and take $D_0$
   large enough that $s < \delta \oldlambda$ when $\abs{z} \ge dD0$
   (see (\ref{z-comparisons})).  So for such $(x,z)$, either
   (\ref{Fprop-II-eqn}) or (\ref{Fprop-III-eqn}) holds; which one depends on the value
   of $\oldlambda = \oldlambda (x,z)$.  We can combine them to get
   \begin{equation}\label{F-unified}
     C_1' \frac{\oldlambda^{n-1}}{1+\oldlambda^{n-\frac{1}{2}}} \le \Re
     F(\oldlambda,s) \le \abs{F(\oldlambda,s)} \le C_2'
     \frac{\oldlambda^{n-1}}{1+\oldlambda^{n-\frac{1}{2}}}.
   \end{equation}
   Inserting this into (\ref{hr-F}) and using (\ref{z-comparisons}),
   we have (in more compact notation)
   \begin{equation}
     h_r(x,z) \asymp \left(\frac{\oldlambda}{s}\right)^{n-1}
     \frac{1}{1+\oldlambda^{n-\frac{1}{2}}} \asymp
     \frac{\abs{z}^{n-1}}{1+(\abs{x}\sqrt{\abs{z}})^{n-\frac{1}{2}}}.
   \end{equation}
   By Lemma (\ref{hl-estimate}), $h_l$ is clearly negligible by
   comparison, so Theorem \ref{h-estimate} is proved.
\end{proof}

A similar argument will give us the estimates on $\grad p_1$ and $q_2$
which correspond to Theorems \ref{main-gradient-theorem} and
\ref{vertical-gradient-theorem}.

\begin{theorem}\label{gradient-thm-regionII}
  For $m$ odd, there exist constants $B_1, D_0, C$ such that
  \begin{equation}
    \abs{\grad p_1(x,z)} \asymp \frac{\abs{x}
      d_0(x,z)^{2n-m+1}}{1+(\abs{x}d_0(x,z))^{n+\frac{1}{2}}}e^{-\frac{1}{4}d_0(x,z)^2}
  \end{equation}
and
\begin{equation} \label{q2-upper-regionII}
  \abs{q_2(x,z)} \le C
  \frac{d_0(x,z)^{2n-m-1}}{1+(\abs{x}d_0(x,z))^{n-\frac{1}{2}}}e^{-\frac{1}{4}d_0(x,z)^2}
\end{equation}
  whenever $\abs{z}\ge B_1\abs{x}^2$ and $d_0(x,z) \ge D_0$.
\end{theorem}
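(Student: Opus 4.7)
The plan is to reduce the theorem to estimates on the scalar functions $q_1$ and $q_2$ of (\ref{q1-def})--(\ref{q2-def}). By (\ref{grad-q1-q2}) and the orthogonality of $\hat{x}$ and $J_{\hat{z}}\hat{x}$ noted after (\ref{gradient-radial}), one has $\abs{\grad p_1(x,z)}^2 = C\abs{x}^2(q_1(x,z)^2 + q_2(x,z)^2)$ for a universal constant $C$. Matching upper bounds on both $\abs{q_1}$ and $\abs{q_2}$ will yield the upper bound on $\abs{\grad p_1}$, while a matching lower bound on either one will yield the lower bound, in parallel with the dichotomy (\ref{I-q-upper})--(\ref{I-q-lower}) from Section \ref{steepest-descent-sec}.

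Both $q_i$ are susceptible to the polar-coordinate/residue analysis of Section \ref{polar-sec}. Converting the $\R^m$-integrals (\ref{q1-def}), (\ref{q2-def}) via the Hankel identity (\ref{hankel-expansion}) (valid because $m$ is odd) reduces them to finite sums of one-dimensional integrals over $\R$. For $q_1$ the resulting integrand takes the form $\left(\frac{\rho}{\sinh\rho}\right)^{n+1}\cosh(\rho)(-i\rho)^k$, which has a pole of order $n+1$ at $\rho = i\pi$, since $\cosh(i\pi) = -1 \ne 0$. For $q_2$, writing $(-i)\rho\sigma\cdot\hat{z}\,e^{i\rho\sigma\cdot z} = -\hat{z}\cdot\grad_z e^{i\rho\sigma\cdot z}$ lets us replace the angular integral by $-\rho I'(\rho\abs{z})$, where $I(w) := \int_{S^{m-1}} e^{iw\sigma\cdot\hat{z}}\,d\sigma$; differentiating the explicit form of $I$ yields another Hankel-type sum whose integrands have a pole of order $n$ at $i\pi$. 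In each case we then move the contour to $\Im\rho = 3\pi/2$; Lemma \ref{hl-estimate} disposes of the line integral, leaving only the residue at $\rho = i\pi$.

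The core step is then to generalize Theorem \ref{h-estimate} to integrands with a pole of arbitrary order $p$ at $i\pi$. Substituting $p$ for $n$ throughout Lemma \ref{g_m}, Corollary \ref{Fstructure}, and Propositions \ref{Fprop-II}--\ref{Fprop-III}, the unified estimate (\ref{F-unified}) becomes $\Re h(x,z) \asymp \frac{\abs{z}^{p-1}}{1 + (\abs{x}\sqrt{\abs{z}})^{p-1/2}}$. Applied to $q_1$ with $p = n+1$, the dominant $k = (m-1)/2$ term of the Hankel sum (carrying the prefactor $\abs{z}^{-(m-1)/2}$) yields $\abs{q_1(x,z)} \asymp \frac{d_0^{2n-m+1}}{1+(\abs{x}d_0)^{n+1/2}}e^{-d_0^2/4}$, using $d_0 \asymp \sqrt{\abs{z}}$ in this region. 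The same scheme applied to $q_2$ with $p = n$ gives the upper bound (\ref{q2-upper-regionII}). A direct comparison shows that the ratio $\abs{q_2}/\abs{q_1}$ is controlled by $\abs{x}/d_0$ or $1/d_0^2$, both of which can be made arbitrarily small by enlarging $B_1$ and $D_0$; hence $q_1$ dominates and $\abs{x}\sqrt{q_1^2 + q_2^2} \asymp \abs{x}\abs{q_1}$ gives the sharp two-sided bound on $\abs{\grad p_1}$.

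The chief technical obstacle will be verifying that the positivity $h_{n-1}(0) > 0$ underlying the lower bound in Corollary \ref{Fstructure} carries over under the substitution of $p$ for $n$---that is, the analogue of $V(0) > 0$ in (\ref{a-series}) must hold for the new integrands. For $q_1$ this reduces to computing the leading Laurent coefficient of $(\rho/\sinh\rho)^{n+1}\cosh(\rho)$ at $\rho = i\pi$, where the factor $\cosh(i\pi) = -1$ guarantees non-vanishing. For $q_2$ the computation is more delicate because differentiating $I$ redistributes the polynomial prefactors across several Hankel terms; one must track which surviving term carries the nonzero leading coefficient. Granted this positivity check, the remainder of the argument is a routine adaptation of Theorem \ref{region-II-III-theorem}, with a case-split on $\theta(x,z)$ near the boundary of region II (as at the end of the proof of Theorem \ref{region-I-theorem}) in the event that the $q_1$-positivity should degenerate at isolated parameter values.
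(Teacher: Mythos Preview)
Your proposal is correct and follows essentially the same route as the paper: differentiate the polar/Hankel expansion (\ref{p-expansion}) to express $q_1$ and $q_2$ as sums of one-dimensional integrals, apply Theorem \ref{h-estimate} (with the pole order $n$ replaced by $n+1$ for $q_1$, and the sign flip from $\cosh(i\pi)=-1$ absorbed), and then observe that the $q_1$ estimate dominates the $q_2$ estimate so that $\abs{\grad p_1}\asymp\abs{x}\abs{q_1}$. The paper differentiates the expanded sum directly in $\abs{z}$ rather than via $I'(\rho\abs{z})$, and the positivity check for $q_2$ goes through without the delicacy you anticipate (indeed the paper obtains the two-sided estimate (\ref{q2-estimate-regionII})), so your final case-split contingency is unnecessary.
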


\begin{proof}
  Applying (\ref{grad-q1-q2}) to (\ref{p-expansion}), we have
\begin{align*}
  \grad p_1(x,z) &= -\frac{1}{2}(2\pi)^{-m}(4\pi)^{-n}\abs{x} (q_1(x,z) \hat{x} + q_2(x,z)
  J_{\hat{z}}\hat{x}) \label{grad-q1-q2}\\
   \intertext{where}
   q_1(x,z) &= -\frac{2}{\abs{x}} \frac{\partial p_1(x,z)}{\partial
   \abs{x}} \\
   &= - \sum_{k=0}^{(m-1)/2} c_{m,k}  \abs{z}^{k-m+1} \int_{-\infty}^\infty
   e^{{i} \rho \abs{z} -\frac{\abs{x}^2}{4} \rho \coth
     \rho} \left(\frac{\rho}{\sinh\rho}\right)^{n+1} (-\cosh\rho) (-i\rho)^{k}
  \,d\rho \\
 q_2(x,z) &=\frac{\partial p_1(x,z)}{\partial
   \abs{z}} \\
 &= \sum_{k=0}^{(m-1)/2} \left[ c_{m,k} (k-m+1)\abs{z}^{k-m} \int_{-\infty}^\infty
   e^{{i} \rho \abs{z} -\frac{\abs{x}^2}{4} \rho \coth
     \rho} \left(\frac{\rho}{\sinh\rho}\right)^n (-i\rho)^{k}
   \,d\rho\right] \\
   &\quad  - \sum_{k=0}^{(m-1)/2} \left[c_{m,k} \abs{z}^{k-m+1} \int_{-\infty}^\infty
   e^{{i} \rho \abs{z} -\frac{\abs{x}^2}{4} \rho \coth
     \rho} \left(\frac{\rho}{\sinh\rho}\right)^n (-i\rho)^{k+1}
   \,d\rho \right].
\end{align*}

  Each integral can be estimated by Theorem \ref{h-estimate}.  For
  $q_1$, each integral is comparable to
  $e^{-\frac{1}{4}d_0(x,z)^2}\frac{\abs{z}^{n}}{1+(\abs{x}\sqrt{\abs{z}})^{n+\frac{1}{2}}}$,
  and the \mbox{$k=(m-1)/2$} term dominates, so
  \begin{equation}\label{q1-estimate-regionII}
    \abs{q_1(x,z)} \asymp \frac{\abs{z}^{n-(m-1)/2}}{1+(\abs{x}\sqrt{\abs{z}})^{n+\frac{1}{2}}}e^{-\frac{1}{4}d_0(x,z)^2}.
  \end{equation}
  The appearance of the extra minus sign in $q_1$ is to account for
  the fact that $\cosh(i\pi) = -1$, but Theorem \ref{h-estimate}
  requires that $a(\lambda)$ be positive near $\lambda=i\pi$.

  For $q_2$, each integral is comparable to
  $\frac{\abs{z}^{n-1}}{1+(\abs{x}\sqrt{\abs{z}})^{n-\frac{1}{2}}}e^{-\frac{1}{4}d_0(x,z)^2}$,
  and the \mbox{$k=(m-1)/2$} term of the second sum dominates, so
  \begin{equation}\label{q2-estimate-regionII}
    \abs{q_2(x,z)} \asymp
    \frac{\abs{z}^{n-1-(m-1)/2}}{1+(\abs{x}\sqrt{\abs{z}})^{n-\frac{1}{2}}}
    e^{-\frac{1}{4}d_0(x,z)^2}
  \end{equation}
  which in particular implies (\ref{q2-upper-regionII}).  To combine
  (\ref{q1-estimate-regionII}) and (\ref{q2-estimate-regionII}), note
  that for $\abs{x}^2 \abs{z}$ bounded we have
  \begin{equation}
    \abs{q_1(x,z)} \asymp \abs{z}^{n-(m-1)/2}e^{-\frac{1}{4}d_0(x,z)^2}; \quad
    \abs{q_2(x,z)} \asymp \abs{z}^{n-1-(m-1)/2}e^{-\frac{1}{4}d_0(x,z)^2}
  \end{equation}
  so that the $q_1$ term dominates, and
  \begin{equation}
    \abs{\grad p_1(x,z)} \asymp \abs{x} \abs{z}^{n-(m-1)/2}e^{-\frac{1}{4}d_0(x,z)^2}.
  \end{equation}
  For $\abs{x}^2 \abs{z}$ bounded away from $0$ we have
  \begin{equation}
    \begin{split}
    \abs{q_1(x,z)} &\asymp \abs{x}^{-n-\frac{1}{2}} \abs{z}^{\frac{n}{2}
      - \frac{m}{2} + \frac{1}{4}}e^{-\frac{1}{4}d_0(x,z)^2} 
\\  \abs{q_2(x,z)} &\asymp
    \abs{x}^{-n+\frac{1}{2}} \abs{z}^{\frac{n}{2} - \frac{m}{2} -
      \frac{1}{4}}e^{-\frac{1}{4}d_0(x,z)^2} \asymp
    \frac{\abs{x}}{\sqrt{\abs{z}}} q_1(x,z)
    \end{split}
  \end{equation}
  so that the $q_1$ term dominates again
  ($\frac{\abs{x}}{\sqrt{\abs{z}}}$ is bounded by assumption).  Thus
  \begin{equation}
    \abs{\grad p_1(x,z)} \asymp \abs{x}
    \frac{\abs{z}^{n-(m-1)/2}}{1+(\abs{x}\sqrt{\abs{z}})^{n+\frac{1}{2}}}e^{-\frac{1}{4}d_0(x,z)^2}
  \end{equation}
  which is equivalent to the desired estimate.
\end{proof}

\index{polar coordinates|)}

\section{Hadamard descent}\label{hadamard-sec}

\index{Hadamard descent|(}
In this section, we obtain estimates for $p_1(x,z)$ and $\abs{\grad
  p_1(x,z)}$ for $\abs{z} \ge B_1 \abs{x}^2$, $\abs{z} \ge D_0$, in
the case where the center dimension $m$ is even.  The methods of the
previous section are not directly applicable, but we can deduce an
estimate for even values of $m$ by integrating the corresponding estimate for
$m+1$.  As discussed in the remark at the end of Section
\ref{heat-kernel-statement-sec}, this is valid even though there may not exist
an $H$-type group of dimension $2n+m+1$ with center dimension $m+1$,
since the estimates we use are derived from the formula
(\ref{pt-formula-1}) and hold for all values of $n,m$.

We continue to assume that $\abs{z} \ge B_1 \abs{x}^2$ and $\abs{z}
\ge D_0$ for some sufficiently large $B_1, D_0$.  To emphasize the
dependence on the dimension, we write $p^{(n,m)}$ for the function
$p_1$ in (\ref{pt-formula-1}).

In order to estimate $p^{(n,m)}$ for $m$ even, we consider
$p^{(n,m+1)}$.  We can observe that
\begin{equation}\label{dimension-reduce}
  p^{(n,m)}(x,z) = \int_\R p^{(n,m+1)}(x, (z, z_{m+1}))\,dz_{m+1}
\end{equation}
since $\int_\R\int_\R e^{i \lambda_{m+1} z_{m+1}}
f(\lambda_{m+1})\,d\lambda_{m+1}\,dz_{m+1} = 2 \pi f(0)$.  Note that
$\abs{(\lambda,0)}_{\R^{m+1}} = \abs{\lambda}_{\R^m}$.  Now
$p^{(n,m+1)}$ can be estimated by means of Theorem
\ref{region-II-III-theorem}.  Using the fact that $\abs{(z,z_{m+1})}
\ge \abs{z}$, we have that for $m$ even, there exist constants $B_1,
D_0$ such that
  \begin{equation}\label{p-Q-comp}
    p^{(n,m)}(x,z) \asymp  Q^{(2n-m-2,n-\frac{1}{2})}(x,z)
  \end{equation}
  whenever $\abs{z} \ge B_1 \abs{x}^2$ and $\abs{z} \ge D_0$, where
  \begin{equation}\label{Qnm-def}
    Q^{(\alpha,\beta)}(x,z) := \int_\R \frac{d_0(x,(z,z_{m+1}))^{\alpha}}{1+(\abs{x}d_0(x,(z,z_{m+1})))^{\beta}}
    e^{-\frac{1}{4}d_0(x,(z,z_{m+1}))^2}\,dz_{m+1}.
  \end{equation}
  Thus it suffices to estimate the integrated bounds given by
  $Q^{(\alpha,\beta)}$.

  \begin{lemma}\label{Q-est}
    For $\abs{z} \ge B_1 \abs{x}^2$ and $\abs{z} \ge D_0$, we have
    \begin{equation}
      Q^{(\alpha,\beta)}(x,z) \asymp \frac{d_0(x,z)^{\alpha +
          1}}{1+(\abs{x}d_0(x,z))^\beta} e^{-\frac{1}{4}d_0(x,z)^2}.
    \end{equation}
  \end{lemma}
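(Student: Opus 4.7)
The plan is to set up a Laplace-type estimate, exploiting the fact that the integrand of $Q^{(\alpha,\beta)}$ peaks sharply at $z_{m+1}=0$ because $d_0(x,(z,z_{m+1}))^2$ attains its minimum there (being a function of $|(z,z_{m+1})| = \sqrt{\abs{z}^2+z_{m+1}^2}$). Writing $u = z_{m+1}$, $f(u) := \sqrt{\abs{z}^2+u^2}$, and $r(s) := d_0(x,\cdot)^2$ viewed as a function of the norm $s = |\cdot|$, we have $d_0(x,(z,u))^2 = r(f(u))$, and our goal reduces to showing that
\[
e^{-d_0(x,z)^2/4} \int_\R \frac{d_0(x,(z,u))^\alpha}{1+(\abs{x}d_0(x,(z,u)))^\beta}\,e^{-g(u)}\,du
\]
behaves like the claimed quantity, where $g(u) := (r(f(u))-r(\abs{z}))/4$ satisfies $g(0)=g'(0)=0$.

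The heart of the matter is a clean formula for $r'$. Differentiating $r(s) = \abs{x}^2(\theta/\sin\theta)^2$ with $\nu(\theta(s)) = 4s/\abs{x}^2$, and using the identity $\nu'(\theta)\sin^3\theta = 2(\sin\theta-\theta\cos\theta)$ (from Lemma \ref{nu-increase}), one finds after simplification that $r'(s) = 4\theta(s)$. Since in the regime $\abs{z} \ge B_1\abs{x}^2$, $\abs{z} \ge D_0$ one has $\theta(s)$ close to $\pi$ uniformly, it follows that $r'(s) \asymp 1$ on $[\abs{z},\infty)$. Consequently, for $|u|\le \abs{z}$ the elementary estimate $u^2/(3\abs{z}) \le f(u)-\abs{z} \le u^2/(2\abs{z})$ gives $g(u)\asymp u^2/\abs{z}$, while for $|u|\ge 2\abs{z}$ one gets $f(u)-\abs{z} \ge |u|/2$ and hence $g(u) \gtrsim |u|$.

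These two-sided bounds on $g$ yield $\int_\R e^{-g(u)}\,du \asymp \sqrt{\abs{z}} \asymp d_0(x,z)$ in the usual way: the lower bound comes from restricting to $|u|\le d_0(x,z) \lesssim \sqrt{\abs{z}}$ where $g(u)\le C$, and the upper bound comes from splitting at $|u|=\abs{z}$ and using the Gaussian estimate for small $|u|$ and the exponential decay for large $|u|$. To handle the polynomial factor, observe that for $|u|\le d_0(x,z)$ one has $|(z,u)| = \abs{z}+O(1)$, so Corollary \ref{distance-estimate} gives $d_0(x,(z,u))^2 \asymp \abs{x}^2+|(z,u)| \asymp d_0(x,z)^2$, and hence $\frac{d_0(x,(z,u))^\alpha}{1+(\abs{x}d_0(x,(z,u)))^\beta} \asymp \frac{d_0(x,z)^\alpha}{1+(\abs{x}d_0(x,z))^\beta}$; this gives the lower bound. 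For the upper bound, on $d_0(x,z)\le |u|\le \abs{z}$ the same comparison still holds, and on $|u|\ge \abs{z}$ the worst-case polynomial growth $d_0(x,(z,u))^\alpha \lesssim |u|^{\alpha/2}$ is overwhelmed by $e^{-g(u)} \lesssim e^{-c|u|}$, contributing a negligible tail.

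Combining the three ingredients gives $Q^{(\alpha,\beta)}(x,z) \asymp \frac{d_0(x,z)^{\alpha+1}}{1+(\abs{x}d_0(x,z))^\beta} e^{-d_0(x,z)^2/4}$, with constants uniform in $(x,z)$ in the specified region (after taking $D_0$ large enough that $\abs{z}$ dominates the constants that appeared along the way, and $B_1$ large enough to confine $\theta$ to a suitable neighborhood of $\pi$). The main technical obstacle is the derivation $r'(s)=4\theta(s)$; once this is in hand, everything else is standard Laplace-method bookkeeping. A secondary subtlety is ensuring uniformity: the constants hidden in $r'(s)\asymp 1$, in the comparison of $d_0(x,(z,u))$ with $d_0(x,z)$, and in the tail bound must all be controlled uniformly over the region $\abs{z}\ge B_1\abs{x}^2$, $\abs{z}\ge D_0$, which is where the upper bound $\theta < \pi$ and the lower bound on $\theta$ supplied by $\abs{z}\ge B_1\abs{x}^2$ both enter.
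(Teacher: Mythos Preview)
Your argument is correct and rests on the same key fact as the paper's proof: that $\partial_{|z|} d_0(x,z)^2$ is bounded above and below by positive constants in this region. Your identity $r'(s)=4\theta(s)$ is in fact a sharper form of the paper's Lemma~\ref{dist-z-deriv}, which only records $r'(s)=4\mu'(\theta)/\nu'(\theta)$ and checks the limit as $\theta\to\pi$; once one notes $\mu'/\nu'=\theta$ exactly, the bound is immediate.

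The execution differs, however. The paper changes variables to the radial coordinate $u=|(z,z_{m+1})|$, which produces the singular but integrable Jacobian $u/\sqrt{u^2-|z|^2}$; it then splits the $u$-integral at $|z|+1$, handles the short piece by the explicit evaluation $\int_{|z|}^{|z|+1}(u-|z|)^{-1/2}\,du=2$ together with the Lipschitz bound on $d_0^2$, and treats the tail by a further change of variables $w=\tfrac14 d_0(x,u)^2$ and Lemma~\ref{exp-poly-est}. Your approach keeps the original variable $z_{m+1}$ and runs a direct Laplace estimate: the quadratic lower and upper bounds $g(u)\asymp u^2/|z|$ on $|u|\le|z|$ give the Gaussian scale $\sqrt{|z|}\asymp d_0(x,z)$, and the linear lower bound on $g$ for $|u|\ge 2|z|$ disposes of the tail. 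This avoids the singular Jacobian entirely and is arguably cleaner; the paper's route has the advantage that after the change of variables the integrand is monotone in $u$, so the splitting arguments are slightly more mechanical. One small gap to fill in your write-up: the intermediate range $|z|\le|u|\le 2|z|$ is not covered by either of your stated bounds on $g$, but there $g(u)\ge g(|z|)\gtrsim|z|$ by monotonicity, so the contribution is $O(|z|^{N}e^{-c|z|})$ and negligible.
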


We will require two preliminary computations.  Since $d_0(x,z)$ depends
on $z$ only through $\abs{z}$, we will occasionally treat $d_0$ as a
function on $\R^{2n} \times [0,\infty)$.

\begin{lemma}\label{dist-z-deriv}
  There exist positive constants $c_1, c_2, B_1$ such that for all
  $x \in \R^{2n}, u \in \R$ with $u
  \ge B_1 \abs{x}^2$, we have $0 < c_1 \le \frac{\partial}{\partial u}
  d_0(x,u)^2 \le c_2 < \infty$.
\end{lemma}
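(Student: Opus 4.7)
The plan is to use the explicit formula for the distance from Theorem \ref{distance} together with implicit differentiation. Write $d_0(x,u)^2 = |x|^2 f(\theta)$ where $f(\theta) := \theta^2/\sin^2\theta$ and $\theta = \theta(x,u) \in [0,\pi)$ is defined implicitly by the relation $\nu(\theta) = 4u/|x|^2$ (this handles $x \ne 0$; the case $x = 0$ gives $d_0(0,u)^2 = 4\pi u$ directly, so $\partial_u d_0^2 = 4\pi$).

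Differentiating the implicit relation yields $\partial_u \theta = 4/(|x|^2 \nu'(\theta))$, so
\begin{equation*}
\frac{\partial}{\partial u} d_0(x,u)^2 = |x|^2 f'(\theta) \cdot \frac{4}{|x|^2 \nu'(\theta)} = \frac{4 f'(\theta)}{\nu'(\theta)}.
\end{equation*}
The key algebraic observation is that a direct computation gives $f'(\theta) = 2\theta(\sin\theta - \theta\cos\theta)/\sin^3\theta$, which is exactly $\theta \cdot \nu'(\theta)$ by the formula for $\nu'$ recorded in the proof of Lemma \ref{nu-increase}. Hence the derivative collapses to the remarkably clean expression
\begin{equation*}
\frac{\partial}{\partial u} d_0(x,u)^2 = 4\theta(x,u).
\end{equation*}

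Given this identity, the bounds are essentially free. The upper bound $c_2 = 4\pi$ holds always since $\theta < \pi$. For the lower bound, the hypothesis $u \ge B_1 |x|^2$ forces $\nu(\theta) \ge 4 B_1$, and since $\nu$ is strictly increasing on $[0,\pi)$ (Lemma \ref{nu-increase}), $\theta \ge \nu^{-1}(4 B_1) > 0$. Taking $c_1 = 4\nu^{-1}(4 B_1)$ finishes the argument, and the $x=0$ case matches by continuity (as $u/|x|^2 \to \infty$ forces $\theta \to \pi$, giving $\partial_u d_0^2 \to 4\pi$).

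I do not expect any real obstacle here. The only thing to be careful about is verifying the identity $f'(\theta) = \theta \nu'(\theta)$, which is a short trigonometric calculation, and handling the $x = 0$ boundary case, which follows from the explicit formula in Theorem \ref{distance}. Everything else reduces to monotonicity facts about $\nu$ already established.
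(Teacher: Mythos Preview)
Your proof is correct and follows the same approach as the paper: both compute $\partial_u d_0(x,u)^2$ via the chain rule as a constant multiple of $\mu'(\theta)/\nu'(\theta)$ (where $\mu(\theta)=\theta^2/\sin^2\theta$), then bound this ratio. The paper simply notes that the ratio is positive on $(0,\pi)$ and tends to $\pi$ as $\theta\to\pi$, whereas you go one step further and observe the clean identity $\mu'(\theta)=\theta\,\nu'(\theta)$, giving $\partial_u d_0^2 = 4\theta$ exactly; this is a nice sharpening that makes the constants explicit, but not a different method.
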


\begin{proof}
  Let $\mu(\theta) = \frac{\theta^2}{\sin^2\theta}$, so that $d_0(x,u)^2
  = \abs{x}^2 \mu(\theta)$ with $\theta = \theta(x,z) =
  \nu^{-1}\left(\frac{2u}{\abs{x}^2}\right)$.  Then
  \begin{equation}
    \frac{\partial}{\partial u}
  d_0(x,u)^2 = 2 \frac{\mu'(\theta)}{\nu'(\theta)}.
  \end{equation}
  It is easily verified that $\mu'(\theta) > 0$, $\nu'(\theta) > 0$
  for all $\theta \in (0,\pi)$, and $\frac{\mu'(\theta)}{\nu'(\theta)}
  \to \pi > 0$ as $\theta \to \pi$.
\end{proof}

\begin{lemma}\label{exp-poly-est}
  For any $\alpha \in \R$, there exists $C_\alpha > 0$ such that for
  all $w_0 \ge 1$ we have
  \begin{equation}
    \int_{w_0}^\infty w^\alpha e^{-w}\,dw \le C_\alpha w_0^\alpha e^{-w_0}.
  \end{equation}
\end{lemma}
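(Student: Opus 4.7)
The plan is to split on the sign of $\alpha$ and reduce both cases to an explicit bound, since the desired inequality is essentially the classical tail estimate for the upper incomplete gamma function $\Gamma(\alpha+1, w_0)$.

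First I would dispose of the case $\alpha \le 0$. Since $w \mapsto w^\alpha$ is nonincreasing on $[1,\infty)$ when $\alpha \le 0$, we have $w^\alpha \le w_0^\alpha$ for all $w \ge w_0 \ge 1$, hence
\begin{equation*}
  \int_{w_0}^\infty w^\alpha e^{-w}\,dw \le w_0^\alpha \int_{w_0}^\infty e^{-w}\,dw = w_0^\alpha e^{-w_0},
\end{equation*}
so $C_\alpha = 1$ suffices.

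For $\alpha > 0$, I would perform the change of variables $w = w_0 + s$ to pull out the factor $e^{-w_0}$ and get
\begin{equation*}
  \int_{w_0}^\infty w^\alpha e^{-w}\,dw = e^{-w_0} \int_0^\infty (w_0 + s)^\alpha e^{-s}\,ds.
\end{equation*}
Since $w_0 \ge 1$, we have $w_0 + s \le w_0(1 + s)$, and therefore $(w_0+s)^\alpha \le w_0^\alpha (1+s)^\alpha$. Substituting back yields
\begin{equation*}
  \int_{w_0}^\infty w^\alpha e^{-w}\,dw \le w_0^\alpha e^{-w_0} \int_0^\infty (1+s)^\alpha e^{-s}\,ds,
\end{equation*}
and the integral on the right is a finite constant (a translated gamma integral), giving the required $C_\alpha$.

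There is no real obstacle here; the only minor subtlety is handling the two sign cases for $\alpha$ with a single clean bound, which the trivial monotonicity argument (for $\alpha \le 0$) and the shift-and-factor argument (for $\alpha > 0$) dispatch uniformly. One could alternatively give a single proof via integration by parts when $\alpha$ is a nonnegative integer and interpolate, but the split above avoids any such fuss.
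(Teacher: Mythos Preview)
Your proof is correct. The case $\alpha \le 0$ is handled identically to the paper, with $C_\alpha = 1$ via monotonicity.

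For $\alpha > 0$ the two arguments diverge. The paper proceeds by induction on $\lceil \alpha \rceil$: one integration by parts gives
\[
\int_{w_0}^\infty w^\alpha e^{-w}\,dw = w_0^\alpha e^{-w_0} + \alpha \int_{w_0}^\infty w^{\alpha-1} e^{-w}\,dw,
\]
and since $\alpha - 1$ falls into the range already handled, the inductive hypothesis together with $w_0^{\alpha-1} \le w_0^\alpha$ (as $w_0 \ge 1$) closes the loop. Your shift $w = w_0 + s$ combined with the elementary bound $w_0 + s \le w_0(1+s)$ for $w_0 \ge 1$ is more direct: it extracts the factor $w_0^\alpha e^{-w_0}$ in a single stroke and leaves the explicit constant $C_\alpha = \int_0^\infty (1+s)^\alpha e^{-s}\,ds$. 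Amusingly, you anticipated the paper's route in your closing remark about integration by parts; the paper's induction in fact works for all real $\alpha$, not just integers, since each step reduces the exponent by exactly $1$ until it drops to the base case $\alpha \le 0$. Both approaches are short and self-contained; yours gives a slightly cleaner constant, while the paper's is the more classical incomplete-gamma recursion.
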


\begin{proof}
  For $\alpha \le 0$, $w^\alpha$ is decreasing for $w \ge 1$, so
  \begin{equation}
    \int_{w_0}^\infty w^\alpha e^{-w}\,dw \le w_0^\alpha
    \int_{w_0}^\infty e^{-w}\,dw = w_0^\alpha e^{-w_0}
  \end{equation}
  and this holds with $C_\alpha = 1$.  Now, for a nonnegative integer
  $n$, suppose the lemma holds for all $\alpha \le n$.  Then if $n <
  \alpha \le n+1$, we integrate by parts to obtain
  \begin{align*}
    \int_{w_0}^\infty w^\alpha e^{-w}\,dw = w_0^\alpha e^{-w_0} +
    \alpha \int_{w_0}^{\infty} w^{\alpha - 1} e^{-w}\,dw \le (1+\alpha
    C_{\alpha - 1}) w_0^\alpha e^{-w_0}
  \end{align*}
  so that the lemma also holds for all $\alpha \le n+1$.  By induction
  the proof is complete.
\end{proof}

\begin{proof}[Proof of Lemma \ref{Q-est}]
We make the change of variables $u = \abs{(z, z_{m+1})}$ so that
$z_{m+1} = \sqrt{u^2 - \abs{z}^2}$.  By our previous abuse of
notation, we can write $d_0(x,(z,z_{m+1}))=d_0(x,u)$.  Thus
\begin{align*}
  Q^{(\alpha,\beta)}(x,z) &= \int_{\abs{z}}^{\infty} 
  \frac{d_0(x,u)^{\alpha}}{1+(\abs{x}d_0(x,u))^{\beta}}
  e^{-\frac{1}{4}d_0(x,u)^2} \frac{u}{\sqrt{u^2-\abs{z}^2}}\,du \\
    &\asymp \int_{\abs{z}}^{\infty} \frac{1}{\sqrt{u-\abs{z}}}
  \frac{1}{\sqrt{u+\abs{z}}}
  \frac{d_0(x,u)^{\alpha+2}}{1+(\abs{x}d_0(x,u))^{\beta}} e^{-\frac{1}{4}d_0(x,u)^2} \,du.
\end{align*}
We used the fact that $u \asymp d_0(x,u)^2$ where $\abs{z} \ge B_1
\abs{x}^2$, by Corollary \ref{distance-estimate}.

Now, noting that $u \mapsto d_0(x,u)$ is an increasing function, and $w
\mapsto w^{\alpha+2}e^{-\frac{1}{4}w^2}$ is decreasing for large
enough $w$, the lower bound can be obtained by
\begin{align*}
  Q^{(\alpha,\beta)}(x,z) &\ge \int_{\abs{z}}^{\abs{z}+1} \frac{1}{\sqrt{u-\abs{z}}}
  \frac{1}{\sqrt{u+\abs{z}}}
  \frac{d_0(x,u)^{\alpha+2}}{1+(\abs{x}d_0(x,u))^{\beta}}
  e^{-\frac{1}{4}d_0(x,u)^2} \,du  \\
&\ge \left( \int_{\abs{z}}^{\abs{z}+1} \frac{1}{\sqrt{u-\abs{z}}}
  \,du  \right) \frac{1}{\sqrt{2\abs{z}+1}}
  \frac{d_0(x,\abs{z}+1)^{\alpha+2}}{1+(\abs{x}d_0(x,\abs{z}+1))^{\beta}}
  e^{-\frac{1}{4}d_0(x,\abs{z}+1)^2}  \\
  &= 2 \frac{1}{\sqrt{2\abs{z}+1}}
  \frac{d_0(x,\abs{z}+1)^{\alpha+2}}{1+(\abs{x}d_0(x,\abs{z}+1))^{\beta}}
  e^{-\frac{1}{4}d_0(x,\abs{z}+1)^2} \\
  &\ge C \frac{1}{\sqrt{2\abs{z}}}
  \frac{d_0(x,{z})^{\alpha+2}}{1+(\abs{x}d_0(x,{z}))^{\beta}}
  e^{-\frac{1}{4}d_0(x,{z})^2}
\end{align*}
where the last line follows because $u \mapsto d_0(x,u)^2$ is Lipschitz,
as shown by Lemma \ref{dist-z-deriv}, with a constant independent of $x$.

Since $\abs{z} \asymp d_0(x,z)^2$, we have that
\begin{equation}
  Q^{(\alpha,\beta)}(x,z)\ge C'
  \frac{d_0(x,{z})^{\alpha+1}}{1+(\abs{x}d_0(x,{z}))^{\beta}}
  e^{-\frac{1}{4}d_0(x,{z})^2}.
\end{equation}

For an upper bound, we have
\begin{align*}
  Q^{(\alpha,\beta)}(x,z) \le C \left[ \int_{\abs{z}}^{\abs{z}+1} \frac{1}{\sqrt{u-\abs{z}}}
  \frac{1}{\sqrt{u+\abs{z}}}
  \frac{d_0(x,u)^{\alpha+2}}{1+(\abs{x}d_0(x,u))^{\beta}}
  e^{-\frac{1}{4}d_0(x,u)^2} \,du + \int_{\abs{z}+1}^\infty \dots. \right]
\end{align*}
Now
\begin{align*}
  &\quad \int_{\abs{z}}^{\abs{z}+1} \frac{1}{\sqrt{u-\abs{z}}}
  \frac{1}{\sqrt{u+\abs{z}}}
  \frac{d_0(x,u)^{\alpha+2}}{1+(\abs{x}d_0(x,u))^{\beta}}
  e^{-\frac{1}{4}d_0(x,u)^2} \,du \\
&\le \left(\int_{\abs{z}}^{\abs{z}+1} \frac{1}{\sqrt{u-\abs{z}}}
   \,du\right) \frac{1}{\sqrt{2\abs{z}}}
  \frac{d_0(x,z)^{\alpha+2}}{1+(\abs{x}d_0(x,z))^{\beta}}
  e^{-\frac{1}{4}d_0(x,z)^2}  \\
   &= 2 \frac{1}{\sqrt{2\abs{z}}}
  \frac{d_0(x,z)^{\alpha+2}}{1+(\abs{x}d_0(x,z))^{\beta}}
  e^{-\frac{1}{4}d_0(x,z)^2} \\
  &\le C \frac{d_0(x,z)^{\alpha+1}}{1+(\abs{x}d_0(x,z))^{\beta}}
  e^{-\frac{1}{4}d_0(x,z)^2}.
\end{align*}
For the other term, we observe
\begin{align*}
  &\int_{\abs{z}+1}^{\infty} \frac{1}{\sqrt{u-\abs{z}}}
  \frac{1}{\sqrt{u+\abs{z}}}
  \frac{d_0(x,u)^{\alpha+2}}{1+(\abs{x}d_0(x,u))^{\beta}}
  e^{-\frac{1}{4}d_0(x,u)^2} \,du  \\
  &\le \int_{\abs{z}+1}^{\infty}   \frac{1}{\sqrt{u+\abs{z}}}
  \frac{d_0(x,u)^{\alpha+2}}{1+(\abs{x}d_0(x,u))^{\beta}}
  e^{-\frac{1}{4}d_0(x,u)^2} \,du \\
  &\le \int_{\abs{z}}^{\infty}   \frac{1}{\sqrt{2u}}
  \frac{d_0(x,u)^{\alpha+2}}{1+(\abs{x}d_0(x,u))^{\beta}}
  e^{-\frac{1}{4}d_0(x,u)^2} \,du \\
  &\le C \int_{\abs{z}}^{\infty}   \frac{d_0(x,u)^{\alpha+1}}{1+(\abs{x}d_0(x,u))^{\beta}}
  e^{-\frac{1}{4}d_0(x,u)^2} \,du.
\end{align*}
We now make the change of variables $w=\frac{1}{4}d_0(x,u)^2$.  By the
above lemma, $du/dw$ is bounded, so
\begin{align*}
  \int_{\abs{z}}^{\infty}   \frac{d_0(x,u)^{\alpha+1}}{1+(\abs{x}d_0(x,u))^{\beta}}
  e^{-\frac{1}{4}d_0(x,u)^2} \,du \le C \int_{\frac{1}{4}d_0(x,z)^2}^\infty
  \frac{(4w)^{(\alpha+1)/2}}{1+(2\abs{x}\sqrt{w})^\beta} e^{-w}\,dw.
\end{align*}
If $d_0(x,z) \le 1/\abs{x}$, we have
\begin{align*}
  \int_{\frac{1}{4}d_0(x,z)^2}^\infty
  \frac{(4w)^{(\alpha+1)/2}}{1+(2\abs{x}\sqrt{w})^\beta} e^{-w}\,dw
  &\le \int_{\frac{1}{4}d_0(x,z)^2}^\infty
  {(4w)^{(\alpha+1)/2}} e^{-w}\,dw \\
  &\le C d_0(x,z)^{\alpha+1} e^{-\frac{1}{4}d_0(x,z)^2} \\
  &\le 2C \frac{d_0(x,z)^{\alpha+1}}{1+(\abs{x}d_0(x,z))^\beta}e^{-\frac{1}{4}d_0(x,z)^2}
\end{align*}
where we have used Lemma \ref{exp-poly-est}.

On the other hand, when $d_0(x,z) \ge 1/\abs{x}$, we have
\begin{align*}
  \int_{\frac{1}{4}d_0(x,z)^2}^\infty
  \frac{(4w)^{(\alpha+1)/2}}{1+(2\abs{x}\sqrt{w})^\beta} e^{-w}\,dw
  &\le (2\abs{x})^{-\beta} \int_{\frac{1}{4}d_0(x,z)^2}^\infty
  {(4w)^{(\alpha+1-\beta)/2}} e^{-w}\,dw \\
  &\le C \abs{x}^{-\beta} d_0(x,z)^{\alpha+1-\beta} e^{-\frac{1}{4}d_0(x,z)^2} \\
  &\le 2C \frac{d_0(x,z)^{\alpha+1}}{1+(\abs{x}d_0(x,z))^\beta}e^{-\frac{1}{4}d_0(x,z)^2}.
\end{align*}

Combining all this, we have as desired that
\begin{equation}
  Q^{(\alpha,\beta)}(x,z) \asymp \frac{d_0(x,z)^{\alpha+1}}{1+(\abs{x}d_0(x,z))^\beta}e^{-\frac{1}{4}d_0(x,z)^2}.
\end{equation}
\end{proof}

\begin{corollary}
  Theorems  \ref{region-II-III-theorem} and
  \ref{gradient-thm-regionII} also hold for $m$ even.
\end{corollary}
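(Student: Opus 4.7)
For the $p_1$ estimate in Theorem \ref{region-II-III-theorem}, the even-$m$ case is essentially immediate from what is already set up: combining the relation (\ref{p-Q-comp}) with the $\alpha = 2n-m-2$, $\beta = n - \tfrac{1}{2}$ instance of Lemma \ref{Q-est} gives, throughout $\abs{z}\ge B_1\abs{x}^2$, $d_0(x,z)\ge D_0$,
\begin{equation*}
p^{(n,m)}(x,z) \asymp Q^{(2n-m-2,\,n-1/2)}(x,z) \asymp \frac{d_0(x,z)^{2n-m-1}}{1+(\abs{x}\,d_0(x,z))^{n-1/2}}\,e^{-d_0(x,z)^2/4}.
\end{equation*}
Thus the real content of the corollary lies in extending Theorem \ref{gradient-thm-regionII}.

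My plan for the gradient estimate is to Hadamard-descend the auxiliary functions $q_1$ and $q_2$, which control $\abs{\grad p_1}$ through (\ref{grad-q1-q2}). Differentiating (\ref{dimension-reduce}) under the integral in $\abs{x}$ gives
\begin{equation*}
q_1^{(n,m)}(x,z) = \int_\R q_1^{(n,m+1)}(x,(z,z_{m+1}))\,dz_{m+1},
\end{equation*}
while using that $p^{(n,m+1)}(x,(z,z_{m+1}))$ depends on the second argument only through $\sqrt{\abs{z}^2 + z_{m+1}^2}$ and differentiating in $\abs{z}$ gives
\begin{equation*}
q_2^{(n,m)}(x,z) = \int_\R q_2^{(n,m+1)}(x,(z,z_{m+1}))\,\frac{\abs{z}}{\sqrt{\abs{z}^2+z_{m+1}^2}}\,dz_{m+1}.
\end{equation*}
Plugging in the odd-dimension estimates from Theorem \ref{gradient-thm-regionII} for dimension $m+1$ bounds the integrands in absolute value by the integrands of $C\,Q^{(2n-m,\,n+1/2)}$ and $C\,Q^{(2n-m-2,\,n-1/2)}$ respectively (the extra weight in the $q_2$-formula being at most $1$). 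Lemma \ref{Q-est} then converts these to the target upper bounds on $\abs{q_1^{(n,m)}}$ and $\abs{q_2^{(n,m)}}$; the orthogonality of $\hat x$ and $J_{\hat z}\hat x$ in (\ref{grad-q1-q2}) yields $\abs{\grad p_1}\asymp\abs{x}\sqrt{q_1^2+q_2^2}$, and since the $\abs{x}\abs{q_1}$ contribution carries one extra power of $d_0$ relative to $\abs{x}\abs{q_2}$, this takes care of both the upper half of Theorem \ref{gradient-thm-regionII} and of (\ref{q2-upper-regionII}).

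The lower bound on $\abs{\grad p^{(n,m)}}$ is the delicate step, and constitutes the main obstacle. It reduces to bounding $\abs{q_1^{(n,m)}}$ from below by $c\,Q^{(2n-m,\,n+1/2)}(x,z)$ via the descent formula and the two-sided odd-case bound on $\abs{q_1^{(n,m+1)}}$, which is valid only if $q_1^{(n,m+1)}(x,(z,z_{m+1}))$ does not change sign as $z_{m+1}$ varies over $\R$. I would establish this by observing that the odd-dimension Theorem \ref{gradient-thm-regionII} already furnishes a strictly positive two-sided asymptotic on $\abs{q_1^{(n,m+1)}}$ throughout the region $\{\abs{(z,z_{m+1})}\ge B_1\abs{x}^2,\,d_0\ge D_0\}$, so $q_1^{(n,m+1)}$ has no zeros along the slice $\{(x,(z,z_{m+1})) : z_{m+1}\in\R\}$ and, being continuous in $z_{m+1}$, must retain a single sign there (taking $B_1$ and $D_0$ slightly larger if necessary so that the whole slice lies in the asymptotic region). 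Once this sign control is in place, Lemma \ref{Q-est} applied to $Q^{(2n-m,\,n+1/2)}$ delivers the required lower bound and completes the proof.
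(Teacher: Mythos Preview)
Your proposal is correct and follows essentially the same Hadamard-descent approach as the paper: differentiate the dimension-reduction identity \eqref{dimension-reduce} to obtain the same integral representations of $q_1^{(n,m)}$ and $q_2^{(n,m)}$, then feed the odd-$(m{+}1)$ pointwise estimates into Lemma~\ref{Q-est}. Your explicit justification of the sign-constancy of $q_1^{(n,m+1)}$ along each slice (needed to pass from $\asymp$ on $|q_1^{(n,m+1)}|$ to a lower bound on $|{\int q_1^{(n,m+1)}}|$) is a point the paper leaves implicit, and your treatment of the $q_2$ weight by the trivial bound $\tfrac{|z|}{|(z,z_{m+1})|}\le 1$ (yielding $Q^{(2n-m-2,\,n-1/2)}$) is a harmless variant of the paper's approach of absorbing the weight via $|(z,z_{m+1})|\asymp d_0^2$ to land on $|z|\,Q^{(2n-m-4,\,n-1/2)}$---both produce the same upper bound.
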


\begin{proof}
The heat kernel estimate of Theorem \ref{region-II-III-theorem} is
immediate, given (\ref{p-Q-comp}) and Lemma \ref{Q-est}.
  
To obtain an estimate on $\grad p_1$, we define $q_1^{(n,m)} :=
-\frac{2}{\abs{x}} \frac{\partial}{\partial \abs{x}} p_1^{(n,m)}(x,z)$,
$q_2^{(n,m)} := \frac{\partial}{\partial \abs{z}} p_1^{(n,m)}(x,z)$,
as in (\ref{grad-q1-q2}).

For $q_1$, we simply differentiate (\ref{dimension-reduce}) to see
\begin{align*}
  q_1^{(n,m)}(x,z) &= \int_\R q_1^{(n,m+1)}(x,(z,z_{m+1}))\,dz_{m+1}
  \\
  &\asymp Q^{(2n-m,n+\frac{1}{2})}(x,z) && \text{by
    (\ref{q1-estimate-regionII})}\\
  &\asymp
  \frac{d_0(x,z)^{2n-m+1}}{1+(\abs{x}d_0(x,z))^{n+\frac{1}{2}}}e^{-\frac{1}{4}d_0(x,z)^2}
  && \text{by Lemma \ref{Q-est}.}
\end{align*}

For $q_2$, we again differentiate (\ref{dimension-reduce}).  Here we
obtain
\begin{align*}
  q_2^{(n,m)}(x,z) &= \int_\R q_2^{(n,m+1)}(x,(z,z_{m+1}))
  \frac{\abs{z}}{\abs{(z,z_{m+1})}}\,dz_{m+1} \\
  &\asymp \abs{z} Q^{(2n-m-4,n-\frac{1}{2})} && \text{by
    (\ref{q2-estimate-regionII})} \\
  &\asymp d_0(x,z)^2
  \frac{d_0(x,z)^{2n-m-3}}{1+(\abs{x}d_0(x,z))^{n-\frac{1}{2}}}e^{-\frac{1}{4}d_0(x,z)^2}
  \\
  &\asymp
  \frac{d_0(x,z)^{2n-m-1}}{1+(\abs{x}d_0(x,z))^{n-\frac{1}{2}}}e^{-\frac{1}{4}d_0(x,z)^2}.
\end{align*}

Repeating the computation from Theorem \ref{gradient-thm-regionII}, we
have the desired estimates on $\abs{\grad p_1}$ and $\abs{q_2}$.
\end{proof}
\index{Hadamard descent|)}

Chapter \ref{heat-chapter}, in large part, is adapted from material
awaiting publication as \heatcite.  The
dissertation author was the sole author of this paper.

\chapter{Gradient Estimates}\label{gradient-chapter}

\section{Statement of results}

\begin{notation}
\index{C@$\funcclass$}
Let $\funcclass$ be the class of $f \in C^1(G)$ for which there exist
constants $M \ge 0$, $a \ge 0$, and $\epsilon \in (0,1)$ such that
\begin{equation*}
  \abs{f(g)} + \abs{\grad f(g)} + \abs{\hat{\grad} f(g)} \le M e^{a d(0,g)^{2-\epsilon}}
\end{equation*}
for all $g \in G$.  By the heat kernel bounds in Theorem
\ref{main-theorem}, $P_t f$ as defined by (\ref{Pt-integral-symmetry}) makes
sense for all $f \in \funcclass$.
\end{notation}

The main theorem of this chapter is the following:
\begin{theorem}\label{main-grad-theorem}
  \index{gradient bounds!H-type group}
  \index{H-type group!gradient bounds}
  \index{H-type group!heat semigroup}
  There exists a finite constant $K$ such that for all $f \in
  \funcclass$,
  \begin{equation}\label{grad-ineq}
    \abs{\grad P_t f} \le K P_t(\abs{\grad f}).
  \end{equation}
\end{theorem}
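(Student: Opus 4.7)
The plan is to follow the approach of \cite{bbbc-jfa}, adapted to the H-type setting with essential input from the pointwise heat kernel estimates proved in Chapter \ref{heat-chapter}. By a density argument (using the growth condition defining $\funcclass$ together with the Gaussian decay of $p_t$ and $\grad p_t$ from Corollary \ref{main-corollary} and Theorem \ref{main-gradient-theorem}), it suffices to establish (\ref{grad-ineq}) for $f \in C^\infty_c(G)$ with a constant $K$ independent of $t > 0$.

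The easy half is handled by right-invariance. Because $\hat{X}_i$ commutes with right translation, differentiating (\ref{Pt-integral-symmetry}) under the integral yields $\hat{X}_i P_t f = P_t(\hat{X}_i f)$ componentwise, so by the integral triangle inequality $|\hat{\grad} P_t f| \le P_t |\hat{\grad} f|$. The identity (\ref{abs-grad-difference}) gives $|(\grad - \hat{\grad})u|(x,z) = |x| |\grad_z u|(x,z)$, so (\ref{grad-ineq}) reduces to controlling the \emph{vertical} gradient $|x| |\grad_z P_t f|$ pointwise by a constant times $P_t|\grad f|$. This is where the subriemannian geometry of $G$ enters, since $\grad_z$ is a commutator of horizontal fields rather than a genuine direction of the distribution.

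The central step is an interpolation along the heat semigroup. I would introduce a functional of the form
\begin{equation*}
\Phi(s) = P_s\!\left(\sqrt{|\grad P_{t-s} f|^2 + a(s)\,|\grad_z P_{t-s} f|^2}\right), \quad s \in [0,t],
\end{equation*}
for a suitable positive weight $a(s)$, and compute $\Phi'(s)$. The derivative decomposes into a Bakry $\Gamma_2$-type term plus cross terms produced by the nonvanishing commutators $[X_i, X_j] = \sum_k \inner{J_{u_k} e_i}{e_j} Z_k$. The H-type identities of Proposition \ref{Jz-props}, in particular the Clifford-type relation $J_Z J_W + J_W J_Z = -2\inner{Z}{W} I$, are exactly what is needed for the cross terms to close: they produce a Bochner-type lower bound for $\Gamma_2$ in which the only defect term is controlled by $|\grad_z P_{t-s}f|^2$, which can then be absorbed by tuning $a(s)$. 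A judicious choice of $a$ should yield a differential inequality of the form $\Phi'(s) \le 0$, so that integration from $0$ to $t$ and passage to the limit $a(0) \downto 0$ gives (\ref{grad-ineq}).

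The hardest step, I expect, will be establishing a uniform-in-$t$ pointwise vertical-gradient bound $|\grad_z P_t f| \le C\, P_t|\grad f|$ needed to absorb the extra term $|x||\grad_z P_t f|$ left over from the right-invariance reduction. Here the precise heat kernel estimates of Chapter \ref{heat-chapter} are indispensable: Theorem \ref{vertical-gradient-theorem} provides $|\grad_z p_t| \le C p_t$ pointwise, and combining this with an integration by parts that rewrites each central generator as $Z_k = \tfrac{1}{2}\sum_{i,j}\inner{J_{u_k}e_i}{e_j}\,[X_i,X_j]$ should transfer the vertical derivative in $P_t f = f * p_t$ onto two horizontal derivatives of $f$, after which Young's inequality and the $|x|$-decay built into the bound for $\grad p_t$ in Theorem \ref{main-gradient-theorem} yield the required estimate with a constant independent of $t$. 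Tracking the dependence of all constants carefully through Proposition \ref{Jz-props} will be essential to certify that the final $K$ in (\ref{grad-ineq}) is finite and uniform in $t$.
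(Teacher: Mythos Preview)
Your proposal diverges from the paper's proof in a fundamental way, and the divergence hides a genuine gap.

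The paper does \emph{not} use a $\Gamma_2$/Bakry--\'Emery interpolation.  After the same right-invariant reduction you describe (and a further reduction to $t=1$ and the identity), the paper is left with showing
\[
\Bigl|\int_G \bigl((\grad-\hat{\grad})f\bigr)\, p_1\,d\haar\Bigr| \le K \int_G |\grad f|\, p_1\,d\haar,
\]
and attacks this by introducing explicit geodesic coordinates $(u,\eta)$, computing the Jacobian, proving a delicate estimate for integrals of $p_1$ along geodesics (using the precise bounds of Chapter~\ref{heat-chapter}), and combining with Jerison's Poincar\'e inequality to get a Cheeger-type bound $\int_G |f - m_f|\,p_1 \le C\int_G |\grad f|\,p_1$.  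The argument is then finished by a partition into a cylinder $\{|x|\le R\}$ (where (\ref{vertical-gradient-crude}) is used after integration by parts) and its complement (where a projection matrix $T(x)$ onto $\operatorname{span}\{J_{u_j}x\}$ with $|\nabla\cdot T|\le C/|x|$ does the work).

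Your interpolation functional $\Phi(s)=P_s\bigl(\sqrt{|\grad P_{t-s}f|^2 + a(s)|\grad_z P_{t-s}f|^2}\bigr)$ is the $L^1$ (square-root) version of the Baudoin--Garofalo generalized curvature-dimension machinery.  That machinery is known to deliver $|\grad P_t f|^p \le K_p\,P_t(|\grad f|^p)$ for $p>1$; the $p=1$ case is precisely the hard one, because differentiating $\sqrt{\Gamma}$ produces a term $|\grad\Gamma|^2/\Gamma^{3/2}$ that cannot be absorbed by the Bochner-type lower bound on $\Gamma_2$ without the extra room that $p>1$ provides.  Your sentence ``a judicious choice of $a$ should yield $\Phi'(s)\le 0$'' is exactly where this fails; no choice of $a$ is known to close that inequality for the square root, and this is why Li, \cite{bbbc-jfa}, and the present paper all resort to the pointwise heat kernel bounds rather than pure curvature methods.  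Your fallback paragraph does not rescue this: writing $Z_k$ as a commutator puts \emph{two} horizontal derivatives on $f$, not one, so you do not recover a bound by $P_t|\grad f|$; and the bound $|\grad_z p_t|\le C p_t$ alone yields only $|\grad_z P_t f|\le C\,P_t|f|$, which is not enough to control the leftover $|x|\,|\grad_z P_t f|$ pointwise (indeed the paper only ever uses that estimate \emph{after} integrating against $f$ and invoking the Poincar\'e--Cheeger bound above).
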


This theorem can be interpreted as a quantitative statement about the
smoothing properties of $P_t$.  As we shall see in Section
\ref{gradient-consequences-sec}, it has a number of significant consequences.

\section{Previous work}\label{grad-previous-sec}

The inequality (\ref{grad-ineq}) arose in the work of Bakry in the
context of Riemannian manifolds \cite{bakry-riesz-notes-ii},
\cite{bakry-sobolev}.  In this case, (\ref{grad-ineq}) is strongly
related to the geometry of the manifold, and in particular to its
Ricci curvature.  \index{Ricci curvature}

\begin{theorem}[Bakry]\label{bakry-thm}
  \index{gradient bounds!Riemannian manifold}
  If $L$ is the Laplace-Beltrami operator on a Riemannian manifold
  $M$, then
  \begin{equation} \label{grad-riemannian}
    \abs{\grad P_t f} \le e^{kt} P_t(\abs{\grad f}) \quad
    \forall\, f \in C^\infty_c(M)
  \end{equation}
  if and only if
  \begin{equation}\label{ricci-bound}
    \Ric(v, v) \ge -k \abs{v}^2 \quad \forall\, v \in TM.
  \end{equation}
\end{theorem}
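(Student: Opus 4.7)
The plan is to prove both directions via the standard interpolation argument along the heat semigroup, combined with the Bochner--Weitzenb\"ock identity and Kato's inequality.

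For the sufficiency direction ($\Leftarrow$), I would fix $f \in C^\infty_c(M)$ and $t > 0$, set $u_s := P_{t-s} f$, and consider the function $\phi(s) := P_s(\abs{\grad u_s})$ for $s \in [0,t]$. Strictly speaking $\abs{\grad u_s}$ need not be smooth where $\grad u_s$ vanishes, so I would work with the regularization $\phi_\epsilon(s) := P_s(\sqrt{\abs{\grad u_s}^2 + \epsilon})$ and send $\epsilon \downto 0$ at the end. A direct computation using $\partial_s u_s = - L u_s$ gives
\begin{equation*}
  \phi_\epsilon'(s) = P_s\left( L \sqrt{\abs{\grad u_s}^2 + \epsilon} - \frac{\grad u_s \cdot \grad L u_s}{\sqrt{\abs{\grad u_s}^2+\epsilon}}\right).
\end{equation*}
Now I would invoke the Bochner--Weitzenb\"ock identity
\begin{equation*}
  \tfrac{1}{2} L \abs{\grad u}^2 - \grad u \cdot \grad L u = \abs{\nabla^2 u}^2 + \Ric(\grad u, \grad u)
\end{equation*}
and Kato's inequality $\abs{\grad \abs{\grad u}}^2 \le \abs{\nabla^2 u}^2$ (in its $\epsilon$-regularized form) to obtain pointwise
\begin{equation*}
  L \sqrt{\abs{\grad u}^2+\epsilon} - \frac{\grad u \cdot \grad L u}{\sqrt{\abs{\grad u}^2+\epsilon}} \ge \frac{\Ric(\grad u, \grad u)}{\sqrt{\abs{\grad u}^2+\epsilon}} \ge -k \sqrt{\abs{\grad u}^2+\epsilon}.
\end{equation*}
This yields $\phi_\epsilon'(s) \ge -k\, \phi_\epsilon(s)$, so $\phi_\epsilon(t) \ge e^{-kt}\phi_\epsilon(0)$, and letting $\epsilon \downto 0$ gives $P_t(\abs{\grad f}) \ge e^{-kt} \abs{\grad P_t f}$, which is (\ref{grad-riemannian}).

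For the necessity direction ($\Rightarrow$), I would argue by contradiction (or directly) at a single point $m_0 \in M$ and a single tangent vector $v_0 \in T_{m_0}M$. Given (\ref{grad-riemannian}), one can Taylor expand both sides as $t \downto 0$ using $P_t = I + tL + O(t^2)$ applied to a well-chosen test function $f$. Specifically, choose $f$ so that at the point $m_0$ we have $\grad f(m_0) = v_0$ and $\nabla^2 f(m_0)$ is any prescribed symmetric matrix (the existence of such $f$ is standard). Squaring (\ref{grad-riemannian}) and expanding to order $t$ gives, after some simplification using Bochner, the pointwise inequality
\begin{equation*}
  \abs{\nabla^2 f(m_0)}^2 + \Ric(\grad f(m_0), \grad f(m_0)) + k \abs{\grad f(m_0)}^2 \ge \abs{\grad \abs{\grad f}(m_0)}^2.
\end{equation*}
By choosing $\nabla^2 f(m_0)$ to saturate Kato's inequality (for instance, $\nabla^2 f(m_0) = v_0 \otimes v_0 / \abs{v_0}$ composed appropriately), the Hessian terms cancel and we are left with $\Ric(v_0, v_0) \ge -k \abs{v_0}^2$, establishing (\ref{ricci-bound}).

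The main obstacle is the technical issue of the non-smoothness of $\abs{\grad u}$ at critical points of $u$, which is why I would carry the $\epsilon$-regularization throughout and verify that both Kato's inequality and the Bochner identity pass cleanly to the limit; additionally, justifying differentiation under $P_s$ requires growth control on $u_s$ and its derivatives, but for $f \in C^\infty_c$ and the heat kernel on a complete Riemannian manifold, the standard bounds suffice. The trickiest part of the converse is the precise choice of test function achieving equality in Kato's inequality at the given point; this is what makes Bakry's criterion sharp rather than merely a one-sided bound.
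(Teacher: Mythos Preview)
The paper does not actually prove this theorem: it is stated without proof in the ``Previous work'' section as a known result, attributed to Bakry with citations to \cite{bakry-riesz-notes-ii} and \cite{bakry-sobolev}, and is included only to motivate and contrast with the hypoelliptic setting. So there is no ``paper's own proof'' to compare against.

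That said, your proposed argument is the standard one and is essentially the proof found in Bakry's original work (phrased there in the language of the $\Gamma_2$ calculus, which for the Laplace--Beltrami operator is equivalent to the Bochner--Weitzenb\"ock identity you invoke). The interpolation $\phi_\epsilon(s) = P_s(\sqrt{\abs{\grad P_{t-s}f}^2 + \epsilon})$, differentiation in $s$, Bochner plus Kato for the forward direction, and the short-time expansion with a test function saturating Kato at a point for the converse, are all correct and are exactly how this equivalence is established in the references the paper cites.
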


\fixnotme{Stuff about Riesz transforms?}

To contrast this theorem with the situation of hypoelliptic operators,
consider the Heisenberg group $\mathbb{H}_1$ as in Section
\ref{H1-sec}.  The operator $L = X^2 + Y^2$ cannot be the
Laplace-Beltrami operator of any Riemannian metric, since $L$ is not
elliptic.  One might try to approximate $L$ by Laplace-Beltrami
operators.  For example, if for $\epsilon > 0$ we impose a metric
$\inner{\cdot}{\cdot}_\epsilon$ on $\mathbb{H}_1$ such that
$\inner{X}{Y} = \inner{X}{Z} = \inner{Y}{Z} = 0$,
$\inner{X}{X}=\inner{Y}{Y}=1$, and $\inner{Z}{Z}=1/\epsilon$, the
corresponding Laplace-Beltrami operator is $L_\epsilon := X^2 + Y^2 +
\epsilon Z^2$, which approximates $L$ as $\epsilon \to 0$.  However,
the Ricci curvature tensor corresponding to
$\inner{\cdot}{\cdot}_\epsilon$ has $\Ric(X, X) = -\frac{1}{4
  \epsilon} \to -\infty$ as $\epsilon \to 0$, so the constant $k$ in
Theorem \ref{bakry-thm} blows up, and the gradient bound in
(\ref{grad-riemannian}) becomes useless.  Therefore, Theorem
\ref{bakry-thm} does not seem to be directly useful in the
hypoelliptic Lie group setting.

\index{gradient bounds!behavior of constant}
We remark another distinction: in (\ref{grad-riemannian}), the
``constant'' $e^{kt}$ on the right side of the inequality depends on
$t$ and in particular tends to $1$ as $t \to 0$.  ($e^{kt}$ is not
known, or even believed, to be the best constant.)  However, in
(\ref{grad-ineq}) in the case of H-type groups, the constant $K$ is
independent of $t$.  Indeed, it can be shown that the \emph{best}
constant is also independent of $t$, and is strictly greater than
$1$.  In particular, the best constant does not tend to $1$ as $t \to
0$.  This is another indication of the significant differences between
the hypoelliptic and Riemannian settings.

\index{gradient bounds!Lp type@$L^p$ type}
\index{Lie group!gradient bounds}
Progress in the hypoelliptic case was made by Driver and Melcher in
\cite{driver-melcher}, which obtained the following $L^p$-type
estimate in the case of the Heisenberg group $\mathbb{H}_1$:
\begin{equation}\label{intro-Lp-ineq}
  \abs{\grad P_t f}^p \le K_p P_t(\abs{\grad f}^p).
\end{equation}
Their argument proceeded probabilistically via methods of Malliavin
calculus
\index{Malliavin calculus}
 and did not depend on heat kernel estimates, but they also
showed that their argument could not produce (\ref{grad-ineq}), which
is the corresponding estimate with $p=1$.  \cite{tai-thesis} extended
(\ref{intro-Lp-ineq}) to the case of a general Lie group, at the cost
of replacing the constant $K_p$ with a function $K_p(t)$ which in
general was shown only to be finite for all $t$.  However, for
nilpotent Lie groups, the constant $K_p(t)$ was shown to be bounded
independent of $t$.

  \index{gradient bounds!H-type group}
  \index{H-type group!gradient bounds}
As mentioned in Section \ref{H1-sec}, 
the first extension of (\ref{grad-ineq}) itself to a hypoelliptic
setting was due to H.-Q. Li in \cite{li-jfa}.  Like the argument in
this dissertation, the proof relies on pointwise upper and lower
estimates for the heat kernel, and a pointwise upper estimate for its
gradient, shown in the case of Heisenberg-Weyl groups in
\cite{li-heatkernel}.  \cite{bbbc-jfa} contains two alternate (and
much simpler) proofs of (\ref{grad-ineq}) for the classical Heisenberg
group, also depending on the pointwise heat kernel estimates from
\cite{li-heatkernel}.  The proof of Theorem \ref{main-grad-theorem} in
the case of H-type groups, which occupies the following section,
follows rather closely the approach taken by the first proof in
\cite{bbbc-jfa}.

\section{Proof of gradient estimate}

Following an argument found in \cite{driver-melcher}, by
left-invariance of $P_t$ and $\grad$, we see that in order to
establish (\ref{grad-ineq}) it suffices to show that it holds at the
identity, i.e. to show
\begin{equation}\label{grad-ineq-identity}
\abs{(\grad P_t f)(0)} \le K P_t(\abs{\grad f})(0).
\end{equation}
It also suffices to assume $t=1$.  This can be seen by taking $t=1$ in
 (\ref{grad-ineq-identity}) and replacing $f$ by $f \circ \varphi_{s^{1/2}}$.

Therefore, in order to prove Theorem \ref{main-grad-theorem}, it will
suffice to show $\abs{(\grad P_1 f)(0)} \le K P_1(\abs{\grad f})(0)$.
We may replace $\grad$ by $\hat{\grad}$ on the left side, since $\grad
= \hat{\grad}$ at $0$.  Since $[X_i, \hat{X}_j] =
0$, we expect that $\hat{\grad}$ should commute with $P_t$, which we
now verify.

\begin{proposition}\label{commute}
  For $f \in \funcclass$, $\hat{\grad} P_t f(0)=(P_t \hat{\grad}f )(0)$.
\end{proposition}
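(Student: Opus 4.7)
The plan is to write $\hat{X}_i P_t f(0)$ as a derivative of an integral against $p_t$ and move the derivative inside. Using the formula (\ref{Xi-dds}) for the right-invariant vector field, together with (\ref{Pt-integral-symmetry}), we have
\begin{equation*}
  \hat{X}_i P_t f(0) = \diffat{s}{0} P_t f((se_i, 0)) = \diffat{s}{0} \int_G f((se_i,0) \groupop k^{-1}) p_t(k)\,d\haar(k).
\end{equation*}
If the differentiation and integration can be interchanged, then by (\ref{Xi-dds}) applied at $g = k^{-1}$ one obtains
\begin{equation*}
  \hat{X}_i P_t f(0) = \int_G (\hat{X}_i f)(k^{-1}) p_t(k)\,d\haar(k) = P_t(\hat{X}_i f)(0),
\end{equation*}
which is exactly the claim (applied coordinate-wise to each component of $\hat{\grad}$).

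Thus the only real task is to justify the interchange via dominated convergence. I would estimate the difference quotient by the mean value theorem: for fixed $k$ and $s \in (-1,1)$,
\begin{equation*}
  \left| \frac{f((se_i,0) \groupop k^{-1}) - f(k^{-1})}{s} \right| = \bigl| (\hat{X}_i f)((s'e_i, 0) \groupop k^{-1}) \bigr|
\end{equation*}
for some $s'$ between $0$ and $s$. Since $f \in \funcclass$, the right-hand side is bounded by $M \exp\bigl( a\, d_0((s'e_i,0) \groupop k^{-1})^{2-\epsilon} \bigr)$. The triangle inequality for $d$ (left-invariance gives $d_0(g \groupop h) \le d_0(g) + d_0(h)$) together with $d_0((s'e_i,0)) \le 1$ and $d_0(k^{-1}) = d_0(k)$ yields the dominating bound $M \exp\bigl(a(1 + d_0(k))^{2-\epsilon}\bigr)$, which is independent of $s$.

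The main (and essentially only) obstacle is integrability of this dominating bound against $p_t$. But the heat kernel estimate of Theorem \ref{main-theorem} (or Corollary \ref{main-corollary}) gives $p_t(k) \lesssim e^{-d_0(k)^2/(4t)}$ times polynomial factors, and since $2 - \epsilon < 2$, the product
\begin{equation*}
  \exp\bigl(a(1+d_0(k))^{2-\epsilon}\bigr)\, p_t(k)
\end{equation*}
decays superpolynomially in $d_0(k)$ and is integrable over $G$. Hence dominated convergence applies, the interchange is justified, and the proposition follows. The analogous bound for the $s = 0$ integrand also shows that each $P_t(\hat{X}_i f)(0)$ is finite, so both sides of the identity are well-defined.
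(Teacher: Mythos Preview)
Your proposal is correct and follows essentially the same route as the paper: write $\hat{X}_i P_t f(0)$ as the $s$-derivative of $\int_G f((se_i,0)\groupop k^{\pm 1})p_t(k)\,d\haar(k)$, justify differentiation under the integral by the mean value theorem together with the $\funcclass$ growth bound and the triangle inequality $d_0(g\groupop h)\le d_0(g)+d_0(h)$, and conclude integrability from the Gaussian-type decay of $p_t$ in Theorem~\ref{main-theorem}. The only cosmetic difference is that the paper uses the form~(\ref{Pt-integral-symmetry}) with $f(g\groupop k)$ rather than the convolution form with $k^{-1}$, which are equivalent since $p_t(k^{-1})=p_t(k)$.
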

\begin{proof}
  By (\ref{Xi-dds}) and (\ref{Pt-integral-symmetry}) we have
  \begin{align*}
    \hat{X}_i P_t f(0) &= \diffat{s}{0} P_t f(s
    e_i, 0) \\
    &= \diffat{s}{0} \int_G f((s e_i, 0)
    \groupop k) p_t(k)\,d\haar(k).
  \end{align*}
  We now differentiate under the integral sign, which can be justified
  because
  \begin{align*}
    \abs{\frac{d}{ds} f((s e_i, 0) \groupop k)} &=
    \abs{\diffat{\sigma}{0} f(((s+\sigma)e_i, 0)
    \groupop k)} \\
    &= \abs{\diffat{\sigma}{0} f((\sigma e_i, 0)
    \groupop (s e_i, 0) \groupop k)} \\
    &= \abs{\hat{X_i} f((s e_i, 0) \groupop k)} \\
    &\le M e^{a d(0, (s e_i, 0) \groupop k)^{2-\epsilon}}.
  \end{align*}
  But 
  \begin{align*}
    d(0, (s e_i, 0) \groupop k) &= d((s e_i, 0)^{-1}, k) = d((-s e_i,
    0), k) \\
    &\le d(0, (-s e_i, 0)) + d(0, k) = \abs{s} + d(0,k).
\end{align*}
  Thus for all
  $s \in [-1,1]$ we have
  \begin{equation*}
    \abs{\frac{d}{ds} f((s e_i, 0) \groupop k)} \le M e^{a (1 +
      d(0,k))^{2-\epsilon}} \le M' e^{a' d(0,k)^{2-\epsilon}}
  \end{equation*}
  for some $M', a'$, and therefore by the heat kernel bounds
  of Theorem \ref{main-theorem} we have
  \begin{equation*}
    \int_G \sup_{s \in [-1,1]} \abs{\frac{d}{ds} f((s e_i, 0) \groupop
      k)} p_t(k)\,d\haar(k) < \infty
  \end{equation*}
  which justifies differentiating under the integral sign.  Thus
  \begin{align*}
    \hat{X}_i P_t f(0) &= \int_G \diffat{s}{0}  f((s e_i, 0)
    \groupop k) p_t(k)\,d\haar(k) \\
    &= \int_G \hat{X}_i f(k)
    p_t(k)\,d\haar(k) \\
    &= P_t \hat{X}_i f(0).
  \end{align*}
  This completes the proof.
\end{proof}

Thus Theorem
\ref{main-grad-theorem} reduces to showing
\begin{equation}\label{grad-ineq-hat-semigroup}
  \abs{(P_1 \hat{\grad} f)(0)} \le K P_1(\abs{\grad f})(0)
\end{equation}
or in other words
\begin{equation}
  \abs{\int_G (\hat{\grad} f) p_1\,d\haar} \le K \int_G \abs{\grad f} p_1\,d\haar
\end{equation}
for which it suffices to show
\begin{equation}\label{integral-to-show}
 \abs{\int_G ((\grad-\hat{\grad}) f) p_1\,d\haar} \le K \int_G \abs{\grad f} p_1\,d\haar.
\end{equation}

A similar argument can be used to verify the following integration by
parts formula.
\begin{proposition}
  If $f \in \funcclass$, then
  \begin{equation}\label{byparts}
    \begin{split}
      \int_G (\grad f) p_1\,d\haar &= - \int_G (\grad p_1) f\,d\haar \\
      \int_G (\hat{\grad} f) p_1\,d\haar &= - \int_G (\hat{\grad} p_1) f\,d\haar.
    \end{split}
  \end{equation}
\end{proposition}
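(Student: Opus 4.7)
The plan is to prove the identity by truncating $f$ with a sequence of compactly supported cutoffs, applying the ordinary integration by parts of item \ref{Xi-parts} of Proposition \ref{Xi-integration}, and passing to the limit via dominated convergence. The crucial analytic input is that the Gaussian decay of $p_1$ and $\grad p_1$ dominates the sub-Gaussian growth $e^{a d_0(g)^{2-\epsilon}}$ allowed for functions in $\funcclass$.

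First I would choose $\psi \in C^\infty_c(G)$ with $\psi \equiv 1$ on a neighborhood of $0$ and set $\psi_n := \psi \circ \varphi_{1/n}$, exactly as in the proof of Theorem \ref{Pt-semigroup}. By (\ref{Xi-dilation}) we have $\grad \psi_n = \frac{1}{n}(\grad \psi) \circ \varphi_{1/n}$, and similarly for $\hat{\grad}$, so $\psi_n \to 1$ boundedly while both $\grad \psi_n$ and $\hat{\grad} \psi_n$ tend to $0$ uniformly. Since $\psi_n f \in C^1_c(G)$, integration by parts (item \ref{Xi-parts} of Proposition \ref{Xi-integration}, which extends from $C^\infty_c$ to $C^1_c$ because the $X_i$ and $\hat X_i$ have zero Euclidean divergence, as used in the proof that $\haar$ is bi-invariant) gives
\begin{equation*}
  \int_G X_i(\psi_n f)\, p_1\,d\haar = -\int_G \psi_n f\, (X_i p_1)\,d\haar.
\end{equation*}
Expanding via Leibniz and rearranging yields
\begin{equation*}
  \int_G \psi_n (X_i f)\, p_1\,d\haar = -\int_G f (X_i \psi_n)\, p_1\,d\haar - \int_G \psi_n f\, (X_i p_1)\,d\haar.
\end{equation*}

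Next I would send $n \to \infty$. The integrability ingredients come from the heat kernel estimates of Chapter \ref{heat-chapter}: Theorem \ref{main-theorem} gives $p_1(g) \lesssim e^{-d_0(g)^2/4}$ up to a polynomial correction, and (\ref{gradient-crude-upper}) of Theorem \ref{main-gradient-theorem} gives $\abs{X_i p_1(g)} \le \abs{\grad p_1(g)} \le C(1+d_0(g)) p_1(g)$. Combined with the defining bound $\abs{f(g)} + \abs{X_i f(g)} \le M e^{a d_0(g)^{2-\epsilon}}$ on $\funcclass$ and the elementary fact that $a d_0(g)^{2-\epsilon} - \tfrac{1}{4}d_0(g)^2 \to -\infty$ as $d_0(g) \to \infty$, one sees that the products $\abs{f}p_1$, $\abs{X_i f}p_1$, and $\abs{f}\abs{X_i p_1}$ all lie in $L^1(G)$. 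Dominated convergence then identifies the first and third integrals in the limit with $\int_G (X_i f) p_1\,d\haar$ and $\int_G f (X_i p_1)\,d\haar$ respectively, while the middle integral vanishes because $\abs{X_i \psi_n}$ tends to zero uniformly against the integrable density $\abs{f}p_1$. Summing over $i$ gives the first identity, and an identical argument with $\hat X_i$ and $\hat\grad \psi_n$ in place of $X_i$ and $\grad \psi_n$ gives the second.

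The main obstacle is nothing more than bookkeeping dominated convergence, which is clean because the pointwise estimates in Theorems \ref{main-theorem} and \ref{main-gradient-theorem} provide both a gap between the sub-Gaussian growth of $\funcclass$ and the Gaussian decay of $p_1$, and a pointwise control on $\grad p_1$ by $p_1$ itself. In particular, no finer structure of $p_1$ beyond (\ref{gradient-crude-upper}) is required, so the argument is quite robust; the only subtle point is verifying that the integration by parts in item \ref{Xi-parts} of Proposition \ref{Xi-integration} can be applied with $\psi_n f \in C^1_c$ rather than $C^\infty_c$, which follows from the divergence-free nature of $X_i$ and $\hat X_i$.
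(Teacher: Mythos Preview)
Your argument is correct and follows essentially the same route as the paper: choose the cutoffs $\psi_n = \psi\circ\varphi_{1/n}$, integrate by parts against $p_1$ using item \ref{Xi-parts} of Proposition \ref{Xi-integration}, and pass to the limit by dominated convergence using the heat kernel bounds. You are in fact slightly more careful than the paper in two places---explicitly invoking (\ref{gradient-crude-upper}) to control $\abs{\grad p_1}$ by $p_1$, and noting that item \ref{Xi-parts} must be applied with $\psi_n f \in C^1_c$ rather than $C^\infty_c$---but these are refinements of the same argument, not a different one.
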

\begin{proof}
  As in Theorem \ref{Pt-semigroup}, take $\psi_n \in C^\infty_c(G)$ to
  be a sequence of compactly supported smooth functions such that
  $\psi_n \to 1$ and $\grad \psi_n \to 0$ boundedly. By item
  \ref{Xi-parts} of Proposition \ref{Xi-integration}, we have
  \begin{align*}
    \int_G (\grad p_1) (f \psi_n)\,d\haar &= -\int_G p_1 \grad(f
    \psi_n)\,d\haar  \\
    &= -\int_G p_1 f \grad \psi_n \,d\haar - \int_G p_1 \psi_n \grad f\,d\haar.
  \end{align*}
  Since $p_1 f, p_1 \grad f, (\grad p_1) f \in L^1(G)$ by definition
  of $\funcclass$ and the heat kernel bounds, two applications of the
  dominated convergence theorem complete the proof.
\end{proof}

\index{H-type group!geodesic coordinates}
We now introduce an alternate coordinate system on $G$, similar but
not exactly analogous to the so-called ``polar coordinate'' system
used in \cite{bbbc-jfa}.  As shown in Section \ref{subriemannian-sec},
there is a unique (up to reparametrization) shortest horizontal path
from the identity $0$ to each point $(x,z) \in G$ with $x,z$ nonzero;
it has as its projection onto $\R^{2n} \times 0$ an arc of a circle
lying in the plane spanned by $x$ and $J_z x$, with the origin as one
endpoint, and $x$ as the other.  The region in this plane bounded by
the arc and the straight line from $0$ to $x$ has area equal to
$\abs{z}$.  The projection onto $0 \times \R^m$ is a straight line
from $0$ to $z$.

Our new coordinate system will identify a point $(x,z)$ with the point
$u \in \R^{2n}$ which is the center of the arc, and a vector $\eta \in
\R^m$ which is parallel to $z$ and whose magnitude equals the angle
subtended by the arc.  The change of coordinates $(u, \eta) \mapsto
(x,z)$ will be denoted by
\begin{align*}
  \Phi &: \{(u,\eta) \in \R^{2n+m} : 0 < \abs{\eta} < 2\pi\}  \to
  \{(x,z) \in G : x \ne 0, z \ne 0\} 
\intertext{where}
  \Phi(u,\eta) &:= \left( \left(I - e^{J_{\eta}}\right) u,
  \frac{\abs{u}^2}{2}
  \left(1-\frac{\sin\abs{\eta}}{\abs{\eta}}\right)\eta\right) \\
  &= \left( (1-\cos\abs{\eta})u + \frac{\sin\abs{\eta}}{\abs{\eta}}
  J_\eta u,     \frac{\abs{u}^2}{2}
  \left(1-\frac{\sin\abs{\eta}}{\abs{\eta}}\right)\eta\right)
\end{align*}
by Proposition \ref{Jz-props}, items \ref{Jz-skew} and \ref{Jz-square}.
$\Phi$ has the property that for each $(u, \eta)$, the path $s \mapsto
\Phi(u,s\eta)$ traces the shortest horizontal path between any two of
its points, and has constant speed $\abs{u}\abs{\eta}$.  In
particular,
  \begin{equation}\label{dPhi}
    d(0, \Phi(u,\eta)) = \abs{u}\abs{\eta}.
    \end{equation}
Also, for any $f \in C^1(G)$,
\begin{equation}\label{geodesic-grad}
  \abs{\frac{d}{ds} f(\Phi(u, s\eta))} \le \abs{u} \abs{\eta}
  \abs{\grad f(\Phi(u, s\eta))}.
\end{equation}

  Note that if $(x,z) = \Phi(u,\eta)$, we have 
  \begin{align*}
    \abs{x}^2 &= \abs{u}^2 (2-2\cos\abs{\eta}) \\ 
    \abs{z} &= \frac{\abs{u}^2}{2} (\abs{\eta}-\sin\abs{\eta}).
\end{align*} 

To compare this with the ``polar coordinates'' $(u,s)$ used in
\cite{bbbc-jfa}, take $u=u$ and $s = \abs{u} \eta$.

In $(u, \eta)$ coordinates, the heat kernel estimate
(\ref{main-eqn-t}) reads
  \begin{align}
    p_1(\Phi(u,\eta)) &\asymp
    \frac{1+(\abs{u}\abs{\eta})^{2n-m-1}}{1+(\abs{u}^2 \abs{\eta}
      \sqrt{2-2\cos\abs{\eta}})^{n-\frac{1}{2}}}
    e^{-\frac{1}{4}(\abs{u}\abs{\eta})^2} \label{p1-u-eta-estimate-cos} \\
    &\asymp
    \frac{1+(\abs{u}\abs{\eta})^{2n-m-1}}{1+\left(\abs{u}^2 \abs{\eta}^2
      (2\pi-\abs{\eta}) \right)^{n-\frac{1}{2}}}
    e^{-\frac{1}{4}(\abs{u}\abs{\eta})^2} \label{p1-u-eta-estimate}
  \end{align}
  since $1-\cos \theta \asymp \theta^2(2\pi-\theta)^2$ for $\theta \in [0,2\pi]$.
We will often abuse notation and write $p_1(u,\eta)$ for
$p_1(\Phi(u,\eta))$, when no confusion will result.

We now begin the proof of Theorem \ref{main-grad-theorem}, which
occupies the rest of this section.

\index{Jacobian determinant}
We begin by computing the Jacobian determinant of the change of
coordinates $\Phi$, so that we can use $(u,\eta)$ coordinates in
explicit computations.

\begin{lemma}
  Let $A(u,\eta)$ denote the Jacobian determinant of $\Phi$, so
  that $d\haar = A(u, \eta)\,du\,d\eta$.  Then
  \begin{equation}
    A(u,\eta) = \abs{u}^{2m} \left(\frac{1}{2}-\frac{\sin\abs{\eta}}{2\abs{\eta}}\right)^{m-1}
      (2-2\cos\abs{\eta})^{n-1} \left(2 - 2\cos\abs{\eta} - \abs{\eta}\sin\abs{\eta}\right).
  \end{equation}
\end{lemma}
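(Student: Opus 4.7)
The plan is to compute $A(u,\eta) = \det D\Phi(u,\eta)$ by exploiting rotational invariance and the block structure of the Jacobian. Since $\Phi$ is equivariant under the $O(m)$-action on $\eta$ (with corresponding conjugation on $J_\eta$), the determinant depends only on $|u|$ and $\rho := |\eta|$; so I would fix $\eta = \rho f_1$ for some unit vector $f_1 \in \R^m$. This single choice makes many mixed partial derivatives vanish, since $\eta_j = 0$ for $j \ge 2$.

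Writing $D\Phi$ in block form with rows indexed by $(x, z_1, z_\perp)$ and columns indexed by $(u, \eta_1, \eta_\perp)$ (where $\perp$ denotes the orthogonal complement of $f_1$), a direct calculation shows that the $z_\perp$-rows vanish in the $u$- and $\eta_1$-columns, while $\partial z_\perp/\partial \eta_\perp$ is the diagonal matrix $\tfrac{|u|^2}{2}(1-\tfrac{\sin\rho}{\rho})\, I_{m-1}$. The Jacobian is therefore block upper triangular, giving
\begin{equation*}
\det D\Phi = \left(\tfrac{|u|^2}{2}\bigl(1-\tfrac{\sin\rho}{\rho}\bigr)\right)^{m-1} \det \begin{pmatrix} A & B \\ c & d \end{pmatrix},
\end{equation*}
where $A = (1-\cos\rho)I + \sin\rho\, J_{f_1}$ (from $\partial x/\partial u$), $B = \sin\rho\, u + \cos\rho\, J_{f_1}u$ (from $\partial x/\partial \eta_1$, after three terms collapse), $c = (\rho - \sin\rho)u^T$ (from $\partial z_1/\partial u$), and $d = \tfrac{|u|^2}{2}(1-\cos\rho)$ (from $\partial z_1/\partial \eta_1$).

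For the remaining $(2n+1) \times (2n+1)$ determinant I would apply the Schur complement identity $\det = \det A \cdot (d - cA^{-1}B)$. Two facts about $J_{f_1}$ from Proposition \ref{Jz-props} do the real work. First, $J_{f_1}^2 = -I$ (item \ref{Jz-square}) forces the eigenvalues of $A$ to be $(1-\cos\rho)\pm i\sin\rho$ each with multiplicity $n$, giving $\det A = (2-2\cos\rho)^n$ and the explicit inverse $A^{-1} = [(1-\cos\rho)I - \sin\rho\, J_{f_1}]/(2-2\cos\rho)$; expanding $A^{-1}B$ and using $J_{f_1}^2 = -I$ once more collapses the product to $A^{-1}B = \tfrac{\sin\rho}{2(1-\cos\rho)} u - \tfrac{1}{2} J_{f_1}u$. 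Second, skew-adjointness (item \ref{Jz-skew}) gives $u^T J_{f_1} u = 0$, which annihilates the $J_{f_1}u$ term when paired with $c$, leaving $cA^{-1}B = \tfrac{(\rho-\sin\rho)\sin\rho}{2(1-\cos\rho)}|u|^2$.

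Combining $d - cA^{-1}B$ and using $(1-\cos\rho)^2 + \sin^2\rho = 2-2\cos\rho$ produces $d - cA^{-1}B = \tfrac{|u|^2}{2(1-\cos\rho)}(2-2\cos\rho-\rho\sin\rho)$; the factor of $2-2\cos\rho$ in the denominator absorbs one power of $\det A$, so the inner determinant equals $|u|^2 (2-2\cos\rho)^{n-1}(2-2\cos\rho-\rho\sin\rho)$. Multiplying by the outer $(m-1)$-fold factor yields exactly the claimed formula. The main technical obstacle is bookkeeping: simplifying $\partial x/\partial \eta_1$ (which starts as a sum of three terms, two of which cancel) into the clean form $\sin\rho\, u + \cos\rho\, J_{f_1} u$ before invoking the Schur complement, and then tracking the trigonometric identities that make the powers of $2-2\cos\rho$ line up correctly at the end.
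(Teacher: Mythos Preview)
Your argument is correct. It differs from the paper's in the choice of decomposition. The paper builds a full adapted orthonormal basis on both sides: in addition to aligning $\eta$, it also picks $\hat{u}$, $\hat{v}$ (the direction of $J_\eta u$), and complementary pairs $\hat{w}_i,\hat{y}_i$ with $J_\eta\hat{w}_i$ parallel to $\hat{y}_i$. In that basis the Jacobian is very sparse, with a block-diagonal $2\times 2$ piece coming from the $\hat{w}_i,\hat{y}_i$ pairs (contributing $(2-2\cos\rho)^{n-1}$ directly) and a residual $3\times 3$ block handled by cofactor expansion along the $\hat\eta$ row. You instead rotate only $\eta$, keep the standard basis on $\R^{2n}$, and replace the explicit $2\times 2$ diagonalization by the eigenvalue computation $\det A=(2-2\cos\rho)^n$ together with the Schur complement $d-cA^{-1}B$. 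Your route trades the basis construction for a bit of linear algebra (the inverse of $aI+bJ$ when $J^2=-I$, and the vanishing $u^TJ_{f_1}u=0$), and it makes slightly more transparent why one power of $2-2\cos\rho$ cancels at the end. Either way the arithmetic lands on the same trigonometric identity $(1-\cos\rho)^2-(\rho-\sin\rho)\sin\rho=2-2\cos\rho-\rho\sin\rho$.
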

Note that $A(u,\eta)$ depends on $u, \eta$ only through their
absolute values $\abs{u}, \abs{\eta}$.  By an abuse of notation we may
occasionally use $A$ with $u$ or $\eta$ replaced by scalars, so that
$A(r, \rho)$ means $A(r \hat{u}, \rho \hat{\eta})$ for arbitrary unit
vectors $\hat{u}, \hat{\eta}$. 

For the Heisenberg group with $n=m=1$, this reduces to
\begin{equation*}
      A(u,\eta) = \abs{u}^{2} \left(2 - 2\cos\abs{\eta} - \abs{\eta}\sin\abs{\eta}\right).
\end{equation*}
The analogous expression appearing in \cite{bbbc-jfa} is slightly
incorrect.  However, it does have the same asymptotics as the correct
expression (see Corollary \ref{A-estimates}), which is sufficient for
the rest of the argument in \cite{bbbc-jfa}, so that its overall
correctness is not affected.

\begin{proof}
  Fix $u, \eta$.  Form an orthonormal basis for
  $T_{(u,\eta)}\Phi^{-1}(G) \cong \R^{2n+m}$ as follows.  Let
  $\hat{u}$ be a unit vector in the direction of $(u,0)$, $\hat{v}$ a
  unit vector in the direction of $(J_\eta u,0)$.  For $i=1, \dots,
  n-1$ let $\hat{w}_i, \hat{y}_i \in \R^{2n} \times 0$ be unit vectors
  such that $\hat{w}_i$ is orthogonal to $\hat{u}, \hat{v}, \hat{w}_j,
  \hat{y}_j, 1 \le j < i$, and let $\hat{y}_i$ be in the direction of
  $J_\eta \hat{w}_i$ so that $\hat{y}_i$ is orthogonal to $\hat{u},
  \hat{v}, \hat{w}_j, \hat{y}_j, 1 \le j < i$ as well as to
  $\hat{w}_i$.  (To see this, note that if $\inner{x}{y}=0$ and
  $\inner{x}{J_z y}=0$, then $\inner{J_z x}{y} = 0$ and $\inner{J_z
    x}{J_z y} = -\abs{z}^2\inner{x}{y}=0$.)  Let
  $\hat{\eta}$ be a unit vector in the direction of $(0,\eta)$, and
  let $\hat{\zeta}_k, k=1,\dots,m-1$ be orthonormal vectors in $0
  \times \R^{m}$ which are orthogonal to $\hat{\eta}$.  Then
  $\{\hat{u}, \hat{v}, \hat{w}_i, \hat{y}_i, \hat{\eta},
  \hat{\zeta}_k\}$ form an orthonormal basis for $\R^{2n+m}$.  Note
  $J_\eta \hat{u} = \abs{\eta} \hat{v}$, $J_\eta \hat{v} = -\abs{\eta}
  \hat{u}$,$J_\eta \hat{w}_i = \abs{\eta} \hat{y}_i$, $J_\eta
  \hat{y}_i = -\abs{\eta} \hat{w}_i$.  Then
  \begin{align*}
    \partial_{\hat{u}} \Phi(u,\eta) &= (1-\cos\abs{\eta})\hat{u}
    + \sin\abs{\eta} \hat{v} + \abs{u}
    \left(\abs{\eta}-\sin\abs{\eta}\right)\hat{\eta}\\
    \partial_{\hat{v}} \Phi(u,\eta) &= (1-\cos\abs{\eta})\hat{v}
    - \sin\abs{\eta} \hat{u} \\
    \partial_{\hat{w}_i} \Phi(u,\eta) &= (1-\cos\abs{\eta})\hat{w}_i
    + \sin\abs{\eta} \hat{y}_i \\
    \partial_{\hat{y}_i} \Phi(u,\eta) &= (1-\cos\abs{\eta})\hat{y}_i
    - \sin\abs{\eta} \hat{w}_i \\
    \partial_{\hat{\eta}} \Phi(u,\eta) &= \abs{u}(\sin\abs{\eta}) \hat{u} +
    \abs{u}(\cos\abs{\eta}) \hat{v} + \frac{\abs{u}^2}{2}
    \left(1-\cos\abs{\eta}\right)\hat{\eta} \\
    \partial_{\hat{\zeta}_k} \Phi(u,\eta) &=
    \frac{\sin{\abs{\eta}}}{\abs{\eta}} J_{\hat{\zeta}_k} u + \frac{\abs{u}^2}{2}
    \left(1-\frac{\sin\abs{\eta}}{\abs{\eta}}\right)\hat{\zeta}_k.
  \end{align*}
  In this basis, the Jacobian matrix has the form
  \begin{align}
    J &= 
    \begin{pmatrix}
      1 - \cos\abs{\eta} & -\sin\abs{\eta} & 0 & \abs{u}\sin\abs{\eta}
      & 0 \\
      \sin\abs{\eta} & 1-\cos\abs{\eta} & 0 & \abs{u}\cos\abs{\eta} &
      0 \\
      0 & 0 & B & 0 & * \\
      \abs{u}(\abs{\eta}-\sin\abs{\eta}) & 0 & 0 &
      \frac{\abs{u}^2}{2}(1-\cos\abs{\eta}) & 0 \\
      0 & 0 & 0 & 0 & D
    \end{pmatrix}_{(2n+m) \times (2n+m)}
    \intertext{where}
    B &:= \begin{pmatrix}
            1 - \cos\abs{\eta} & -\sin\abs{\eta} &  & & \\
            \sin\abs{\eta} & 1-\cos\abs{\eta} & & & \\
            & & \ddots & & \\
            & & & 1 - \cos\abs{\eta} & -\sin\abs{\eta} \\
            & & & \sin\abs{\eta} & 1-\cos\abs{\eta}
    \end{pmatrix}_{2(n-1) \times 2(n-1)}
    \intertext{ is a block-diagonal matrix of $2\times 2$ blocks, and}
    D &:=     \begin{pmatrix}
      \frac{\abs{u}^2}{2}
      \left(1-\frac{\sin\abs{\eta}}{\abs{\eta}}\right) & & \\
      & \ddots & \\
      & & \frac{\abs{u}^2}{2}
      \left(1-\frac{\sin\abs{\eta}}{\abs{\eta}}\right)
    \end{pmatrix}_{(m-1)\times (m-1)}
  \end{align}
  is diagonal.  Note $\abs{B} = (2-2\cos\abs{\eta})^{n-1}$ and $\abs{D} = \left(\frac{\abs{u}^2}{2}
      \left(1-\frac{\sin\abs{\eta}}{\abs{\eta}}\right)\right)^{m-1}$.

  So factoring out $\abs{D}$ and expanding about the $\hat{\eta}$ row,
  we have
  \begin{align*}
    \abs{J} &= \abs{D} \left(\abs{u}(\abs{\eta}-\sin\abs{\eta}) 
    \begin{vmatrix}
      -\sin\abs{\eta} & 0 & \abs{u}\sin\abs{\eta} \\
      1-\cos\abs{\eta} & 0 & \abs{u}\cos\abs{\eta} \\
      0 & B & 0       
    \end{vmatrix}
    \right. \\
    &\quad \left. + \frac{\abs{u}^2}{2}(1-\cos\abs{\eta}) 
    \begin{vmatrix}
      1 - \cos\abs{\eta} & -\sin\abs{\eta} & 0 \\
      \sin\abs{\eta} & 1-\cos\abs{\eta} & 0  \\
      0 & 0 & B 
    \end{vmatrix}
    \right) \\
    &= \left(\frac{\abs{u}^2}{2}
      \left(1-\frac{\sin\abs{\eta}}{\abs{\eta}}\right)\right)^{m-1} 
      \\ &\quad \times
      \left(\abs{u}(\abs{\eta}-\sin\abs{\eta})
      (-\abs{u}\sin\abs{\eta}) (2-2\cos\abs{\eta})^{n-1} + 
      \frac{\abs{u}^2}{2}(1-\cos\abs{\eta}) (2-2\cos\abs{\eta})^{n}
    \right)  \\
    &= \left(\frac{\abs{u}^2}{2}
      \left(1-\frac{\sin\abs{\eta}}{\abs{\eta}}\right)\right)^{m-1}
      \abs{u}^2 (2-2\cos\abs{\eta})^{n-1} \left((\abs{\eta}-\sin\abs{\eta})
      (-\sin\abs{\eta})  + 
      (1-\cos\abs{\eta})^2\right) \\
    &= \abs{u}^{2m} \left(\frac{1}{2}-\frac{\sin\abs{\eta}}{2\abs{\eta}}\right)^{m-1}
      (2-2\cos\abs{\eta})^{n-1} \left(2 - 2\cos\abs{\eta} - \abs{\eta}\sin\abs{\eta}\right).
  \end{align*}
\end{proof}

\begin{corollary}\label{A-estimates}
  \begin{equation}\label{A-estimates-eqn}
    A(u, \eta) \asymp \abs{u}^{2m} \abs{\eta}^{2(m+n)} (2\pi-\abs{\eta})^{2n-1}.
  \end{equation}
\end{corollary}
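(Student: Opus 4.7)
The plan is to write $\theta := \abs{\eta}$, factor $A(u,\eta)$ into $\abs{u}^{2m}$ times three $\theta$-dependent pieces, and show that each piece has the expected asymptotic behavior on $[0, 2\pi]$. Specifically, I will establish
\begin{align*}
  \left(\frac{1}{2} - \frac{\sin\theta}{2\theta}\right)^{m-1} &\asymp \theta^{2(m-1)}, \\
  (2 - 2\cos\theta)^{n-1} &\asymp \theta^{2(n-1)} (2\pi-\theta)^{2(n-1)}, \\
  2 - 2\cos\theta - \theta\sin\theta &\asymp \theta^4 (2\pi-\theta),
\end{align*}
on $[0, 2\pi]$. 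Multiplying these three comparisons and the prefactor $\abs{u}^{2m}$ yields exactly $\abs{u}^{2m}\theta^{2(m+n)}(2\pi-\theta)^{2n-1}$, matching (\ref{A-estimates-eqn}).

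For the first two factors the arguments are brief. The first follows from the Taylor expansion $\frac{1}{2}-\frac{\sin\theta}{2\theta} = \theta^2/12 + O(\theta^4)$ at $0$, combined with the strict inequality $\sin\theta < \theta$ for $\theta > 0$, which together make the ratio to $\theta^2$ continuous and strictly positive on all of $[0,2\pi]$. The second uses the half-angle identity $2 - 2\cos\theta = 4\sin^2(\theta/2)$ together with the elementary comparison $\sin x \asymp x(\pi - x)$ on $[0,\pi]$, from which $\sin(\theta/2) \asymp \theta(2\pi-\theta)$ on $[0,2\pi]$.

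The main obstacle is the third factor $g(\theta) := 2 - 2\cos\theta - \theta\sin\theta$, which vanishes to order $4$ at $\theta=0$ and only to order $1$ at $\theta=2\pi$. To handle it I would rewrite
\begin{equation*}
  g(\theta) = 2\sin(\theta/2)\bigl[2\sin(\theta/2) - \theta\cos(\theta/2)\bigr] =: 2\sin(\theta/2)\,h(\theta).
\end{equation*}
A direct computation gives $h'(\theta) = (\theta/2)\sin(\theta/2) \ge 0$ on $[0,2\pi]$, so $h$ is strictly increasing from $h(0)=0$ to $h(2\pi)=2\pi$; in particular $h>0$ on $(0,2\pi]$. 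Taylor expansion yields $h(\theta) = \theta^3/12 + O(\theta^5)$ near $0$, while $h(2\pi) = 2\pi > 0$, so the ratio $h(\theta)/\theta^3$ is continuous and strictly positive on $[0,2\pi]$, i.e.\ $h(\theta) \asymp \theta^3$. Multiplied by $\sin(\theta/2) \asymp \theta(2\pi-\theta)$, this gives $g(\theta) \asymp \theta^4(2\pi-\theta)$.

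The subtle part is really spotting the factorization $g = 2\sin(\theta/2)\,h$: the orders of vanishing at the two endpoints of $[0,2\pi]$ are very asymmetric ($4$ at the origin, $1$ at $2\pi$), so matching them requires isolating a factor of $2\sin(\theta/2)$ that contributes exactly one zero at each endpoint, leaving a residual $h$ that is $O(\theta^3)$ at the origin but nonzero at $2\pi$. Once that factorization is in hand, the remaining work is routine Taylor expansion combined with the observation that continuous functions that are positive on a compact interval are bounded above and below.
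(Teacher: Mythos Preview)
Your proof is correct and follows essentially the same approach as the paper: both establish the endpoint asymptotics by Taylor expansion and interior positivity factor by factor, and both handle the crucial third factor via the same half-angle factorization $2-2\cos\theta-\theta\sin\theta = 2\sin(\theta/2)\bigl[2\sin(\theta/2)-\theta\cos(\theta/2)\bigr]$ together with the derivative computation $h'(\theta)=(\theta/2)\sin(\theta/2)\ge 0$. Your write-up is somewhat more explicit in stating the three separate $\asymp$ relations, but the underlying argument is the same.
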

\begin{proof}
  The asymptotic equivalence near $\abs{\eta}=0$ and $\abs{\eta}=2\pi$
  follows from a routine Taylor series computation.

  It then suffices to show that $A(u,\eta) > 0$ for all $0 <
  \abs{\eta} < 2\pi$.  We have $\frac{1}{2} -
  \frac{\sin\abs{\eta}}{2\abs{\eta}} > 0$ for all $\abs{\eta} > 0$,
  since $x > \sin x$ for all $x > 0$.  We also have $2-2\cos\abs{\eta}
  > 0$ for all $0 < \abs{\eta} < 2\pi$.

  Finally, to show $f(\abs{\eta}) := 2 - 2\cos\abs{\eta} -
  \abs{\eta}\sin\abs{\eta} > 0$, let $\theta = \frac{1}{2}\abs{\eta}$.
  Using double-angle identities, we have $f(2\theta) = 4 \sin\theta
  (\sin\theta - \theta\cos\theta)$.  For $0 < \theta < \pi$ we have
  $\sin\theta > 0$ so it suffices to show $g(\theta) :=
  \sin\theta-\theta\cos\theta > 0$.  But we have $g(0)=0$ and
  $g'(\theta) = \theta\sin\theta > 0$ for $0 < \theta < \pi$.
\end{proof}

The heat kernel estimates will be used to prove a technical lemma
regarding integrating the heat kernel along a geodesic.  The proof
requires the following simple fact from calculus, a close
relative of Lemma \ref{exp-poly-est}.

\begin{lemma}\label{gaussian-calculus}
  For any $q \in \R$, $a_0 > 0$ there exists a constant $C = C_{q,a_0}$ such that for any
  $a \ge a_0$ we have
  \begin{equation}
    \int_{t=1}^{t=\infty} t^q e^{-(at)^2}\,dt \le C \frac{1}{a^2} e^{-a^2}.
  \end{equation}
\end{lemma}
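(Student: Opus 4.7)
The plan is to reduce the claim to a one-parameter tail estimate for the Gaussian via a rescaling, and then handle the polynomial factor by iterated integration by parts. Substitute $s = at$ in the integral to obtain
\begin{equation*}
  \int_1^\infty t^q e^{-(at)^2}\,dt = a^{-q-1} \int_a^\infty s^q e^{-s^2}\,ds,
\end{equation*}
so it suffices to show that there is a constant $C' = C'_{q,a_0}$ with
\begin{equation*}
  I_q(a) := \int_a^\infty s^q e^{-s^2}\,ds \le C' a^{q-1} e^{-a^2} \quad \text{for all } a \ge a_0,
\end{equation*}
since $a^{-q-1}\cdot a^{q-1} = a^{-2}$, which gives the desired $a^{-2} e^{-a^2}$ on the right.

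First I would dispose of the case $q \le 1$ by a direct pointwise bound: for $s \ge a$, the fact that $q-1 \le 0$ gives $s^{q-1} \le a^{q-1}$, hence $s^q e^{-s^2} \le a^{q-1}\,(s e^{-s^2})$, and integrating yields $I_q(a) \le \tfrac{1}{2} a^{q-1} e^{-a^2}$. This handles all $q \le 1$ with an explicit constant independent of $a_0$.

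For $q > 1$, I would use integration by parts with $u = s^{q-1}$ and $dv = s e^{-s^2}\,ds$, which gives the recursion
\begin{equation*}
  I_q(a) = \tfrac{1}{2} a^{q-1} e^{-a^2} + \tfrac{q-1}{2} I_{q-2}(a).
\end{equation*}
Iterating this $k$ times, where $k = k(q)$ is chosen so that $q - 2k \le 1$, produces a finite sum of boundary terms of the form $c_j\, a^{q-1-2j} e^{-a^2}$ ($0 \le j \le k-1$) plus a multiple of $I_{q-2k}(a)$, to which the $q \le 1$ base case applies and contributes a term of the same shape $c_k\, a^{q-1-2k} e^{-a^2}$. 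Factoring out $a^{q-1} e^{-a^2}$ from each contribution leaves the factor $a^{-2j}$, which for $a \ge a_0$ is bounded above by $\max(1, a_0^{-2j})$. Summing the finitely many contributions gives the desired bound $I_q(a) \le C'_{q,a_0} a^{q-1} e^{-a^2}$.

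There is no real obstacle; the only bookkeeping point worth flagging is that the constant genuinely does depend on $a_0$ (through the $a^{-2j}$ factors in the boundary terms when $q$ is large and $a_0 < 1$), which is why the statement is formulated with a lower threshold $a_0 > 0$ rather than claiming a bound uniform in all $a > 0$.
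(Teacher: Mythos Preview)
Your proof is correct and essentially identical to the paper's: the paper makes the change of variables $w=(at)^2$ and invokes Lemma~\ref{exp-poly-est} (whose proof is exactly your ``monotone bound for small exponent, integration-by-parts recursion for large exponent'' argument applied to $\int_{w_0}^\infty w^\alpha e^{-w}\,dw$), which is the same computation as yours under the further substitution $w=s^2$. Your observation about the genuine dependence of the constant on $a_0$ is also consistent with the paper's formulation.
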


\begin{proof}
  Apply Lemma \ref{exp-poly-est}, taking $w_0=a^2$, $\alpha =
  (q-1)/2$, and making the change of variables $w = (a t)^2$.
\end{proof}

Let $B := \{ g : d(0,g) \le 1\}$ be the Carnot-Carath\'eodory unit
ball.

\begin{lemma}\label{geodesic-integral-bound}
  For each $q \in \R$ there exists a constant $C_q$ such that for
  all $u, \eta$ with $\Phi(u, \eta) \in B^C$, i.e. $\abs{u}\abs{\eta}
  \ge 1$,  we have
  \begin{align}
    \int_{t=1}^{t=\frac{2\pi}{\abs{\eta} }} p_1(u,t\eta) A(u,t\eta)
    t^{q}\,dt &\le  \frac{C_q}{(\abs{u} \abs{\eta})^2} p_1(u,\eta)
    A(u,\eta) \label{geodesic-integral-bound-stronger} \\
    &\le C_q  p_1(u,\eta) A(u,\eta). \label{geodesic-integral-bound-weaker}
  \end{align}
\end{lemma}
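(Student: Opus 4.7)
The plan is to reduce the claim to a direct estimate on the explicit asymptotic formulas. Set $d := \abs{u}\abs{\eta}$ and $\rho := \abs{\eta}$, so $d \ge 1$ by hypothesis. Since $d \ge 1$, (\ref{geodesic-integral-bound-weaker}) is a consequence of (\ref{geodesic-integral-bound-stronger}), and I focus on the latter. Dividing by $p_1(u,\eta)A(u,\eta)$, it suffices to show
\begin{equation*}
\int_1^{2\pi/\rho} R(t)\,t^q\,dt \le \frac{C_q}{d^2}, \qquad R(t) := \frac{p_1(u,t\eta)\,A(u,t\eta)}{p_1(u,\eta)\,A(u,\eta)}.
\end{equation*}

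Using the asymptotic estimates (\ref{p1-u-eta-estimate}) and (\ref{A-estimates-eqn}), $R(t)$ factors as a Gaussian piece $e^{-d^2(t^2-1)/4}$ times a product of rational functions in $t$, $\rho$, and $d$. For $t \ge 1$ and $d \ge 1$, most of these are easy to control: the Jacobian ratio is bounded by $C t^{2(m+n)}$ since $(2\pi - t\rho)/(2\pi - \rho) \le 1$, and the heat-kernel numerator ratio $(1+(td)^{2n-m-1})/(1+d^{2n-m-1})$ is bounded by $C t^{\abs{2n-m-1}}$. The delicate factor is the heat-kernel denominator ratio $(1+B_1^{n-1/2})/(1+B_t^{n-1/2})$, where $B_1 := d^2(2\pi - \rho)$ and $B_t := t^2 d^2(2\pi - t\rho)$. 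This ratio is bounded by a constant while $B_t$ remains comparable to $B_1$, but can grow up to a factor of $d^{2n-1}$ once $t\rho$ is close enough to $2\pi$ that $B_t \ll B_1$.

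I would split the integration at the larger root $t_*$ of $t^2(2\pi - t\rho) = 2\pi - \rho$ (this is precisely where $B_t = B_1$). On $[1, t_*]$ the denominator ratio is bounded by a constant, so combining the above yields $R(t)\,t^q \le C_q\, t^N e^{-d^2(t-1)/2}$ (using $t^2 - 1 \ge 2(t-1)$ for $t \ge 1$); the substitution $s = d^2(t-1)/2$ and an argument modeled on Lemma \ref{gaussian-calculus} then give the required $C_q/d^2$ bound on this piece. On $[t_*, 2\pi/\rho]$, the blow-up in the denominator ratio is controlled in two complementary ways: the Jacobian simultaneously produces the vanishing factor $(2\pi - t\rho)^{2n-1}$, and the Gaussian $e^{-d^2(t^2-1)/4}$ is very small in this regime, which (when $\rho$ is small) forces $td$ to be of order $\abs{u} = d/\rho$ or larger, yielding decay stronger than any inverse power of $\rho$.

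The main obstacle I anticipate is the bookkeeping on this second subinterval, where one must track how the small factor $(2\pi - t\rho)^{2n-1}$ and the potentially large factor $B_1^{n-1/2}$ cancel and then confirm that the surviving Gaussian decay absorbs any remaining factors of $d$ and $1/\rho$; the case $\rho > 4\pi/3$ (where $t_* < 1$ so the first subinterval is empty but the total integration range has length at most $1/2$) needs a separate treatment exploiting the short length of the interval. Once this case analysis is complete, combining the contributions from the two subintervals yields (\ref{geodesic-integral-bound-stronger}), and (\ref{geodesic-integral-bound-weaker}) follows at once from $d \ge 1$.
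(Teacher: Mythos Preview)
Your approach is sound and will go through, but it is organized differently from the paper's proof. The paper does not work with the ratio $R(t)$ directly. Instead it partitions $B^C$ into three regions $R_1 = \{|\eta|\le\pi\}$, $R_2=\{\pi<|\eta|\le 2\pi-1/|u|^2\}$, $R_3=\{|\eta|>\max(\pi,2\pi-1/|u|^2)\}$, and on each region exhibits an explicit monomial-times-Gaussian $F_i(u,\eta)\asymp p_1 A$. A single generic claim (proved once) shows that for any such $F$, the integral of $p_1 A\,t^q$ over a subinterval of the geodesic lying in one region is bounded by $C\tau_0^{q-1}(\abs{u}\abs{\eta})^{-2}F(u,\tau_0\eta)$; the proof then reduces to three cases (according to which region $(u,\eta)$ starts in) and to checking a handful of ratios $F_i(u,t_j\eta)/F_k(u,\eta)$ at the region boundaries $t_2,t_3$. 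This modular structure keeps the bookkeeping localized: each cross-region comparison is a short calculation.

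Your split at $t_*$ (where $B_t$ returns to $B_1$) is a legitimate alternative and your treatment of $[1,t_*]$ is clean. But your sketch of the second subinterval underestimates what is needed. ``Short interval'' alone is not the mechanism when $\rho>4\pi/3$: you must actually exploit the cancellation between the Jacobian factor $(2\pi-t\rho)^{2n-1}$ and the denominator-ratio factor $1+B_1^{n-1/2}$, and this forces a further split at the point where $B_t$ crosses $1$. Concretely, on $\{B_t\ge 1\}$ one has $(B_1/B_t)^{n-1/2}\cdot((2\pi-t\rho)/(2\pi-\rho))^{2n-1}\le t^{-(2n-1)}$, while on $\{B_t\le 1\}$ one uses $2\pi-t\rho\le 1/(t^2d^2)$ together with $2\pi-\rho\ge 1/d^2$ (from $B_1\ge 1$) to get the same collapse. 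Once you insert this extra split, your argument closes with the same Gaussian integral as on $[1,t_*]$; without it the stated reasons do not quite cover the case where $(u,\eta)$ sits in the paper's $R_2$.
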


Note that (\ref{geodesic-integral-bound-weaker}) follows immediately
from the stronger statement (\ref{geodesic-integral-bound-stronger}),
since by assumption $\abs{u}\abs{\eta} \ge 1$.  In fact, we shall only
use (\ref{geodesic-integral-bound-weaker}) in the sequel.

\fixnotme{Connect to Cheeger-type isoperimetric inequality from \cite{bbbc-jfa}.}

\begin{proof}
  Assume throughout that $\abs{u}\abs{\eta} \ge 1$ and $0 < \abs{\eta}
  < 2\pi$.

  The proof involves the fact that a geodesic passes through (up to) three
  regions of $G$ in which the estimates for $p_1$ and $A$ simplify in
  different ways.  We define these regions, which partition
  $B^C$, as follows.  See Figure \ref{regions-fig}.
  \begin{figure}
    \centering
    \includegraphics[height=3in]{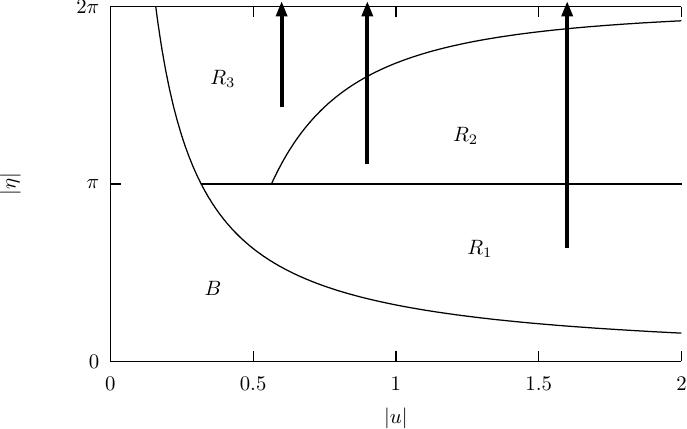}
    \caption[The regions $R_1, R_2, R_3$]{The regions $R_1, R_2, R_3$, seen in the
      $\abs{u}$-$\abs{\eta}$ plane.  The dark lines indicate examples of
      the geodesic paths of integration used in
      (\ref{geodesic-integral-bound-stronger}).  }\label{regions-fig}
  \end{figure}
  \begin{enumerate}
    \item Region $R_1$ is the set of $\Phi(u,\eta)$ such that
       $0 <
      \abs{\eta} \le \pi$.  (This corresponds to having $\abs{x}^2
      \lesssim \abs{z}$.) In this region we have $\abs{u} \ge
      \frac{1}{\pi}$ and $\pi \le 2\pi - \abs{\eta} < 2\pi$.
      Therefore (\ref{p1-u-eta-estimate}) becomes
      \begin{align*}
        p_1(u, \eta) &\asympon{R_1} (\abs{u}\abs{\eta})^{-m}
        e^{-\frac{1}{4}(\abs{u}\abs{\eta})^2}
        \intertext{and Corollary \ref{A-estimates} yields}
        A(u, \eta) &\asympon{R_1} \abs{u}^{2m} \abs{\eta}^{2(n+m)}
        \intertext{so that}
        p_1(u, \eta) A(u, \eta) &\asympon{R_1} \abs{u}^m
        \abs{\eta}^{2n+m} e^{-\frac{1}{4}(\abs{u}\abs{\eta})^2} =: F_1(u,\eta).
      \end{align*}

    \item Region $R_2$ is the set of $\Phi(u,\eta)$ such that
       $\pi <
      \abs{\eta} \le 2\pi-\frac{1}{\abs{u}^2}$.  (This corresponds to having
      $\abs{x}^2 \gtrsim \abs{z}$ and $\abs{x}^2 \abs{z} \gtrsim 1$.)
      In this region, we have $\abs{u}^2 \abs{\eta}^2
      (2\pi-\abs{\eta}) \ge \pi^2$, so that
      \begin{align*}
        p_1(u, \eta) &\asympon{R_2} \abs{u}^{-m}
        (2\pi-\abs{\eta})^{-n+\frac{1}{2}}
        e^{-\frac{1}{4}(\abs{u}\abs{\eta})^2} \\
        A(u, \eta) &\asympon{R_2} \abs{u}^{2m} (2\pi-\abs{\eta})^{2n-1}
        \\
        p_1(u, \eta) A(u, \eta) &\asympon{R_2} \abs{u}^m
        (2\pi-\abs{\eta})^{n-\frac{1}{2}}
        e^{-\frac{1}{4}(\abs{u}\abs{\eta})^2} =: F_2(u,\eta)
        \\ &\asympon{R_2}  \abs{u}^m \abs{\eta}^{2n+m}
        (2\pi-\abs{\eta})^{n-\frac{1}{2}}
        e^{-\frac{1}{4}(\abs{u}\abs{\eta})^2} =: \tilde{F}_2(u, \eta).
      \end{align*}
      We shall use the estimates $F_2, \tilde{F}_2$ at different
      times.  Although $F_2 \asympon{R_2} \tilde{F}_2$ (since
      $\abs{\eta} \asympon{R_2} 1$), they are not
      equivalent on $R_1$.

    \item Region $R_3$ is the set of $\Phi(u,\eta)$ such that
       $\abs{\eta} > \max\left(\pi,
      2\pi-\frac{1}{\abs{u}^2}\right)$.  (This corresponds to having
      $\abs{x}^2 \gtrsim \abs{z}$ and $\abs{x}^2 \abs{z} \lesssim 1$.)
      In this region, we have $\abs{u}^2 \abs{\eta}^2
      (2\pi-\abs{\eta}) < (2\pi)^2$, so that
      \begin{align*}
        p_1(u, \eta) &\asympon{R_3} \abs{u}^{2n-m-1}
        e^{-\frac{1}{4}(\abs{u}\abs{\eta})^2} \\
        A(u, \eta) &\asympon{R_3} \abs{u}^{2m} (2\pi-\abs{\eta})^{2n-1}
        \\
        p_1(u, \eta) A(u, \eta) &\asympon{R_3} \abs{u}^{2n+m-1}
        (2\pi-\abs{\eta})^{2n-1} e^{-\frac{1}{4}(\abs{u}\abs{\eta})^2}
        =: F_3(u,\eta).
      \end{align*}
  \end{enumerate}

  We observe that a geodesic starting from the origin (given by $t
  \mapsto \Phi(u, t\eta)$ for some fixed $u,\eta$) passes through
  these regions in order, except that it skips Region 2 if $\abs{u} < \pi^{-1/2}$.

  We now estimate the desired integral along a portion of a geodesic
  lying in a single region.
  \begin{claim}\label{single-region}
    Let $q \in \R$.  Suppose that $F : G \to \R$ is given by
    \begin{equation*}
      F(u,\eta) =  \abs{u}^{\alpha}
      \abs{\eta}^{\beta} (2\pi-\abs{\eta})^{\gamma}
      e^{-\frac{1}{4}(\abs{u}\abs{\eta})^2}
    \end{equation*}
    for some nonnegative powers $\alpha, \beta, \gamma$, and that
    there is some region $R \subset G$ such that $F \asympon{R} p_1
    A$.  Then there is a constant $C$ depending on
    $q$, $F$, $R$ such that for all $u, \eta, \tau_0, \tau_1, \tau_2$
    satisfying
    \begin{itemize}
    \item $1 \le \tau_0 \le \tau_1 \le \tau_2 \le
    \frac{2\pi}{\abs{\eta}}$; and
    \item $\Phi(u, t\eta) \in R$ for all $t \in
          [\tau_1,\tau_2]$
    \end{itemize}
    we have
    \begin{equation}
      \int_{t=\tau_1}^{t=\tau_2} p_1(u,t\eta) A(u, t\eta) t^q\,dt \le
      C \frac{\tau_0^{q-1}}{(\abs{u}\abs{\eta})^2} F(u,\tau_0 \eta).
    \end{equation}
  \end{claim}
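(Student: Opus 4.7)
The plan is to reduce the claim to a one-dimensional Gaussian tail estimate via monotonicity of the polynomial factors in $F$.

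First, using the hypothesis $F \asympon{R} p_1 A$ together with the assumption that $\Phi(u, t\eta) \in R$ for $t \in [\tau_1, \tau_2]$, I would replace $p_1 A$ by $F$ in the integrand at the cost of an absolute constant. Writing $a := \abs{u}\abs{\eta}$, this reduces the claim to showing
$$\abs{u}^\alpha \abs{\eta}^\beta \int_{\tau_1}^{\tau_2} t^{\beta+q}\,(2\pi - t\abs{\eta})^\gamma\, e^{-a^2 t^2/4}\,dt \le C \frac{\tau_0^{q-1}}{a^2} F(u, \tau_0\eta).$$

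Second, since $\gamma \ge 0$ and $2\pi - t\abs{\eta}$ is nonincreasing on $[\tau_0, 2\pi/\abs{\eta}]$, I would apply the pointwise bound $(2\pi - t\abs{\eta})^\gamma \le (2\pi - \tau_0\abs{\eta})^\gamma$ for $t \ge \tau_0$. This isolates the $\gamma$-factor, which will reappear inside $F(u, \tau_0\eta)$, and reduces the problem to a pure polynomial-times-Gaussian integral. Extending the upper limit to $+\infty$ and applying Lemma \ref{gaussian-calculus} after the substitution $t \mapsto t/\tau_1$ (so that the rescaled integration region becomes $[1, \infty)$ and the effective Gaussian parameter is $a\tau_1/2 \ge 1/2$, which is uniformly bounded below since $a \ge 1$ from Lemma \ref{geodesic-integral-bound} and $\tau_1 \ge 1$), yields
$$\int_{\tau_1}^{\infty} t^{\beta+q}\, e^{-a^2 t^2/4}\,dt \le \frac{C}{a^2}\,\tau_1^{\beta+q-1}\, e^{-a^2 \tau_1^2/4}.$$

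Third, stringing these estimates together and comparing against the target $\tau_0^{q-1} a^{-2} F(u, \tau_0\eta)$, the remaining ratio collapses to $C (\tau_1/\tau_0)^{\beta+q-1}\, e^{-a^2(\tau_1^2 - \tau_0^2)/4}$. Setting $r := \tau_1/\tau_0 \ge 1$ and using $a\tau_0 \ge 1$, we have $a^2(\tau_1^2 - \tau_0^2) \ge r^2 - 1$, so this ratio is dominated by $\sup_{r \ge 1} r^{\beta+q-1} e^{-(r^2-1)/4}$, which is finite. This finishes the proof with a constant depending only on $q$, the fixed exponents $\alpha, \beta, \gamma$, and the implicit constants in $F \asympon{R} p_1 A$.

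The main obstacle is really just careful bookkeeping. The only subtle point is when $\beta + q - 1 > 0$, where Gaussian decay must overpower the polynomial factor $(\tau_1/\tau_0)^{\beta+q-1}$; the inequality $a\tau_0 \ge 1$, which rests on $\tau_0 \ge 1$ and the ambient hypothesis $\abs{u}\abs{\eta} \ge 1$ of Lemma \ref{geodesic-integral-bound}, is precisely what makes the Gaussian tail dominant. When $\beta + q \le 1$ the factor $(\tau_1/\tau_0)^{\beta+q-1}$ is itself bounded by $1$, so no Gaussian decay at all is needed for that step.
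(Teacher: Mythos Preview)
Your proof is correct and follows essentially the same approach as the paper: replace $p_1 A$ by $F$ on $R$, use monotonicity to extract the $(2\pi - \tau_0\abs{\eta})^\gamma$ factor, rescale, and invoke Lemma~\ref{gaussian-calculus}. The paper makes one small simplification: instead of rescaling by $\tau_1$ and then separately bounding the ratio $(\tau_1/\tau_0)^{\beta+q-1} e^{-a^2(\tau_1^2-\tau_0^2)/4}$, it first extends the lower limit of integration from $\tau_1$ down to $\tau_0$ (which only enlarges the nonnegative integral) and then rescales by $\tau_0$, arriving at the target bound directly without your third step.
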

  
\begin{proof}[Proof of Claim \ref{single-region}] We have
  \begin{align*}
    \int_{t=\tau_1}^{t=\tau_2} p_1(u,t\eta) A(u, t\eta) t^q\,dt &\le C
    \int_{t=\tau_1}^{t=\tau_2} F(u, t\eta) t^q\,dt \\
    &\le C \int_{t=\tau_0}^{t=\tau_2} F(u, t\eta) t^q\,dt \\
    &= C \abs{u}^{\alpha} \abs{\eta}^{\beta} \int_{t=\tau_0}^{t=\tau_2}
    t^{q+\beta_i} (2\pi-t\abs{\eta})^{\gamma}
    e^{-\frac{1}{4}(t\abs{u}\abs{\eta})^2}\,dt \\
    &\le C \abs{u}^{\alpha} \abs{\eta}^{\beta} (2\pi-\tau_0 \abs{\eta})^{\gamma}\int_{t=\tau_0}^{t=\tau_2}
    t^{q+\beta} 
    e^{-\frac{1}{4}(t\abs{u}\abs{\eta})^2}\,dt
\intertext{since $t \ge \tau_0$.  We now make the change of
  variables $t = t' \tau_0$:}
    &\le C \abs{u}^{\alpha} \abs{\eta}^{\beta}
(2\pi-\tau_0 \abs{\eta})^{\gamma} \tau_0^{q+\beta+1} \int_{t'=1}^{t'=\infty}
    t'^{q+\beta} 
    e^{-\frac{1}{4}(t' \tau_0 \abs{u}\abs{\eta})^2}\,dt' \\
    &\le C' \abs{u}^{\alpha} \abs{\eta}^{\beta}
    (2\pi-\tau_0 \abs{\eta})^{\gamma} \tau_0^{q+\beta+1} \frac{1}{(\tau_0
      \abs{u}\abs{\eta})^2} e^{-\frac{1}{4}(\tau_0 \abs{u}\abs{\eta})^2} \\
    &= C' \frac{\tau_0^{q-1}}{(\abs{u}\abs{\eta})^2} F_i(u, \tau_0 \eta)
  \end{align*}
  where in the second-to-last line we applied Lemma
  \ref{gaussian-calculus} with $a = \frac{1}{2} \tau_0
  \abs{u}\abs{\eta}$, \mbox{$a_0 = \frac{1}{2}$}.
\end{proof}

  Now for fixed $u, \eta$, let
  \begin{align*}
    t_2 &:= \max\left(1, \frac{\pi}{\abs{\eta}}\right) \\
    t_3 &:= \max\left(t_2, \frac{1}{\abs{\eta}}\left(2\pi-\frac{1}{\abs{u}^2}\right)\right)
  \end{align*}
  so that
  \begin{align*}
    \Phi(u, t\eta) &\in R_1 \text{ for } 1 < t \le t_2 \\
    \Phi(u, t\eta) &\in R_2 \text{ for } t_2 < t < t_3 \\
    \Phi(u, t\eta) &\in R_3 \text{ for } t_3 \le t <
    \frac{2\pi}{\abs{\eta}}.
  \end{align*}

  We divide the remainder of the proof into cases, depending on the
  region where $\Phi(u,\eta)$ resides.

  \begin{case}
  Suppose that $\Phi(u,\eta) \in R_1$.  We have
  \begin{equation*}
    \int_{t=1}^{t=\frac{2\pi}{\abs{\eta}}} p_1(u,t\eta) A(u,t\eta)
    t^{q}\,dt = \int_{t=1}^{t=t_2} + \int_{t=t_2}^{t=t_3} +
    \int_{t=t_3}^{t=\frac{2\pi}{\abs{\eta}}}.
  \end{equation*}

  For the first integral, where $\Phi(u, t\eta) \in R_1$, we have by
  Claim \ref{single-region} (taking $\tau_0 = \tau_1 = 1$, $\tau_2 =
  t_2$, $R = R_1$, $F = F_1$) that
  \begin{align*}
    \int_{t=1}^{t=t_2} p_1(u,t\eta) A(u,t\eta) t^{q}\,dt \le 
    \frac{C}{(\abs{u}\abs{\eta})^2} F_1(u,\eta) \le \frac{C'}{(\abs{u}\abs{\eta})^2} p_1(u,\eta) A(u,\eta)
  \end{align*}
  since $F_1 \asympon{R_1} p_1 A$.  

  For the second integral, where $\Phi(u, t\eta) \in R_2$, we take
  $\tau_0 = 1$, $\tau_1 = t_2$, $\tau_2 = t_3$, $R = R_2$, $F =
  \tilde{F}_2$ in Claim \ref{single-region} to obtain
  \begin{align*}
    \int_{t=t_2}^{t=t_3} p_1(u,t\eta) A(u,t\eta) t^{q}\,dt 
    &\le  \frac{C}{(\abs{u}\abs{\eta})^2} \tilde{F}_2(u,\eta).
  \end{align*}
  However, for $\Phi(u, \eta) \in R_1$ we have
  \begin{align*}
    \frac{\tilde{F}_2(u, \eta)}{F_1(u,\eta)} =
    (2\pi-\abs{\eta})^{n-\frac{1}{2}} \le (2\pi)^{n-\frac{1}{2}}.
  \end{align*}
  Thus
  \begin{align*}
    \int_{t=t_2}^{t=t_3} p_1(u,t\eta) A(u,t\eta) t^{q}\,dt &\le
    \frac{C'}{(\abs{u}\abs{\eta})^2} {F}_1(u,\eta) \\
    &\le \frac{C''}{(\abs{u}\abs{\eta})^2} p_1(u, \eta) A(u, \eta).
  \end{align*}

  The third integral is more subtle.  We apply Claim
  \ref{single-region} with $\tau_0 = \tau_1 = t_3$, $\tau_3 =
  \frac{2\pi}{\abs{\eta}}$, $R = R_3$, $F = {F}_3$:
  \begin{align*}
    \int_{t=t_3}^{t=\frac{2\pi}{\abs{\eta}}} p_1(u,t\eta) A(u,t\eta)
    t^{q}\,dt &\le C \frac{t_3^{q-1}}{(\abs{u}\abs{\eta})^2} {F}_3(u,
    t_3 \eta).
  \end{align*}
  Then 
  \begin{align}\label{F3-ratio}
    \frac{t_3^{q-1} {F}_3(u, t_3 \eta)}{F_1(u, \eta)} &=
    t_3^{q-1} \abs{u}^{2n-1} \abs{\eta}^{-2n-m}
    (2\pi-t_3 \abs{\eta})^{2n-1} e^{-\frac{1}{4} (\abs{u} \abs{\eta})^2
      (t_3^2 - 1)}.
  \end{align}
  We must show that this ratio is bounded.  Fix some $\epsilon > 0$. If $\abs{u} \ge
  (\pi-\epsilon)^{-1/2} > \pi^{-1/2}$, we have
  $2\pi-\frac{1}{\abs{u}^2} > \pi + \epsilon$ and thus $t_3 =
  \frac{1}{\abs{\eta}}\left(2\pi-\frac{1}{\abs{u}^2}\right)$.  Then
  \begin{align*}
     \abs{\eta}^2  (t_3^2 - 1) &= \left(2\pi-\frac{1}{\abs{u}^2}\right)^2 -
     \abs{\eta}^2 \\
     &\ge (\pi + \epsilon)^2 - \pi^2 = 2 \pi \epsilon + \epsilon^2.
  \end{align*}
  So in this case (\ref{F3-ratio}) becomes
  \begin{align*}
    \frac{t_3^{q-1} {F}_3(u, t_3 \eta)}{F_1(u, \eta)} &=
    \left(\frac{1}{\abs{\eta}}\left(2\pi-\frac{1}{\abs{u}^2}\right)\right)^{q-1}
    \abs{u}^{2n-1} \abs{\eta}^{-2n-m}
    \left(\frac{1}{\abs{u}^2}\right)^{2n-1} e^{-\frac{1}{4} (\abs{u} \abs{\eta})^2
      (t_3^2 - 1)} \\
    &= \left(2\pi-\frac{1}{\abs{u}^2}\right)^{q-1} \abs{u}^{-2n+1}
    \abs{\eta}^{-2n-m-q+1}  e^{-\frac{1}{4} (\abs{u} \abs{\eta})^2
      (t_3^2 - 1)} \\
    &\le (2\pi)^{q-1} \abs{u}^{m+q} e^{-\frac{1}{4}(2 \pi \epsilon + \epsilon^2)\abs{u}^2}
  \end{align*}
  since $\abs{\eta} \le \frac{1}{\abs{u}}$.  This is certainly bounded
  by some constant.  On the other hand, if $\abs{u} \le
  (\pi-\epsilon)^{-1/2}$, then $\abs{\eta} \ge (\pi-\epsilon)^{1/2}$
  and $1 \le t_3 \le \left(\frac{\pi + \epsilon}{\pi -
    \epsilon}\right)^{1/2}$, so that the right side of (\ref{F3-ratio})
  is clearly bounded.

  Thus we have
  \begin{align*}
    \int_{t=t_3}^{t=\frac{2\pi}{\abs{\eta}}} p_1(u,t\eta) A(u,t\eta) t^{q}\,dt &\le
    \frac{C'}{(\abs{u}\abs{\eta})^2} {F}_1(u,\eta) \\
    &\le \frac{C''}{(\abs{u}\abs{\eta})^2} p_1(u, \eta) A(u, \eta).
  \end{align*}
  This completes the proof of this case.
\end{case}

\begin{case}
  Suppose that $\Phi(u,\eta) \in R_2$.  We have
  \begin{equation*}
    \int_{t=1}^{t=\frac{2\pi}{\abs{\eta}}} p_1(u,t\eta) A(u,t\eta)
    t^{q}\,dt = \int_{t=1}^{t=t_3} + \int_{t=t_3}^{t=\frac{2\pi}{\abs{\eta}}}.
  \end{equation*}
  Note that in this region we have $1 \le t_3 \le 2$.
  Again by Claim \ref{single-region}, with $\tau_0 = \tau_1 = 1$ and
  $\tau_2 = t_3$, we have
  \begin{align*}
    \int_{t=1}^{t=t_3} p_1(u,t\eta) A(u,t\eta) t^{q}\,dt 
    \le  \frac{C}{(\abs{u}\abs{\eta})^2} F_2(u,\eta) 
    \le  \frac{C'}{(\abs{u}\abs{\eta})^2} p_1(u,\eta) A(u,\eta).
  \end{align*}
  For the second integral, we apply Claim \ref{single-region} with
  $\tau_0 = 1$, $\tau_1 = t_3$, $\tau_2 = \frac{2\pi}{\abs{\eta}}$ to get
  \begin{align*}
    \int_{t=t_3}^{t=\frac{2\pi}{\abs{\eta}}} p_1(u,t\eta) A(u,t\eta)
    t^{q}\,dt &\le  \frac{C}{(\abs{u}\abs{\eta})^2} F_3(u,
    \eta).
  \end{align*}
  But $\abs{\eta} \ge 2\pi - \frac{1}{\abs{u}^2}$ on $R_3$, so we have
  \begin{align*}
    \frac{F_3(u, \eta)}{F_2(u, \eta)} &= \abs{u}^{2n-1}
      (2\pi-\abs{\eta})^{n-\frac{1}{2}} \\
      &\le \abs{u}^{2n-1}
        \left(\frac{1}{\abs{u}^2}\right)^{n-\frac{1}{2}} = 1.      
  \end{align*}
  Thus
  \begin{align*}
    \int_{t=t_3}^{t=\frac{2\pi}{\abs{\eta}}} p_1(u,t\eta) A(u,t\eta) t^{q}\,dt &\le
    \frac{C'}{(\abs{u}\abs{\eta})^2} {F}_2(u,\eta) \\
    &\le \frac{C''}{(\abs{u}\abs{\eta})^2} p_1(u, \eta) A(u, \eta).
  \end{align*}
\end{case}

\begin{case}
  Suppose $\Phi(u,\eta) \in R_3$; we apply Claim \ref{single-region}
  with $\tau_0 = \tau_1 = 1$, $\tau_2 = \frac{2\pi}{\abs{\eta}}$ to
  get
  \begin{equation*}
    \int_{t=1}^{t=\frac{2\pi}{\abs{\eta}}} p_1(u,t\eta) A(u,t\eta) t^{q}\,dt \le C
    \frac{1}{(\abs{u}\abs{\eta})^2} F_3(u,\eta) \le C' p_1(u,\eta) A(u,\eta).
  \end{equation*}
\end{case}

The three cases together complete the proof of Lemma \ref{geodesic-integral-bound}.
\end{proof}

\begin{notation}
   For $f \in C^1(G)$, let $m_f := \frac{\int_B
     f\,d\haar}{\int_B d\haar}$, where $B$ is the Carnot-Carath\'eodory unit ball.
\end{notation}

To continue to follow the line of \cite{bbbc-jfa}, we need the
following Poincar\'e inequality.  This theorem can be found in
\cite{jerison}, and is a special case of a more general theorem
appearing in \cite{maheux-saloffcoste}.

\index{Poincar\'e inequality}
\begin{theorem}\label{poincare}
  There exists a constant $C$ such that for any $f \in C^\infty(G)$,
  \begin{equation}\label{poincare-eq}
    \int_B \abs{f - m_f}\,d\haar \le C \int_B \abs{\grad f}\,d\haar.
  \end{equation}
\end{theorem}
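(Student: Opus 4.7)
The plan is to prove this via a representation formula along geodesics combined with a change of variables in the geodesic coordinates $(u,\eta)$ introduced in Chapter~\ref{subriemannian-chapter}. The key ingredients are Chow's theorem (giving us horizontal curves between any two points of $B$ of length at most $2$), the explicit geodesic map $\Phi$ with its Jacobian estimate $A(u,\eta) \asymp \abs{u}^{2m}\abs{\eta}^{2(m+n)}(2\pi-\abs{\eta})^{2n-1}$ from Corollary~\ref{A-estimates}, and the left-invariance of $d$ and $\grad$.

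First, I would reduce to the ball centered at $0$ and apply the standard double-integral trick. Since $m_f$ minimizes $c \mapsto \frac{1}{|B|}\int_B |f-c|\,d\haar$ up to a factor of $2$, we have
\begin{equation*}
  \int_B \abs{f - m_f}\,d\haar \le \frac{2}{\haar(B)} \int_B \int_B \abs{f(g) - f(h)}\,d\haar(g)\,d\haar(h).
\end{equation*}
For each pair $(g,h) \in B \times B$, I would estimate $\abs{f(g)-f(h)}$ by the triangle inequality $\abs{f(g)-f(h)} \le \abs{f(g)-f(0)} + \abs{f(h)-f(0)}$, using the fact that $0 \in B$. For each $g \in B$ with geodesic coordinates $g = \Phi(u,\eta)$, the horizontal path $s \mapsto \Phi(u,s\eta)$ joins $0$ to $g$, so by (\ref{geodesic-grad})
\begin{equation*}
  \abs{f(g) - f(0)} \le \abs{u}\abs{\eta} \int_0^1 \abs{\grad f(\Phi(u,s\eta))}\,ds.
\end{equation*}
By symmetry this reduces the whole estimate to a bound of the form
\begin{equation*}
  \int_B \abs{u(g)}\abs{\eta(g)} \int_0^1 \abs{\grad f(\Phi(u,s\eta))}\,ds\,d\haar(g)
  \le C \int_B \abs{\grad f}\,d\haar.
\end{equation*}

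Writing the outer integral in geodesic coordinates over $\{(u,\eta) : \abs{u}\abs{\eta} \le 1,\, \abs{\eta} \le 2\pi\}$ and using Fubini, the main task becomes a one-dimensional integral-change-of-variables problem in the radial variable $\rho=\abs{\eta}$ for each fixed $\abs{u}$ and direction $\hat\eta$. After substituting $\sigma = s\rho$, I need to show that the kernel $K(u,\sigma) := \int_\sigma^{1/|u|} \abs{u}\rho \cdot A(u,\rho\hat\eta) \rho^{m-1}\frac{d\rho}{\rho}$ (arising from the change of order of integration) is bounded by $C\, A(u,\sigma\hat\eta)\sigma^{m-1}$. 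Using the explicit asymptotics in Corollary~\ref{A-estimates}, where $A(u,\rho\hat\eta)\rho^{m-1} \asymp \abs{u}^{2m}\rho^{3m+2n-1}(2\pi-\rho)^{2n-1}$, this reduces to an elementary monomial integration over $\rho \in [\sigma, 1/|u|]$.

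The main obstacle is the last step: the Jacobian $A(u,\eta)$ is not monotone in $\abs{\eta}$, and near $\abs{\eta}=2\pi$ the geodesics ``loop around'' (corresponding to the region $R_3$ of the proof of Lemma~\ref{geodesic-integral-bound}). However, the integration is restricted to $\abs{u}\abs{\eta} \le 1$, which automatically rules out the $\abs{\eta} \to 2\pi$ regime except when $\abs{u}$ is small, and in that case the $(2\pi-\rho)^{2n-1}$ factor in $A$ is bounded away from $0$ on the relevant range. This is essentially the same dichotomy handled by Claim~\ref{single-region} in the proof of Lemma~\ref{geodesic-integral-bound}, and a similar case split suffices. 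Alternatively, and perhaps more transparently, I would simply cite Jerison's construction, since the representation formula above places us in his framework, noting that H-type groups are stratified and hence satisfy the homogeneity and doubling hypotheses required.
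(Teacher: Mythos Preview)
The paper does not actually prove Theorem~\ref{poincare}: it simply cites \cite{jerison} and \cite{maheux-saloffcoste} and moves on. Your closing fallback --- to invoke Jerison's theorem directly, noting that H-type groups are stratified and satisfy the requisite doubling and homogeneity hypotheses --- is therefore exactly what the paper does, and is the correct move.

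Your direct geodesic argument, however, has a genuine gap. The kernel bound you need is
\[
K(u,\sigma) \;=\; \int_\sigma^{\rho_{\max}} |u|\,\rho^{m-1} A(u,\rho)\,d\rho \;\le\; C\,\sigma^{m-1} A(u,\sigma),
\]
and this fails as $\sigma \to 0^+$: the left side tends to the positive constant $\int_0^{\rho_{\max}} |u|\,\rho^{m-1}A(u,\rho)\,d\rho$, while the right side tends to zero, since by Corollary~\ref{A-estimates} one has $\sigma^{m-1}A(u,\sigma) \asymp |u|^{2m}\sigma^{3m+2n-1}$ near $\sigma = 0$. This is the classical obstruction to obtaining an $L^1$ Poincar\'e inequality from a representation formula along geodesics emanating from a single point: the resulting fractional-integral kernel does not map $L^1$ to $L^1$. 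Your appeal to Claim~\ref{single-region} and Lemma~\ref{geodesic-integral-bound} does not rescue this, because those estimates concern integrals weighted by $p_1$, and it is precisely the Gaussian factor $e^{-\frac14(|u||\eta|)^2}$ (via Lemma~\ref{gaussian-calculus}) that makes those kernel bounds go through; on the bare unit ball $B$ there is no such decay. Jerison's argument avoids this difficulty by a Whitney decomposition and chaining construction rather than a single-center representation, which is why the citation is the right route.
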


\begin{corollary}\label{poincare-p_1}
    There exists a constant $C$ such that for any $f \in C^\infty(G)$,
  \begin{equation}\label{poincare-p_1-eq}
    \int_B \abs{f - m_f} p_1 \,d\haar \le C \int_B \abs{\grad f}p_1 \,d\haar.
  \end{equation}
\end{corollary}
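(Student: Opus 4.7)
The plan is to deduce Corollary \ref{poincare-p_1} directly from Theorem \ref{poincare} by exploiting the fact that $p_1$ is bounded above and below by positive constants on the compact set $B$.

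First I would verify that $B = \{g \in G : d(0,g) \le 1\}$ is compact. By Corollary \ref{distance-estimate}, $d_0(x,z) \asymp |x| + |z|^{1/2}$, so $B$ is contained in a Euclidean-bounded subset of $\R^{2n+m}$; since $d$ induces the manifold topology (Theorem \ref{chow-thm-subriemannian}) and $B$ is closed under $d$, it is closed in the Euclidean topology as well, hence compact.

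Next I would invoke Proposition \ref{pt-props} (item \ref{pt-schwartz}, continuity of $p_t$) together with Theorem \ref{pt-positive} ($p_t > 0$) to conclude that $p_1$ attains a positive minimum $a := \min_{B} p_1 > 0$ and a finite maximum $b := \max_{B} p_1 < \infty$ on the compact set $B$. Then the chain of inequalities
\begin{equation*}
\int_B |f - m_f|\, p_1\, d\haar \le b \int_B |f - m_f|\, d\haar \le bC_1 \int_B |\grad f|\, d\haar \le \frac{bC_1}{a}\int_B |\grad f|\, p_1\, d\haar,
\end{equation*}
where $C_1$ is the constant from Theorem \ref{poincare}, gives the desired inequality with $C := bC_1/a$.

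There is essentially no obstacle here: the only subtle point is checking compactness of $B$, which follows from the comparison between $d$ and the Euclidean norm in Corollary \ref{distance-estimate}. The argument makes crucial use of the \emph{local} boundedness of $p_1$ (both above and away from zero) rather than its precise asymptotics, which is all one needs once the integration is restricted to the compact ball $B$.
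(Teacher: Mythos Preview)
Your argument is correct and is exactly the approach the paper takes: the paper's proof consists of the single line ``$p_1$ is bounded and bounded below away from $0$ on $B$,'' and you have simply filled in the details of why this suffices (compactness of $B$, continuity and positivity of $p_1$, and the resulting sandwich argument).
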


\begin{proof}
  $p_1$ is bounded and bounded below away from $0$ on $B$.
\end{proof}

\begin{lemma}[akin to Lemma 5.2 of \cite{bbbc-jfa}]\label{bbbc-52}
  There exists a constant $C$ such that for all $f \in \funcclass$,
  \begin{equation}\label{bbbc-52-eq}
    \int_{B^C} \abs{f-m_f} p_1\,d\haar \le C \int_G \abs{\grad f} p_1\,d\haar.
  \end{equation}
\end{lemma}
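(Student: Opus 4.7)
The plan is to adapt the approach of Lemma 5.2 in \cite{bbbc-jfa} to the H-type setting, with Lemma \ref{geodesic-integral-bound} playing the central role. The heart of the matter is to control $f(g) - m_f$ for $g \in B^C$ by a line integral of $\abs{\grad f}$ along the geodesic from the origin through $g$, and then to use Lemma \ref{geodesic-integral-bound} to convert this line integral back into an integral against $p_1$ with no loss of the Gaussian weight.

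For $g = \Phi(u,\eta) \in B^C$ (so $\abs{u}\abs{\eta} \ge 1$), I would set
\[g_0 = g_0(g) := \Phi\bigl(u, \eta/(\abs{u}\abs{\eta})\bigr),\]
which lies on $\partial B$ by (\ref{dPhi}). Splitting
\[\abs{f(g) - m_f} \le \abs{f(g) - f(g_0)} + \abs{f(g_0) - m_f},\]
I would bound the two contributions to $\int_{B^C} \abs{f - m_f} p_1\,d\haar$ separately.

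For the first contribution, since $s \mapsto \Phi(u, s\eta)$ is a constant-speed geodesic of speed $\abs{u}\abs{\eta}$, (\ref{geodesic-grad}) gives
\[\abs{f(g) - f(g_0)} \le \abs{u}\abs{\eta} \int_{1/(\abs{u}\abs{\eta})}^1 \abs{\grad f(\Phi(u, s\eta))}\,ds.\]
I would integrate this against $p_1(u,\eta) A(u,\eta)\,du\,d\eta$ over $\{\abs{u}\abs{\eta} \ge 1\}$, swap the order of integration via Fubini, substitute $\eta' = s\eta$ (with $d\eta = s^{-m}\,d\eta'$), and reparametrize the inner $s$-integral by $t = 1/s$. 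The inner integral collapses to
\[\int_1^{2\pi/\abs{\eta'}} t^{m-1}\, p_1(u, t\eta')\, A(u, t\eta')\,dt,\]
which is exactly the object controlled by the stronger conclusion (\ref{geodesic-integral-bound-stronger}) of Lemma \ref{geodesic-integral-bound} with $q = m-1$, yielding the bound $C(\abs{u}\abs{\eta'})^{-2} p_1(u,\eta') A(u,\eta')$. The external speed factor $\abs{u}\abs{\eta}$ reassembles as $\abs{u}\abs{\eta'}$ after tracking through the substitutions, and combined with the $(\abs{u}\abs{\eta'})^{-2}$ gain leaves a factor of $(\abs{u}\abs{\eta'})^{-1} \le 1$ on $B^C$. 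This produces
\[\int_{B^C} \abs{f(g) - f(g_0)} p_1\,d\haar \le C \int_{B^C} \abs{\grad f} p_1\,d\haar \le C \int_G \abs{\grad f} p_1\,d\haar.\]

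For the second contribution, $g_0 \in \partial B$. In coordinates $(u, \hat\eta, \rho)$ with $\rho = \abs{\eta}$, $g_0$ depends only on $(u, \hat\eta)$, so the integral factors as
\[\int \abs{f(g_0(u,\hat\eta)) - m_f} \left[\int_{1/\abs{u}}^{2\pi} p_1(u, \rho\hat\eta) A(u, \rho\hat\eta) \rho^{m-1}\,d\rho\right] du\,d\hat\eta.\]
Another application of Lemma \ref{geodesic-integral-bound} (now with $\abs{u}\abs{\eta_0} = 1$, so that even the weaker form (\ref{geodesic-integral-bound-weaker}) suffices) bounds the inner integral by a constant multiple of $p_1(g_0) A(u, \hat\eta/\abs{u})$. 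The remaining estimate of $\abs{f(g_0) - m_f}$ follows by a Poincar\'e-type argument on a ball of radius $2$ containing both $B$ and $\partial B$: one writes $\abs{f(g_0) - m_f} \le \abs{B}^{-1}\int_B \abs{f(g_0) - f(h)}\,d\haar(h)$, bounds each $\abs{f(g_0) - f(h)}$ by a geodesic integral of $\abs{\grad f}$ within $\bar B_2$, and uses that $p_1$ is comparable to a positive constant on $\bar B_2$ to reinsert $p_1$ into the integral on the right side.

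The main obstacle is the bookkeeping in the first contribution: one must verify that, after the substitutions $\eta' = s\eta$ and $t = 1/s$, the speed factor $\abs{u}\abs{\eta}$ matches the $(\abs{u}\abs{\eta'})^{-2}$ gain furnished by the sharper conclusion (\ref{geodesic-integral-bound-stronger}) of Lemma \ref{geodesic-integral-bound}. Indeed, Lemma \ref{geodesic-integral-bound} was proved with precisely the factor $1/(\abs{u}\abs{\eta})^2$ (rather than only its weaker consequence) in order to absorb this speed and close the argument, so the structure is essentially identical to \cite{bbbc-jfa} once translated into the geodesic coordinates $\Phi$ of Section 5.3.
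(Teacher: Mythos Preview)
Your treatment of the first contribution $\int_{B^C}\abs{f-f(g_0)}\,p_1\,d\haar$ is correct and coincides with the paper's argument, including the observation that it is the stronger form (\ref{geodesic-integral-bound-stronger}) of Lemma~\ref{geodesic-integral-bound} that absorbs the speed factor $\abs{u}\abs{\eta'}$.

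The second contribution is where your sketch has a genuine gap. After bounding the radial integral (which, incidentally, carries an extra factor $\abs{u}^{-m}$ you omitted, though this only helps), you are left with an integral of $\abs{f(g_0)-m_f}$ over $\{\abs{u}\ge 1/(2\pi)\}\times S^{m-1}$, essentially a surface integral over $\partial B$. Writing $\abs{f(g_0)-m_f}\le \abs{B}^{-1}\int_B\abs{f(g_0)-f(h)}\,d\haar(h)$ and bounding each difference by a line integral along a geodesic from $g_0$ to $h$ does not convert cleanly to a volume integral of $\abs{\grad f}$: the map $(h,t)\mapsto \gamma_{g_0,h}(t)$ has a Jacobian that degenerates as $t\to 0$, so the induced occupation density blows up near each $g_0$, and no bound of the form $C\int_{\bar B_2}\abs{\grad f}\,d\haar$ falls out. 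In effect you are invoking a trace-type Poincar\'e inequality on $\partial B$, not the volume Poincar\'e inequality of Theorem~\ref{poincare}, and no such tool is available here. The paper avoids this by \emph{not} collapsing the radial integral first. Instead it averages the pointwise inequality
\[
\abs{f(g_0)-m_f}\le \bigl|f(g_0)-f\bigl(\Phi(u,\tfrac{s}{\abs{u}}\hat\eta)\bigr)\bigr|+\bigl|f\bigl(\Phi(u,\tfrac{s}{\abs{u}}\hat\eta)\bigr)-m_f\bigr|
\]
over $s\in[0,1]$ with the weight $s^{m-1}A(u,s/\abs{u})\big/(\abs{u}^m D(u))$, the normalized Jacobian along the radial segment from $0$ to $g_0$ inside $B$. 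This yields two terms $I_1,I_2$: after a change of variables $I_2$ becomes exactly $\int_B\abs{f-m_f}\,d\haar$, to which Theorem~\ref{poincare} applies, while $I_1$ is controlled via a pointwise Jacobian comparison $R(u)\int_0^t s^{m-1}A(u,s/\abs{u})\,ds\le C\,t^{m-1}A(u,t/\abs{u})$ (an auxiliary claim bounding $R(u):=D(u)^{-1}\int_{1/\abs{u}}^{2\pi}p_1 A\,\rho^{m-1}\,d\rho$), reducing it to $\int_B\abs{\grad f}\,d\haar$. This radial averaging is precisely the device that thickens the $\partial B$ integral into a volume integral over $B$, and it is the missing idea in your sketch.
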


\begin{proof}
  Changing to $(u,\eta)$ coordinates, we wish to show
  \begin{equation}
    \int_{\abs{u} \ge \frac{1}{2\pi}} \int_{\frac{1}{\abs{u}} \le
      \abs{\eta} < 2\pi} \abs{f(\Phi(u,\eta))-m_f}p_1(\Phi(u,\eta))
    A(u,\eta)\,d\eta\,du \le C \int_{G} \abs{\grad f} p_1 \,d\haar.
  \end{equation}
  By an abuse of notation we shall write $f(u, \eta)$ for
  $f(\Phi(u,\eta))$, $p_1(u,\eta)$ for $p_1(\Phi(u,\eta))$, $\grad
  f(u,\eta)$ for $(\grad f)(\Phi(u,\eta))$, et cetera.

  Let $g(u,\eta) := f\left(u,
  \min\left(\abs{\eta},\frac{1}{\abs{u}}\right)
  \frac{\eta}{\abs{\eta}}\right)$.  Then $g=f$ on $B$ (in particular
  $m_g = m_f$), $g$ is bounded, the function $s \mapsto g(u, s\eta)$
  is absolutely continuous, and $\frac{d}{ds}g(u,s\eta) = 0$ for $s >
  \frac{1}{\abs{u}\abs{\eta}}$.

  Now $\abs{f - m_f} \le \abs{f-g} + \abs{g - m_f}$.  We first
  observe that for $\abs{u}\abs{\eta} \ge 1$ we have
  \begin{align*}
    \abs{f(u,\eta) - g(u,\eta)} &=
    \abs{\int_{s=\frac{1}{\abs{u}\abs{\eta}}}^{s=1} \left(\frac{d}{ds}
      f(u,s\eta)-\cancel{\frac{d}{ds}g(u,s\eta)}\right)\,ds} \\
    &\le \int_{s=\frac{1}{\abs{u}\abs{\eta}}}^{s=1} \abs{\grad
      f(u,s\eta)} \abs{u} \abs{\eta}\,ds
  \end{align*} 
  by (\ref{geodesic-grad}).  Thus
  \begin{align*}
    \int_{B^C} \abs{f-g}p_1\,d\haar &= \int_{\abs{u} \ge \frac{1}{2\pi}} \int_{\abs{\eta} \ge
      \frac{1}{\abs{u}}} \abs{f(u,\eta)-g(u,\eta)}p_1(u,\eta)
    A(u,\eta)\,d\eta\,du
\\
\intertext{where the limits of integration come from the conditions
  $\abs{u} \abs{\eta} \ge 1$, $\abs{\eta} < 2\pi$;}
 &\le \int_{\abs{u} \ge \frac{1}{2\pi}} \int_{\abs{\eta} \ge
      \frac{1}{\abs{u}}} \int_{s=\frac{1}{\abs{u}\abs{\eta}}}^{s=1} \abs{\grad
      f(u,s\eta)} \abs{u} \abs{\eta}p_1(u,\eta)
    A(u,\eta)\,ds\,d\eta\,du \\
 &= \int_{\abs{u} \ge \frac{1}{2\pi}} \int_{s=0}^{s=1}
    \int_{\frac{1}{s\abs{u}} \le \abs{\eta} \le 2\pi} \abs{\grad
      f(u,s\eta)} \abs{u} \abs{\eta}p_1(u,\eta)
    A(u,\eta)\,d\eta\,ds\,du
\intertext{by Tonelli's theorem.  We now make the change of variables $\eta' =
  s\eta$ to obtain}
&= \int_{\abs{u} \ge \frac{1}{2\pi}} \int_{s=0}^{s=1}
    \int_{\frac{1}{\abs{u}} \le \abs{\eta'} \le 2\pi s} \abs{\grad
      f(u,\eta')} \abs{u} \frac{1}{s}\abs{\eta'} \\
    &\quad \times p_1\left(u,\frac{1}{s}\eta'\right)
    A\left(u,\frac{1}{s}\eta'\right) \frac{1}{s^m}\,d\eta'\,ds\,du \\
    &= \int_{\abs{u} \ge \frac{1}{2\pi}} \int_{\frac{1}{\abs{u}} \le \abs{\eta'} \le 2\pi} \abs{\grad
      f(u,\eta')} \abs{u} \abs{\eta'} \\
    &\quad \times \left(\int_{s=\frac{\abs{\eta'}}{2\pi}}^{s=1}
     p_1\left(u,\frac{1}{s}\eta'\right)
    A\left(u,\frac{1}{s}\eta'\right) \frac{1}{s^{m+1}}\,ds\right)\,d\eta'\,du.
\intertext{Make the further change of variables $t=\frac{1}{s}$ to
  get}
    &= \int_{\abs{u} \ge \frac{1}{2\pi}} \int_{\frac{1}{\abs{u}} \le \abs{\eta'} \le 2\pi} \abs{\grad
      f(u,\eta')} \abs{u} \abs{\eta'} \\
&\quad \times \left(\int_{t=1}^{t=\frac{2\pi}{\abs{\eta'}}}
     p_1(u,t\eta')
    A(u,t\eta') t^{m-1}\,dt\right)\,d\eta'\,du.
    \intertext{Applying Lemma \ref{geodesic-integral-bound} to the
      bracketed term gives}
    &\le C \int_{\abs{u} \ge \frac{1}{2\pi}} \int_{\frac{1}{\abs{u}} \le
      \abs{\eta'} \le 2\pi} \frac{1}{\abs{u} \abs{\eta'}} \abs{\grad
      f(u,\eta')} p_1(u,\eta') A(u,\eta') \,d\eta' \,du  \\
    &\le C' \int_{B^C} \abs{\grad f} p_1\,d\haar
  \end{align*}
  converting back from geodesic coordinates and using the fact that
  $\abs{u}\abs{\eta'} \ge 1$.
  
  To complete the proof, we must show that $\int_{B^C} \abs{g - m_f}
  p_1\,d\haar \le \int_G \abs{\grad f}p_1\,d\haar$.  Note that
  for $\Phi(u,\eta) \in B^C$, i.e. $\abs{u} \abs{\eta} \ge 1$, we
  have $g(u,\eta) = f\left(u, \frac{1}{\abs{u}
    \abs{\eta}}\eta\right)$, so
  \begin{align}
    \int_{B^C} \abs{g - m_f} p_1\,d\haar &= \int_{\abs{u} \ge \frac{1}{2\pi}}
    \int_{\frac{1}{\abs{u}} \le \abs{\eta} \le 2\pi } \abs{f\left(u, \frac{1}{\abs{u}
    \abs{\eta}}\eta\right) - m_f} p_1(u,\eta) A(u,\eta) \,d\eta\,du.
    \intertext{Change the $\eta$ integral to polar coordinates by
      writing $\eta = \rho \hat{\eta}$, where $\rho \ge 0$ and
      $\abs{\hat{\eta}} = 1$.  Note that $p_1(u,\eta),
      A(u,\eta)$ depend on $\eta$ only through $\rho$ and not $\hat{\eta}$.}
&= C \int_{\abs{u} \ge \frac{1}{2\pi}}
    \int_{\hat{\eta} \in S^{m-1}} \abs{f\left(u, \frac{1}{\abs{u}}
      \hat{\eta}\right) - m_f}
    \\ &\quad \times \int_{\rho=\frac{1}{\abs{u}}}^{\rho=2\pi} p_1(u,\rho) A(u,\rho)
    \rho^{m-1}\,d\rho \,d\hat{\eta}\,du. \label{foogle}
  \end{align}
  Now, for any $s \in [0,1]$ we have
  \begin{equation}\label{gmf1}
    \abs{f\left(u, \frac{1}{\abs{u}}\hat{\eta}\right) - m_f} \le
    \abs{f\left(u, \frac{1}{\abs{u}}\hat{\eta}\right) - f\left(u,
      \frac{s}{\abs{u}}\hat{\eta}\right)} + \abs{ f\left(u,
      \frac{s}{\abs{u}}\hat{\eta}\right) - m_f}.
  \end{equation}
  Let 
\begin{equation}
D(u) := \int_{s=0}^{s=1} \frac{s^{m-1}}{\abs{u}^m} A\left(u,
  \frac{s}{\abs{u}}\right)\,ds.
\end{equation}
By multiplying both sides of (\ref{gmf1}) by
  $\frac{1}{D(u)}\frac{s^{m-1}}{\abs{u}^m} A\left(u,
  \frac{s}{\abs{u}}\right)$ and integrating we obtain
  \begin{equation}\label{xyzzy}
    \begin{split}
    \abs{f\left(u, \frac{1}{\abs{u}}\hat{\eta}\right) - m_f} &\le \frac{1}{D(u)} \int_{s=0}^{s=1} \left(\abs{f\left(u, \frac{1}{\abs{u}}\hat{\eta}\right) - f\left(u,
      \frac{s}{\abs{u}}\hat{\eta}\right)} + \abs{ f\left(u,
      \frac{s}{\abs{u}}\hat{\eta}\right) - m_f}\right) \\
    &\quad \times \frac{s^{m-1}}{\abs{u}^m} A\left(u,
  \frac{s}{\abs{u}}\right)\,ds.
  \end{split}
  \end{equation}
  Let
  \begin{equation}\label{R-def}
    R(u) := \frac{1}{D(u)} \int_{\rho=\frac{1}{\abs{u}}}^{\rho=2\pi}
      p_1(u,\rho) A(u,\rho) \rho^{m-1}\,d\rho.
  \end{equation}
  Then substituting (\ref{xyzzy}) into (\ref{foogle}) and using
  (\ref{R-def}) we have
  \begin{equation}
    \int_{B^C} \abs{g-m_f}p_1\,d\haar \le I_1 + I_2
  \end{equation}
  where
  \begin{align}
     I_1 &:= \int_{\abs{u} \ge \frac{1}{2\pi}}
    \int_{\hat{\eta} \in S^{m-1}} 
\int_{s=0}^{s=1} \abs{f\left(u, \frac{1}{\abs{u}}\hat{\eta}\right) - f\left(u, \frac{s}{\abs{u}}\hat{\eta}\right)} \frac{s^{m-1}}{\abs{u}^m} A\left(u,
\frac{s}{\abs{u}}\right)\,ds \,
   R(u) 
   \,d\hat{\eta}\,du \label{I1-def} \\
     I_2 &:= \int_{\abs{u} \ge \frac{1}{2\pi}}
    \int_{\hat{\eta} \in S^{m-1}} 
\int_{s=0}^{s=1} \abs{ f\left(u, \frac{s}{\abs{u}}\hat{\eta}\right) -
  m_f} \frac{s^{m-1}}{\abs{u}^m} A\left(u,
\frac{s}{\abs{u}}\right)\,ds\, R(u) \,d\hat{\eta}\,du. \label{I2-def}
  \end{align}
  We now show that $I_1$, $I_2$ can each be bounded by a constant
  times $\int_G \abs{\grad f}p_1\,d\haar$, using the
  following claim.
  \begin{claim}\label{R-claim}
    There exists a constant $C$ such that for all $\abs{u} \ge
    \frac{1}{2\pi}$ we have
    \begin{equation}
      R(u) \le C
      \left(2\pi-\frac{1}{\abs{u}}\right)^{2n-1} \le (2\pi)^{2n-1}C.
    \end{equation}
  \end{claim}

  \begin{proof}[Proof of Claim]
    First, by Corollary \ref{A-estimates} we have
    \begin{align*}
      D(u) &:= \int_{s=0}^{s=1} \frac{s^{m-1}}{\abs{u}^m} A\left(u,
  \frac{s}{\abs{u}}\right)\,ds \\
  &\ge C \int_{s=0}^{s=1} \frac{s^{m-1}}{\abs{u}^m} \abs{u}^{2m}
  \left(\frac{s}{\abs{u}}\right)^{2(m+n)} \left(2\pi -
  \frac{s}{\abs{u}}\right)^{2n-1}\,ds \\
  &= C \abs{u}^{-2n-m} \int_{s=0}^{s=1} s^{3m+2n-1} \left(2\pi -
  \frac{s}{\abs{u}}\right)^{2n-1}\,ds \\
  &\ge C \abs{u}^{-2n-m}  \int_{s=0}^{s=1} s^{3m+2n-1}
  \left(2\pi(1-s)\right)^{2n-1}\,ds && \text{since $u \ge
    \frac{1}{2\pi}$} \\
  &= C' \abs{u}^{-2n-m}
    \end{align*}
    since the $s$ integral is a positive constant independent of $u$.

    On the other hand, making the change of variables $\rho =
    \frac{t}{\abs{u}}$ shows
    \begin{align*}
      \int_{\rho=\frac{1}{\abs{u}}}^{\rho=2\pi}
      p_1(u,\rho) A(u,\rho) \rho^{m-1}\,d\rho &= \abs{u}^{-m} \int_{t=1}^{t=2\pi\abs{u}}
      p_1\left(u,\frac{t}{\abs{u}}\right) A\left(u,\frac{t}{\abs{u}}\right) t^{m-1}\,dt \\
      &\le C \abs{u}^{-m} p_1\left(u, \frac{1}{\abs{u}}\right) A\left(u, \frac{1}{\abs{u}}\right)
    \end{align*}
    by taking $\abs{\eta} = \frac{1}{\abs{u}}$ in Lemma
    \ref{geodesic-integral-bound}.  Now $p_1\left(u, \frac{1}{\abs{u}}\right)$ is the heat
    kernel evaluated at a point on the unit sphere of $G$, so this is
    bounded by a constant independent of $u$.  Thus by Corollary
    \ref{A-estimates} we have
    \begin{align*}
       \int_{\rho=\frac{1}{\abs{u}}}^{\rho=2\pi}
      p_1(u,\rho) A(u,\rho) \rho^{m-1}\,d\rho &\le C \abs{u}^{-m}
      \abs{u}^{2m} \left(\frac{1}{\abs{u}}\right)^{2(m+n)} \left(2\pi-\frac{1}{\abs{u}}\right)^{2n-1} \\
      &\le C \left(2\pi-\frac{1}{\abs{u}}\right)^{2n-1} \abs{u}^{-2n-m}.
    \end{align*}
    Combining this with the estimate on $D(u)$ proves the claim.
  \end{proof}

  To estimate $I_1$ (see (\ref{I1-def})), we observe that
  \begin{align*}
    \abs{f\left(u, \frac{1}{\abs{u}}\hat{\eta}\right) - f\left(u,
      \frac{s}{\abs{u}}\hat{\eta}\right)} &= \abs{\int_{t=s}^{t=1} \frac{d}{dt}
      f\left(u, \frac{t}{\abs{u}}\hat{\eta}\right)\,dt} \\
    &\le \int_{t=s}^{t=1} \abs{\frac{d}{dt}
      f\left(u, \frac{t}{\abs{u}}\hat{\eta}\right)}\,dt \\
    &\le \int_{t=s}^{t=1} \abs{\grad f\left(u,
        \frac{t}{\abs{u}}\hat{\eta}\right)}\,dt
  \end{align*}
  by (\ref{geodesic-grad}).  Thus
  \begin{align}
    I_1 &\le \int_{\abs{u} \ge \frac{1}{2\pi}} \int_{\hat{\eta} \in
      S^{m-1}} \int_{s=0}^{s=1} \int_{t=s}^{t=1} \abs{\grad f\left(u,
      \frac{t}{\abs{u}}\hat{\eta}\right)} \frac{s^{m-1}}{\abs{u}^m} A\left(u,
    \frac{s}{\abs{u}}\right)\,dt\,ds \, R(u) \,d\hat{\eta}\,du \\
    &= \int_{\abs{u} \ge \frac{1}{2\pi}} \int_{\hat{\eta} \in
      S^{m-1}} \int_{t=0}^{t=1} \abs{\grad f\left(u,
      \frac{t}{\abs{u}}\hat{\eta}\right)}
    \frac{1}{\abs{u}^m}\left(R(u)\int_{s=0}^{s=t}  s^{m-1} A\left(u, 
    \frac{s}{\abs{u}}\right)\,ds\right)\,dt \, d\hat{\eta}\,du.
  \end{align}
  Now by Claim \ref{R-claim} and Corollary \ref{A-estimates}, we have
  for all $t \in [0,1]$: 
  \begin{align*}
    R(u) \int_{s=0}^{s=t}  s^{m-1} A\left(u,
    \frac{s}{\abs{u}}\right)\,ds &\le C 
      \left(2\pi-\frac{1}{\abs{u}}\right)^{2n-1} \\ &\quad\times \int_{s=0}^{s=t}
      s^{m-1} \abs{u}^{2m} \left(\frac{s}{\abs{u}}\right)^{2(m+n)}
      \left(2\pi-\frac{s}{\abs{u}}\right)^{2n-1}\,ds \\
      &\le C 
      \left(2\pi-\frac{t}{\abs{u}}\right)^{2n-1} (2\pi)^{2n-1}
      \abs{u}^{-2n} \int_{s=0}^{s=t}
      s^{3m+2n-1}\,ds \\
      &= C' \left(2\pi-\frac{t}{\abs{u}}\right)^{2n-1} \abs{u}^{-2n}
      t^{3m+2n} \\
      &= C' \left(2\pi-\frac{t}{\abs{u}}\right)^{2n-1} \abs{u}^{2m}
      \left(\frac{t}{\abs{u}}\right)^{2(m+n)} t^m \\
      &\le C'' A\left(u, \frac{t}{\abs{u}}\right) t^m \\
      &\le C'' A\left(u, \frac{t}{\abs{u}}\right) t^{m-1}.
  \end{align*}
  Thus
  \begin{align}
    I_1 &\le C \int_{\abs{u} \ge \frac{1}{2\pi}} \int_{\hat{\eta} \in
      S^{m-1}} \int_{t=0}^{t=1} \abs{\grad f\left(u,
      \frac{t}{\abs{u}}\hat{\eta}\right)} A\left(u, \frac{t}{\abs{u}}\right)
    \frac{t^{m-1}}{\abs{u}^m}\,dt \, d\hat{\eta}\,du. \\
    \intertext{Make the change of variables $r=\frac{t}{\abs{u}}$:}
    &= C \int_{\abs{u} \ge \frac{1}{2\pi}} \int_{\hat{\eta} \in
      S^{m-1}} \int_{r=0}^{r=\frac{1}{\abs{u}}} \abs{\grad f\left(u,
      r\hat{\eta}\right)} A\left(u, r\right) r^{m-1} \,dr \,
    d\hat{\eta}\,du \\
    &\le C \int_{u \in \R^{2n}} \int_{\hat{\eta} \in
      S^{m-1}} \int_{r=0}^{r=\frac{1}{\abs{u}}} \abs{\grad f\left(u,
      r\hat{\eta}\right)} A\left(u, r\right) r^{m-1} \,dr \,
    d\hat{\eta}\,du \\
    &= C \int_{B}  \abs{\grad f}\,d\haar \label{int-B-gradf} \\
    &\le \frac{C}{\inf_B p_1} \int_B \abs{\grad f} p_1\,d\haar \\
    &\le C' \int_G  \abs{\grad f} p_1\,d\haar. \label{int-G-gradf-p_1}
  \end{align}
  where we have used the fact that $p_1$ is bounded away from $0$ on
  $B$.

  For $I_2$ (see (\ref{I2-def})), we have by Claim \ref{R-claim} that
  \begin{align}
    I_2 &\le C \int_{\abs{u} \ge \frac{1}{2\pi}}
    \int_{\hat{\eta} \in S^{m-1}} 
\int_{s=0}^{s=1} \abs{ f\left(u, \frac{s}{\abs{u}}
        \hat{\eta}\right) - m_f} \frac{s^{m-1}}{\abs{u}^m} A\left(u,
  \frac{s}{\abs{u}}\right)\,ds
    \,d\hat{\eta}\,du.
\intertext{Make the change of variables $r=\frac{s}{\abs{u}}$:}
&= C \int_{\abs{u} \ge \frac{1}{2\pi}}
    \int_{\hat{\eta} \in S^{m-1}} 
\int_{r=0}^{r=\frac{1}{\abs{u}}} \abs{ f\left(u, r\hat{\eta}\right) - m_f} r^{m-1} A\left(u,
  r\right)\,dr
    \,d\hat{\eta}\,du \\
    &\le C \int_{u \in \R^{2n}} \int_{\hat{\eta} \in S^{m-1}} 
    \int_{r=0}^{r=\frac{1}{\abs{u}}} 
    \abs{ f\left(u, r\hat{\eta}\right) - m_f} r^{m-1}
    A\left(u, r\right)\,dr\,d\hat{\eta} du \\
    &= C \int_B \abs{f-m_f}\,d\haar \\
    &\le C \int_B \abs{\grad f}\,d\haar
  \end{align}
  by Theorem \ref{poincare}.  The inequalities
  (\ref{int-B-gradf}--\ref{int-G-gradf-p_1}) now show that $I_2
  \le C' \int_G \abs{\grad f}p_1\,d\haar$, as desired.
\end{proof}

\begin{corollary}\label{cheeger-combined}
  There exists a constant $C$ such that for all $f \in \funcclass$,
  \begin{equation}
    \int_G \abs{f - m_f} p_1 \,d\haar \le C \int_G \abs{\grad f} p_1 \,d\haar.
  \end{equation}
\end{corollary}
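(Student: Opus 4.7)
The plan is to split the integration region as $G = B \cup B^C$ and bound each piece by $\int_G |\nabla f|\, p_1\, d\haar$ using the two results just established.

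First I would write
\begin{equation*}
\int_G \abs{f - m_f}\, p_1\, d\haar = \int_B \abs{f - m_f}\, p_1\, d\haar + \int_{B^C} \abs{f - m_f}\, p_1\, d\haar.
\end{equation*}
For the first piece, Corollary \ref{poincare-p_1} (the weighted Poincar\'e inequality on the unit ball) gives
\begin{equation*}
\int_B \abs{f - m_f}\, p_1\, d\haar \le C_1 \int_B \abs{\grad f}\, p_1\, d\haar \le C_1 \int_G \abs{\grad f}\, p_1\, d\haar,
\end{equation*}
where in the last step we used nonnegativity of $p_1$ (Theorem \ref{pt-positive}) and of $|\grad f|$ to extend the domain of integration. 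For the second piece, Lemma \ref{bbbc-52} directly gives
\begin{equation*}
\int_{B^C} \abs{f - m_f}\, p_1\, d\haar \le C_2 \int_G \abs{\grad f}\, p_1\, d\haar.
\end{equation*}
Adding these with $C := C_1 + C_2$ yields the claim.

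There is no real obstacle: both ingredients are already in hand, and the only thing to check is that the hypotheses are met. Corollary \ref{poincare-p_1} requires $f \in C^\infty(G)$, but since $\funcclass \subset C^1(G)$ and the Poincar\'e inequality (Theorem \ref{poincare}) holds for $C^1$ functions on a ball by a standard approximation argument (or by noting that one can mollify $f$ on a slightly larger ball, apply the inequality, and pass to the limit using dominated convergence together with the growth bound in the definition of $\funcclass$), the argument goes through for every $f \in \funcclass$. Lemma \ref{bbbc-52} is stated for $f \in \funcclass$ directly.
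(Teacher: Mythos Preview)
Your proof is correct and is exactly the paper's approach: the paper's proof reads in its entirety ``Add (\ref{poincare-p_1-eq}) and (\ref{bbbc-52-eq}),'' which is precisely your splitting $G = B \cup B^C$ together with the trivial extension of the $B$-integral on the right side. Your remark about the $C^\infty$ versus $C^1$ hypothesis in Corollary~\ref{poincare-p_1} is a fair point that the paper leaves implicit.
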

\begin{proof}
  Add (\ref{poincare-p_1-eq}) and (\ref{bbbc-52-eq}).
\end{proof}

We can now prove some cases of the desired gradient inequality
(\ref{integral-to-show}).

\begin{notation}
  Let $\cylinder(R) = \{(x,z) : \abs{x} \le R\}$ denote the ``cylinder
  about the $z$ axis'' of radius $R$.
\end{notation}

\begin{lemma}\label{grad-ineq-cyl}
  For fixed $R > 0$, (\ref{integral-to-show}) holds, with a constant
  $C=C(R)$ depending on $R$, for all $f \in \funcclass$ which are
  supported on $\cylinder(R)$ and satisfy $m_f = 0$.
\end{lemma}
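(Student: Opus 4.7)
The plan is to reduce the integral inequality \eqref{integral-to-show} to an $L^1(p_1\,d\haar)$ bound on $f$ itself, which can then be controlled by the Poincar\'e-type estimate already established in Corollary \ref{cheeger-combined}. The key observation is \eqref{abs-grad-difference}, which says that $(\grad-\hat{\grad})f$ vanishes in the ``horizontal'' directions and only contains a $z$-derivative, weighted by $\abs{x}$; since $f$ is supported in $\cylinder(R)$, this weight is bounded by $R$.

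First I would transfer the derivative off of $f$ using the integration by parts formula \eqref{byparts}. Applying it componentwise and subtracting,
\begin{equation*}
\int_G ((\grad-\hat{\grad})f)\, p_1\,d\haar = -\int_G ((\grad-\hat{\grad})p_1)\, f\,d\haar.
\end{equation*}
Taking norms of the vector-valued integrals and pulling the norm inside,
\begin{equation*}
\left|\int_G ((\grad-\hat{\grad})f)\, p_1\,d\haar\right| \le \int_G \abs{f}\,\abs{(\grad-\hat{\grad})p_1}\,d\haar.
\end{equation*}

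Next, since $p_1$ is radial, \eqref{abs-grad-difference} gives $\abs{(\grad-\hat{\grad})p_1(x,z)} = \abs{x}\,\abs{\grad_z p_1(x,z)}$, and by \eqref{vertical-gradient-crude} we have $\abs{\grad_z p_1} \le C p_1$ globally. Because $f$ is supported in $\cylinder(R)$, we may replace $\abs{x}$ on the support of $f$ by the constant $R$, so
\begin{equation*}
\int_G \abs{f}\,\abs{(\grad-\hat{\grad})p_1}\,d\haar \le CR \int_G \abs{f}\,p_1\,d\haar.
\end{equation*}

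Finally, since $m_f = 0$, the right-hand side equals $CR\int_G\abs{f-m_f}p_1\,d\haar$, and Corollary \ref{cheeger-combined} bounds this by $CRC'\int_G\abs{\grad f}\,p_1\,d\haar$. Chaining the three steps yields \eqref{integral-to-show} with $K = C(R) := CRC'$. There is essentially no hard step here; all the machinery has been set up. The only point requiring mild care is the componentwise integration by parts and the passage of the norm inside the integral, but both are standard once one works with the fact that $(\grad - \hat{\grad})f$ is a vector in $\R^{2n}$ whose components are $(X_i - \hat{X}_i)f$, to which \eqref{byparts} applies directly. The main substantive input is the pointwise bound \eqref{vertical-gradient-crude} on $\abs{\grad_z p_1}$ by $p_1$, which was the content of Theorem \ref{vertical-gradient-theorem} and is what makes this cylinder case so clean; lifting the restriction $f \in \cylinder(R)$ will of course be the real obstacle in the subsequent lemmas, since $\abs{x}$ is then unbounded.
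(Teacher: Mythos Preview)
Your proof is correct and follows the paper's argument essentially line for line: integrate by parts via \eqref{byparts}, apply \eqref{abs-grad-difference} to get the factor $\abs{x}\abs{\grad_z p_1}$, bound $\abs{\grad_z p_1}\le C p_1$ by \eqref{vertical-gradient-crude}, use $\abs{x}\le R$ on the support, and finish with Corollary \ref{cheeger-combined} since $m_f=0$.
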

\begin{proof}
  \begin{align*}
     \abs{\int_G ((\grad-\hat{\grad})f) p_1}\,d\haar &= \abs{\int_G f
       (\grad-\hat{\grad})p_1}\,d\haar &&\text{by integration by parts (\ref{byparts})} \\
     &\le \int_G \abs{f}
     \abs{(\grad-\hat{\grad})p_1}\,d\haar \\
     &= \int_G \abs{f} \abs{x} \abs{\grad_z p_1} \,d\haar &&
     \text{by (\ref{abs-grad-difference})}\\
       &\le C R \int_G \abs{f} p_1 \,d\haar && \text{by
         (\ref{vertical-gradient-crude}).}
     \intertext{(Note that $\abs{x} \le R$ on the support
       of $f$.)}
       &\le C' R \int_G \abs{\grad f} p_1 \,d\haar && \text{by Corollary \ref{cheeger-combined}}.
  \end{align*}
\end{proof}

\begin{notation}
  If $T : G \to M_{2n \times 2n}$ is a matrix-valued function on $G$,
  with $k \ell$th entry $a_{k \ell}$, let $\nabla \cdot T : G \to
  \R^{2n}$ be defined as
  \begin{equation}
    \nabla \cdot T(g) := \sum_{k,\ell=1}^{2n} X_\ell a_{k \ell}(g) e_k.
  \end{equation}
  Note that for $f : G \to \R$ we have the product formula
  \begin{equation}\label{div-product}
    \nabla \cdot (f T) = T \grad f + f \nabla \cdot T.
  \end{equation}
  \end{notation}

\begin{lemma}\label{grad-ineq-offcyl}
  For fixed $R > 1$, (\ref{integral-to-show}) holds, with a constant
  $C=C(R)$ depending on $R$, for all $f \in \funcclass$ which are
  supported on the complement of $\cylinder(R)$.
\end{lemma}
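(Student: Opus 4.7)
The plan is to find a matrix-valued function $T\colon G\to M_{2n\times 2n}$ satisfying a divergence identity that allows us to trade an integration by parts against $(\grad-\hat\grad) p_1$ for one against $\grad f$ via the product formula \ref{div-product}. Starting from $\int_G((\grad-\hat\grad)f)p_1\,d\haar = -\int_G f(\grad-\hat\grad)p_1\,d\haar$ (Proposition \ref{Xi-integration}\ref{Xi-parts}, applied componentwise and valid by the subexponential growth of $f\in\funcclass$ against the Gaussian decay of $p_1$ from Theorem \ref{main-theorem}), the goal is to express $(\grad-\hat\grad)p_1 = \nabla\cdot T + E$ with $\abs{T}\le 2 p_1$ (operator norm) and $\abs{E}\le (C/R) p_1$ on $\cylinder(R)^C$.

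I would construct $T$ as follows. Let $A(x)$ denote the $2n\times m$ matrix with entries $A_{ij}(x):=\inner{J_{u_j}x}{e_i}$, so that $X_i - \hat X_i = \sum_j A_{ij}(x)\partial_{z_j}$ and hence $(\grad-\hat\grad)p_1 = A(x)\grad_z p_1$. Proposition \ref{Jz-props}\ref{Jz-inner-a} gives $A(x)^T A(x) = \abs{x}^2 I_m$, so on $\cylinder(R)^C$ the matrix $P(x) := \abs{x}^{-2} A(x)A(x)^T$ is the orthogonal projection of $\R^{2n}$ onto $\spanop\{J_{u_1}x,\dots,J_{u_m}x\}$, of operator norm one. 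Setting $T(x,z) := 2P(x)\,p_1(x,z)$ gives $\abs{T}\le 2 p_1$ immediately. A direct computation of $\nabla\cdot T$ then splits into two pieces: the contribution $\sum_\ell 2P(x)_{k\ell} X_\ell p_1$ reproduces $((\grad-\hat\grad)p_1)_k$ exactly, because $P$ annihilates $\grad_x p_1$ (which is parallel to $\hat x$ by the radiality of $p_1$, and $\inner{J_{u_j}x}{x}=0$) while acting as the identity on $\mathrm{range}(A)$ where $A\grad_z p_1$ lives; and the contribution $\sum_\ell (\partial_{x_\ell} 2P(x)_{k\ell})p_1$ simplifies, using $\sum_\ell x_\ell\inner{[x,e_k]}{[x,e_\ell]} = \inner{[x,e_k]}{[x,x]} = 0$ and the identity $\sum_\ell (J_{u_j})_{k\ell}(J_{u_j}x)_\ell = (J_{u_j}^2 x)_k = -x_k$ (Proposition \ref{Jz-props}\ref{Jz-square}), to $-(2m\,x_k/\abs{x}^2)\,p_1$. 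Altogether,
$$\nabla\cdot T \;=\; (\grad-\hat\grad) p_1 \;-\; \frac{2m\,x}{\abs{x}^2}\,p_1.$$

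Combining this identity with the product formula \ref{div-product} and $\int_G \nabla\cdot(fT)\,d\haar = 0$ (justified entrywise via Proposition \ref{Xi-integration}\ref{Xi-parts} and the rapid decay of $fT$), one gets
$$-\int_G f(\grad-\hat\grad)p_1\,d\haar \;=\; \int_G T\grad f\,d\haar \;-\; \int_G f\cdot\frac{2m\,x}{\abs{x}^2}\,p_1\,d\haar.$$
Taking norms, using $\abs{T}\le 2p_1$ and $1/\abs{x}\le 1/R$ on the support of $f$, yields
$$\abs{\int_G((\grad-\hat\grad)f)p_1\,d\haar} \;\le\; 2\int_G\abs{\grad f}p_1\,d\haar + \frac{2m}{R}\int_G\abs{f}p_1\,d\haar.$$
Since $R>1$ and $d_0(x,z)\ge\abs{x}$, the Carnot-Carath\'eodory unit ball $B$ lies inside $\cylinder(R)$, so $f$ vanishes on $B$ and $m_f = 0$; Corollary \ref{cheeger-combined} then bounds $\int_G\abs{f}p_1$ by a constant multiple of $\int_G\abs{\grad f}p_1$, finishing the proof with $C(R) = 2 + 2mC/R$.

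The main obstacle is identifying the right matrix $T$. The decisive feature that makes the argument succeed is the identity $A(x)^T A(x) = \abs{x}^2 I_m$, which is essentially the defining property of an H-type algebra and which lets one \emph{invert} the horizontal-vertical gradient relation at the price of a factor $1/\abs{x}^2$---precisely the reason the argument needs $\abs{x}$ bounded below, and why the on-cylinder case required a separate treatment in Lemma \ref{grad-ineq-cyl}.
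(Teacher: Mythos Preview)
Your proof is correct and follows essentially the same strategy as the paper: both introduce the orthogonal projection $P(x)=\abs{x}^{-2}\sum_j J_{u_j}x\,(J_{u_j}x)^T$ onto $\spanop\{J_{u_1}x,\dots,J_{u_m}x\}$, observe that $2P\grad p_1=(\grad-\hat\grad)p_1$ by radiality of $p_1$, and then integrate by parts to transfer the derivatives onto $f$. The only difference is packaging---the paper keeps $T=P$ and applies the divergence identity to $fp_1T$, whereas you set $T=2Pp_1$ and apply it to $fT$; your explicit evaluation $\nabla\cdot P=-m\,x/\abs{x}^2$ is a slight sharpening of the paper's cruder bound $\abs{\nabla\cdot P}\le C/\abs{x}$, yielding an explicit constant $C(R)=2+2mC/R$.
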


\begin{proof}
  Applying (\ref{gradient-formula}) we have
  \begin{equation*}
    \grad p_1(x,z) = \grad_x p_1(x,z) + \frac{1}{2}J_{\grad_z p_1(x,z)}x.
  \end{equation*}
  Now $p_1$ is a ``radial'' function (that is, $p_1(x,z)$ depends
  only on $\abs{x}$ and $\abs{z}$).  Thus we have that $\grad_x
  p_1(x,z)$ is a scalar multiple of $x$, and also that $\grad_z
  p_1(x,z)$ is a scalar multiple of $z$, so that $J_{\grad_z
    p_1(x,z)}x$ is a scalar multiple of $J_z x$.  

  For nonzero $x \in \R^{2n}$, let $T(x) \in M_{2n \times 2n}$ be
  orthogonal projection onto the $m$-dimensional subspace of $\R^{2n}$
  spanned by the orthogonal vectors $J_{u_1}x, \dots, J_{u_m}x$.
  (Recall $\inner{J_{u_i}x}{J_{u_j} x} = -\inner{u_i}{u_j} \norm{x}^2
  = -\delta_{ij} \norm{x}^2$.)  Thus for any $z \in \R^m$, $T(x) J_z x
  = J_z x$, and $T(x) x = 0$; in particular,
  \begin{equation}\label{Tgradp}
    T(x) \grad p_1(x,z) =
  \frac{1}{2}J_{\grad_z p_1(x,z)}x = \frac{1}{2}
  (\grad-\hat{\grad})p_1(x,z).
  \end{equation}

  Explicitly, we have
  \begin{equation*}
    T(x) = \frac{1}{\abs{x}^2} \sum_{j=1}^m J_{u_j} x (J_{u_j}x)^{T}.
  \end{equation*}
   Note that $\abs{T(x)} = 1$ (in operator norm) for all $x \ne 0$,
   and a routine computation verifies that $\abs{\nabla \cdot T(x)} =
   \abs{\nabla_x \cdot T(x)} \le \frac{C}{\abs{x}}$.  Indeed, the
   $k\ell$th entry of $T(x)$ is
\begin{equation*}
a_{k\ell}(x) = \frac{1}{\abs{x}^2} \sum_{j=1}^{m}
\inner{J_{u_j}x}{e_k}\inner{J_{u_j}x}{e_\ell}
\end{equation*}
so that $\abs{X_k a_{k\ell}(x)} =
\abs{\frac{\partial}{\partial x^k} a_{k\ell}(x)} \le
\frac{3m}{\abs{x}}$; thus $\abs{\nabla \cdot T(x)} \le
\frac{3m(2n)^2}{\abs{x}}$.

  Since $p_1$ decays rapidly at infinity, we have the integration by
  parts formula
  \begin{equation}\label{div-parts}
    0 = \int_G \nabla \cdot (f p_1T)\,d\haar = \int_G (f p_1 \nabla \cdot T +
    f T \grad p_1 + p_1 T \grad f) \,d\haar.
  \end{equation}
  Thus
  \begin{align*}
    \abs{\int_G ((\grad - \hat{\grad})f) p_1\,d\haar} &= \abs{\int_G f
      (\grad - \hat{\grad})p_1\,d\haar} \\
    &= 2\abs{\int_G f T \grad p_1\,d\haar} \\
    &= 2\abs{\int_G f p_1 (\nabla \cdot T +  T \grad f)\,d\haar} \\
    &\le 2\int_G \abs{f} \abs{\nabla \cdot T} p_1\,d\haar + 2 \int_G
    \abs{T} \abs{\grad f} p_1\,d\haar\\
    &\le \frac{2C}{R} \int_G \abs{f} p_1\,d\haar + 2 \int_G
    \abs{\grad f} p_1\,d\haar
  \end{align*}
  since on the support of $f$, we have $ \abs{\nabla \cdot T} \le
  \frac{C}{\abs{x}} \le \frac{C}{R}$, and $\abs{T} = 1$.  The second integral is the
  desired right side of (\ref{integral-to-show}).  The first integral
  is bounded by the same by Corollary \ref{cheeger-combined}, where we
  note that $m_f = 0$ because $f$ vanishes on $D(R) \supset B$.
\end{proof}

We can now complete the proof of Theorem \ref{main-grad-theorem}.

\begin{proof}[Proof of Theorem \ref{main-grad-theorem}]
  We prove (\ref{integral-to-show}) for general $f \in \funcclass$.
  By replacing $f$ by $f - m_f \in \funcclass$, we can assume $m_f = 0$.

  Let $\psi \in C^\infty(G)$ be a smooth function such that $\psi
  \equiv 1$ on $D(1)$ and 
  $\psi$ is supported in $D(2)$.  Then $f = \psi f + (1-\psi)f$.

  $\psi f$ is supported on $D(2)$, so Lemma \ref{grad-ineq-cyl}
  applies to $\psi f$.  (Note that $m_{\psi f} = 0$ since $\psi \equiv
  1$ on $D(1) \supset B$.)  We have
  \begin{align*}
    \abs{\int_G (\grad - \hat{\grad})(\psi f) p_1\,d\haar} &\le C \int_G
    \abs{\grad(\psi f)} p_1\,d\haar \\
   &\le C \int_G \abs{\grad \psi} \abs{f} p_1 \,d\haar + \int_G \abs{\psi}
    \abs{\grad f} p_1\,d\haar \\
    &\le C \sup_G \abs{\grad \psi} \int_G \abs{f} p_1 \,d\haar + C \sup_G
    \abs{\psi}\int_G \abs{\grad f} p_1\,d\haar.
  \end{align*}
  The second integral is the right side of (\ref{integral-to-show}),
  and the first is bounded by the same by Corollary
  \ref{cheeger-combined}.

  Precisely the same argument applies to $(1-\psi)f$, which is
  supported on the complement of $D(1)$, by using Lemma
  \ref{grad-ineq-offcyl} instead of Lemma \ref{grad-ineq-cyl}.
\end{proof}

\section{The optimal constant $K$}

\index{gradient bounds!behavior of constant|(}
We observed previously that the constant $K$ in (\ref{grad-ineq}) can
be taken to be independent of $t$.  We now show that the
\emph{optimal} constant is also independent of $t > 0$, and is
discontinuous at $t=0$.  This distinguishes the current situation from
the elliptic case, in which the constant is continuous at $t=0$; see,
for instance \cite[Proposition 2.3]{bakry-sobolev}.  This
fact was initially noted for the Heisenberg group in
\cite{driver-melcher}, and the proof here is similar to the one found
there.

\newcommand{\Kopt}{K_{\mathrm{opt}}}

\begin{proposition}
  For $t \ge 0$, let
  \begin{equation}\label{Kopt}
    \Kopt(t) := \sup \left\{ 
    \frac{\abs{(\grad P_t f)(g)}}{P_t (\abs{\grad f})(g)} : f \in \funcclass, g \in G , P_t (\abs{\grad
        f})(g) \ne 0 \right\}.
  \end{equation}
  Then $\Kopt(0) = 1$, and for all $t > 0$, $\Kopt(t) \equiv \Kopt >
  1$ is independent of $t$, so that $\Kopt(t)$ is discontinuous at
  $t=0$.  In particular, \mbox{$\Kopt \ge \sqrt{\frac{3n+5}{3n+1}}$}.
\end{proposition}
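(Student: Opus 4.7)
I would split the claim into three pieces: $\Kopt(0)=1$, $\Kopt(t)$ is constant on $(0,\infty)$, and that constant is at least $\sqrt{(3n+5)/(3n+1)}$. The first is immediate from $P_0=I$, which forces the ratio in (\ref{Kopt}) to equal $1$ whenever it is defined.

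For the second piece, I would use the dilations $\varphi_\alpha$ of Definition \ref{dilation-def}. Given $f\in\funcclass$ and $\alpha>0$, the growth condition defining $\funcclass$ is preserved under $f\mapsto f\circ\varphi_\alpha$ thanks to (\ref{distance-dilate}); a change of variable $k\mapsto\varphi_\alpha k$ in the convolution integral for $P_t$, combined with item \ref{pt-scaling} of Proposition \ref{pt-props}, yields $P_t(f\circ\varphi_\alpha)=(P_{\alpha^2 t}f)\circ\varphi_\alpha$. Combined with $\grad(f\circ\varphi_\alpha)(g)=\alpha(\grad f)(\varphi_\alpha g)$ from (\ref{Xi-dilation}), this identifies the ratio at $(f\circ\varphi_\alpha,g,t)$ with the ratio at $(f,\varphi_\alpha g,\alpha^2 t)$. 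Since $\varphi_\alpha$ is bijective on $G$ and $f\mapsto f\circ\varphi_\alpha$ is bijective on $\funcclass$, passing to the supremum gives $\Kopt(t)=\Kopt(\alpha^2 t)$ for every $\alpha>0$, so $\Kopt(t)$ is constant on $(0,\infty)$.

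For the explicit lower bound, by the scaling it suffices to test at $t=1$ and $g=0$. My plan is to try the quadratic test function
\begin{equation*}
f(x,z) \;=\; \inner{b}{x} \;+\; c\,\inner{a}{z}\,\inner{b'}{x},
\end{equation*}
where $a\in\R^m$, $b\in\R^{2n}$ are unit vectors, $b':=J_a b$ (which is again a unit vector by item \ref{Jz-inner-a} of Proposition \ref{Jz-props}), and $c\in\R$ is a free parameter; since $f$ is a polynomial of degree two, it lies in $\funcclass$. By Proposition \ref{commute} one has $\grad P_1 f(0)=\int \hat\grad f \cdot p_1\,d\haar$. Expanding $\hat\grad f = b + c\inner{a}{z}b' - \tfrac{c}{2}\inner{b'}{x}J_a x$, using $\int\inner{a}{z}\,p_1=0$ (by $z$-symmetry) and $\int\inner{b'}{x}\,x\,p_1=2b'$ (which follows from $\int p_1(x,z)\,dz=(4\pi)^{-n}e^{-|x|^2/4}$), and the identity $J_a b'=J_a^2 b=-b$, gives $\grad P_1 f(0)=(1+c)b$, whence $|\grad P_1 f(0)|=|1+c|$.

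For the denominator I would apply Jensen's inequality $P_1(|\grad f|)(0)\le\sqrt{\int|\grad f|^2\,p_1}$. Expanding $|\grad f|^2$ with $\inner{b}{J_a b}=0$, $\inner{b}{J_a x}=-\inner{b'}{x}$, $\inner{b'}{J_a x}=\inner{b}{x}$, $|J_a x|^2=|x|^2$, and using the further moments $\int\inner{a}{z}^2\,p_1=n$ (obtained by a second-order $\lambda$-expansion of the Mehler formula (\ref{mehler-formula}) at $\lambda=0$), $\int\inner{b'}{x}^2\,p_1=2$, $\int\inner{b'}{x}^2|x|^2\,p_1=8(n+1)$, and the vanishing by $z$-oddness of $\int\inner{a}{z}\inner{b'}{x}\inner{b}{x}\,p_1$, produces
\begin{equation*}
\int|\grad f|^2\,p_1\,d\haar \;=\; 1 - 2c + (3n+2)c^2.
\end{equation*}
Hence the ratio is bounded below by $|1+c|/\sqrt{1-2c+(3n+2)c^2}$, and maximizing in $c$ leads to the critical equation $3(n+1)c^2+(3n+1)c-2=0$, whose discriminant is the perfect square $(3n+1)^2+24(n+1)=(3n+5)^2$. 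Its positive root $c=2/(3(n+1))$ simplifies both the numerator and the denominator, which collapse respectively to $(3n+5)^2/(9(n+1)^2)$ and $(3n+1)(3n+5)/(9(n+1)^2)$, giving the ratio $\sqrt{(3n+5)/(3n+1)}>1$. The principal obstacle is spotting this test function: the coupling $b'=J_a b$ is precisely what makes the cross term in $\hat\grad f$ add constructively (giving the $(1+c)$ factor) rather than cancel, and the algebraic miracle that the discriminant is a perfect square is what produces the clean closed form of the bound.
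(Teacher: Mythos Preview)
Your argument is correct and is essentially the paper's proof. The dilation argument for the constancy of $\Kopt(t)$ on $(0,\infty)$ is the same (the paper first reduces to $g=0$ by left invariance, while you keep $g$ general and use the bijectivity of $\varphi_\alpha$; either way works).

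For the explicit lower bound, your test function $f=\inner{b}{x}+c\,\inner{a}{z}\,\inner{J_a b}{x}$ is exactly the paper's $f=x_1+z_1 x_2$ (once one chooses a basis with $J_{u_1}e_1=e_2$), except that you insert a free parameter $c$ and fix $t=1$, whereas the paper fixes $c=1$ and varies $t$. By the dilation relation you already established, the ratio at $(f_c,0,1)$ equals the ratio at $(f_1,0,c)$, so the two optimizations are literally the same; your quadratic $(1+c)^2/(1-2c+(3n+2)c^2)$ is the paper's $k_2(t)$ with $c=t$, and your optimal $c=2/(3(n+1))$ is the paper's $t_{\max}$. The only genuine computational difference is that the paper evaluates $P_t f$ and $P_t(|\grad f|^2)$ via the terminating series $P_t=\sum_k t^k L^k/k!$ on polynomials, while you compute the same quantities at $t=1$ as moments of $p_1$ (Gaussian moments in $x$ after integrating out $z$, and the second $z$-moment from the small-$\lambda$ expansion of $(\cosh|\lambda|)^{-n}$). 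Both routes are short and lead to the identical expression.
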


\begin{proof}
  It is obvious that $\Kopt(0) = 1$.
  
  As before, by the left invariance of $P_t$ and $\grad$, it suffices
  to take $g=0$ on the right side of (\ref{Kopt}).  To show
  independence of $t > 0$, fix $t, s > 0$.  If $f \in \funcclass$,
  then $\tilde{f} := f \circ \varphi_{s^{1/2}}^{-1} \in \funcclass$
  and $f = \tilde{f} \circ \varphi_{s^{1/2}}$.  Then
  \begin{align*}
    \frac{\abs{(\grad P_t f)(0)}}{P_t (\abs{\grad f})(0)} &= 
    \frac{\abs{(\grad P_t (\tilde{f} \circ
        \varphi_{s^{1/2}}))(0)}}{P_t \left(\abs{\grad (\tilde{f} \circ
        \varphi_{s^{1/2}})}\right)(0)} \\
    &=     \frac{\abs{(\grad (P_{st} \tilde{f}) \circ
        \varphi_{s^{1/2}})(0)}}{P_t \left(s^{1/2} \abs{\grad \tilde{f}} \circ
        \varphi_{s^{1/2}}\right)(0)} \\
    &=     \frac{s^{1/2}\abs{(\grad P_{st}
        \tilde{f})(\varphi_{s^{1/2}}(0))}}
         {s^{1/2} P_{st} \left(\abs{\grad
             \tilde{f}}\right)(\varphi_{s^{1/2}}(0))}
         \le \Kopt(st).
  \end{align*}
  Taking the supremum over $f$ shows that $\Kopt(t) \le \Kopt(st)$.
  $s$ was arbitrary, so $\Kopt(t)$ is constant for $t > 0$.  

  In order to bound the constant, we explicitly compute a related ratio for
  a particular choice of function $f$.  The function used is an obvious
  generalization of the example used in \cite{driver-melcher} for the
  Heisenberg group.

  Fix a unit vector $u_1$ in the center of $G$, i.e. $u_1 \in 0 \times
  \R^m \subset \R^{2n+m}$.  We note that the operator $L$ and the norm
  of the gradient $\abs{\grad f}^2 = \frac{1}{2}(L(f^2) - 2 f L f)$
  are independent of the orthonormal basis $\{e_i\}$ chosen to define
  the vector fields $\{X_i\}$, so without loss of generality we
  suppose that $J_{u_1} e_1 = e_2$.  Then take
  \begin{align*}
    f(x,z) &:= \inner{x}{e_1} + \inner{z}{u_1}\inner{x}{e_2} = x_1 +
    z_1 x_2 \\
    k(t) &:= \frac{\abs{(\grad P_t f)(0)}}{P_t(\abs{\grad f})(0)}.
  \end{align*}
  Note that $k(t) \le \Kopt$ for all $t$.  By the Cauchy-Schwarz
  inequality,
  \begin{equation*}
    k(t)^2 \ge k_2(t) := \frac{\abs{(\grad P_t f)(0)}^2}{P_t\left(\abs{\grad f}^2\right)(0)}.
  \end{equation*}
  Since $f$ is a polynomial, we can compute $P_t f$ by the formula
  $P_t f = f + \frac{t}{1!} L f + \frac{t^2}{2!} L^2 f + \cdots$ since
  the sum terminates after a finite number of terms (specifically,
  two).  The same is true of $\abs{\grad f}^2$, which is also a
  polynomial (three terms are needed).  The formulas
  (\ref{Xi-formula}) are helpful in carrying out this tedious but
  straightforward computation.
  \fixnotme{Consider including the computation here.}
  We find
  \begin{equation*}
    k_2(t) = \frac{(1+t)^2}{1-2t+(3n+2)t^2}
  \end{equation*}
  which, by differentiation, is maximized at $t_{\mathrm{max}} =
  \frac{2}{3n+3}$, with $k_2(t_{\mathrm{max}}) = \frac{3n+5}{3n+1}$.
  Since $\Kopt \ge k(t_{\mathrm{max}}) \ge
  \sqrt{k_2(t_{\mathrm{max}})} = \sqrt{\frac{3n+5}{3n+1}}$, this is
  the desired bound.
\end{proof}
\index{gradient bounds!behavior of constant|)}

\section{Consequences}\label{gradient-consequences-sec}

Section 6 of \cite{bbbc-jfa} gives several important consequences of
the gradient inequality (\ref{grad-ineq}).  The proofs given
there are generic (see their Remark 6.6); with Theorem
\ref{main-grad-theorem} in hand, they go through without change in the
case of H-type groups.  These consequences include:
\begin{itemize}
\item Local Gross-Poincar\'e inequalities, or $\varphi$-Sobolev
  inequalities;
  \index{Gross-Poincar\'e inequalities}
  \index{Sobolev inequalities}
\item Cheeger type inequalities; and
  \index{Cheeger type inequalities}
\item Bobkov type isoperimetric inequalities.
  \index{Bobkov inequalities}
  \index{isoperimetric inequalities}
\end{itemize}
We refer the reader to \cite{bbbc-jfa} for the statements and proofs of
these theorems, and many references as well.

\fixnotme{Say a little about log Sobolev inequalities, and about
  isoperimetric Cheeger inequalities.}

Chapter \ref{gradient-chapter}, in large part, is adapted from
material awaiting publication as \gradcite.  The
dissertation author was the sole author of this paper.

\newcommand{\foo}[1]{\thepage}
\index{recursion|foo}

\nohyphens{\printindex}

\bibliographystyle{plain}
\bibliography{allpapers}

{\tiny   \texttt{\RCSId}}

\end{document}